\newtheorem{thm}{Theorem}[section]
\newtheorem{lem}[thm]{Lemma}
\newtheorem{prop}[thm]{Proposition}
\theoremstyle{definition}
\newtheorem{defn}[thm]{Definition}
\theoremstyle{remark}
\newtheorem{rem}[thm]{Remark}
\newtheorem{ex}[thm]{Example}
\numberwithin{equation}{section}
\begin{document}
	
	\title{Asymptotic lines and parabolic points of plane fields in $\mathbb{R}^3$}%
	
	\author{Douglas H. da Cruz}
	\author{Ronaldo A. Garcia}%
	
	
	\begin{abstract}
		In this paper are studied the simplest qualitative properties of asymptotic lines
		of a plane field in Euclidean space. These lines are the integral curves of the null directions
		of the normal curvature of the plane field, on the closure of the hyperbolic
		region, where the Gaussian curvature  is negative.
	When the plane field is completely integrable, these curves coincides with the classical asymptotic lines on  surfaces.
	\end{abstract}
	\maketitle
	\section{Introduction}

		
	The classical work about plane fields is \cite{MR1510041} and the general reference for the purposes of this work is the book \cite{MR1749926}.
	The normal curvature of a plane field can be introduced as in \cite{Euler1760}.
	This and other concepts of the differential geometry of surfaces in $\mathbb{R}^3$ were naturally extended
	for plane fields in $\mathbb{R}^3$ and three manifolds.
	
	The main results of this work are the Theorems \ref{t1}, \ref{t2}, \ref{thm:node-focus},  \ref{t3}, \ref{aci} and \ref{genpar}. 
	
	Motivated by the \cite[Theorem 1]{MR1634428}, \cite{MR1725206}, the Theorems  \ref{t1}, \ref{t2}, \ref{thm:node-focus}, \ref{t3}, \ref{aci} concerns about
	the simplest qualitative properties of asymptotic lines of plane fields in $\mathbb{R}^3$ near a regular parabolic surface and are proved in the section \ref{par}.
	
	In the section \ref{pre}, we give the definitions of a plane field, normal curvature, asymptotic line, parabolic point, Gaussian curvature, mean curvature and others objects that
	will be necessary in the subsequent sections. Furthermore, some preliminaries results are presented.

	Theorem \ref{t1} establishes the behaviour of the asymptotic lines when the asymptotic direction is not tangent to the surface of parabolic points. 
	
	The regular curve $\varphi$ of special parabolic points is characterized by the property that the asymptotic direction is tangent to 
	the parabolic surface. Lemma \ref{lemma:s1} show that the curve $\varphi$ is related with the curve $\widetilde{\varphi}$ of singular 
	points of the Lie-Cartan vector field $\mathcal{X}$, defined in Section \ref{subsection:lie}. In Proposition \ref{prop0} we show  that the integral curves of $\mathcal{X}$ are projected onto
	the asymptotic lines.
	
	Theorem \ref{t2} establishes the behaviour of the asymptotic lines near $\varphi$
	when the singular points of $\widetilde{\varphi}$ are of type saddle, node or focus.
	
	Theorem \ref{thm:node-focus} concerns with the case where there is a transition
	of  node-focus type at a point of $\varphi$.

	In Theorem \ref{t3} is analyzed the case where there is a transition
	of the type saddle-node at a point $r$ of $\varphi$ and Lemma \ref{lemma:spct} shows that this transition occur only if the tangent line of the curve $\varphi$ at $r$ is the asymptotic direction.
	
	In all the above cases, the associated eigenvalues do not cross the 
	imaginary axis. 
	
In 	Theorem  \ref{aci} it is analyzed  the case where, at a point $r$ of $\varphi$, the pair of complex eigenvalues
	crosses the imaginary axis.  This case only occurs if the tangent line of $\varphi$ at $r$ and the asymptotic direction at $r$
	generate the tangent plane of the parabolic surface as shown in Lemma \ref{lemma:spct}.  
	
	In the section \ref{genp},   Theorem \ref{genpar} shows that parabolic points studied in the section \ref{par} are generic in the topological sense.

	\begin{figure}
		\centering
		\subfloat{\includegraphics[trim=0 0 0 0,clip,width=.6\textwidth]{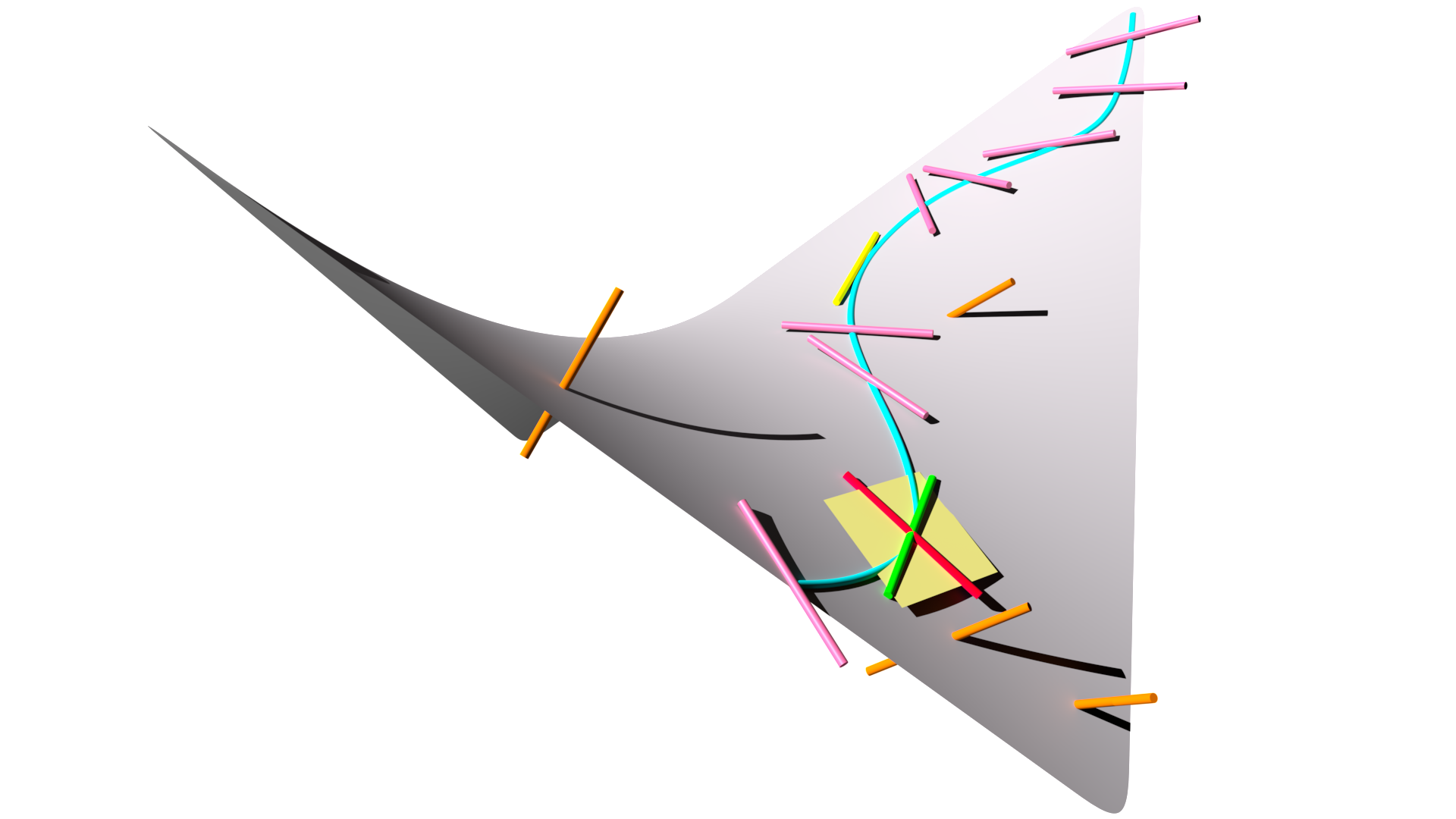}}
		\caption{A curve $\varphi$ of special parabolic points (blue curve) on the
			parabolic surface. A parabolic point is special if the asymptotic direction
			at it is tangent to the parabolic surface. Some asymptotic directions on the parabolic surface are the orange, pink,
			yellow and red lines. The orange asymptotic directions are the asymptotic directions that are not tangent to the
		parabolic surface. The pink, yellow and red asymptotic directions are the asymptotic directions on the curve of special parabolics points.
		 The red asymptotic direction is a more special case, it is the asymptotic direction that together with the green tangent line of $\varphi$ generate the pastel yellow  tangent plane of the parabolic surface which also belongs to the plane field.
		The most special case of this work is the
	yellow asymptotic direction, which is the asymptotic direction that is tangent to the curve of special parabolic points.
 }
		\label{fig:par}
	\end{figure}
	
	For related works about local and global properties of implicit differential equations see for example \cite{MR3192164}, \cite{MR800916},  \cite{MR1297761}, \cite{MR1725206}  and \cite{MR3570306}. For global aspects of extrinsic geometry of non integrable plane fields in dimension three see \cite{MR1749926}, \cite{MR753131} and \cite{gomes2020}.

A connection  of the geometry of plane fields with sub-Riemmanian geometry is presented in 
\cite[Chapter 17]{MR3971262}. See also \cite{MR4371078} and \cite{MR1344763}.

	\section{Geometry of plane fields}
	\label{pre}
	In this paper, the Euclidean space $\mathbb{R}^3$ is endowed with the Euclidean norm $|\cdot|=\langle \cdot,\cdot\rangle^{\frac{1}{2}}$.
	Let $[\cdot,\cdot,\cdot]$ denote the triple product in $\mathbb{R}^3$.
	
	\subsection{Plane field in \texorpdfstring{$\mathbb{R}^3$}{R3}}
	Let $\xi:\mathbb{R}^3\rightarrow\mathbb{R}^3$, $\xi(x,y,z)=(a(x,y,z),b(x,y,z),c(x,y,z))$, be a vector field of class $C^{k}$, where $k\geq 3$.
	
	A point $r=(x,y,z)$ is called a singular point of $\xi$ if $\xi(r)=0$, otherwise it is a regular point.

	A plane field $\Delta$ in $\mathbb{R}^3$, orthogonal to the vector field $\xi$, is defined by the equation $\langle \xi,dr\rangle=0$, where $dr=(dx,dy,dz)$.
	See the Figure \ref{pf}.

	If $r=(x,y,z)$ is a singular point of $\xi$, then the plane of $\Delta$ at $r$ is not defined.
	
	\begin{defn}
		A regular  curve $\gamma\colon I\rightarrow \mathbb{R}^3$ is an integral curve of a      plane field $\Delta$ if $\gamma^\prime(t)$ is orthogonal to $\xi(\gamma(t))$ for every $t\in I$, that is, $\gamma^\prime(t)$ is contained in the plane of $\Delta$ at $\gamma(t)$.  
	\end{defn}
	
	An integral curve of $\Delta$ is also called a  Legendre curve of $\Delta$.
	\begin{defn}
		The curl vector field of $\xi$ will be denoted by $curl(\xi)$, i.e, $curl(\xi)=\left(c_{y}-b_{z},a_{z}-c_{x},b_{x}-a_{y}\right)$ in cartesian coordinates $(x,y,z)$.
	\end{defn}
	\begin{thm}[{\cite[Jacobi Theorem, p. 2]{MR1749926}}]\label{jacobi}
		There exist  a family of surfaces orthogonal to $\xi$ if, and only if, $\langle\xi,curl(\xi)\rangle\equiv0$.
	\end{thm}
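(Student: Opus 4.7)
The plan is to translate the statement into the Frobenius integrability criterion. Set
\begin{equation*}
\omega=\langle\xi,dr\rangle=a\,dx+b\,dy+c\,dz,
\end{equation*}
so that at each regular point of $\xi$ the plane of $\Delta$ coincides with $\ker\omega$, and orthogonality of a surface to $\xi$ is equivalent to being an integral surface of the Pfaffian equation $\omega=0$. A family of surfaces orthogonal to $\xi$ covering a neighbourhood is then exactly a local foliation by integral surfaces of $\omega=0$.

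The bridge between the two sides of the equivalence is the identity
\begin{equation*}
\omega\wedge d\omega \;=\; \langle\xi,curl(\xi)\rangle\,dx\wedge dy\wedge dz.
\end{equation*}
I would establish this by a short exterior-algebra calculation: express $d\omega$ in the $\{dx\wedge dy,\ dy\wedge dz,\ dz\wedge dx\}$ basis, wedge with $\omega$, and read off the coefficient of the volume form, which is precisely $a(c_y-b_z)+b(a_z-c_x)+c(b_x-a_y)$.

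For the implication $(\Rightarrow)$, suppose the surfaces are locally given as level sets $\{f=\mathrm{const}\}$. Then $\xi$ is parallel to $\nabla f$, so $\xi=\mu\,\nabla f$ for some scalar $\mu$. Since $curl(\mu\,\nabla f)=\nabla\mu\times\nabla f$, we obtain $\langle\xi,curl(\xi)\rangle=[\mu\nabla f,\nabla\mu,\nabla f]=0$ by the antisymmetry of the triple product with two parallel entries. The harder implication $(\Leftarrow)$ is where the real work sits: assuming $\omega\wedge d\omega=0$, I would invoke the Frobenius theorem to conclude that the distribution $\ker\omega$ is completely integrable, so through each regular point of $\xi$ passes a $2$-dimensional integral manifold; varying the initial point along a short transversal arc packages these leaves into the desired family.

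The main obstacle, if one wishes to avoid citing Frobenius as a black box, is this backward direction. A self-contained approach restricts to a region where, say, $c\neq 0$, rewrites the equation as $dz=-(a/c)\,dx-(b/c)\,dy$, and constructs an integral surface by two successive ODE flows: first solving along $x$ in the plane $y=y_0$, then extending in $y$. The cross-derivative compatibility that must be verified in the second step reduces, after clearing the nonzero factor $c^{2}$, exactly to the hypothesis $\langle\xi,curl(\xi)\rangle=0$, closing the equivalence.
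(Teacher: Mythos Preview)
Your proposal is correct and aligns with the paper's own treatment: the paper does not prove this theorem but cites it from \cite{MR1749926}, and in the remark immediately following the statement it records exactly your key identity $\eta\wedge d\eta=\langle\xi,curl(\xi)\rangle\,dx\wedge dy\wedge dz$ and observes that the result is a special case of the Frobenius Integrability Theorem. Your write-up simply fills in the details that the paper leaves implicit, including the elementary direct argument for $(\Rightarrow)$ and the optional ODE construction for $(\Leftarrow)$, so there is no divergence in approach.
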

	\begin{defn}
		A plane field $\Delta$ is said to be completely integrable if $\langle\xi,curl(\xi)\rangle\equiv0$. A surface of the
		family of surfaces orthogonal to $\xi$ is called  an integral surface.
	\end{defn}
	\begin{rem}
		Set the 1-form $\eta=\langle \xi,dr\rangle$. Then $\eta\wedge d\eta=\langle\xi,curl(\xi)\rangle dx\wedge dy\wedge dz$.
		Theorem \ref{jacobi} is a special case of the Frobenius Integrability Theorem for differential forms.
	\end{rem}
	\begin{figure}
		\captionsetup[subfigure]{width=.3\linewidth}
		\centering
		\subfloat{\includegraphics[width=.6\textwidth]{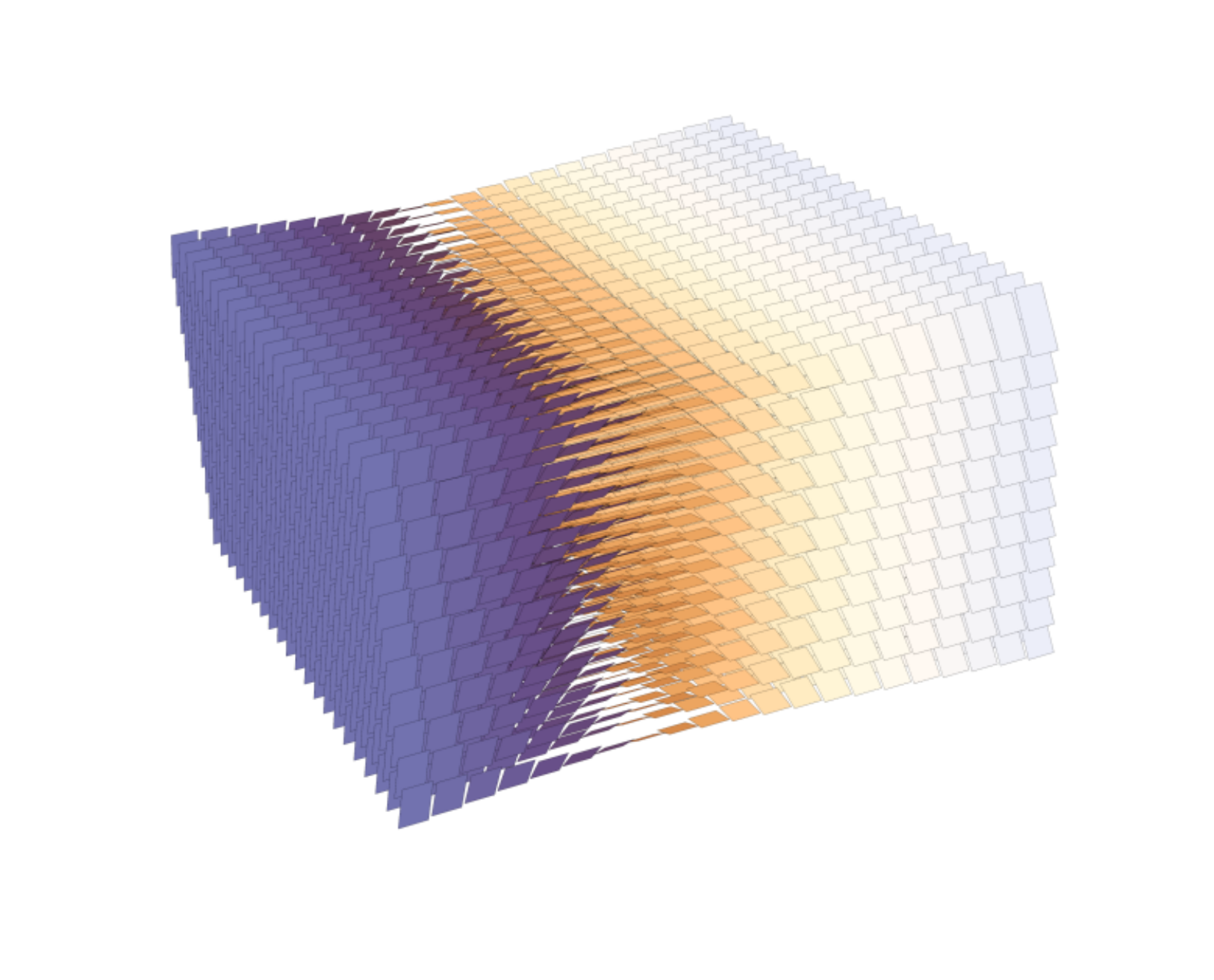}\label{pf1}}
		\caption{Plane field orthogonal to the vector field $\xi=(-y,0,1)$. In this case, the equation $\langle \xi,dr\rangle=0$ becomes $dz-ydx=0$.}
		\label{pf}
	\end{figure}
	\subsection{Normal curvature of a plane field}
	\begin{defn}[{\cite[p. 8]{MR1749926}}]
		The normal curvature $k_{n}$ of a plane field, in the direction $dr$, is defined by
		\begin{equation}\label{nc}
		k_{n}=\frac{\langle d^2r,\xi \rangle}{\langle dr,dr\rangle}=-\frac{\langle dr, d\xi \rangle}{\langle dr,dr\rangle}.
		\end{equation}
	\end{defn}
	
	This definition agrees with
	the classical one given by L. Euler in  \cite{Euler1760}. The geometrical
	interpretation of $k_{n}$ is given by Proposition \ref{kng}.
	
	\begin{prop}[{\cite[Proposition 1.2]{alacyr}}]
		Let $\Delta$ be a plane field. Then in each plane of $\Delta$
		there exists two orthogonal directions at which the normal curvature $k_{n}$
		attains the extreme values, minimal and maximal.
	\end{prop}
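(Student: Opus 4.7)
The plan is to localize at a single regular point of $\xi$ and reduce the statement to the spectral theorem for a self-adjoint operator on a two-dimensional Euclidean space. Fix $r_{0}$ with $\xi(r_{0})\neq 0$ and let $P=\Delta(r_{0})$ be the orthogonal complement of $\xi(r_{0})$ in $\mathbb{R}^{3}$. The right-hand equality in \eqref{nc} is essential: it shows that $k_{n}$ depends only on the direction of $dr$ through the \emph{linear} data $(d\xi)_{r_{0}}$, eliminating the apparent dependence on the second derivative $d^{2}r$. Thus for a unit vector $v\in P$,
\[
k_{n}(v)=-\langle v,(d\xi)_{r_{0}}v\rangle,
\]
which I would interpret as the value of a quadratic form on the two-plane $P$.

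Next I would pass from the (in general non-self-adjoint) linear map $(d\xi)_{r_{0}}$ to a self-adjoint operator whose Rayleigh quotient computes $k_{n}$. Let $\pi_{P}$ denote orthogonal projection onto $P$. Since $v\perp\xi(r_{0})$, only the $P$-component of $(d\xi)_{r_{0}}v$ contributes to $k_{n}(v)$, so it suffices to study $L=\pi_{P}\circ(d\xi)_{r_{0}}|_{P}\colon P\to P$. The antisymmetric part of $L$ contributes $0$ to the quadratic form, hence $k_{n}(v)=\langle v,Sv\rangle$ for every unit $v\in P$, where $S=-\tfrac{1}{2}(L+L^{*})$ is a self-adjoint endomorphism of the two-dimensional Euclidean space $P$.

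Finally, I would invoke the spectral theorem for $S$: there is an orthonormal eigenbasis $\{e_{1},e_{2}\}$ of $P$ with eigenvalues $k_{1}\le k_{2}$, and the standard Rayleigh-quotient argument (equivalently, Lagrange multipliers applied to $v\mapsto\langle v,Sv\rangle$ on the unit circle of $P$) identifies $k_{1}$ and $k_{2}$ with the minimum and maximum values of $k_{n}$, attained along the orthogonal directions $e_{1}$ and $e_{2}$. The only genuinely delicate step is the first reduction, that is, turning the second-order expression $k_{n}$ into a purely algebraic quadratic form on $P$; once this is done, everything collapses to two-dimensional linear algebra. The degenerate umbilic case $k_{1}=k_{2}$ presents no difficulty, since any two orthogonal directions in $P$ then realize the common extreme value.
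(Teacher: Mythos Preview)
The paper does not supply its own proof of this proposition: it is quoted verbatim from \cite[Proposition~1.2]{alacyr} and then used as a black box (the later proof of the Euler formula, Theorem~\ref{ecf}, already assumes that the principal directions $X_{1},X_{2}$ exist). So there is nothing in the paper to compare your argument against line by line.

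That said, your proof is correct and is the standard way to establish the result. The key move---using the second equality in \eqref{nc} to replace the second-order quantity $d^{2}r$ by the first-order data $(d\xi)_{r_{0}}$, and then symmetrising $L=\pi_{P}\circ(d\xi)_{r_{0}}|_{P}$ to obtain a self-adjoint operator $S$ on the two-plane $P$---is exactly right, and the spectral theorem on a two-dimensional Euclidean space finishes the job. One small remark: you might note explicitly that the equality $\langle d^{2}r,\xi\rangle=-\langle dr,d\xi\rangle$ in \eqref{nc} follows by differentiating the constraint $\langle dr,\xi\rangle=0$ along an integral curve, so that the reduction to a quadratic form is not merely formal but rests on $dr$ being tangent to $\Delta$. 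With that clarification the argument is complete and self-contained.
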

	\begin{defn}
		The minimal (resp. maximal) principal curvature will be denoted by $k_{1}$ (resp. $k_{2}$).
		The principal direction associated to $k_{1}$ (resp. $k_{2}$) will be denoted by $\mathcal{P}_{1}$ (resp. $\mathcal{P}_{2}$).
	\end{defn}

	The Euler curvature formula holds for planes fields, see Theorem \ref{ecf}:
	\begin{equation}
	k_{n}=k_{1}cos^2(\theta)+k_{2}sin^2(\theta),
	\end{equation}
	
	\noindent where  $\theta$ is the angle between $dr$
	and the principal direction associated to $k_{1}$. 
	
	\subsection{Geodesic curvature and geodesic torsion of a plane field}
	
	The geodesic curvature $k_{g}$ of the plane field, in the direction $dr$, is defined by
	\begin{equation}\label{gc}
	k_{g}=\frac{[\xi,dr,d^2r]}{\langle dr,dr\rangle^{\frac{3}{2}}}.
	\end{equation}
	See  \cite[p. 14]{MR1749926}.
	
	The geodesic torsion $\tau_{g}$ of the plane field, in the direction $dr$, is defined by
	\begin{equation}\label{gt}
	\tau_{g}=\frac{[dr,\xi,d\xi]}{\langle dr,dr\rangle}.
	\end{equation} See {\cite[p. 50]{MR1749926}}
	
	The geodesic torsion formula for planes fields is given by Theorem \ref{proptau}.
	
\subsection{Gaussian curvature and Mean curvature of a plane field}	

	\begin{defn}[{\cite[p. 11]{MR1749926}}]
		\label{defHK}
		The Gaussian curvature $\mathcal{K}_{G}$ of a plane field is defined by $\mathcal{K}_{G}=k_{1}k_{2}$, where $k_{1}$ and $k_{2}$
		are the principal curvatures of the plane field.
		
		The Mean curvature $\mathcal{H}_{M}$ of a plane field  is defined by  $\mathcal{H}_{M}=\frac{k_{1}+k_{2}}{2}$.
		
	A point $p$ is called, respectively, elliptic, parabolic or hyperbolic
when $\mathcal{K}_{G}(p)>0$, $\mathcal{K}_{G}(p)=0$ or $\mathcal{K}_{G}(p)<0$.

The set of hyperbolic points (resp. elliptic points) will be denoted by $\mathbb{H}$ (resp. $\mathbb{E}$) and is called a hyperbolic region of $\Delta$ (resp. elliptic region of $\Delta$). The set of parabolic points will be denoted by $\mathbb{P}$.
	\end{defn}

	\begin{rem}
		When the plane field is completely integrable, the normal, geodesic curvature and  geodesic torsion above coincide with that of curves on surfaces.  The definition of the geodesic curvature (resp. geodesic torsion) of a surface can be found in \cite[p. 271]{MR1891533} and \cite[p. 542]{MR2253203} (resp. \cite[p. 545]{MR2253203}).
	\end{rem}
	\subsection{Asymptotic lines and parabolic points of a plane field}

The directions where $k_{n}=0$ are called asymptotic directions of
	the plane field
	and therefore are defined by the following implicit differential equations:
	\begin{equation}\label{eqla}
	\begin{split}
	\langle\xi,dr\rangle&=adx+bdy+cdz=0,\\
	\langle d\xi,dr\rangle&=a_{x}dx^2+(a_{y}+b_{x})dxdy+b_{y}dy^2\\
	&+(a_{z}+c_{x})dxdz+(b_{z}+c_{y})dydz+c_{z}dz^2=0.
	\end{split}
	\end{equation}
		A solution $dr=(dx,dy,dz)$ of \eqref{eqla} is an asymptotic direction. A curve $\gamma$ in $\mathbb{R}^3$ is an asymptotic line of the plane field if $\gamma$ is an integral curve of \eqref{eqla}.
		
		The system \eqref{eqla} will be called    the implicit differential equation  of the asymptotic lines of the plane field.
	
	
The line fields of asymptotic directions will be denoted by
$\ell_{1}$ and $\ell_{2}$. They are called asymptotic line fields. 
	
		
	\begin{figure}
		\captionsetup[subfigure]{width=.3\linewidth}
		\centering
		\subfloat{\includegraphics[width=.4\textwidth]{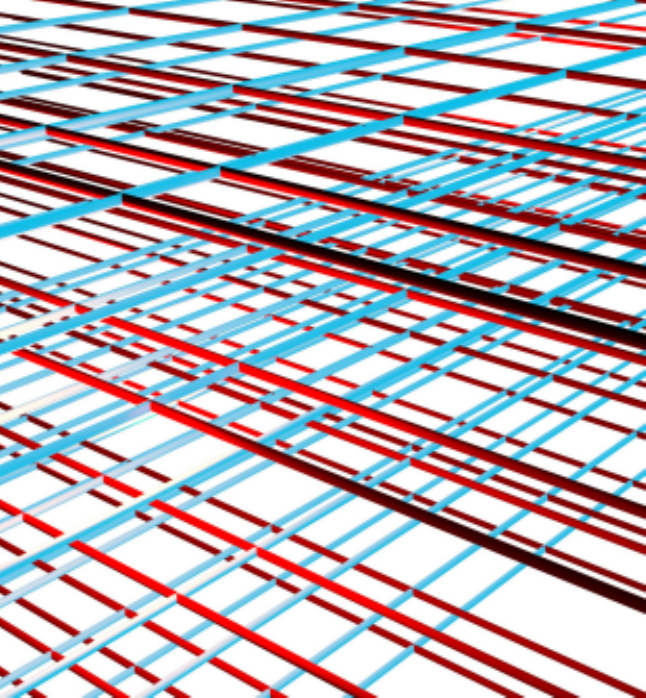}\label{hip1}}
		\caption{The two foliations of asymptotic lines in the hyperbolic region, one coloured with red and the other with blue.}
		\label{hip}
	\end{figure}
	
		As a consequence of \cite[Panov. Corollary of Theorem 4]{MR1725930}, a closed asymptotic line without parabolic points of a surface  
	in $\mathbb{R}^3$ cannot
	have a convex projection in any plane. See also \cite{douglas2020}.  However, for plane fields  we have no restrictions.
	\begin{ex}
		The circle in $\mathbb{R}^3$ given by $x^2+y^2=1$, $z=0$, is an asymptotic line without parabolic points of the plane field $\Delta$ orthogonal to the vector field
		$\xi=(a,b,c)$, where $a=x^2yz+y^3z-x^2y-y^3+xz-2yz+y$, $b=x^3-x^3z-xy^2z+xy^2+2xz+yz-x$ and $c=-x^2-y^2$. The plane field $\Delta$ is not completely integrable.\end{ex}
		
		The ordered pair $\{\ell_{1},\ell_{2}\}$ is well defined
in the hyperbolic region $\mathbb{H}$, where these directions are real, see
the Proposition \ref{fam}.

The asymptotic foliations of $\Delta$ are the integral foliations 
$\mathcal{A}_{1}$ of
$\ell_{1}$ and $\mathcal{A}_{2}$ of
$\ell_{2}$; they fill out the hyperbolic region $\mathbb{H}$, 
 see
the Proposition \ref{fam}.


	\begin{prop}
		If a straight line is a integral curve of a plane field $\Delta$, then it is also an asymptotic line of $\Delta$.
	\end{prop}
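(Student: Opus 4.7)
The plan is to exploit the two equivalent formulations of the normal curvature,
\[
k_{n}=\frac{\langle d^2r,\xi\rangle}{\langle dr,dr\rangle}=-\frac{\langle dr,d\xi\rangle}{\langle dr,dr\rangle},
\]
together with the defining property of a straight line, namely that its acceleration vanishes.

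First, I would parameterize the straight line as $\gamma(t)=p+tv$ with $v\neq 0$. Since $\gamma$ is assumed to be an integral curve of $\Delta$, the vector $v=\gamma^{\prime}(t)$ lies in the plane of $\Delta$ at $\gamma(t)$, that is $\langle\xi(\gamma(t)),v\rangle=0$ for all $t$ in the domain. Reading $dr=v\,dt$, this is exactly the first equation of the implicit system \eqref{eqla} along $\gamma$.

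Second, since $\gamma$ is a straight line, $\gamma^{\prime\prime}(t)\equiv 0$, so $d^{2}r=0$ along $\gamma$. Plugging into the first expression for $k_n$ gives $k_{n}\equiv 0$ on $\gamma$, and by the equivalence of the two formulas this is the same as $\langle d\xi,dr\rangle=0$ along $\gamma$. Thus $v$ satisfies both equations of \eqref{eqla} at every point of the line, which is precisely the statement that $\gamma$ is an integral curve of the asymptotic direction equation, i.e.\ an asymptotic line of $\Delta$.

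There is essentially no obstacle: the only subtlety worth flagging is the compatibility of the two forms of $k_{n}$. Differentiating the integrality relation $\langle\xi(\gamma(t)),\gamma^{\prime}(t)\rangle\equiv 0$ yields $\langle d\xi,dr\rangle+\langle\xi,d^{2}r\rangle=0$, so on an integral curve the two expressions for $k_n$ really do agree, and for a straight line both summands vanish individually. This justifies using whichever form is most convenient, and completes the argument.
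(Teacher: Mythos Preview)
Your proof is correct and essentially matches the paper's argument: the paper simply differentiates the integrality relation $\langle\xi(\ell(t)),v\rangle=0$ (using that $v$ is constant) to obtain $\langle d\xi(\ell(t))v,v\rangle=0$ directly, which is exactly the computation you carry out in your final paragraph. Your detour through the formula $k_n=\langle d^2r,\xi\rangle/\langle dr,dr\rangle$ with $d^2r=0$ is a pleasant gloss but not a genuinely different route, since you end up justifying the equivalence of the two forms of $k_n$ by the very same differentiation.
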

	\begin{proof}
		Let $\ell(t)=p_0+tv$ be a parametrization of the straight line with $v\in \Delta$. Since $\langle\xi(\ell(t)),v\rangle=0$ it follows that
		$\langle d\xi(\ell(t))v,v\rangle  
		=0$. Then the straight line $\ell$ is an asymptotic line of $\Delta$.
	\end{proof}
	\begin{prop}\label{prop3}
		Given a plane field $\Delta$, let $h:\mathbb{R}^3\rightarrow \mathbb{R}$ be a signal defined smooth function. Then a curve $\gamma$ is an asymptotic line of $\Delta$ if, and only if,
		$\gamma$ is an asymptotic line of the plane field $\widetilde{\Delta}$ orthogonal to the vector field $\widetilde{\xi}=h\xi$.
	\end{prop}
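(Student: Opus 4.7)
The plan is to verify that the implicit differential equation \eqref{eqla} defining the asymptotic directions is invariant, up to a nonvanishing factor, under the substitution $\xi \mapsto \widetilde{\xi} = h\xi$. Because \emph{signal defined} means $h$ is nowhere zero (of constant sign), any factor of $h$ that appears can be divided away without changing the zero set.

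First I would write down the two equations of \eqref{eqla} for $\widetilde{\xi}$. The first is immediate:
\begin{equation}
\langle \widetilde{\xi}, dr \rangle = h \langle \xi, dr \rangle,
\end{equation}
so $\langle \widetilde{\xi}, dr \rangle = 0$ if and only if $\langle \xi, dr \rangle = 0$. For the second equation, the product rule gives $d\widetilde{\xi} = \xi\, dh + h\, d\xi$, hence
\begin{equation}
\langle d\widetilde{\xi}, dr \rangle = \langle \xi, dr \rangle\, dh + h\, \langle d\xi, dr \rangle.
\end{equation}
Restricted to directions that already satisfy the first equation, the term $\langle \xi, dr \rangle\, dh$ vanishes, so $\langle d\widetilde{\xi}, dr \rangle = 0$ reduces to $h\langle d\xi, dr\rangle = 0$, which is equivalent to $\langle d\xi, dr\rangle = 0$.

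Consequently, at every point the pairs of equations defining the asymptotic directions of $\Delta$ and of $\widetilde{\Delta}$ cut out the same set of directions $dr$, so the implicit differential equations coincide and thus have the same integral curves. The argument is essentially a routine calculation; the only point that requires attention is the order of the reduction, namely that the $dh$-term in the second equation must be killed \emph{using} the first equation rather than treated on its own, and this is exactly where the hypothesis that $h$ is of constant sign (nowhere vanishing) is used to divide out the surviving factor $h$.
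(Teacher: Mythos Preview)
Your proposal is correct and follows essentially the same approach as the paper: both compute $\langle\widetilde{\xi},dr\rangle=h\langle\xi,dr\rangle$ and $\langle d\widetilde{\xi},dr\rangle=dh\,\langle\xi,dr\rangle+h\langle d\xi,dr\rangle$, then observe that the two systems \eqref{eqla} for $\xi$ and $\widetilde{\xi}$ are equivalent. Your write-up is in fact slightly more explicit than the paper's about why the extra $dh$-term disappears and where the nonvanishing of $h$ is used.
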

	\begin{proof}
		The implicit differential equations of the asymptotic lines of $\widetilde{\Delta}$ are given by
		\begin{equation}
		\langle\widetilde{\xi},dr\rangle=h\langle\xi,dr\rangle=0, \ \
		\langle d\widetilde{\xi}(dr),dr\rangle=dh(dr)\langle\xi,dr\rangle+h \langle d\xi(dr),dr\rangle=0.
		\end{equation}
		Then $\gamma$ is an asymptotic line of $\Delta$ if, and only if,
		$\gamma$  is an asymptotic line of the plane field $\widetilde{\Delta}$.
	\end{proof}
	\begin{prop}\label{laC}
		Let $\gamma$ be a integral curve, parameterized by arc length $s$, with nonvanishing curvature, of a plane field $\Delta$ and let $\{T,N,B=T\wedge N\}$ be the Frenet orthonormal frame
		associated to $\gamma$ with the Frenet equations $T'=kN$, $N'=-kT+\tau B$, $B'=-\tau N$, where $k$ is the curvature of $\gamma$ and $\tau$ is the torsion of $\gamma$.  Then $k_{n}^2+k_{g}^2=k^2$ and the following conditions are equivalent:
		\begin{itemize}
			\item $\gamma$  is an asymptotic line of $\Delta$.
			\item $k_{g}=\pm k$, where $k_{g}(s)$ is the geodesic curvature \ref{gc} of $\Delta$ evaluated at the direction of $T(s)$.
			\item for every $s$, the osculating plane of $\gamma$ at $\gamma(s)$ is the plane of $\Delta$ at $\gamma(s)$.
		\end{itemize}
		Furthermore, if $\gamma$  is an asymptotic line of $\Delta$, then $\tau_{g}=\tau$, where $\tau_{g}(s)$ is the geodesic torsion \ref{gt} of $\Delta$ evaluated at the direction of $T(s)$.
	\end{prop}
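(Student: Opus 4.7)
The plan is to work in arc-length parameter along $\gamma$ so that $dr = T\,ds$ and $d^2r = kN\,ds^2$, and then to decompose $N$ in the orthonormal frame attached to the plane of $\Delta$. First, by Proposition \ref{prop3} I may replace $\xi$ by $\xi/|\xi|$ without changing the asymptotic lines, so I assume $|\xi|=1$ along $\gamma$ (the definitions \eqref{nc}, \eqref{gc}, \eqref{gt} are tacitly normalised this way so that the associated quantities are truly geometric). Since $\gamma$ is a Legendre curve, $T\perp \xi$, so $\{T,\xi,\xi\wedge T\}$ is an orthonormal frame along $\gamma$, which I will use to expand $N$.

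Substituting $dr=T\,ds$ and $d^2r=kN\,ds^2$ into \eqref{nc} and \eqref{gc} gives $k_n = k\langle N,\xi\rangle$ and $k_g = k\,[\xi,T,N]$. Since $N\perp T$ and $|N|=1$, the vector $N$ lies in the plane spanned by $\xi$ and $\xi\wedge T$, so
\begin{equation}
N = \langle N,\xi\rangle\,\xi + \langle N,\xi\wedge T\rangle\,(\xi\wedge T),
\end{equation}
with $\langle N,\xi\rangle^2 + \langle N,\xi\wedge T\rangle^2 = 1$. Plugging this into $k_g$ and using $[\xi,T,\xi\wedge T] = |\xi\wedge T|^2 = 1$ yields $k_g = k\langle N,\xi\wedge T\rangle$, whence $k_n^2+k_g^2 = k^2$ is immediate. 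The three equivalences then reduce to reformulations of the single condition $\langle N,\xi\rangle = 0$: it coincides with $k_n = 0$ (the asymptotic condition), with $k_g = \pm k$ (by the Pythagorean identity just established), and with $N$ lying in the plane of $\Delta$, which together with $T\in\Delta$ is the same as saying the osculating plane (spanned by $T$ and $N$) coincides with the plane of $\Delta$.

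For the final claim, suppose $\gamma$ is asymptotic. Then the osculating plane equals the plane of $\Delta$ along $\gamma$, so $B = T\wedge N = \epsilon\,\xi$ for a sign $\epsilon = \pm 1$ which is constant along the connected curve. From $N = B\wedge T$ one gets $N = \epsilon(\xi\wedge T)$, and differentiating $B=\epsilon\xi$ and applying the Frenet equation $B' = -\tau N$ yields $\epsilon\,\xi'(\gamma(s)) = -\tau\,\epsilon\,(\xi\wedge T)$, i.e.\ $\xi'(\gamma(s)) = -\tau\,(\xi\wedge T)$ independently of $\epsilon$. Substituting into \eqref{gt},
\begin{equation}
\tau_g = [T,\xi,\xi'(\gamma(s))] = -\tau\,[T,\xi,\xi\wedge T] = \tau,
\end{equation}
since $[T,\xi,\xi\wedge T]=-[\xi,T,\xi\wedge T]=-1$. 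The main bookkeeping obstacle will be orientation: I need to verify that the sign $\epsilon$ really cancels, so that $\tau_g=\tau$ holds without a $\pm$ ambiguity; the computation above shows this is automatic because $\epsilon$ appears in both $B$ and $N$.
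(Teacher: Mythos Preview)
Your proof is correct and follows essentially the same idea as the paper's: both exploit that $\xi$ and $N$ lie in the plane $T^{\perp}$, and decompose one in terms of an orthonormal basis adapted to the other. The paper writes $\xi=\cos\theta\,N+\sin\theta\,B$ and differentiates to obtain $k_n=k\cos\theta$, $k_g=k\sin\theta$, $\tau_g=\tau+\theta'$ (so the asymptotic case is $\theta'\equiv 0$), whereas you expand $N$ in the frame $\{\xi,\xi\wedge T\}$ and differentiate $B=\epsilon\xi$ instead; these are the same computation in dual notation, with the paper's version yielding the slightly more general formula $\tau_g=\tau+\theta'$ as a byproduct.
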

	\begin{proof}
		At $\gamma$, $\xi(s)=cos(\theta(s))N(s)+sin(\theta(s))B(s).$
		Then, at $\gamma$, we have that
		$d\xi(\gamma')=-kcos(\theta)T-(\theta'+\tau)sin(\theta)N+(\theta'+\tau)cos(\theta)B$.
		It follows that the normal \ref{nc} and geodesic curvatures \ref{gc}, and geodesic torsion \ref{gt} of $\Delta$, evaluated at the direction $T$, are given by
		$k_{n}=kcos(\theta)$, $k_{g}=ksin(\theta)$, $\tau_{g}=\tau+\theta'$ and so $k_{n}^2+k_{g}^2=k^2$. If $\gamma$  is an asymptotic line, which means that $k_{n}=0$, then $cos(\theta)=0$ and so $sin(\theta)=\pm1$, which gives $k_{g}=\pm k$ (or, by the formula $k_{n}^2+k_{g}^2=k^2$, $k_{g}=\pm k$). If $k_{g}=\pm k$, then $cos(\theta)=0$ and so $\xi=\pm B$. It follows that, for every $s$, the osculating plane of $\gamma$ at $\gamma(s)$ is the plane of $\Delta$ at $\gamma(s)$.
		If for every $s$, the osculating plane of $\gamma$ at $\gamma(s)$ is the plane of $\Delta$ at $\gamma(s)$, then $\xi(s)$ is parallel to $B(s)$
		and then $cos(\theta(s))=0$ for every $s$. It follows that $k_{n}=0$.
		
		If $\gamma$ is asymptotic line, then $\theta$ is constant, and it follows that $\tau_{g}=\tau$.
	\end{proof}
	\begin{rem}
		The Proposition \ref{laC} is the version for plane fields of the equivalent results for surfaces in $\mathbb{R}^3$ that are stated, for example, in \cite[p. 196]{MR532832}.
	\end{rem}
	\begin{lem}\label{propeqlaV}
		Let $\Delta$ be a plane field such that $r=(x,y,z)$ is not a singular point of $\xi=(a,b,c)$. Then we can choose a coordinate system such that $c(r)\neq 0$. In this
		coordinate system, the implicit differential equations of the asymptotic lines, in a neighbourhood of $r$,
		becomes
		\begin{equation}
		\begin{split}\label{eqlaV}
		dz=-\left(\frac{a}{c}\right)dx-\left(\frac{b}{c}\right)dy, \quad edx^2+2fdxdy+gdy^2=0,
		\end{split}
		\end{equation}
		\noindent where
		\begin{equation}
		\begin{split}
		e&=a_{x}-\frac{(a_{z}+c_{x})a}{c}+\frac{a^2c_{z}}{c^2}, \ \ g=b_{y}-\frac{(b_{z}+c_{y})b}{c}+\frac{b^2c_{z}}{c^2},\\
		f&=\frac{a_{y}+b_{x}}{2}-\frac{(a_{z}+c_{x})b}{2c}-\frac{(b_{z}+c_{y})a}{2c}+\frac{abc_{z}}{c^2}.
		\end{split}
		\end{equation}
		Furthermore, in this neighbourhood, the parabolic set of $\Delta$ is given by $eg-f^2=0$.
	\end{lem}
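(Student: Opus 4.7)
The plan is to reduce the second-order system \eqref{eqla} to a single binary quadratic in $(dx,dy)$ by eliminating $dz$ through the linear constraint; this is possible on a neighbourhood of $r$ once one component of $\xi$ is nonvanishing.

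First, since $r$ is a regular point of $\xi$, we have $\xi(r)\neq 0$, so at least one of $a(r),b(r),c(r)$ is nonzero. A rigid rotation of the ambient $\mathbb{R}^3$ preserves the plane field, the normal curvature and therefore the asymptotic equation, so I may rotate until the third coordinate axis is transverse to the plane of $\Delta$ at $r$, giving $c(r)\neq 0$. By continuity, $c\neq 0$ on a neighbourhood $U$ of $r$, on which the first equation of \eqref{eqla} is equivalent to $dz=-(a/c)\,dx-(b/c)\,dy$.

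Next, I would substitute this expression for $dz$ into the second equation of \eqref{eqla} and collect coefficients of $dx^2$, $dx\,dy$, $dy^2$. The term $c_z\,dz^2$ contributes the $a^2c_z/c^2$, $2ab c_z/c^2$, $b^2 c_z/c^2$ pieces of $e,\,2f,\,g$; the mixed terms $(a_z+c_x)\,dx\,dz$ and $(b_z+c_y)\,dy\,dz$ account for the remaining corrections; and the pure $(dx,dy)$ monomials in \eqref{eqla} produce $a_x$, $a_y+b_x$, $b_y$. This is a direct algebraic expansion and the only hazard is bookkeeping, which yields exactly the stated $e,f,g$.

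For the parabolic set, recall from Definition \ref{defHK} that a point is parabolic iff $\mathcal{K}_G=k_1k_2=0$, equivalently iff the normal-curvature quadratic form restricted to the plane of $\Delta$ is degenerate. In the basis of $\Delta$ induced by the substitution, namely $\bigl(\partial_x-(a/c)\partial_z,\,\partial_y-(b/c)\partial_z\bigr)$, this quadratic form is represented, up to a positive definite factor coming from the restriction of $\langle dr,dr\rangle$ to $\Delta$, by the symmetric matrix with entries $e,f,g$. Hence its degeneracy is equivalent to $eg-f^2=0$. The only subtlety, and the step one should verify carefully, is that this parametrization of $\Delta$ is nonsingular on $U$ and that degeneracy of the restricted form is a basis-independent property, so $\mathbb{P}\cap U=\{eg-f^2=0\}$.
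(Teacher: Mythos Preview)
Your argument is correct and follows the same route as the paper: choose coordinates with $c\neq 0$, solve the linear constraint for $dz$, substitute into $\langle d\xi,dr\rangle=0$, and read off $e,f,g$. The only stylistic difference is in the parabolic-set step: the paper argues directly that the binary quadratic $e\,dx^2+2f\,dx\,dy+g\,dy^2=0$ has coincident (or indeterminate) roots iff its discriminant $eg-f^2$ vanishes, whereas you phrase the same fact as degeneracy of the restricted second fundamental form relative to the (positive definite) first fundamental form; both lead to $\mathcal{K}_G=0\Leftrightarrow eg-f^2=0$.
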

	\begin{proof}
		Since $\xi(r)\neq(0,0,0)$, we can choose a coordinate system such that $c(r)\neq0$.
		In a neighbourhood of $r$, the equation $\langle\xi,dr\rangle=0$ of \eqref{eqla} can be solved for $dz$ and then
		we get the first equation of \eqref{eqlaV}.
		Replace this $dz$ on the equation $\langle d\xi,dr\rangle=0$ of \eqref{eqla} to get the second equation of \eqref{eqlaV}.
		
		By \eqref{eqlaV}, at a point $(x,y,z)$, all directions are asymptotic directions if, and only if, $e(x,y,z)=f(x,y,z)=g(x,y,z)=0$
		and, at a point $(x,y,z)$, the asymptotic directions coincides if, and only if, $e(x,y,z)g(x,y,z)-(f(x,y,z))^2=0$.
	\end{proof}
	\begin{defn}
		Let $\Delta$ be a plane field satisfying the assumptions of the Lemma \ref{propeqlaV}. Set $\mathcal{K}=eg-f^2$ and  
		$\mathcal{H}=-\left(\frac{e+g}{2}\right)$.
	\end{defn}

	\begin{prop}[{\cite[p. 11]{MR1749926}}]\label{gaussyu}
		Let $\Delta$ be a plane field satisfying the assumptions of the Lemma \ref{propeqlaV}.
		\begin{itemize}
			\item $\mathcal{K}(0,0,0)=\mathcal{K}_{G}(0,0,0)$ and $\mathcal{H}(0,0,0)=\mathcal{H}_{M}(0,0,0)$,
			\item $\mathcal{K}=0$ if, and only if, $\mathcal{K}_{G}=0$ and $\mathcal{H}=0$ if, and only if, $\mathcal{H}_{M}=0$.
		\end{itemize}
	\end{prop}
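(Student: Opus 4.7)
The plan is to normalize coordinates at the reference point $r=0$ so that the plane of $\Delta$ at the origin becomes the $xy$-plane; this reduces the computation of the principal curvatures to a $2\times 2$ symmetric eigenvalue problem whose trace and determinant are visibly $-(e(0)+g(0))$ and $e(0)g(0)-f(0)^{2}$.

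First I would apply a Euclidean rotation so that $\xi(0)$ is parallel to the $z$-axis; this is a mild strengthening of the coordinate choice in Lemma \ref{propeqlaV} and is always possible because $\xi(0)\neq 0$. After this rotation $a(0)=b(0)=0$ and $c(0)\neq 0$, so the plane of $\Delta$ at the origin is spanned by the orthonormal pair $e_{1}=(1,0,0)$, $e_{2}=(0,1,0)$. The next step is to evaluate the coefficients $e,f,g$ of Lemma \ref{propeqlaV} at the origin: since every correction term carries a factor of $a$ or $b$, those terms vanish at $0$, leaving $e(0)=a_{x}(0)$, $g(0)=b_{y}(0)$, and $f(0)=(a_{y}(0)+b_{x}(0))/2$. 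Equivalently, the matrix $\begin{pmatrix} e(0) & f(0)\\ f(0) & g(0)\end{pmatrix}$ represents the symmetric quadratic form $v\mapsto\langle d\xi(0)v,v\rangle$ on the plane of $\Delta$ at the origin, in the basis $\{e_{1},e_{2}\}$.

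Finally I would invoke the Rayleigh quotient characterization of the principal curvatures: $k_{1}$ and $k_{2}$ at the origin are the extrema of $k_{n}(v)=-\langle d\xi(0)v,v\rangle/\langle v,v\rangle$ over non-zero $v$ in the plane of $\Delta$ at the origin, so they are the eigenvalues of the symmetric matrix $-\begin{pmatrix} e(0) & f(0)\\ f(0) & g(0)\end{pmatrix}$. Reading off its trace and determinant gives
\[
\mathcal{H}_{M}(0,0,0)=\frac{k_{1}+k_{2}}{2}=-\frac{e(0)+g(0)}{2}=\mathcal{H}(0,0,0), \qquad \mathcal{K}_{G}(0,0,0)=k_{1}k_{2}=e(0)g(0)-f(0)^{2}=\mathcal{K}(0,0,0),
\]
which is the first bullet. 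The second bullet is then an immediate consequence of these numerical equalities.

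The only place where minor care is needed is to check that the refinement in the first step---aligning $\xi(0)$ with the $z$-axis rather than merely requiring $c(0)\neq 0$---does not compromise the statement. This is settled by noting that a rotation in the $xy$-plane alters $e,f,g$ individually but preserves both the trace and the determinant of $\begin{pmatrix} e & f \\ f & g\end{pmatrix}$ at the origin, so $\mathcal{K}(0,0,0)$ and $\mathcal{H}(0,0,0)$ are intrinsic invariants of the plane of $\Delta$ at the origin with its induced Euclidean structure, and the proposition is well-posed independently of this additional normalization.
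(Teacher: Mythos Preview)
The paper does not supply its own proof of this proposition (it is only cited from \cite{MR1749926}), so there is nothing to compare against. Your core argument for the first bullet---after the alignment $\xi(0)\parallel e_{3}$---is correct, but there are two genuine gaps.

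First, your final paragraph does not do what you claim. A rotation in the $xy$-plane preserves the $z$-axis, so it cannot take a general $\xi(0)$ with $a(0),b(0)\neq 0$ to one along $e_{3}$; what you would need is invariance of $e+g$ and $eg-f^{2}$ under an arbitrary rotation, and that is false. For example, with $\xi=(x+1,-y,1)$ one has $e(0)=1$, $f(0)=0$, $g(0)=-1$, so $\mathcal{H}(0)=0$ and $\mathcal{K}(0)=-1$, whereas a direct computation of $k_{n}$ on the plane $dx+dz=0$ gives $k_{1}=-\tfrac12$, $k_{2}=1$, hence $\mathcal{H}_{M}(0)=\tfrac14$ and $\mathcal{K}_{G}(0)=-\tfrac12$. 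The reason is that the substitution $dz=-(a/c)dx-(b/c)dy$ is a non-orthogonal projection onto the $(dx,dy)$-plane, so the induced first fundamental form is $E\,dx^{2}+2F\,dxdy+G\,dy^{2}$ with $E=1+a^{2}/c^{2}$, $F=ab/c^{2}$, $G=1+b^{2}/c^{2}$, not $dx^{2}+dy^{2}$. Thus the extra normalization $a(0)=b(0)=0$ is not a harmless convenience but an essential hypothesis for the first bullet; the proposition as literally stated (only $c(0)\neq 0$) is false for $\mathcal{H}$ and for $\mathcal{K}$.

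Second, the second bullet is not ``an immediate consequence'' of the first: it is a statement about the vanishing loci throughout the coordinate neighborhood, where $a,b$ are generically nonzero, so the first bullet (which concerns only the origin) does not help. The correct mechanism is that $k_{1},k_{2}$ are the generalized eigenvalues of $\bigl(\begin{smallmatrix}e&f\\f&g\end{smallmatrix}\bigr)$ relative to $\bigl(\begin{smallmatrix}E&F\\F&G\end{smallmatrix}\bigr)$, so $\mathcal{K}_{G}=(eg-f^{2})/(EG-F^{2})$ with $EG-F^{2}>0$; this gives $\mathcal{K}=0\Leftrightarrow\mathcal{K}_{G}=0$ pointwise (and this is in fact what the paper uses, cf.\ the last line of the proof of Lemma~\ref{propeqlaV}). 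For $\mathcal{H}$ one has instead $2(EG-F^{2})\mathcal{H}_{M}=-(eG+gE-2fF)$, which is \emph{not} proportional to $e+g$ in general (the same example above gives $e+g=0$ but $eG+gE-2fF=-1$), so the $\mathcal{H}$-part of the second bullet again requires additional hypotheses.
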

	
	\begin{prop}\label{fam}
	 The asympotic directions are well defined in the hyperbolic region $\mathbb{H}$, the
directions are real. 
	The asymptotic foliations  fill out the hyperbolic region $\mathbb{H}$.
	Locally, the asymptotic lines in $\mathbb{H}$ are as show in the Figure \ref{hip}.
	
	\end{prop}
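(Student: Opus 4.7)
The plan is to derive the conclusion directly from Lemma \ref{propeqlaV} together with Proposition \ref{gaussyu}, by examining the discriminant of the quadratic that defines the asymptotic directions after elimination of $dz$.

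First, I would fix $r\in\mathbb{H}$ and, after choosing coordinates so that $c(r)\neq 0$ (as in Lemma \ref{propeqlaV}), record that the asymptotic directions at points in a neighborhood of $r$ are determined by the quadratic form $e\,dx^{2}+2f\,dx\,dy+g\,dy^{2}=0$, together with $dz=-(a/c)\,dx-(b/c)\,dy$. The discriminant of this binary form is $4(f^{2}-eg)=-4\mathcal{K}$. By Proposition \ref{gaussyu}, the function $\mathcal{K}$ vanishes precisely on the parabolic set, and it coincides with $\mathcal{K}_{G}$ at $r$; since the change of basis from the orthonormal principal frame on $\Delta$ to the coordinate basis $\{(1,0,-a/c),(0,1,-b/c)\}$ has a positive (indeed nonzero) determinant, $\mathcal{K}$ and $\mathcal{K}_{G}$ differ by a strictly positive smooth factor on a neighborhood of $r$. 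Hence $\mathcal{K}<0$ on $\mathbb{H}$, so the discriminant $-\mathcal{K}$ is strictly positive on $\mathbb{H}$, which yields two distinct real solutions of the quadratic at every hyperbolic point. This proves the first assertion.

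Second, to upgrade these pointwise directions to two transverse smooth line fields $\ell_{1},\ell_{2}$, I would work on the projective line in $(dx:dy)$. If $g(r)\neq 0$ the quadratic $g\mu^{2}+2f\mu+e=0$ in $\mu=dx/dy$ has the two smooth roots $\mu_{\pm}=(-f\pm\sqrt{f^{2}-eg})/g$, since the square root is smooth where $f^{2}-eg>0$. If instead $g(r)=0$, the analogous chart in $\nu=dy/dx$ gives smooth roots of $e\nu^{2}+2f\nu+g=0$. Transversality of $\ell_{1}(p)$ and $\ell_{2}(p)$ at each $p\in\mathbb{H}$ follows from the simplicity of the two roots. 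Lifting each direction to $\mathbb{R}^{3}$ by $dz=-(a/c)\,dx-(b/c)\,dy$ produces two smooth, pairwise transverse line fields tangent to $\Delta$.

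Third, I would invoke the standard smooth dependence theorem for ordinary differential equations on each of $\ell_{1},\ell_{2}$ to obtain, locally around $r$, two foliations $\mathcal{A}_{1},\mathcal{A}_{2}$ by regular integral curves, which together fill out the hyperbolic region. Since their tangents satisfy both equations of \eqref{eqla}, the leaves are asymptotic lines of $\Delta$; since $\ell_{1}\pitchfork\ell_{2}$, they form the transverse pair of families pictured in Figure \ref{hip}.

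The only real subtlety is the sign claim, namely that $\mathcal{K}$ and $\mathcal{K}_{G}$ have the same sign (and not merely the same zero set) on a neighborhood of any regular point of $\xi$. This is the main place where one has to be careful; it is handled by the positive multiplicative factor in the comparison between the two forms, which I would either compute directly from the definition of $k_{n}$ in \eqref{nc} after changing from an orthonormal basis of $\Delta$ to the basis $\{(1,0,-a/c),(0,1,-b/c)\}$, or deduce by the continuity/connectedness argument at any point of $\mathbb{H}$, using the equality at points where $\mathcal{K}_{G}=0$ as the anchor supplied by Proposition \ref{gaussyu}.
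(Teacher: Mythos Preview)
Your argument is correct and follows essentially the same route as the paper: reduce to the quadratic $e\,dx^{2}+2f\,dx\,dy+g\,dy^{2}=0$ via Lemma \ref{propeqlaV}, observe that the discriminant $f^{2}-eg=-\mathcal{K}$ is strictly positive on $\mathbb{H}$, solve for the two roots to obtain a pair of smooth transverse line fields, and integrate. The paper does exactly this, writing down explicit vector fields $Z_{\pm}$ after assuming one of the coefficients is nonvanishing; your projective-line case split ($g\neq 0$ versus $e\neq 0$) is the same device. One small slip: with $\mu=dx/dy$ the quadratic reads $e\mu^{2}+2f\mu+g=0$, not $g\mu^{2}+2f\mu+e=0$; presumably you meant $\mu=dy/dx$.

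The only place where you are more careful than the paper is the sign of $\mathcal{K}$ on $\mathbb{H}$. The paper leaves this implicit, but in fact Proposition \ref{gaussyu} already gives $\mathcal{K}(r)=\mathcal{K}_{G}(r)<0$ at the point $r$ chosen as origin, and continuity then yields $\mathcal{K}<0$ on a neighborhood; your positive-factor argument is correct but not strictly needed.
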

	\begin{proof}
		Let $r=(x,y,z)$ be a point of a open subset of $\mathbb{H}$. By Lemma \ref{propeqlaV}, the implicit differential equations of the asymptotic lines, in a neighbourhood of $r$,
		are given by \eqref{eqlaV}. Since $r\in\mathbb{H}$, in a neighbourhood $\Lambda$ of $r$, the functions $e$, $f$, $g$ does not vanishes simultaneously. Without loss of generality, suppose
		that $e$ does not vanishes at $\Lambda$. Then we can solve the equation $edx^2+2fdxdy+gdy^2=0$ for $dy$ to get
		\begin{equation}
		dy=\left(\frac{-f\pm\sqrt{-\mathcal{K}}}{e}\right)dx.
		\end{equation}
		By \eqref{eqlaV},
		\begin{equation}
		dz=\left[-\left(\frac{a}{c}\right)-\left(\frac{-bf\pm b\sqrt{-\mathcal{K}}}{ce}\right)\right]dx.
		\end{equation}
		This defines the following two vector fields $Z_{+}(x,y,z)$ and $Z_{-}(x,y,z)$ in $\Lambda$:
		\begin{equation}
		Z_{\pm}=\left(1,\frac{dy}{dx},\frac{dz}{dx}\right)=\left(1,\left(\frac{-f\pm\sqrt{-\mathcal{K}}}{e}\right),-\left(\frac{a}{c}\right)-\left(\frac{-bf\pm b\sqrt{-\mathcal{K}}}{ce}\right)\right).
		\end{equation}
		Since $\mathcal{K}$ never vanishes, then, at each point of $\Lambda$, the integral curves of $Z_{+}$ and $Z_{-}$ are transversal.
	\end{proof}
	\begin{rem}
		We can apply the Tubular Flow Theorem (see, for example, \cite[Tubular Flow Theorem, p. 40]{MR669541}) for one of the vector fields $Z_{\pm}$, but we cannot apply it directly for both vector fields simultaneously.
	\end{rem}
	\begin{rem}
		The Proposition \ref{fam} is the version for asymptotic lines of plane fields of the \cite[Theorem 2.2, p. 166]{zbMATH00043670}.
	\end{rem}
	\begin{lem}\label{propetaV}
		Let $\Delta$ be a plane field satisfying the assumptions of Lemma \ref{propeqlaV}. If $(0,0,0)\in\mathbb{H}\cup\mathbb{P}$, then
		in a neighbourhood of $(0,0,0)$, $\xi=(a,b,c)$ where
		\begin{equation}\label{etaV}
		\begin{split}
		a&=a_{2}y+a_{3}z+(a_{11}x^2+a_{12}xy+a_{13}xz+a_{22}y^2+a_{23}yz+a_{33}z^2)\\
		&+(a_{111}x^3+a_{112}x^2y+a_{113}x^2z+a_{122}xy^2+a_{123}xyz+a_{133}xz^2+a_{222}y^3\\
		&+a_{223}y^2z+a_{233}yz^2+a_{333}z^3)+\sum_{i+j+k=4}a_{ij}^{k}x^iy^jz^k+\mathcal{O}^5(x,y,z),\\
		b&=b_{1}x+b_{2}y+b_{3}z+(b_{11}x^2+b_{12}xy+b_{13}xz+b_{22}y^2+b_{23}yz+b_{33}z^2)\\
		&+(b_{111}x^3+b_{112}x^2y+b_{113}x^2z+b_{122}xy^2+b_{123}xyz+b_{133}xz^2+b_{222}y^3\\
		&+b_{223}y^2z+b_{233}yz^2+b_{333}z^3)+\sum_{i+j+k=4}b_{ij}^{k}x^iy^jz^k+\mathcal{O}^5(x,y,z),\\
		c&=1+(c_{11}x^2+c_{12}xy+c_{13}xz+c_{22}y^2+c_{23}yz+c_{33}z^2)\\
		&+(c_{111}x^3+c_{112}x^2y+c_{113}x^2z+c_{122}xy^2+c_{123}xyz+c_{133}xz^2+c_{222}y^3\\
		&+c_{223}y^2z+c_{233}yz^2+c_{333}z^3)+\sum_{i+j+k=4}c_{ij}^{k}x^iy^jz^k+\mathcal{O}^5(x,y,z).
		\end{split}
		\end{equation}
		Furthermore, the implicit differential equation of the
		asymptotic lines evaluated at $(0,0,0)$ are given by
		\begin{equation}\label{eqlaVTh}
		dz=0, \ \ (a_{2}+b_{1})dxdy+b_{2}dy^2=0.
		\end{equation}
		\noindent $\mathcal{K}(0,0,0)=-(a_{2}+b_{1})^2$ and
		\begin{equation}
		\nabla \mathcal{K} (0,0,0)=(2b_{2}a_{11},(a_{3}b_{1}+a_{12})b_{2},[a_{13}-(a_{3})^2]b_{2}).
		\end{equation}
	\end{lem}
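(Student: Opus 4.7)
The plan is to separately establish the Taylor normal form \eqref{etaV}, then read off the asymptotic equation from \eqref{eqla}, and finally evaluate $\mathcal{K}$ and $\nabla\mathcal{K}$ using the definitions from Lemma \ref{propeqlaV}. For the normal form I would proceed in four operations. First translate $\mathbb{R}^3$ to put the chosen point at the origin; second rotate so that $\xi(0,0,0)$ points along the positive $z$-axis, giving $a(0)=b(0)=0$; third invoke Proposition \ref{prop3} with a smooth rescaling $h$ chosen so that $h(0)=1/c(0)$ and $\nabla h(0)$ cancels the gradient of the rescaled $c$, yielding $c(0)=1$ and $\nabla c(0)=0$ (and since $a$ and $b$ vanish at the origin, their first-order jets are unaffected by $h$); fourth, since $(0,0,0)\in\mathbb{H}\cup\mathbb{P}$, the binary quadratic form $a_x(0)dx^2+(a_y(0)+b_x(0))dxdy+b_y(0)dy^2$ has real roots, so a rotation about the $z$-axis can be chosen sending one of these roots to the direction $(1,0)$, which forces $a_x(0)=0$ in the new coordinates. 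This final rotation acts linearly in $(x,y)$ and fixes the $z$-axis pointwise, so it preserves $\xi(0)=(0,0,1)$ and $\nabla c(0)=0$, and the resulting Taylor expansion of $\xi$ is \eqref{etaV}.

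For the asymptotic equation at the origin, substituting $a(0)=b(0)=0$ and $c(0)=1$ into the first line of \eqref{eqla} gives $dz=0$; substituting $a_x(0)=0$ and $\nabla c(0)=0$ into the second line of \eqref{eqla} and restricting to $dz=0$ reduces it to $(a_2+b_1)dxdy+b_2dy^2=0$. Evaluating the formulas of Lemma \ref{propeqlaV} at the origin gives $e(0)=a_x(0)=0$, $g(0)=b_y(0)=b_2$ and $f(0)=(a_2+b_1)/2$, so $\mathcal{K}(0,0,0)=e(0)g(0)-f(0)^2$ evaluates to the stated expression in $(a_2+b_1)^2$.

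For the gradient, I would differentiate $\mathcal{K}=eg-f^2$ to obtain $\nabla\mathcal{K}=g\nabla e+e\nabla g-2f\nabla f$. At a parabolic origin $\mathcal{K}(0)=0$ forces $a_2+b_1=0$, so $f(0)=0$; combined with $e(0)=0$ this reduces the gradient to $\nabla\mathcal{K}(0,0,0)=b_2\nabla e(0)$. A product-rule computation on $e=a_x-(a_z+c_x)a/c+a^2c_z/c^2$ at the origin, using $a(0)=b(0)=0$, $a_x(0)=0$, $c(0)=1$ and $\nabla c(0)=0$, yields $e_x(0)=2a_{11}$, $e_y(0)=a_{12}-a_2a_3$ and $e_z(0)=a_{13}-a_3^2$; the parabolic relation $b_1=-a_2$ rewrites $-a_2a_3$ as $a_3b_1$, recovering the claimed vector. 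The main obstacle is the first step: sequencing the four normalizations so that each preserves those established before, with the final $z$-axis rotation requiring $(0,0,0)\in\mathbb{H}\cup\mathbb{P}$ in order for the asymptotic direction at the origin to be real.
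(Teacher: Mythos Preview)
Your approach is correct and follows essentially the same outline as the paper's proof: normalize $\xi$ so that $\xi(0)=(0,0,1)$ and $\nabla c(0)=0$, then rotate about the $z$-axis to place a real asymptotic direction along $(1,0,0)$, which requires $(0,0,0)\in\mathbb{H}\cup\mathbb{P}$.

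The one difference worth noting is in how $\nabla c(0)=0$ is obtained. You use Proposition~\ref{prop3} with a bespoke $h$ whose value and gradient at the origin are tuned to kill $c(0)-1$ and $\nabla c(0)$. The paper instead invokes Proposition~\ref{prop3} once to replace $\xi$ by the \emph{unit} vector field $\xi/|\xi|$; then, at any point where $\xi(0)=(0,0,1)$, differentiating the identity $a^2+b^2+c^2\equiv 1$ immediately gives $c_x(0)=c_y(0)=c_z(0)=0$. This is a bit slicker and avoids the need to track how your chosen $h$ interacts with the subsequent rotation (incidentally, your remark that the first-order jets of $a,b$ are ``unaffected by $h$'' is not literally true---they are scaled by $h(0)$---though this is harmless for the argument). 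Your gradient computation is actually more complete than the paper's proof, which stops at $\mathcal{K}(0)$ and never writes out $\nabla\mathcal{K}(0)$; you are also right that the stated gradient formula uses the parabolic relation $a_2+b_1=0$, since otherwise the term $-2f(0)\nabla f(0)$ would contribute.
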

	\begin{proof}
		By the Proposition \ref{prop3} we can suppose that $\xi$ is unitary. By Lemma \ref{propeqlaV}, we can choose a coordinate system in $\mathbb{R}^3$ such that
		$c(0,0,0)\neq0$. Moreover, without loss of generality, this coordinate system can be taken such that $\xi(0,0,0)=(0,0,1)$.
		Then $c_{x}(0,0,0)=c_{y}(0,0,0)=c_{z}(0,0,0)=0$. By \ref{propeqlaV}, the implicit differential
		equation of the asymptotic lines evaluated at $(0,0,0)$ becomes
		\begin{equation}\label{eqlaMC}
		dz=0, \ \ a_{x}(0,0,0)dx^2+\left(\frac{a_{y}(0,0,0)+b_{x}(0,0,0)}{2}\right)dxdy+b_{y}(0,0,0)dy^2=0.
		\end{equation}
		There is a rotation in $\mathbb{R}^3$ around the $z$ axis that makes $(dx,0,0)$ be one asymptotic direction at $(0,0,0)$, that is,
		$a_{x}(0,0,0)=0$. It follows that
		$\xi=(a,b,c)$, where
		$a$, $b$ and $c$ are given by \eqref{etaV} and
		that the implicit differential equations of the
		asymptotic lines \eqref{eqlaV} evaluated at $(0,0,0)$ are given by \eqref{eqlaVTh}.
		
		Then, $e(0,0,0)=0$, $f(0,0,0)=a_{2}+b_{1}$, $g(0,0,0)=b_{2}$ and $\mathcal{K}(0,0,0)=-(a_{2}+b_{1})^2$.
	\end{proof}
	
	\begin{rem}
	In \cite{MR1749926} there is defined the following Mean curvature
	of first kind:
	 $\mathcal{H}_{1}=-\left(\frac{a_{x}+b_{y}}{2}\right)$, related
	 with the divergence of the vector field $\xi$. It follows
	 that $\mathcal{H}\neq\mathcal{H}_{1}$, but 
	 $\mathcal{H}(0,0,0)=\mathcal{H}_{1}(0,0,0)$.
	\end{rem}
	
	\begin{prop}\label{prop10}
		If $(0,0,0)\in\mathbb{P}$, then $a_{2}=-b_{1}$. If $b_{2}\neq0$, then at $(0,0,0)$ the two asymptotic directions coincides with the asymptotic direction
		$\mathcal{A}=\left(dx,0,0\right)$, $dx\neq0$, where, without loss of generality,
		we can assume $dx=1$. If $b_{2}=0$, then at $(0,0,0)$, all directions in the plane of $\Delta$ at $(0,0,0)$ are asymptotic directions.
	\end{prop}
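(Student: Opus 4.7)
The plan is to read the statement off directly from Lemma \ref{propetaV} combined with Proposition \ref{gaussyu}, so essentially no new computation is required.

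First I would deduce the identity $a_{2}=-b_{1}$. By Lemma \ref{propetaV}, in the adapted coordinates where $\xi(0,0,0)=(0,0,1)$ we have $\mathcal{K}(0,0,0)=-(a_{2}+b_{1})^{2}$. By Proposition \ref{gaussyu}, $\mathcal{K}(0,0,0)=\mathcal{K}_{G}(0,0,0)$. The hypothesis $(0,0,0)\in\mathbb{P}$ means $\mathcal{K}_{G}(0,0,0)=0$, so
\begin{equation}
-(a_{2}+b_{1})^{2}=0,
\end{equation}
which forces $a_{2}=-b_{1}$.

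Next I would substitute this relation into the implicit equation \eqref{eqlaVTh} at the origin, namely
\begin{equation}
dz=0,\qquad (a_{2}+b_{1})\,dx\,dy+b_{2}\,dy^{2}=0,
\end{equation}
which now reduces simply to $dz=0$ and $b_{2}\,dy^{2}=0$. From here the case analysis is immediate. If $b_{2}\neq 0$, the quadratic equation becomes $dy^{2}=0$, so every asymptotic direction must satisfy $dy=dz=0$ and $dx\neq 0$ (since $(dx,dy,dz)\neq 0$); thus both asymptotic directions collapse to $\mathcal{A}=(1,0,0)$ after normalization. If instead $b_{2}=0$, the quadratic equation is satisfied identically, so the only remaining constraint is $dz=0$, and since the plane of $\Delta$ at $(0,0,0)$ is the $xy$-plane (because $\xi(0,0,0)=(0,0,1)$), every direction in that plane is an asymptotic direction.

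There is no real obstacle: the content of the proposition is a direct translation of the vanishing of the Gaussian curvature into the coefficients of the normal form provided by Lemma \ref{propetaV}. The only mild subtlety is to remember that the computations of Lemma \ref{propetaV} are carried out in a coordinate system in which the tangent plane of $\Delta$ at the origin is exactly the $xy$-plane, so that the conclusion "all directions in the plane of $\Delta$" in the case $b_{2}=0$ is indeed captured by $dz=0$.
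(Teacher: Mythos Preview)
Your proof is correct and follows essentially the same approach as the paper's own proof: invoke Lemma \ref{propetaV} to obtain $\mathcal{K}(0,0,0)=-(a_{2}+b_{1})^{2}$, use the parabolicity hypothesis to force $a_{2}=-b_{1}$, reduce \eqref{eqlaVTh} to $dz=0$, $b_{2}\,dy^{2}=0$, and split on whether $b_{2}$ vanishes. The only cosmetic difference is that the paper appeals directly to the characterization of $\mathbb{P}$ by $\mathcal{K}=0$ from Lemma \ref{propeqlaV} rather than routing through Proposition \ref{gaussyu}, but this is the same fact.
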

	\begin{proof}
		By Lemma \ref{propetaV}, $\xi=(a,b,c)$ where
		$a$, $b$ and $c$ are given by \eqref{etaV}. The implicit differential equations of the
		asymptotic lines \eqref{eqlaV} evaluated at $(0,0,0)$ are given by \eqref{eqlaVTh}. Since $\mathcal{K}(0,0,0)=-(a_{2}+b_{1})^2$, it follows
		that $a_{2}=-b_{1}$. The equations \eqref{eqlaVTh} becomes $dz=0$, $b_{2}dy^2=0$. If $b_{2}\neq0$, then
		the asymptotic direction at $(0,0,0)$ is given by $\mathcal{A}=(dx,0,0)$.  If $b_{2}=0$, then all the directions $(dx,dy,0)$  are
		asymptotic directions. Note that the plane of $\Delta$ at $(0,0,0)$ is given by $dz=0$.
	\end{proof}
	\subsection{Principal directions}
	\label{pdu}
	The principal directions of a plane field $\Delta$ are defined by the following system of implicit differential equations, see \cite{MR1749926} and \cite{gomes2020}:

	\begin{equation}\label{eqlc}
	\langle\xi,dr\rangle=0, \ \ 2[d\xi,dr,\xi]+\langle curl(\xi),\xi\rangle\langle dr,dr\rangle=0.
	\end{equation}
	\begin{defn}
		The equations \eqref{eqlc} are said to be the implicit differential equations of the principal curvature lines of $\Delta$.
	\end{defn}
	\begin{defn}
		A solution $dr=(dx,dy,dz)$ of \eqref{eqlc} is called of principal direction of $\Delta$. A curve $\gamma$ in $\mathbb{R}^3$ is a principal curvature line of $\Delta$ if $\gamma$ is a integral curve of \eqref{eqlc}.
	\end{defn}
	\begin{rem}
		If $(x,y,z)$ is a singular point of $\xi$, then the principal directions are not defined in $(x,y,z)$.
	\end{rem}
	\begin{prop}\label{eqlc2}
		The second equation of \eqref{eqlc} is equivalent to $2\tau_{g}+\langle curl(\xi),\xi\rangle=0$. Furthermore, let $\tau_{g,i}$, $i=1,2$,
		be the geodesic torsion evaluated at the principal direction associated to the principal curvature $k_{i}$. Then $\tau_{g,1}=\tau_{g,2}$.
	\end{prop}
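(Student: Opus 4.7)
The plan is to peel the two claims apart and deal with each via the cyclic symmetry of the triple product.

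For the first claim, I would start from the second equation of \eqref{eqlc},
\begin{equation}
2[d\xi,dr,\xi]+\langle \mathrm{curl}(\xi),\xi\rangle\langle dr,dr\rangle=0,
\end{equation}
and use the fact that the triple product $[\cdot,\cdot,\cdot]=\det(\cdot,\cdot,\cdot)$ is invariant under cyclic permutations, so $[d\xi,dr,\xi]=[dr,\xi,d\xi]$. Since $dr$ represents a (nonzero) direction we may assume $\langle dr,dr\rangle\neq 0$, hence dividing the equation by $\langle dr,dr\rangle$ and comparing with the definition \eqref{gt} of the geodesic torsion gives exactly $2\tau_g+\langle \mathrm{curl}(\xi),\xi\rangle=0$. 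Conversely, multiplying this identity by $\langle dr,dr\rangle$ and applying cyclic symmetry in reverse recovers the second equation of \eqref{eqlc}.

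For the second claim, the key observation is that the quantity $\langle \mathrm{curl}(\xi),\xi\rangle$ evaluated at a point $p$ depends only on the vector field $\xi$ at $p$ and its first derivatives there, not on the direction $dr$. From the equivalent form
\begin{equation}
\tau_g=-\tfrac{1}{2}\langle \mathrm{curl}(\xi),\xi\rangle,
\end{equation}
obtained in the first step, the geodesic torsion takes the same value at $p$ for every direction $dr$ lying in the plane of $\Delta$ that solves the principal equation. In particular, evaluating at the two principal directions associated to $k_1$ and $k_2$ yields
\begin{equation}
\tau_{g,1}=-\tfrac{1}{2}\langle \mathrm{curl}(\xi),\xi\rangle=\tau_{g,2}.
\end{equation}

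There is no real obstacle here: once one recognises that the second equation of \eqref{eqlc} is nothing but the vanishing of $2\tau_g+\langle \mathrm{curl}(\xi),\xi\rangle$, the equality $\tau_{g,1}=\tau_{g,2}$ is immediate because the right-hand side is manifestly direction-independent. The only subtlety worth flagging is that the formula for $\tau_g$ requires $\langle dr,dr\rangle\neq 0$, which is automatic for any genuine tangent direction, so dividing is legitimate throughout.
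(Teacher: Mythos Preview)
Your proof is correct and follows essentially the same route as the paper: rewrite $[d\xi,dr,\xi]$ via cyclic symmetry to recognise $\tau_g$ from \eqref{gt}, and then observe that since both principal directions satisfy $2\tau_{g,i}+\langle\mathrm{curl}(\xi),\xi\rangle=0$ with the same direction-independent second term, the equality $\tau_{g,1}=\tau_{g,2}$ is immediate. The paper's proof is terser but makes the identical two moves.
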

	\begin{proof}
		By the definition of geodesic torsion \ref{gt}, it is clear that the second equation of \eqref{eqlc} is equivalent to $2\tau_{g}+\langle curl(\xi),\xi\rangle=0$. It follows from the two equations $2\tau_{g,i}+\langle curl(\xi),\xi\rangle=0$, $i=1,2$, that $\tau_{g,1}-\tau_{g,2}=0$.
	\end{proof}
	\begin{thm}[Euler curvature formula for a plane field]\label{ecf}
		Let $\Delta$ be a plane field, then
		\begin{equation}\label{ecfor}
		k_{n}=k_{1}cos^2(\theta)+k_{2}sin^2(\theta),
		\end{equation}
	\end{thm}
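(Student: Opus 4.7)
The plan is to reduce Euler's formula to the spectral theorem for a symmetric bilinear form on the two-dimensional plane $\Delta(p)$. First, observe that although the $3\times 3$ Jacobian $d\xi$ is not symmetric in general (this is precisely what obstructs complete integrability), the normal curvature $k_n(dr)=-\langle dr,d\xi(dr)\rangle/\langle dr,dr\rangle$ only depends on the symmetric part. So the first step is to define, at a fixed point $p$, the symmetric bilinear form $B$ on $\Delta(p)$ by
\begin{equation}
B(u,v)=-\tfrac{1}{2}\bigl(\langle u,d\xi(v)\rangle+\langle v,d\xi(u)\rangle\bigr),
\end{equation}
and to verify that $k_n(dr)=B(dr,dr)/\langle dr,dr\rangle$ for every $dr\in\Delta(p)$.

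Next, I would restrict $B$ to the two-dimensional Euclidean plane $\Delta(p)\subset\mathbb{R}^3$. By the spectral theorem for symmetric bilinear forms on a Euclidean plane, there exists an orthonormal basis $\{e_1,e_2\}$ of $\Delta(p)$ and scalars $\lambda_1\le\lambda_2$ such that $B(e_i,e_j)=\lambda_i\delta_{ij}$. By the Rayleigh quotient characterization, $\lambda_1$ and $\lambda_2$ are, respectively, the minimum and maximum of $u\mapsto B(u,u)$ on the unit circle of $\Delta(p)$. But this is exactly the minimum and maximum of $k_n$ on directions of $\Delta(p)$, so $\lambda_1=k_1$, $\lambda_2=k_2$, and the eigenvectors $e_1,e_2$ coincide with the principal directions $\mathcal{P}_1,\mathcal{P}_2$ introduced earlier.

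Finally, any unit direction $dr\in\Delta(p)$ can be written as $dr=\cos(\theta)\mathcal{P}_1+\sin(\theta)\mathcal{P}_2$, where $\theta$ is the angle between $dr$ and $\mathcal{P}_1$. Expanding by bilinearity of $B$ and using the diagonalization, the cross term $2\cos(\theta)\sin(\theta)B(\mathcal{P}_1,\mathcal{P}_2)$ vanishes, yielding
\begin{equation}
k_n=B(dr,dr)=k_1\cos^2(\theta)+k_2\sin^2(\theta),
\end{equation}
which is the desired formula.

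The only conceptually delicate step is the first one: one must make sure that the two definitions of principal curvatures (variational extrema of $k_n$, cited from the preliminary Proposition) and of principal directions are compatible with the spectral diagonalization of $B$, which is essentially automatic from the Lagrange-multiplier/Rayleigh-quotient argument but deserves to be stated explicitly because the ambient operator $d\xi$ is not self-adjoint on $\mathbb{R}^3$. Everything else is standard bilinear algebra and requires no computation in coordinates.
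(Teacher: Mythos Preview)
Your argument is correct. It is genuinely different from the paper's proof, which works through the Darboux frame: the paper picks an integral curve $\gamma$ with $\gamma'(0)=X$, writes $X=\cos\theta\,X_1+\sin\theta\,X_2$ in the principal basis, expands $\nabla_X(X\wedge Y)$ linearly in terms of $\nabla_{X_1}(X_1\wedge X_2)$ and $\nabla_{X_2}(X_1\wedge X_2)$, and reads off $k_n=-\langle X,\nabla_X(X\wedge Y)\rangle$ using the explicit Darboux coefficients for $X_1,X_2$. The vanishing of the cross term there relies on Proposition~\ref{eqlc2} ($\tau_{g,1}=\tau_{g,2}$), and the same computation simultaneously yields the geodesic torsion formula of Theorem~\ref{proptau}.

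Your route bypasses the moving frame entirely by symmetrizing $d\xi$ pointwise and invoking the spectral theorem on the two-plane $\Delta(p)$. This is more elementary and intrinsically pointwise; since the paper \emph{defines} $k_1,k_2$ and $\mathcal P_1,\mathcal P_2$ as the extrema and extremal directions of $k_n$, the identification of the eigenpairs of $B$ with the principal data is immediate via the Rayleigh quotient, exactly as you note. What you lose is the simultaneous derivation of~\eqref{tgf}: the antisymmetric part of $d\xi$ that you discard is precisely what carries the geodesic torsion, so Theorem~\ref{proptau} would need a separate argument in your framework.
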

	\noindent where $\theta$ is the angle between the direction $dr$ and the principal direction
	associated with $k_{1}$.
	\begin{proof}
		The proof given in \cite{Euler1760} holds for plane fields. For different proofs, see \cite[p. 12]{MR1749926} and the Section \ref{darbouxframe}.
	\end{proof}
	\begin{prop}\cite[Section 1.2]{MR1749926}
		The equation \eqref{eqlc} defines two principal directions at every point that is not a partially umbilic point.
		At a partially umbilic point, all directions are principal directions.
	\end{prop}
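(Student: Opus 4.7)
The plan is to reduce the system \eqref{eqlc} to a single trigonometric equation on the unit directions of the plane of $\Delta$ at the point. First, by Proposition \ref{eqlc2}, the second equation of \eqref{eqlc} is equivalent to $2\tau_{g}+\langle curl(\xi),\xi\rangle=0$, while the first equation $\langle\xi,dr\rangle=0$ confines $dr$ to the plane of $\Delta$ at the point $r$. So I need to identify the unit $dr$ in this plane at which $\tau_{g}$ attains the value $-\tfrac{1}{2}\langle curl(\xi),\xi\rangle$, which (again by Proposition \ref{eqlc2}) is the common principal value $\tau_{g,1}=\tau_{g,2}$ whenever the principal directions exist.

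Next, I would fix a point $r$ and choose an orthonormal basis $\{\mathcal{P}_{1},\mathcal{P}_{2}\}$ of the plane of $\Delta$ at $r$, taking it to be the principal frame at a non-umbilic point and any orthonormal frame at a partially umbilic point. Parameterizing unit tangent directions in this plane by $dr(\theta)=\cos\theta\,\mathcal{P}_{1}+\sin\theta\,\mathcal{P}_{2}$, the geodesic torsion formula (Theorem \ref{proptau}, the analogue of Euler's formula \eqref{ecfor} for $\tau_{g}$ in place of $k_{n}$) yields an expansion of the form
$$\tau_{g}(\theta)=\tau_{g,1}+(k_{2}-k_{1})\sin\theta\cos\theta=-\tfrac{1}{2}\langle curl(\xi),\xi\rangle+\tfrac{1}{2}(k_{2}-k_{1})\sin 2\theta.$$
Substituting this into $2\tau_{g}+\langle curl(\xi),\xi\rangle=0$ collapses the equation to
$$(k_{2}-k_{1})\sin 2\theta=0.$$

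From here the dichotomy is immediate. If $r$ is not a partially umbilic point, then $k_{1}\neq k_{2}$, so $\sin 2\theta=0$ and there are exactly two solutions in $[0,\pi)$, namely $\theta=0$ and $\theta=\pi/2$, corresponding to the two orthogonal principal directions $\mathcal{P}_{1}$ and $\mathcal{P}_{2}$. If $r$ is a partially umbilic point, then $k_{1}=k_{2}$, the reduced equation holds identically in $\theta$, and every direction in the plane of $\Delta$ at $r$ is a solution of \eqref{eqlc}.

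The only nonroutine ingredient is the geodesic torsion formula of Theorem \ref{proptau} in the $\sin 2\theta$ form above; once it is available, the remainder is algebra. The main technical point to watch is verifying that the coefficient of $\sin 2\theta$ really is $\tfrac{1}{2}(k_{2}-k_{1})$ with no extra terms arising from the non-integrability of $\Delta$ (i.e., from $\langle curl(\xi),\xi\rangle$ not vanishing), which is precisely why the constant offset $-\tfrac{1}{2}\langle curl(\xi),\xi\rangle$ appears in the expansion and why the second equation of \eqref{eqlc} carries the same term.
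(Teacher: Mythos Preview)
The paper does not supply its own proof of this proposition; it is stated with a citation to \cite[Section 1.2]{MR1749926} and nothing more. So there is no in-paper argument to compare against, and your proposal would in fact be filling a gap rather than paralleling one.

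Your reduction to $(k_{2}-k_{1})\sin 2\theta=0$ is the right picture, but as written the argument is circular within the paper's logical structure. You need $\widetilde{\tau}_{g}=\tau_{g,1}=-\tfrac{1}{2}\langle curl(\xi),\xi\rangle$ to collapse the constant term. You extract this from Proposition~\ref{eqlc2}, but look at how that proposition is proved in the paper: it asserts $2\tau_{g,i}+\langle curl(\xi),\xi\rangle=0$ for $i=1,2$ \emph{because} the principal directions are solutions of \eqref{eqlc}, which is precisely the content of the proposition you are trying to establish. Theorem~\ref{proptau}, which you also invoke, is proved after this proposition and its proof likewise appeals to Proposition~\ref{eqlc2}. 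So both ingredients sit downstream of the statement under consideration.

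The fix is short. First show, independently of \eqref{eqlc}, that for any orthonormal frame $\{X,Y\}$ of the plane one has $\tau_{g,X}+\tau_{g,Y}=-\langle curl(\xi),\xi\rangle$; this is a pointwise identity obtained by expanding $[X,\xi,d\xi(X)]+[Y,\xi,d\xi(Y)]$ (equivalently, it is the Frobenius pairing $\eta\wedge d\eta$ evaluated on $X,Y,\xi$). Second, show that the extremal condition $\left.\tfrac{d}{d\theta}k_{n}(\theta)\right|_{\theta=0}=0$ forces $\tau_{g,X}=\tau_{g,Y}$ when $X,Y$ are the principal directions; this drops out of the same Darboux-frame expansion you implicitly use, since $k_{n}'(\theta)=(k_{n,Y}-k_{n,X})\sin 2\theta+(\tau_{g,X}-\tau_{g,Y})\cos 2\theta$. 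Combining the two gives $\tau_{g,1}=\tau_{g,2}=-\tfrac{1}{2}\langle curl(\xi),\xi\rangle$ without assuming the proposition, and then your $\sin 2\theta$ argument goes through cleanly.
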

	\begin{prop}
		If $(0,0,0)\in\mathbb{P}$, then one asymptotic direction at $(0,0,0)$  is a principal direction.
	\end{prop}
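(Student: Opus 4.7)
The plan is to verify directly that the asymptotic direction at the parabolic point satisfies the implicit principal-direction equations \eqref{eqlc}, by working in the adapted coordinates provided by Lemma \ref{propetaV}.

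First I would reduce to the case $b_2 \neq 0$: when $b_2 = 0$, Proposition \ref{prop10} says every direction in the plane of $\Delta$ at $(0,0,0)$ is an asymptotic direction, so any principal direction is automatically asymptotic and the conclusion is immediate. Assuming $b_2 \neq 0$, Proposition \ref{prop10} identifies the unique asymptotic direction at the origin as $\mathcal{A} = (1,0,0)$, while Lemma \ref{propetaV} gives the normal form \eqref{etaV} with $\xi(0,0,0)=(0,0,1)$ and the parabolic condition $a_2 = -b_1$.

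Next I would plug $dr = (1,0,0)$ and $\xi(0,0,0)=(0,0,1)$ into the two equations of \eqref{eqlc} at the origin. The first equation $\langle \xi, dr\rangle = 0$ is immediate because $a(0,0,0) = 0$ in the expansion \eqref{etaV}. For the second equation one needs to compute, at the origin, the vector $d\xi(dr) = (a_x, b_x, c_x)(0,0,0) = (0, b_1, 0)$, the triple product
\begin{equation}
[d\xi(dr), dr, \xi] = \det\begin{pmatrix} 0 & b_1 & 0 \\ 1 & 0 & 0 \\ 0 & 0 & 1 \end{pmatrix} = -b_1,
\end{equation}
the curl $\mathrm{curl}(\xi)(0,0,0) = (-b_3, a_3, b_1 - a_2)$, and then the inner product $\langle \mathrm{curl}(\xi), \xi\rangle(0,0,0) = b_1 - a_2$. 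Using the parabolic relation $a_2 = -b_1$ the latter equals $2b_1$, and the second equation of \eqref{eqlc} evaluates to
\begin{equation}
2(-b_1) + (2b_1)(1) = 0,
\end{equation}
so $\mathcal{A}$ is indeed a principal direction.

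I do not anticipate a real obstacle: the only subtlety is that the two equations in \eqref{eqlc} each contribute a $\pm b_1$ which cancel precisely because of the parabolic relation $a_2+b_1=0$ supplied by Lemma \ref{propetaV}. In other words, the statement is a clean consequence of the normal form together with a direct substitution, and the mild case distinction on $b_2$ covers the degenerate parabolic configuration.
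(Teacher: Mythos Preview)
Your proof is correct and follows essentially the same strategy as the paper: work in the adapted coordinates of Lemma~\ref{propetaV}, use the parabolic relation $a_2=-b_1$, and verify the principal-direction equations \eqref{eqlc} at the origin. The only cosmetic difference is that the paper first derives the full principal-direction equation at $(0,0,0)$, obtaining $dz=0$ and $b_2\,dx\,dy=0$, and then reads off that $(dx,0,0)=\mathcal{A}$ is a solution, whereas you plug in $\mathcal{A}=(1,0,0)$ directly and check the two equations vanish; the underlying computation is the same.
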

	\begin{proof}
		In $(0,0,0)$ the implicit differential equations of the principal curvatures lines of the plane field $\Delta$ are given by
		$b_{2}dxdy=0$ and $dz=0$ and then one principal direction in $(0,0,0)$ is given by
		$(dx,0,0)=\mathcal{A}$. If $b_{2}=0$, then all directions in $\Delta$ are principal directions and then $(0,0,0)$ is a partially umbilic point of $\Delta$.
	\end{proof}
	\subsection{Geometrical interpretation of the normal curvature of a plane field}
	\begin{prop}\label{kng}
	The normal curvature $k_n$ of a plane field $\Delta$ evaluated at a point $P$ 
and in the direction $dr$ is 
the curvature $k$, evaluated at $P$, of a plane curve $\gamma$,
which is the integral curve
of the line field $\ell$ defined by the intersection of the plane
$\mathcal{N}$ generated by $dr$ and $\xi(P)$ with the plane field $\Delta$,
see the Figure  \ref{cneuler}.
	
	\end{prop}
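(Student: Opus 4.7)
The plan is to parameterize the intersection curve $\gamma=\mathcal{N}\cap\Delta$ by arc length and show that its second derivative at $P$ is exactly $k_{n}\,\xi(P)$, so that its curvature as a plane curve coincides with $|k_{n}|$. Before any computation, I would first note that the line field $\ell$ genuinely integrates to a smooth regular curve through $P$: since $\xi(P)\in\mathcal{N}$ but $\xi(P)\notin\Delta(P)$, the planes $\mathcal{N}$ and $\Delta(P)$ meet transversally along a line, and a standard implicit function argument produces a smooth regular curve $\gamma\subset\mathcal{N}$ tangent to $\ell$ at $P$. I would also normalize $\xi$ to be a unit vector field in a neighbourhood of $P$, which is the convention implicit in the formula \eqref{nc}.

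Next I would exploit the two facts that $\gamma\subset\mathcal{N}$ and $\gamma$ is an integral curve of $\Delta$. Parameterize $\gamma$ by arc length $s$ with $\gamma(0)=P$ and $T:=\gamma'(0)$ the unit vector in the direction of $dr$. Differentiating the identity $\langle\xi(\gamma(s)),\gamma'(s)\rangle\equiv0$ at $s=0$ gives
\begin{equation}
\langle\xi(P),\gamma''(0)\rangle=-\langle d\xi(T),T\rangle=k_{n},
\end{equation}
directly from \eqref{nc} and $|T|=1$. This is the only analytic identity used in the proof.

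The remaining step is purely geometric. Since $\gamma\subset\mathcal{N}$, one has $\gamma''(0)\in\mathcal{N}$; arc-length parameterization forces $\gamma''(0)\perp T$; and $T\in\Delta$ forces $\xi(P)\perp T$. Hence $\xi(P)$ spans the orthogonal complement of $T$ inside $\mathcal{N}$, so $\gamma''(0)=\lambda\,\xi(P)$ for some scalar $\lambda$, and the identity above forces $\lambda=k_{n}$. The curvature of $\gamma$, viewed as a plane curve in $\mathcal{N}$, is therefore $|\gamma''(0)|=|k_{n}|$ (with signed curvature equal to $k_{n}$ in the orientation $\{T,\xi(P)\}$ of $\mathcal{N}$), which proves the proposition.

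The only place I expect to need any care is the first one: checking transversality of $\mathcal{N}$ and $\Delta$ near $P$ so that $\gamma$ is a bona fide smooth plane curve rather than a formal construct, and keeping the normalization $|\xi|\equiv 1$ consistent so that the scalar produced in the central computation is literally $k_{n}$ rather than a nonzero multiple of it. Beyond that, the argument is essentially two lines of calculus.
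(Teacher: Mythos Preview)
Your proof is correct and follows essentially the same line as the paper's: both arguments hinge on differentiating the identity $\langle\xi(\gamma),\gamma'\rangle\equiv 0$ to obtain $\langle\xi(P),\gamma''(0)\rangle=-\langle d\xi(T),T\rangle=k_n$, and both identify $\xi(P)$ as the in-plane normal to $\gamma$ at $P$. The only cosmetic difference is that the paper constructs the tangent field of $\ell$ explicitly by projecting $\xi$ onto $\mathcal{N}$ and taking the orthogonal complement inside $\mathcal{N}$, whereas you invoke transversality of $\mathcal{N}$ and $\Delta$ abstractly and work directly in arc length; your geometric argument that $\gamma''(0)$ must lie in $\mathcal{N}\cap T^{\perp}=\mathbb{R}\,\xi(P)$ is a slightly crisper way to pin down the normal direction than the paper's statement that ``the normal vector of $\gamma$ at $P$ is $\xi(P)$''.
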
	
	\begin{proof}
		There is no loss of generality in assuming that $\langle dr,dr\rangle=1$ and that $\xi$ is unitary. Let $\zeta^{\bot}$ be the projection of $\xi$ onto the plane $\mathcal{N}$:
		\begin{equation}\label{igcn}
		\zeta^{\bot}(x,y,z)=\xi(x,y,z)-\langle \xi(x,y,z),(\xi(P)\wedge dr)\rangle (\xi(P)\wedge dr).
		\end{equation}
		Let $\zeta$ be the vector field at $\mathcal{N}$ orthogonal to $\zeta^{\bot}$, that is, $\langle \zeta^{\bot},\zeta \rangle\equiv0$ and $\langle (\xi(P)\wedge dr),\zeta\rangle\equiv0$.
		By equation \eqref{igcn}, we have that $\langle \xi,\zeta \rangle\equiv0$.
		It follows that $\zeta$ defines the straight line field $\ell$ and $\zeta(P)$ is parallel to $dr$. Now, $\gamma$ is a curve such that $\gamma(0)=P$, $\gamma'(t)=\zeta(\gamma(t))$ and $\gamma'(0)=dr$.
		It follows that the normal vector of $\gamma$ at $P$ is $\xi(P)$ and, at $\gamma$, $\langle \xi,\gamma'\rangle\equiv0$.
		Then the curvature $k$ of $\gamma$ at $P$ is given by $k=\langle \xi(P),\gamma''(0)\rangle$. Since $\langle \xi(\gamma),\gamma'\rangle\equiv0$, then, at $\gamma$,
		$\langle \xi,\gamma''\rangle=-\langle d\xi(\gamma'),\gamma'\rangle$. Then, at $P$,
		\begin{equation}
		k=-\langle d\xi(\gamma'(0)),\gamma'(0)\rangle=-\langle d\xi(dr),dr\rangle=k_{n},
		\end{equation}
		\noindent where $k_{n}$ is the normal curvature defined by equation  \eqref{nc}, evaluated at $P$ in the direction $dr$.
	\end{proof}
	\begin{figure}
		\captionsetup[subfigure]{width=.3\linewidth}
		\centering
		\subfloat[][Draw the plane of $\Delta$ at a point $P$ and draw the vector $\xi(P)$ orthogonal to it.]{\includegraphics[width=.3\linewidth]{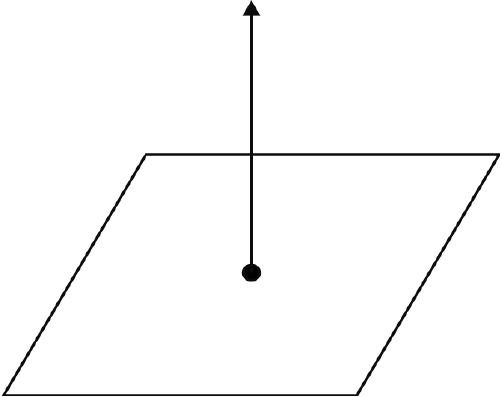}\label{cn1}}
		\qquad \qquad
		\subfloat[][Draw a direction $dr$ in the plane.]{\includegraphics[width=.3\linewidth]{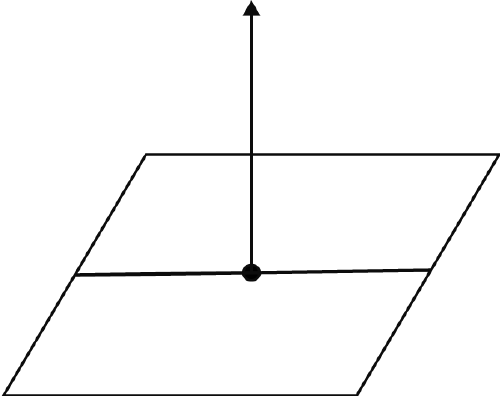}\label{cn2}}
		\\
		\subfloat[][Draw the plane $\mathcal{N}$ generated by $dr$ and $\xi(P)$.]{\includegraphics[width=.3\linewidth]{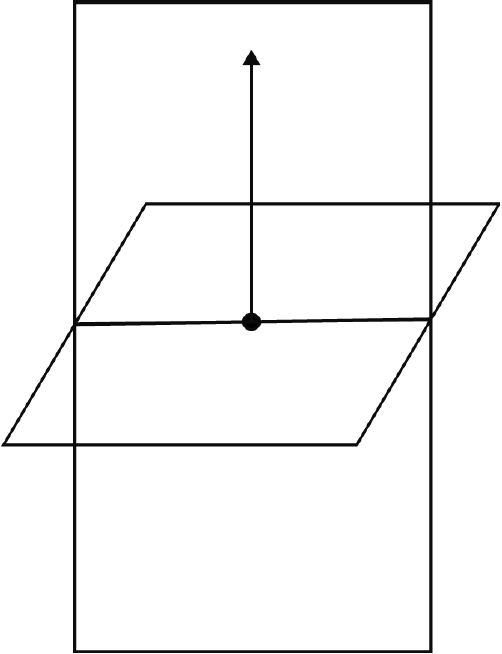}\label{cn3}}
		\qquad \qquad
		\subfloat[][Let $\ell$ be the   line field defined by the intersection of $\mathcal{N}$ with $\Delta$, see Figure \ref{int}. ]{\includegraphics[width=.3\linewidth]{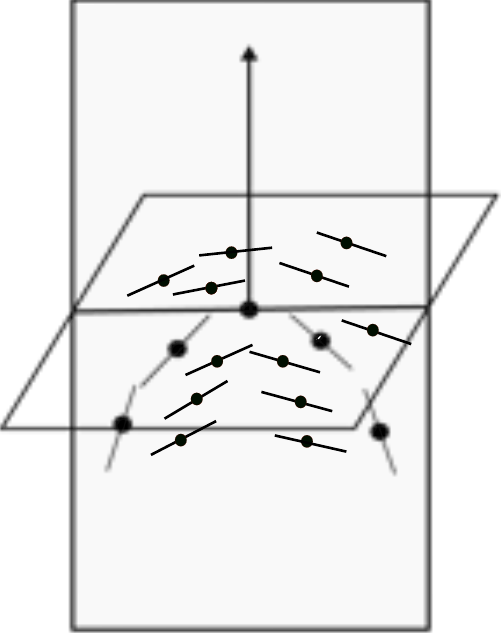}\label{cn4}}
		\\
		\subfloat[][Let $\gamma$ be the integral plane curve of $\ell$ such that $\gamma(0)=P$ and $\gamma'(0)=dr$. See Figure \ref{int}]{\includegraphics[width=.3\linewidth]{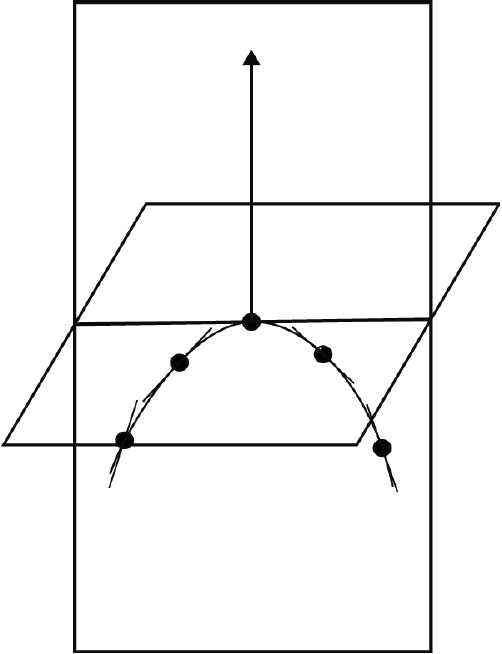}\label{cn5}}
		\qquad \qquad
		\subfloat[][The normal curvature $k_{n}$ of $\Delta$, at the direction $dr$, is the curvature of $\gamma$ at $P$.]{\includegraphics[width=.3\linewidth]{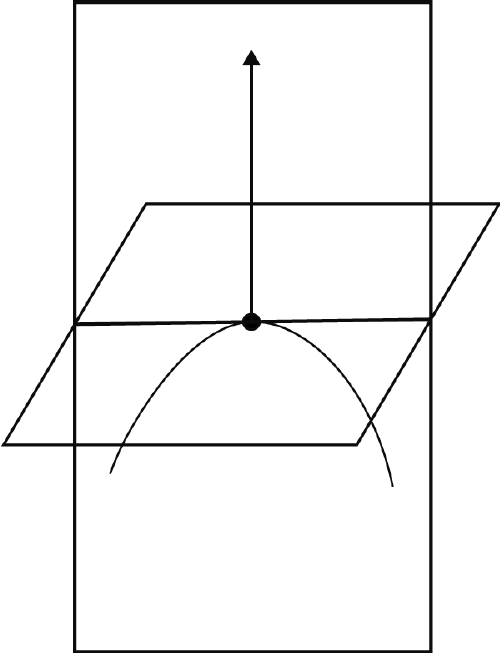}\label{cn6}}
		\caption{Geometrical steps to define the normal curvature $k_{n}$ of a plane field $\Delta$, following \cite{Euler1760}.}
		\label{cneuler}
	\end{figure}
	
	\begin{figure}
		\captionsetup[subfigure]{width=.3\linewidth}
		\centering
		\subfloat{\includegraphics[width=.4\textwidth]{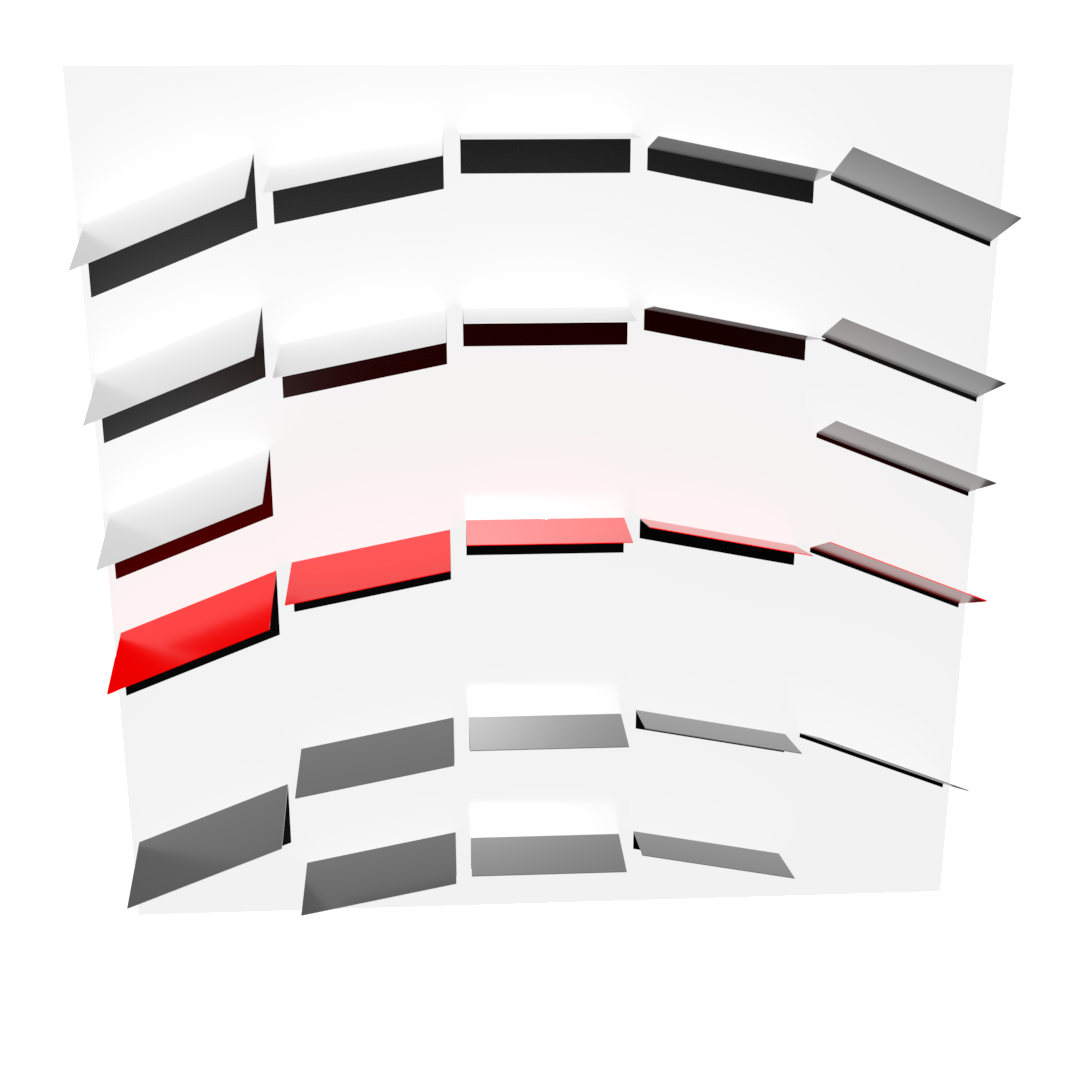}\label{int2}}
		\caption{Intersection of $\mathcal{N}$ with $\Delta$.
		The intersection of $\mathcal{N}$ with the red planes are the tangent
		lines of the curve $\gamma$.}
		\label{int}
	\end{figure}
	\subsection{Darboux frame}\label{darbouxframe}
	Let $\gamma:I\rightarrow \mathbb{R}^3$ be a integral curve of a plane field $\Delta$, parametrized by arc lenght $s$.
	The Darboux frame $\{X(s),Y(s),(X\wedge Y)(s)=\xi(s)\}$ associated to $\gamma$, where $\gamma'(s)=X(s)$, $\langle X(s),Y(s)\rangle=0$,
	$\langle Y(s),Y(s)\rangle=1$, $\langle (X\wedge Y)(s),(X\wedge Y)(s)\rangle=1$,
	is defined by the equations:
	\begin{equation}\label{darboux}
	\begin{split}
	\left.\nabla_{X}X\right|_{\gamma(s)}&=k_{g}(s)Y(s)+k_{n}(s)(X\wedge Y)(s),\\
	\left.\nabla_{X}Y\right|_{\gamma(s)}&=-k_{g}(s)X(s)+\tau_{g}(s)(X\wedge Y)(s),\\
	\left.\nabla_{X}(X\wedge Y)\right|_{\gamma(s)}&=-k_{n}(s)X(s)-\tau_{g}(s)Y(s),
	\end{split}
	\end{equation}
	\noindent where $k_{n}$, $k_{g}$ and $\tau_{g}$ are the normal curvature \eqref{nc}, geodesic curvature \eqref{gc} and geodesic torsion \eqref{gt}
	of the plane field $\Delta$ in the direction $X$:
	\begin{equation}
	\begin{split}
	k_{n}(s)&=\langle \left.\nabla_{X}X\right|_{\gamma(s)},(X\wedge Y)(s) \rangle=-\langle X(s),\left.\nabla_{X}(X\wedge Y)\right|_{\gamma(s)} \rangle,\\
	k_{g}(s)&=((X\wedge Y)(s),X(s),\left.\nabla_{X}X\right|_{\gamma(s)})\\
	&=\langle Y(s), \left.\nabla_{X}X\right|_{\gamma(s)}\rangle=-\langle\left.\nabla_{X}Y\right|_{\gamma(s)},X(s)\rangle,\\
	\tau_{g}(s)&=(X(s),(X\wedge Y)(s),\left.\nabla_{X}(X\wedge Y)\right|_{\gamma(s)})\\
	&=-\langle Y(s),\left.\nabla_{X}(X\wedge Y)\right|_{\gamma(s)}\rangle=\langle \left.\nabla_{X}Y\right|_{\gamma(s)},(X\wedge Y)(s)\rangle.
	\end{split}
	\end{equation}
	Also, at $\gamma(s)$, we have that
	\begin{equation}\label{darboux2}
	\begin{split}
	\left.\nabla_{Y}Y\right|_{\gamma(s)}&=-k_{g,Y}(s)X(s)+k_{n,Y}(s)(X\wedge Y)(s),\\
	\left.\nabla_{Y}X\right|_{\gamma(s)}&=k_{g,Y}(s)Y(s)-\tau_{g,Y}(s)(X\wedge Y)(s),\\
	\left.\nabla_{Y}(X\wedge Y)\right|_{\gamma(s)}&=\tau_{g,Y}(s)X(s)-k_{n,Y}(s) Y(s),
	\end{split}
	\end{equation}
	\noindent where $k_{n,Y}$, $k_{g,Y}$ and $\tau_{g,Y}$ are the normal curvature \eqref{nc}, geodesic curvature \eqref{gc} and geodesic torsion \eqref{gt}
	of the plane field $\Delta$, at $\gamma(s)$, in the direction $Y$.
	\begin{thm}\label{proptau}
		Let $\Delta$ be a plane field. Then
		\begin{equation}\label{tgf}
		\tau_{g}-\widetilde{\tau}_{g}=(k_{2}-k_{1})cos(\theta)sin(\theta),
		\end{equation}
		\noindent where $k_{1}$ and $k_{2}$ are the principal curvatures of $\Delta$, $\theta$ is the angle between
		the direction $dr$ and the principal direction associated to $k_{1}$ and $\widetilde{\tau}_{g}$
		is the geodesic torsion evaluated in the principal direction.
	\end{thm}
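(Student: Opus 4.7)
The plan is to work in a Darboux frame adapted to the principal directions at $P$. Let $\{e_1,e_2\}$ be an oriented orthonormal basis of the plane of $\Delta$ at $P$, with $e_i$ the principal direction associated to the principal curvature $k_i$ and with $e_1\wedge e_2=\xi(P)$. Given a unit direction $dr$ at angle $\theta$ with $e_1$, I set
\begin{equation}
X=\cos(\theta)e_1+\sin(\theta)e_2,\qquad Y=-\sin(\theta)e_1+\cos(\theta)e_2,
\end{equation}
so that $\{X,Y,\xi\}$ is the Darboux frame \eqref{darboux} along any integral curve of $\Delta$ through $P$ tangent to $X$, and I apply the formula $\tau_g=\langle\nabla_X Y,\xi\rangle=-\langle Y,d\xi(X)\rangle$ recorded in Section \ref{darbouxframe}.

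Expanding $d\xi(X)=\cos(\theta)d\xi(e_1)+\sin(\theta)d\xi(e_2)$ and pairing with $Y$ reduces the computation to identifying the four inner products $\langle d\xi(e_i),e_j\rangle$ for $i,j\in\{1,2\}$. The two diagonal entries come from the definition \eqref{nc} of the normal curvature: since $k_n(e_i)=-\langle d\xi(e_i),e_i\rangle=k_i$, we get $\langle d\xi(e_i),e_i\rangle=-k_i$. The two off-diagonal entries come from Proposition \ref{eqlc2}, which asserts that the geodesic torsion takes the same value $\widetilde{\tau}_g$ on both principal directions. Specializing the formula $\tau_g=-\langle Y,d\xi(X)\rangle$ to $X=e_1$, $Y=e_2$ yields $\widetilde{\tau}_g=-\langle d\xi(e_1),e_2\rangle$, while specializing it to $X=e_2$, $Y=-e_1$ (the companion forced by the orientation $X\wedge Y=\xi$) yields $\widetilde{\tau}_g=\langle d\xi(e_2),e_1\rangle$.

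Substituting these four values in the expansion and collecting terms I expect to obtain
\begin{equation}
\tau_g=(k_2-k_1)\cos(\theta)\sin(\theta)+\widetilde{\tau}_g\bigl(\cos^2(\theta)+\sin^2(\theta)\bigr),
\end{equation}
which is precisely \eqref{tgf}. The computation is essentially linear algebra in the frame $\{e_1,e_2,\xi\}$; the only delicate point is the orientation bookkeeping, because it is exactly the fact that $\langle d\xi(e_1),e_2\rangle$ and $\langle d\xi(e_2),e_1\rangle$ carry opposite signs relative to $\widetilde{\tau}_g$ that forces the antisymmetric combination $(k_2-k_1)$ to appear, whereas the two principal-torsion contributions add up and condense to a single $\widetilde{\tau}_g$ via $\cos^2(\theta)+\sin^2(\theta)=1$. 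Once the signs are kept consistent, no further geometric input is needed beyond Proposition \ref{eqlc2} and \eqref{nc}.
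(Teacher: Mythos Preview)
Your proposal is correct and follows essentially the same route as the paper's own proof: both decompose $X$ and $Y$ in the principal frame $\{e_1,e_2\}$, compute $\tau_g=-\langle Y,\nabla_X\xi\rangle$ by linearity, and invoke Proposition~\ref{eqlc2} to identify the two off-diagonal entries with a common $\widetilde{\tau}_g$. The only cosmetic difference is that the paper packages the four entries $\langle d\xi(e_i),e_j\rangle$ via the Darboux-frame equations \eqref{darboux} and \eqref{darboux2} (writing $\nabla_{X_1}\xi=-k_1X_1-\tau_{g,1}X_2$ and $\nabla_{X_2}\xi=\tau_{g,2}X_1-k_2X_2$), whereas you read them off directly from \eqref{nc} and \eqref{gt}; the underlying computation is identical.
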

	\begin{rem}
		The version of the Theorem \ref{proptau} for surfaces in $\mathbb{R}^3$, see for example \cite[Proposition 2, p.192]{MR532832}, states that $\tau_{g}=(k_{2}-k_{1})cos(\theta)sin(\theta)$. This difference happens because, at surfaces, $\tau_{g,1}=\tau_{g,2}=0$, see for example \cite[p. 196]{MR532832}.
	\end{rem}
	\begin{proof}[Proof of the Theorems \ref{ecf} and \ref{proptau}]
		At a point $(x,y,z)$, let $\gamma$ be a integral curve of $\Delta$, parametrized by arc length $s$, such that $\gamma(0)=(x,y,z)$.
		Consider the Darboux frame $\{X(s),Y(s),(X\wedge Y)(s)=\xi(s)\}$ associated to $\gamma$, where $\gamma'(s)=X(s)$.
		At $\gamma$, $X=cos(\theta)X_{1}+sin(\theta)X_{2}$ and so $Y=-sin(\theta)X_{1}+cos(\theta)X_{2}$, where
		at a point $\gamma(s)$, $X_{1}(s)$ and $X_{2}(s)$ are the two principal directions at it. It follows that
		\begin{equation}\label{eFE}
		\begin{split}
		\left.\nabla_{X}(X\wedge Y)\right|_{\gamma(s)}&=cos(\theta) \left.\nabla_{X_{1}}(X\wedge Y)\right|_{\gamma(s)}+
		sin(\theta) \left.\nabla_{X_{2}}(X\wedge Y)\right|_{\gamma(s)}\\
		&=cos(\theta) \left.\nabla_{X_{1}}(X_{1}\wedge X_{2})\right|_{\gamma(s)}+
		sin(\theta) \left.\nabla_{X_{2}}(X_{1}\wedge X_{2})\right|_{\gamma(s)}.
		\end{split}
		\end{equation}
		By \eqref{darboux}, $\left.\nabla_{X_{1}}(X_{1}\wedge X_{2})\right|_{\gamma(s)}=-k_{1}X_{1}-\tau_{g,1}X_{2}$,
		where $k_{1}$ is the principal direction associated with $X_{1}$ and $\tau_{g,1}$ is the geodesic torsion evaluated in the direction $X_{1}$.
		By \eqref{darboux2}, $\left.\nabla_{X_{2}}(X_{1}\wedge X_{2})\right|_{\gamma(s)}=-k_{2}X_{2}+\tau_{g,2}X_{1}$,
		where $k_{2}$ is the principal direction associated with $X_{2}$ and $\tau_{g,2}$ is the geodesic torsion evaluated in the direction $X_{2}$.
		
		By Proposition \ref{eqlc2}, it follows that
		$\widetilde{\tau}_{g}=\tau_{g,1}=\tau_{g,2}$.
		
		Then $k_{n}(s)=-\langle X(s),\left.\nabla_{X}(X\wedge Y)\right|_{\gamma(s)} \rangle=k_{1}cos^2(\theta)+k_{2}sin^2(\theta)$,
		which prove the Theorem \ref{ecf} and
		$\tau_{g}(s)=-\langle Y(s),\left.\nabla_{X}(X\wedge Y)\right|_{\gamma(s)} \rangle=(k_{2}-k_{1})cos(\theta)sin(\theta)+\widetilde{\tau}_{g}$, which prove the Theorem \ref{proptau}.
	\end{proof}
	\begin{rem}
		The  Euler curvature formula \eqref{ecfor} and the formula \eqref{tgf} are equivalent to the formulas \cite[3 and 4, p. 105]{zbMATH02626886} respectively.
	\end{rem}
	\subsubsection{Triple orthogonal system of plane fields}
	With the notation above, we have that
	\begin{equation}\label{darboux3}
	\begin{split}
	\left.\nabla_{X\wedge Y}(X\wedge Y)\right|_{\gamma(s)}&=l_{1}(s)X(s)+l_{2}(s)Y(s)\\
	\left.\nabla_{X\wedge Y}X\right|_{\gamma(s)}&=l_{3}(s)Y(s)-l_{1}(s)(X\wedge Y)(s),\\
	\left.\nabla_{X\wedge Y}Y\right|_{\gamma(s)}&=-l_{3}(s)X(s)-l_{2}(s)(X\wedge Y)(s).\\
	\end{split}
	\end{equation}
	Let $\Delta_{X}$ and $\Delta_{Y}$ be plane fields such that $\{\Delta,\Delta_{X},\Delta_{Y}\}$ is a triple orthogonal system of plane fields
	and, for all $s$, the vector orthogonal to $\Delta_{X}$ (resp. $\Delta_{Y}$) at $\gamma(s)$ is $X(s)$ (resp. $Y(s)$).
	
	Let $k_{n}^{\Delta_{X}}$, $k_{g}^{\Delta_{X}}$, $\tau_{g}^{\Delta_{X}}$ (resp. $k_{n}^{\Delta_{Y}}$, $k_{g}^{\Delta_{Y}}$, $\tau_{g}^{\Delta_{Y}}$) be respectively
	the normal curvature, geodesic curvature and geodesic torsion of the plane field $\Delta_{X}$ (resp. $\Delta_{Y}$) evaluated at the direction $X\wedge Y$.
	It follows, directly from the definitions \ref{nc}, \ref{gc}, \ref{gt}, that $l_{1}=k_{n}^{\Delta_{X}}=k_{g}^{\Delta_{Y}}$ , $l_{2}=k_{n}^{\Delta_{Y}}=-k_{g}^{\Delta_{X}}$
	and $l_{3}=\tau_{g}^{\Delta_{X}}=\tau_{g}^{\Delta_{Y}}$.
	
	\subsection{The second fundamental form of a plane field}
	Let $\{X,Y\}$ be a local basis for the plane field $\Delta$, that is, locally, $X\wedge Y=\xi$.
	\begin{defn}[{\cite{MR512930}}]
		The second fundamental form $II$ of $\Delta$ is defined by
		\begin{equation}
		II(X,Y)=\frac{1}{2}\langle \nabla_{X}Y+\nabla_{Y}X,\xi \rangle.
		\end{equation}
	\end{defn}
	\begin{rem}[{\cite{MR512930}}]
		In the case that $\Delta$ is completely integrable, $II$ is the second fundamental form of the integral surfaces.
	\end{rem}
	\begin{prop}\label{gauss2}
		Let $X_{i}$, $i=1,2$, be the principal direction associated with $k_{i}$. Then $II(X_{1},X_{2})=0$ and
		\begin{equation}
		\frac{II(X_{1},X_{1})}{\langle X_{1},X_{1}\rangle}=k_{1}, \ \ \frac{II(X_{2},X_{2})}{\langle X_{2},X_{2}\rangle}=k_{2}.
		\end{equation}
	\end{prop}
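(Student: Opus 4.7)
The plan is to reinterpret $II$ as the polarization of the quadratic form whose normalization gives the normal curvature, and then invoke the spectral theorem on the associated self-adjoint operator.

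First, I would exploit orthogonality. For any local sections $X,Y$ of $\Delta$ we have $\langle X,\xi\rangle\equiv 0$ and $\langle Y,\xi\rangle\equiv 0$. Differentiating $\langle Y,\xi\rangle=0$ along $X$, and $\langle X,\xi\rangle=0$ along $Y$, gives
\begin{equation}
\langle \nabla_{X}Y,\xi\rangle=-\langle Y,\nabla_{X}\xi\rangle,\qquad \langle \nabla_{Y}X,\xi\rangle=-\langle X,\nabla_{Y}\xi\rangle.
\end{equation}
Substituting in the definition of the second fundamental form yields
\begin{equation}
II(X,Y)=-\frac{1}{2}\bigl(\langle Y,\nabla_{X}\xi\rangle+\langle X,\nabla_{Y}\xi\rangle\bigr),
\end{equation}
which displays $II$ as a symmetric bilinear form on $\Delta$.

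Second, comparing with formula \eqref{nc}, where $\nabla_{dr}\xi=d\xi(dr)$, one obtains
\begin{equation}
k_{n}(dr)=\frac{II(dr,dr)}{\langle dr,dr\rangle}.
\end{equation}
Thus the Euler-type quadratic form $dr\mapsto II(dr,dr)$ is precisely the numerator of the normal curvature.

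Third, since $II$ is symmetric and $\langle\cdot,\cdot\rangle$ is positive definite on each plane of $\Delta$, there is a unique self-adjoint operator $L\colon \Delta\to\Delta$ with $II(X,Y)=\langle L(X),Y\rangle$. By the definition of principal curvatures and principal directions as the extrema of $k_{n}$ on the unit circle of $\Delta$, Lagrange multipliers (equivalently, the spectral theorem) identify $k_{1},k_{2}$ as the eigenvalues of $L$ and $X_{1},X_{2}$ as associated eigenvectors. When $k_{1}\ne k_{2}$, eigenvectors of a self-adjoint operator attached to distinct eigenvalues are automatically orthogonal; in the partially umbilic case $k_{1}=k_{2}$ every direction is principal and one may take $X_{1}\perp X_{2}$ by convention.

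Finally, the conclusions follow immediately: $II(X_{i},X_{i})=\langle L(X_{i}),X_{i}\rangle=k_{i}\langle X_{i},X_{i}\rangle$ gives the ratio formula, while $II(X_{1},X_{2})=\langle L(X_{1}),X_{2}\rangle=k_{1}\langle X_{1},X_{2}\rangle=0$. The only real step that requires care is verifying that the polarization identity above reproduces the definition of $II$; everything else is standard linear algebra on a two-dimensional Euclidean space, so I do not expect any genuine obstacle.
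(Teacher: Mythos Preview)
Your argument is correct, but it proceeds along a different line from the paper's. The paper does not invoke the spectral theorem at all: it reads off $II(X_i,X_i)/\langle X_i,X_i\rangle=k_i$ directly from the Darboux-frame identity $k_i=\langle\nabla_{X_i}X_i,\xi\rangle/\langle X_i,X_i\rangle$, and for the cross term it expresses $\langle\nabla_{X_1}X_2,\xi\rangle$ and $\langle\nabla_{X_2}X_1,\xi\rangle$ in terms of the geodesic torsions $\tau_{g,1},\tau_{g,2}$ in the two principal directions, obtaining $II(X_1,X_2)$ as (a multiple of) $\tau_{g,1}-\tau_{g,2}$. Proposition~\ref{eqlc2}, which follows from the defining implicit equation \eqref{eqlc} of principal directions, then gives $\tau_{g,1}=\tau_{g,2}$ and hence $II(X_1,X_2)=0$.

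Your route---polarize $II$, identify $k_n(dr)=II(dr,dr)/\langle dr,dr\rangle$, and diagonalize the associated self-adjoint operator $L$---is cleaner linear algebra and uses only the \emph{variational} characterization of principal directions as extremizers of $k_n$ (which is the primary definition in the paper). It simultaneously re-derives the orthogonality of $X_1,X_2$ when $k_1\ne k_2$. The paper's approach, by contrast, makes explicit the non-integrable feature that the geodesic torsion in a principal direction is generally nonzero (equal to $-\tfrac12\langle\mathrm{curl}(\xi),\xi\rangle$) yet is the \emph{same} in both principal directions; that identity is what forces $II(X_1,X_2)=0$. So your proof is more self-contained, while the paper's proof exposes the geometric mechanism specific to plane fields.
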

	\begin{proof}
		The principal curvatures $k_{i}$, $i=1,2$, are  given by
		$k_{i}=\frac{\langle \nabla_{X_{i}}X_{i},\xi\rangle}{\langle X_{i},X_{i}\rangle}$.
		It follows that $\frac{II(X_{i},X_{i})}{\langle X_{i},X_{i}\rangle}=k_{i}$. The geodesic torsion evaluated at the direction $X_{1}$ (resp. $X_{2}$) is given by
		$\tau_{g,1}=\frac{\langle \nabla_{X_{1}}X_{2},\xi\rangle}{\langle X_{1},X_{1}\rangle}$
		\bigg(resp. $\tau_{g,2}=-\frac{\langle \nabla_{X_{2}}X_{1},\xi\rangle}{\langle X_{2},X_{2}\rangle}$\bigg).
		It follows that $\frac{II(X_{1},X_{2})}{\langle X_{1},X_{1}\rangle\langle X_{2},X_{2}\rangle}=\frac{\tau_{g,1}-\tau_{g,2}}{2}$. By Proposition \ref{eqlc2}, $\tau_{g,1}-\tau_{g,2}=0$ and then $II(X_{1},X_{2})=0$.
	\end{proof}
	\begin{rem}
		The \cite[Lemma 2.4, item 3]{MR3008238} asserts that $II(X_{1},X_{2})=0$ if $X_{1}$, $X_{2}$ are the principal directions and can be used to prove
		the Proposition \ref{gauss2}.
	\end{rem}
	\begin{thm}
		The following holds
		\begin{equation}\label{gaussK}
		\begin{split}
		\mathcal{K}_{G}&=\frac{II(X,X)II(Y,Y)-(II(X,Y))^2}{\langle X,X\rangle \langle Y,Y\rangle-\langle X,Y\rangle^2},\\
		\mathcal{H}_{M}&=\frac{II(X,X)\langle Y,Y\rangle-2II(X,Y)\langle X,Y\rangle+II(Y,Y)\langle X,X\rangle}{2(\langle X,X\rangle \langle Y,Y\rangle-\langle X,Y\rangle^2)},
		\end{split}
		\end{equation}
		\noindent where $\mathcal{K}_{G}$ (resp. $\mathcal{H}_{M}$) is the Gauss curvature (resp. Mean curvature) of $\Delta$.
	\end{thm}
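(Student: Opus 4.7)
The plan is to reduce both identities to linear algebra in the tangent plane of $\Delta$ by changing basis from the arbitrary local frame $\{X,Y\}$ to an orthogonal frame of principal directions $\{X_1,X_2\}$. In the principal frame, Proposition \ref{gauss2} tells us that both the inner product and $II$ are diagonal, with $II(X_i,X_i)/\langle X_i,X_i\rangle=k_i$. My strategy is then: expand both numerators and denominators of the claimed expressions using bilinearity, extract a common change-of-area factor, and read off the symmetric functions of the $k_i$.

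For this reduction to be legitimate I first need that $II$ is a symmetric $C^\infty$-bilinear form on $\Delta$. Symmetry is built into the defining formula $II(V,W)=\frac{1}{2}\langle\nabla_V W+\nabla_W V,\xi\rangle$. Tensoriality in the first slot follows from the Leibniz rule, which produces an extra term proportional to $\langle V,\xi\rangle$; this term vanishes because $V\in\Delta$. (This bilinearity was already tacitly used in Proposition \ref{gauss2}.)

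Writing $X=aX_1+bX_2$, $Y=cX_1+dX_2$ and setting $p=\langle X_1,X_1\rangle$, $q=\langle X_2,X_2\rangle$, bilinearity reduces every entry in the formulas to a polynomial in $a,b,c,d$ whose coefficients involve $k_1 p$ and $k_2 q$ (for $II$) or $p$ and $q$ (for the inner product). A short expansion then shows that both
\begin{equation}
II(X,X)II(Y,Y)-II(X,Y)^2 \quad \text{and} \quad \langle X,X\rangle\langle Y,Y\rangle-\langle X,Y\rangle^2
\end{equation}
factor through $(ad-bc)^2\,pq$, the first with coefficient $k_1 k_2$ and the second with coefficient $1$. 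Their quotient is $\mathcal{K}_G = k_1 k_2$, establishing the first formula.

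The mean curvature formula follows by the same substitution: the numerator $II(X,X)\langle Y,Y\rangle-2II(X,Y)\langle X,Y\rangle+II(Y,Y)\langle X,X\rangle$ expands to a polynomial in $a,b,c,d$ in which the ``pure'' $k_1 p^2$ (coefficient $a^2c^2$) and $k_2 q^2$ (coefficient $b^2 d^2$) contributions cancel between the three summands, leaving $(ad-bc)^2 (k_1+k_2)\,pq$; dividing by $2(ad-bc)^2\,pq$ recovers $\mathcal{H}_M=(k_1+k_2)/2$. The whole proof is essentially bookkeeping and I do not expect any real obstacle; the only point to be watchful about is the bilinearity of $II$, after which one is simply doing linear algebra on the two-dimensional vector space $\Delta_p$.
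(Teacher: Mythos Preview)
Your proof is correct and follows essentially the same route as the paper: expand $X$ and $Y$ in the principal frame $\{X_1,X_2\}$, use Proposition~\ref{gauss2} to diagonalize both the inner product and $II$, and reduce both quotients to the symmetric functions of $k_1,k_2$ after a common $(ad-bc)^2\langle X_1,X_1\rangle\langle X_2,X_2\rangle$ factor cancels. The paper parametrizes the coefficients as $(\cos\theta,\sin\theta)$ and $(\cos\alpha,\sin\alpha)$ rather than your generic $(a,b)$, $(c,d)$, which tacitly uses the homogeneity of the formulas to normalize; your version sidesteps that and also makes the tensoriality of $II$ explicit, but the argument is otherwise identical.
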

	\begin{proof}
		We can write $X$ and $Y$ as
		\begin{equation}
		X=cos(\theta)X_{1}+sin(\theta)X_{2}, \ \ Y=cos(\alpha)X_{1}+sin(\alpha)X_{2},
		\end{equation}
		\noindent where $X_{i}$, $i=1,2$, is the principal direction associated to $k_{i}$.
		It follows that
		\begin{equation}
		\begin{split}
		\langle X,X\rangle&=\langle X_{1},X_{1}\rangle cos^2(\theta)+\langle X_{2},X_{2}\rangle sin^2(\theta), \\
		\langle Y,Y\rangle&=\langle X_{1},X_{1}\rangle cos^2(\alpha)+\langle X_{2},X_{2}\rangle sin^2(\alpha), \\
		\langle X,Y\rangle&=\langle X_{1},X_{1}\rangle cos(\theta)cos(\alpha)+\langle X_{2},X_{2}\rangle sin(\theta)sin(\alpha).
		\end{split}
		\end{equation}
		Then
		\begin{equation}
		\langle X,X\rangle \langle Y,Y\rangle-\langle X,Y\rangle^2=(cos(\theta)sin(\alpha)-cos(\alpha)sin(\theta))^2\langle X_{1},X_{1}\rangle \langle X_{2},X_{2}\rangle,
		\end{equation}
		\noindent and
		\begin{equation}
		\begin{split}
		II(X,X)&=II(X_{1},X_{1})cos^2(\theta)+2II(X_{1},X_{2})cos(\theta)sin(\theta)+II(X_{2},X_{2})sin^2(\theta),\\
		II(Y,Y)&=II(X_{1},X_{1})cos^2(\alpha)+2II(X_{1},X_{2})cos(\alpha)sin(\alpha)+II(X_{2},X_{2})sin^2(\alpha),\\
		II(X,Y)&=II(X_{1},X_{1})cos(\theta)cos(\alpha)+II(X_{1},X_{2})(cos(\theta)sin(\alpha)+cos(\alpha)sin(\theta))
		\\&+II(X_{2},X_{2})sin(\theta)sin(\alpha).
		\end{split}
		\end{equation}
		It follows that
		\begin{equation}
		\frac{II(X,X)II(Y,Y)-(II(X,Y))^2}{\langle X,X\rangle \langle Y,Y\rangle-\langle X,Y\rangle^2}=
		\frac{II(X_{1},X_{1})II(X_{2},X_{2})-(II(X_{1},X_{2}))^2}{\langle X_{1},X_{1}\rangle \langle X_{2},X_{2}\rangle},
		\end{equation}
		\noindent and
		\begin{equation}
		\begin{split}
		&\frac{II(X,X)\langle Y,Y\rangle-2II(X,Y)\langle X,Y\rangle+II(Y,Y)\langle X,X\rangle}{2(\langle X,X\rangle \langle Y,Y\rangle-\langle X,Y\rangle^2)}\\
		&=\frac{II(X_{1},X_{1})\langle X_{2},X_{2}\rangle+II(X_{2},X_{2})\langle X_{1},X_{1}\rangle}{2\langle X_{1},X_{1}\rangle \langle X_{2},X_{2}\rangle}.
		\end{split}
		\end{equation}
		By Proposition \ref{gauss2}, $\frac{II(X_{1},X_{1})}{\langle X_{1},X_{1}\rangle}=k_{1}$ and $\frac{II(X_{2},X_{2})}{\langle X_{2},X_{2}\rangle}=k_{2}$.
	\end{proof}
	\begin{rem}
		The Mean curvature $\mathcal{H}_{M}$ of a plane field as the expression given in \eqref{gaussK} is defined in \cite{MR512930}.
		The expression of $\mathcal{K}_{G}$ of \eqref{gaussK} is defined in \cite[Definition 2.5]{MR2443254}, where is called of sectional curvature of the plane field.
	\end{rem}

    \subsection{Tubular neighbourhood around an integral curve of the plane field}
    
     Let $\gamma$ be a integral curve of the plane field $\Delta$, parametrized by arc lenght $u$.
    
    Set $X(u)=\gamma'(u)$ and consider the Darboux frame defined in Section \ref{darbouxframe} with the equations \eqref{darboux}, \eqref{darboux2} and \eqref{darboux3}.
    
    By Theorems \ref{ecf} and \ref{proptau},
    \begin{equation}\label{eqcur1}
    \begin{split}
    k_{n}(u)&=k_{1}(u)cos^2(\theta(u))+k_{2}(u)sin^2(\theta(u)), \\ \tau_{g}(u)&=\widetilde{\tau}_{g}(u)+(k_{1}(u)-k_{2}(u))cos(\theta(u))sin(\theta(u)).
    \end{split}
    \end{equation}
    Since $\langle X(u),Y(u)\rangle=0$ for all $u$,
    then
    \begin{equation}\label{eqcur2}
    \begin{split}
    k_{n,Y}(u)&=k_{1}(u)sin^2(\theta(u))+k_{2}(u)cos^2(\theta(u)), \\ \tau_{g,Y}(u)&=\widetilde{\tau}_{g}(u)-(k_{1}(u)-k_{2}(u))cos(\theta(u))sin(\theta(u)).
    \end{split}
    \end{equation}
    Let $\alpha:\Lambda\subset \mathbb{R}^3\rightarrow \mathbb{R}^3$ defined by
    \begin{equation}
    \alpha(u,v,w)=\gamma(u)+vY(u)+w(X\wedge Y)(u),
    \end{equation}
    \noindent where $\Lambda$ is a open subset.
    
    \begin{defn}
    The map $\alpha$ is a tubular neighbourhood around the integral curve $\gamma$.
    \end{defn}

\begin{lem}
	In the tubular neighbourhood around an integral curve of the plane field,
	$\mathcal{K}=\mathcal{K}_{G}$ and $\mathcal{H}=\mathcal{H}_{M}$. 
\end{lem}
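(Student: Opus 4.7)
The plan is to compute the coefficients $e$, $f$, $g$ from Lemma \ref{propeqlaV} directly in the tubular coordinate chart $(u,v,w)$, treating it as the ambient Cartesian system required by the lemma, and to show that the resulting $\mathcal{K} = eg - f^2$ and $\mathcal{H} = -(e+g)/2$ agree pointwise with the intrinsic $\mathcal{K}_G$ and $\mathcal{H}_M$ throughout the tubular neighbourhood, strengthening the pointwise identity at a single point supplied by Proposition \ref{gaussyu}.

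First, using the Darboux equations \eqref{darboux} one obtains
\begin{equation*}
\alpha_u = (1 - v k_g - w k_n)\, X - w \tau_g\, Y + v \tau_g\, \xi, \qquad \alpha_v = Y, \qquad \alpha_w = \xi,
\end{equation*}
where $k_n(u)$, $k_g(u)$, $\tau_g(u)$ depend only on $u$. I would then express the pulled-back components $(\tilde a, \tilde b, \tilde c)$ of $\xi$ in the tubular chart via the identity $\alpha^*\langle \xi, dr \rangle = \tilde a\, du + \tilde b\, dv + \tilde c\, dw$. Because $\alpha_v = Y$ and $\alpha_w = \xi$ are independent of $(v,w)$ and are respectively tangent and orthogonal to $\Delta$ along $\gamma$, the components satisfy $(\tilde a, \tilde b, \tilde c)\big|_{\gamma} = (0,0,1)$, so Proposition \ref{gaussyu} already delivers $\mathcal{K} = \mathcal{K}_G$ and $\mathcal{H} = \mathcal{H}_M$ at every point of $\gamma$.

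To extend the identity to the whole tubular neighbourhood, I would Taylor-expand $\xi(\alpha(u,v,w))$ in $(v,w)$ using the Darboux equations \eqref{darboux}, \eqref{darboux2} and the triple-orthogonal frame equations \eqref{darboux3}, which encode $d\xi$ acting on the full frame $\{X, Y, \xi\}$. Substituting these expansions into the formulas for $e$, $f$, $g$ from Lemma \ref{propeqlaV} and applying the Euler formulas \eqref{eqcur1}, \eqref{eqcur2} to $k_n$ and $\tau_g$, the contributions from the correction terms in $\alpha_u$ assemble and yield $eg - f^2 = k_1 k_2$ and $-(e+g)/2 = (k_1 + k_2)/2$ identically in $(u,v,w)$.

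The main obstacle is the bookkeeping in the second step: in a generic chart, $\mathcal{K}$ is only proportional to $\mathcal{K}_G$ with a positive factor by Proposition \ref{gaussyu}, and one must verify that this factor is identically $1$ for the tubular parameterization. The cancellation comes from the combination of $\alpha_v$ and $\alpha_w$ being independent of $(v,w)$ with the identities \eqref{darboux3} governing $\nabla_{\xi} X$, $\nabla_{\xi} Y$ and $\nabla_{\xi} \xi$; these make the off-$\gamma$ corrections to $\tilde a, \tilde b, \tilde c$ enter $e$, $f$, $g$ in a way that reproduces the invariant second fundamental form, not merely a positive multiple of it.
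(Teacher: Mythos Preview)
Your first step---identifying that $(\tilde a,\tilde b,\tilde c)|_{\gamma}=(0,0,1)$ and that the coordinate frame $\{\alpha_u,\alpha_v,\alpha_w\}|_{\gamma}=\{X,Y,\xi\}$ is orthonormal along the central curve---is exactly the content of the paper's proof, although the paper carries it out by direct computation rather than by appealing to Proposition~\ref{gaussyu}. In the paper, one writes $d\alpha$, Taylor-expands $\xi(u,v,w)$ via the Darboux relations, solves $\langle\xi,d\alpha\rangle=0$ for $dw$, substitutes into $\langle d\xi,d\alpha\rangle=0$, and reads off
\[
e(u,0,0)=-k_n(u),\qquad f(u,0,0)=-\tfrac{1}{2}\big(\tau_g(u)-\tau_{g,Y}(u)\big),\qquad g(u,0,0)=-k_{n,Y}(u).
\]
Equations \eqref{eqcur1} and \eqref{eqcur2} then give $eg-f^2=k_1k_2$ and $-(e+g)/2=(k_1+k_2)/2$ \emph{only at} $(u,0,0)$. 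That is all the paper proves and all that is needed: since $\gamma$ is an arbitrary integral curve, every point lies on the central curve of \emph{some} tubular chart. Your appeal to Proposition~\ref{gaussyu} is morally correct but formally mismatched, since that proposition is stated for Cartesian coordinates; the paper sidesteps this by computing $e,f,g$ directly in the tubular chart.

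Your second step, however, is a genuine overreach. You claim that the identities $eg-f^2=k_1k_2$ and $-(e+g)/2=(k_1+k_2)/2$ hold \emph{identically in $(u,v,w)$}, and you attribute the cancellation to $\alpha_v$ and $\alpha_w$ being constant in $(v,w)$ together with the relations \eqref{darboux3}. This is not correct. Away from $\gamma$ the frame $\{\alpha_u,\alpha_v,\alpha_w\}$ is no longer orthonormal: for instance $|\alpha_u|^2=(1-vk_g-wk_n)^2+(v^2+w^2)\tau_g^2\neq 1$. Consequently the basis $\{\alpha_u+A\alpha_w,\,\alpha_v+B\alpha_w\}$ of $\Delta$ used to form $e,f,g$ is not orthonormal off $\gamma$, and by the formula \eqref{gaussK} one only expects $eg-f^2$ to equal $\mathcal{K}_G$ up to the nontrivial conformal factor $\langle X,X\rangle\langle Y,Y\rangle-\langle X,Y\rangle^2$, which is $1$ precisely along $(u,0,0)$. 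The paper neither claims nor proves the off-curve identity, and your sketch gives no mechanism that would force this factor to remain $1$. Drop the second step: the lemma, as proved in the paper, is the identity along the central curve.
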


\begin{proof}
Since $d\alpha=\alpha_{u}du+\alpha_{v}dv+\alpha_{w}dw$, then
\begin{equation}
		d\alpha=[(1-vk_{g}-wk_{n})du]X+[dv-w\tau_{g}du]Y+[v\tau_{g}du+dw]X\wedge Y.
		\end{equation}
		Since $\xi(u,0,0)=(X\wedge Y)(u)$, at the  tubular neighbourhood $\alpha$, the Taylor expansion of $\xi$ is given by
		\begin{equation}\label{xic}
		\begin{split}
		&\xi(u,v,w)=(X\wedge Y)(u,0,0)+v\left.\nabla_{Y}(X\wedge Y)\right|_{\gamma(u)}+w\left.\nabla_{X\wedge Y}(X\wedge Y)\right|_{\gamma(u)}\\
		&+\Bigg(\frac{v^2m_{11}(u)}{2}+vwm_{21}(u)+\frac{w^2m_{31}(u)}{2}+\frac{n_{11}(u)v^3}{6}+\frac{n_{21}(u)v^2w}{2}
		+\frac{n_{31}(u)vw^2}{2}\\
		&+\frac{n_{41}(u)w^3}{6}+\mathcal{O}^4_{u}(v,w)\Bigg)X(u)
		+\Bigg(\frac{v^2m_{12}(u)}{2}+vwm_{22}(u)+\frac{w^2m_{32}(u)}{2}\\
		&+\frac{n_{12}(u)v^3}{6}+\frac{n_{22}(u)v^2w}{2}+\frac{n_{32}(u)vw^2}{2}+\frac{n_{42}(u)w^3}{6}+\mathcal{O}^4_{u}(v,w)\Bigg)Y(u)\\
		&+\Bigg(\frac{v^2m_{13}(u)}{2}+vwm_{23}(u)+\frac{w^2m_{33}(u)}{2}+\frac{n_{13}(u)v^3}{6}+\frac{n_{23}(u)v^2w}{2}
		+\frac{n_{33}(u)vw^2}{2}\\
		&+\frac{n_{43}(u)w^3}{6}+\mathcal{O}^4_{u}(v,w)\Bigg)(X\wedge Y)(u).
		\end{split}
		\end{equation}
		
		We have that $d\xi=\xi_{u}du+\xi_{v}dv+\xi_{w}dw$.
		The implicit differential equations  \eqref{eqla} of the asymptotic lines are given by $\langle\xi,d\alpha\rangle=0$ and $\langle d\xi,d\alpha\rangle=0$.
		We can solve the equation $\langle\xi,d\alpha\rangle=0$ for $dw$ and substitute it in $\langle d\xi,d\alpha\rangle=0$. Then the equations of the asymptotic lines
		becomes $dw=Adu+Bdv$ and $edu^2+2fdudv+gdv^2=0$, where
		
		\begin{equation}
		\begin{split}
		e(u,v,w)&=-k_{n}(u)+\mathcal{O}^1_{u}(v,w),
		\quad f(u,v,w)=-\frac{\tau_{g}(u)-\tau_{g,Y}(u)}{2}
		+\mathcal{O}^1_{u}(v,w), \\
		g(u,v,w)&=-k_{n,Y}(u)+\mathcal{O}^1_{u}(v,w).
		\end{split}
		\end{equation}
		
		It follows from equations \eqref{eqcur1} and \eqref{eqcur2} that
		\begin{equation}
		\mathcal{K}(u,0,0)=e(u,0,0)g(u,0,0)-(f(u,0,0))^2=
		k_{1}(u)k_{2}(u)=\mathcal{K}_{G}(u,0,0)
		\end{equation}
		\noindent and
		\begin{equation}
		\mathcal{H}(u,0,0)=-\frac{(e(u,0,0)+g(u,0,0))}{2}=
		\frac{k_{1}(u)+k_{2}(u)}{2}=\mathcal{H}_{M}(u,0,0).
		\end{equation}
\end{proof}

	\subsection{Lie-Cartan hypersurface and Lie-Cartan vector field}
	\label{subsection:lie}
	Let $\Delta$ be a plane field satisfying the assumptions of the Lemma \ref{propeqlaV}.
	Define $F:\mathbb{R}^4\rightarrow \mathbb{R}$ by
	$F(x,y,z,p)=e+2fp+gp^2$.
	\begin{defn}
		The set defined by the equation $F=0$ is called Lie-Cartan hypersurface, and will be denoted by $\mathbb{L}$.
		The subset of $\mathbb{L}$ defined by the equations $F=0$, $F_{p}=0$ is called criminant surface, and will be denoted by $\widetilde{\mathbb{P}}$.
	\end{defn}
	Let $\pi:\mathbb{R}^4\rightarrow \mathbb{R}^3$ be the map defined by $\pi(x,y,z,p)=(x,y,z)$.
	\begin{prop}\label{propLC1} Let $\Delta$ be a plane field satisfying the assumptions of the Lemma \ref{propeqlaV}. Then the following holds
		\begin{enumerate}
			\item $\pi(\widetilde{\mathbb{P}})\subset \mathbb{P}$.
			\item If $(0,0,0)$ is a parabolic point such that the two asymptotic directions coincides
			then $\pi(0,0,0,0)=(0,0,0)$ and  $\pi^{-1}(0,0,0)=\{(0,0,0,0)\}$.
		\end{enumerate}
	\end{prop}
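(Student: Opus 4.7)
The plan is to establish both parts by direct algebraic analysis of the defining polynomial
\begin{equation}
F(x,y,z,p)=e(x,y,z)+2f(x,y,z)\,p+g(x,y,z)\,p^{2},
\end{equation}
together with the formulas recorded in Lemma \ref{propeqlaV} and Proposition \ref{prop10}. The observation driving the proof is that on $\widetilde{\mathbb{P}}$ the coordinate $p$ is a double root of $F$ viewed as a quadratic in $p$, while by Lemma \ref{propeqlaV} the parabolic set is the zero locus of the discriminant-type expression $eg-f^{2}$.

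For item (1), I would take an arbitrary point $q=(x_{0},y_{0},z_{0},p_{0})\in\widetilde{\mathbb{P}}$, so that $F(q)=0$ and $F_{p}(q)=2f+2gp_{0}=0$ at $(x_{0},y_{0},z_{0})$. Multiplying $F=0$ by $g$ and substituting $gp_{0}=-f$ obtained from $F_{p}=0$ gives
\begin{equation}
0=gF(q)=eg+2f(gp_{0})+(gp_{0})^{2}=eg-2f^{2}+f^{2}=eg-f^{2}
\end{equation}
evaluated at $(x_{0},y_{0},z_{0})$. The degenerate subcase $g(x_{0},y_{0},z_{0})=0$ is handled separately and trivially: $F_{p}=0$ forces $f=0$ and then $F=0$ forces $e=0$, so $eg-f^{2}=0$ holds as well. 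By Lemma \ref{propeqlaV}, the parabolic set is precisely $\{eg-f^{2}=0\}$, so $(x_{0},y_{0},z_{0})\in\mathbb{P}$ and $\pi(\widetilde{\mathbb{P}})\subset\mathbb{P}$.

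For item (2), I would use the normalized coordinate system supplied by Lemma \ref{propetaV}, in which $e(0,0,0)=0$, $f(0,0,0)=a_{2}+b_{1}$ and $g(0,0,0)=b_{2}$. The hypothesis that the two asymptotic directions at the origin coincide places us, by Proposition \ref{prop10}, in the case $b_{2}\neq 0$ together with $a_{2}+b_{1}=0$ (the latter already following from $(0,0,0)\in\mathbb{P}$). Hence
\begin{equation}
F(0,0,0,p)=b_{2}\,p^{2},
\end{equation}
whose only zero in $p$ is $p=0$. Therefore the fiber of $\pi$ over $(0,0,0)$ intersected with $\mathbb{L}$ reduces to $\{(0,0,0,0)\}$, and $\pi(0,0,0,0)=(0,0,0)$ is immediate from the definition of $\pi$.

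The proof is essentially computational and I do not anticipate any serious obstacle; the only point that requires care is the reading of $\pi^{-1}(0,0,0)$ in part (2) as the fiber of the restricted map $\pi|_{\mathbb{L}}$ (otherwise the set would be a whole line in $\mathbb{R}^{4}$). Once this convention is fixed, both statements follow from the two short calculations above.
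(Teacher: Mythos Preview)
Your proof is correct and follows essentially the same approach as the paper: for item (1) the paper likewise splits into the cases $g\neq 0$ (solving $F_p=0$ for $p=-f/g$ and substituting into $F=0$) and $g=0$, and for item (2) it uses the normalized coordinates of Lemma~\ref{propetaV} together with $b_2\neq 0$ from Proposition~\ref{prop10} to see that $p=0$ is the unique solution. The only minor difference is that the paper singles out $p$ via $F_p(0,0,0,p)=0$ (implicitly reading $\pi^{-1}$ inside $\widetilde{\mathbb{P}}$) while you read it inside $\mathbb{L}$ via $F(0,0,0,p)=b_2p^2$; both are equivalent here, and your remark about this convention is well taken.
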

	\begin{proof}
		Let us prove the first item.
		Suppose that $g(x,y,z)=0$. From $F_{p}=0$ we have that $f(x,y,z)=0$ and then from $F=0$ we have that $e(x,y,z)=0$.
		Then $(x,y,z)\in\mathbb{P}$. Now suppose that $g(x,y,z)\neq0$. From $F_{p}=0$ we will have that $p=-\frac{f(x,y,z)}{g(x,y,z)}$
		and with this $p$, the equation $F=0$ becomes $e(x,y,z)g(x,y,z)-(f(x,y,z))^2=0$ and so $(x,y,z)\in\mathbb{P}$.
		
		Now we will prove the second item. In a neighbourhood of $(0,0,0)$ we can assume that $\xi$ can be written in the form given by Lemma \ref{propetaV}. Suppose that $(0,0,0)$ is a parabolic point such that the two asymptotic directions coincides.
		Then by the Proposition \ref{prop10} we can suppose that $b_{2}\neq0$. Solving $F_{p}(0,0,0,p)=0$ for $p$
		we get $p=0$. It follows that $(0,0,0,0)\in\widetilde{\mathbb{P}}$ since $F(0,0,0,0)=0$.
	\end{proof}
	\begin{prop}\label{prop13}
		Let $\Delta$ be a plane field satisfying the assumptions of the Lemma \ref{propeqlaV}. If $(0,0,0)$ is a parabolic point where the two asymptotic directions coincides, then the Lie-Cartan hypersurface and the criminant surface are both regular in a neighbourhood
		of $(0,0,0,0)$.
	\end{prop}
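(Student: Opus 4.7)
The plan is to verify the regularity directly by computing the gradients of $F$ and $F_{p}$ at $(0,0,0,0)$, using the normal form for $\xi$ supplied by Lemma \ref{propetaV} together with Proposition \ref{prop10}. Since the two asymptotic directions at $(0,0,0)$ coincide, Proposition \ref{prop10} gives $a_{2} = -b_{1}$ and $b_{2} \neq 0$; the formulas of Lemma \ref{propeqlaV} then yield $e(0,0,0) = 0$, $f(0,0,0) = a_{2} + b_{1} = 0$, and $g(0,0,0) = b_{2} \neq 0$. Consequently $F(0,0,0,0) = 0$ and $F_{p}(0,0,0,0) = 2f(0,0,0) = 0$, so the point $(0,0,0,0)$ indeed lies on both $\mathbb{L}$ and $\widetilde{\mathbb{P}}$, which is where regularity must be checked.

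Next, I would differentiate $F = e + 2fp + gp^{2}$ and $F_{p} = 2f + 2gp$ and evaluate at $(0,0,0,0)$ to obtain
\begin{equation}
\nabla F(0,0,0,0) = \bigl(e_{x}(0,0,0),\, e_{y}(0,0,0),\, e_{z}(0,0,0),\, 0\bigr),
\end{equation}
\begin{equation}
\nabla F_{p}(0,0,0,0) = \bigl(2f_{x}(0,0,0),\, 2f_{y}(0,0,0),\, 2f_{z}(0,0,0),\, 2b_{2}\bigr).
\end{equation}
The key observation is the identity $\nabla \mathcal{K}(0,0,0) = b_{2}\,\bigl(e_{x}(0,0,0), e_{y}(0,0,0), e_{z}(0,0,0)\bigr)$, obtained by differentiating $\mathcal{K} = eg - f^{2}$ and using $e(0,0,0) = f(0,0,0) = 0$. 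Since the parabolic surface is regular at $(0,0,0)$---an implicit standing assumption of the paper, consistent with the explicit formula for $\nabla \mathcal{K}(0,0,0)$ given in Lemma \ref{propetaV}---we have $\nabla \mathcal{K}(0,0,0) \neq 0$, and hence $(e_{x}, e_{y}, e_{z})(0,0,0) \neq 0$. This forces $\nabla F(0,0,0,0) \neq 0$, so by the implicit function theorem $\mathbb{L}$ is a regular hypersurface of $\mathbb{R}^{4}$ in a neighbourhood of $(0,0,0,0)$.

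For the criminant surface, the $p$-component of $\nabla F(0,0,0,0)$ vanishes while the $p$-component of $\nabla F_{p}(0,0,0,0)$ equals $2b_{2} \neq 0$; together with the fact that the first three components of $\nabla F(0,0,0,0)$ do not all vanish, this immediately gives linear independence of $\nabla F$ and $\nabla F_{p}$ at $(0,0,0,0)$, and therefore $\widetilde{\mathbb{P}}$ is a regular two-dimensional submanifold of $\mathbb{R}^{4}$ near $(0,0,0,0)$. The main obstacle in this strategy is the first claim, $\nabla F(0,0,0,0) \neq 0$: it passes through the identification $\nabla\mathcal{K} = g\nabla e + e\nabla g - 2f\nabla f$ evaluated at the origin and ultimately rests on the regularity of the parabolic surface; the linear-algebraic step that separates $\nabla F$ from $\nabla F_{p}$ is then automatic because $F_{pp}(0,0,0,0) = 2b_{2} \neq 0$ decouples the $p$-direction from the base.
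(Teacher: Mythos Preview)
Your proof is correct and follows essentially the same approach as the paper's own proof: both identify $\nabla F(0,0,0,0)$ with $b_{2}^{-1}\nabla\mathcal{K}(0,0,0)$ in the first three components (via $F_{x}=e_{x}$, etc., at $p=0$) to obtain regularity of $\mathbb{L}$ from the assumed regularity of the parabolic surface, and then use $F_{p}(0,0,0,0)=0$ together with $F_{pp}(0,0,0,0)=2b_{2}\neq 0$ to deduce the linear independence of $\nabla F$ and $\nabla F_{p}$, giving regularity of the criminant.
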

	\begin{proof}
		In a neighbourhood of $(0,0,0)$ we can assume that $\xi$ can be written in the form given by Lemma \ref{propetaV}.
		After the calculations, we get that $b_{2}F_{x}(0,0,0,0)=(\mathcal{K})_{x}(0,0,0)$,
		$b_{2}F_{y}(0,0,0,0)=(\mathcal{K})_{y}(0,0,0)$ and $b_{2}F_{z}(0,0,0,)=(\mathcal{K})_{z}(0,0,0)$.
		As the parabolic set defined by $\mathcal{K}=0$ is
		a regular surface in a neighbourhood of $(0,0,0)$ then the Lie-Cartan hypersurface defined by $F=0$ is regular in a neighbourhood
		of $(0,0,0,0)$. Since $F_{p}(0,0,0,0)=0$ and $F_{pp}(0,0,0,0)=2b_{2}\neq0$, the gradient vectors $\nabla F(0,0,0,0)$
		and $\nabla F_{p}(0,0,0,0)$ are lineament independents and so the criminant, which is defined by $F=F_{p}=0$, is a regular surface in a neighbourhood of
		$(0,0,0,0)$.
	\end{proof}
	\begin{prop}\label{prop0}
		Let $\Delta$ be a plane field, orthogonal to a vector field $\xi$ of class $C^{k}$, $k\geq3$, satisfying the assumptions of the Lemma \ref{propeqlaV}. Then the equations $F=0$, $dy-pdx=0$, $dz-qdx=0$, $F_{x}dx+F_{y}dy+F_{z}dz+F_{p}dp=0$, where $q=-\frac{a}{c}-\frac{b}{c}p$,
		defines a  line field $\widetilde{\mathcal{A}}$ tangent to $\mathbb{L}$, which in a neighbourhood of $(0,0,0,0)$,
		is spanned by the following vector field of class $C^{k-2}$
		\begin{equation}
		\mathcal{X}=F_{p}\frac{\partial}{\partial x}+pF_{p}\frac{\partial}{\partial y}+qF_{p}\frac{\partial}{\partial z}-
		(F_{x}+pF_{y}+qF_{z})\frac{\partial}{\partial p},
		\end{equation}
		\noindent which can be written in the form $\mathcal{X}=(F_{p},pF_{p},qF_{p},-(F_{x}+pF_{y}+qF_{z}))$.
		
		Furthermore,
		if $\widetilde{\gamma}(t)=(x(t),y(t),z(t),p(t))$ is a integral curve of $\mathcal{X}$, then
		$\gamma(t)=\pi(\widetilde{\gamma}(t))=(x(t),y(t),z(t))$   is an asymptotic line of $\Delta$ and if $\gamma(t)=(x(t),y(t),z(t))$  is an asymptotic line of $\Delta$, then  $\widetilde{\gamma}(t)=(x(t),y(t),z(t),p(t))$
		is a integral curve of $\mathcal{X}$, where $p(t)=\left.\left(\frac{dy}{dt}\right)\right/\left(\frac{dx}{dt}\right)$.
	\end{prop}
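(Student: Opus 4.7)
The plan is to verify the statement by direct analysis of the Pfaffian system. Among the four defining conditions, $F=0$ singles out the hypersurface $\mathbb{L}$, while the remaining three are linear constraints on tangent vectors $(dx,dy,dz,dp)\in T\mathbb{R}^4$. Since the last of them is literally $dF=0$, any direction satisfying it is automatically tangent to $\mathbb{L}$. Using $dy=p\,dx$ and $dz=q\,dx$ to eliminate $dy$ and $dz$, the third equation collapses to $(F_x+pF_y+qF_z)\,dx+F_p\,dp=0$, so the resulting direction in $T\mathbb{R}^4$ is determined up to scale, yielding precisely $\mathcal{X}=(F_p,pF_p,qF_p,-(F_x+pF_y+qF_z))$. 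This simultaneously establishes that $\widetilde{\mathcal{A}}$ is a line field, that it lies in $T\mathbb{L}$, and that $\mathcal{X}$ spans it.

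For the regularity, I would track the loss of derivatives carefully. By Lemma \ref{propeqlaV}, $e$, $f$, $g$ are built from $a$, $b$, $c$ and their first-order partials, so they are of class $C^{k-1}$; consequently $F=e+2fp+gp^2$ is $C^{k-1}$, and the partials $F_x,F_y,F_z,F_p$ are $C^{k-2}$. The function $q=-a/c-(b/c)p$ is $C^k$ on any neighbourhood where $c\neq 0$, which is guaranteed by the hypothesis of Lemma \ref{propeqlaV}, so every component of $\mathcal{X}$ is $C^{k-2}$.

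For the correspondence with asymptotic lines, the first step is to check that $F$ is a first integral of $\mathcal{X}$ by direct computation along an integral curve $\widetilde{\gamma}(t)=(x(t),y(t),z(t),p(t))$: substituting the components of $\mathcal{X}$ into $(d/dt)F(\widetilde{\gamma})=F_x\dot x+F_y\dot y+F_z\dot z+F_p\dot p$ yields $F_p(F_x+pF_y+qF_z)-F_p(F_x+pF_y+qF_z)=0$. Hence if $\widetilde{\gamma}(0)\in\mathbb{L}$ then $\widetilde{\gamma}$ remains on $\mathbb{L}$. The projection $\gamma=\pi\circ\widetilde{\gamma}$ then satisfies $\dot y=p\dot x$ and $\dot z=q\dot x$, which translate to $\langle\xi,\dot\gamma\rangle=0$ (by the very definition of $q$), while the quadratic part $e\dot x^2+2f\dot x\dot y+g\dot y^2=\dot x^2\,F(\widetilde{\gamma})=0$ gives the second equation of \eqref{eqlaV}; hence $\gamma$ is an asymptotic line. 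Conversely, for an asymptotic line $\gamma$ with $\dot x\neq 0$, setting $p(t)=\dot y(t)/\dot x(t)$ yields a lift for which $F(\widetilde{\gamma})=0$ (from the quadratic equation) and $\dot z=q\dot x$ (from the plane-field equation); differentiating the identity $F(\widetilde{\gamma}(t))\equiv 0$ then gives the last Pfaffian condition, so $\widetilde{\gamma}'\in\widetilde{\mathcal{A}}$ and, after a reparametrization, $\widetilde{\gamma}$ is an integral curve of $\mathcal{X}$.

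The computations are essentially mechanical; the main point requiring attention is the regularity count, since $\mathcal{X}$ mixes first-order quantities (through $F_p$) and second-order quantities (through $F_x,F_y,F_z$) in the derivatives of $\xi$, and one must confirm that the combined loss is exactly two orders. A minor subtlety is the behaviour at points where both $F_p$ and $F_x+pF_y+qF_z$ vanish, where the Pfaffian system fails to cut out a genuine line and $\mathcal{X}$ vanishes; this is consistent with the statement, which asserts only that $\mathcal{X}$ spans $\widetilde{\mathcal{A}}$ rather than that it is nowhere-vanishing.
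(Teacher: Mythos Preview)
Your proposal is correct and follows essentially the same route as the paper: eliminate $dy$ and $dz$ from the Pfaffian system using $dy=p\,dx$, $dz=q\,dx$, read off $\mathcal{X}$ from the remaining relation $(F_x+pF_y+qF_z)\,dx+F_p\,dp=0$, and then argue the correspondence with asymptotic lines via $F(\widetilde{\gamma})=0$ and its derivative. Your write-up is in fact more complete than the paper's on two points: you actually justify the $C^{k-2}$ regularity claim by tracking the loss of derivatives through $e,f,g$, and you explicitly check that $F$ is a first integral of $\mathcal{X}$ (the paper simply asserts $F(\widetilde{\gamma})=0$ for integral curves without verifying it).
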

	\begin{proof}
		For each $(x,y,z,p)$, the equations $dy-pdx=0$, $dz-qdx=0$, $F_{x}dx+F_{y}dy+F_{z}dz+F_{p}dp=0$ defines a straight line
		with coordinates $(dx,dy,dz,dp)$ and so varying $(x,y,z,p)$ we have a field of straight lines in $\mathbb{R}^3$.
		We will show that this field of straight lines is locally defined by a vector field $\mathcal{X}$.
		
		From the three equations above we have that
		$(F_{x}+pF_{y}+qF_{z})dx+F_{p}dp=0$,
		which the solution $(dx,dp)$ is given by $dp=-(F_{x}+pF_{y}+qF_{z})$ and $dx=F_{p}$. Therefore,  we have that
		$dy=pdx=pF_{p}$ and $dz=qdx=qF_{p}$. This defines locally the vector field $\mathcal{X}=(\dot{x},\dot{y},\dot{z},\dot{p})$
		where
		\begin{equation}
		\dot{x}=F_{p}, \ \ \dot{y}=pF_{p}, \ \ \dot{z}=qF_{p}, \ \ \dot{p}=-(F_{x}+pF_{y}+qF_{z}),
		\end{equation}
		\noindent which can be written in the following notation
		\begin{equation}
		\mathcal{X}=F_{p}\frac{\partial}{\partial x}+pF_{p}\frac{\partial}{\partial y}+qF_{p}\frac{\partial}{\partial z}-
		(F_{x}+pF_{y}+qF_{z})\frac{\partial}{\partial p}.
		\end{equation}
		Now, let $\widetilde{\gamma}$ be a integral curve of $\mathcal{X}$, $\widetilde{\gamma}(t)=(x(t),y(t),z(t),p(t)).$
		From $F(\widetilde{\gamma})=0$ we have that $e(x(t),y(t),z(t))+2f(x(t),y(t),z(t))p(t)+g(x(t),y(t),z(t))(p(t))^2=0$.
		But $\frac{dy}{dt}=p(t)\frac{dx}{dt}$ and so $\left(\frac{dy}{dt}\right)^2=(p(t))^2\left(\frac{dx}{dt}\right)^2.$
		Then multiplying the equation $F(\widetilde{\gamma})=0$ by $\left(\frac{dx}{dt}\right)^2$ we get the equation
		\begin{equation}
		e(\gamma(t))\left(\frac{dx}{dt}\right)^2+2f(\gamma(t))\left(\frac{dx}{dt}\right)\left(\frac{dy}{dt}\right)
		+g(\gamma(t))\left(\frac{dy}{dt}\right)^2=0.
		\end{equation}
		Then $\gamma$ satisfies the equation $edx^2+2fdxdy+gdy^2=0$ of \eqref{eqlaV}. We have that
		$\frac{dz}{dt}-q(t)\frac{dx}{dt}=0$ and
		$q(t)=-\left(\frac{a(\gamma(t))}{c(\gamma(t))}\right)-\left(\frac{b(\gamma(t))}{c(\gamma(t))}\right)p(t)$ and
		so $$\frac{dz}{dt}=-\left(\frac{a(\gamma(t))}{c(\gamma(t))}\right)\left(\frac{dx}{dt}\right)-\left(\frac{b(\gamma(t))}{c(\gamma(t))}\right)\left(\frac{dy}{dt}\right).$$
		With that, $\gamma$ satisfies the equation $dz=-\left(\frac{a}{c}\right)dx-\left(\frac{b}{c}\right)dy$ of \eqref{eqlaV}.
		This concludes that $\gamma$  is an asymptotic line of the plane field $\Delta$.
		
		Now, if $\gamma$  is an asymptotic line of the plane field, then $\gamma$ and $\gamma'=\left(\frac{dx}{dt},\frac{dy}{dt},\frac{dz}{dt}\right)$ satisfies the equations \eqref{eqlaV}:
		\begin{equation}\label{eqlaVgamma}
		\begin{split}
		&e(\gamma)\left(\frac{dx}{dt}\right)^2+2f(\gamma)\left(\frac{dx}{dt}\right)\left(\frac{dy}{dt}\right)
		+g(\gamma)\left(\frac{dy}{dt}\right)^2=0,
		\\
		&\frac{dz}{dt}=-\left(\frac{a(\gamma)}{c(\gamma)}\right)\left(\frac{dx}{dt}\right)-\left(\frac{b(\gamma)}{c(\gamma)}\right)\left(\frac{dy}{dt}\right).
		\end{split}
		\end{equation}
		As $dy-pdx=0$, $dz-qdx=0$ then $p(t)=\frac{dy}{dx}=\left.\left(\frac{dy}{dt}\right)\right/ \left(\frac{dx}{dt}\right)$,
		$q(t)=\frac{dz}{dx}=\left.\left(\frac{dz}{dt}\right)\right/ \left(\frac{dx}{dt}\right)$.
		Dividing the first equation
		(respectively the second equation)
		of \eqref{eqlaVgamma} by $\left(\frac{dx}{dt}\right)^2$ (respectively by $\frac{dx}{dt}$) we get that
		$$
		e(\gamma(t))+2f(\gamma(t))p(t)+g(\gamma(t))(p(t))^2=0, \ \ q(t)=-\frac{a(\gamma(t))}{c(\gamma(t))}-\left(\frac{b(\gamma(t))}{c(\gamma(t))}\right)p(t).
		$$
		Then $F(\widetilde{\gamma})=0$. Differentiating the equation  $F(\widetilde{\gamma})=0$ with relation of $t$
		we get that
		$$
		F_{x}(\widetilde{\gamma})\frac{dx}{dt}+F_{y}(\widetilde{\gamma})\frac{dy}{dt}+F_{z}(\widetilde{\gamma})\frac{dz}{dt}
		+F_{p}(\widetilde{\gamma})\frac{dp}{dt}=0.
		$$
		It follows that the tangents of $\widetilde{\gamma}$ belongs to the field of straight lines which is locally defined
		by $\mathcal{X}$ and so  $\widetilde{\gamma}$ is an integral curve of $\mathcal{X}$.
	\end{proof}
	\begin{defn}
		The vector field $\mathcal{X}$ given in Proposition \ref{prop0} is called Lie-Cartan vector field, see \cite{MR2532372}.
		It is called of suspended vector field, see \cite{MR1634428} and lifted field \cite{MR1328597}.
	\end{defn}
	\begin{rem}
		To prove a more general version of the second item of the Proposition \ref{propLC1}, we need to consider $G:\mathbb{R}^4\rightarrow \mathbb{R}$ defined by
		$G(x,y,z,p)=eq^2+2fq+g$, where $dz-qdx=0$.
	\end{rem}
	
	\begin{prop}\label{propsing}
		Let $\Delta$ be a plane field satisfying the assumptions of the Lemma \ref{propeqlaV}. Suppose that $(0,0,0)$ is a parabolic point
		such that the two asymptotic directions coincides. Let $\theta$ be the angle between $\nabla \mathcal{K}(0,0,0)$ and
		the asymptotic direction $\mathcal{A}$ on $(0,0,0)$.
		Then $(0,0,0,0)\in\widetilde{\mathbb{P}}$
		is a singular point of the Lie-Cartan vector field $\mathcal{X}$
		if, and only if, $\theta=\frac{\pi}{2}$, which implies that $a_{11}=0$.
	\end{prop}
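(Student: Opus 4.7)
The plan is to evaluate the Lie-Cartan vector field $\mathcal{X}$ at $(0,0,0,0)$ in the normal form of Lemma \ref{propetaV} and check by direct computation that its vanishing condition matches the angle condition $\theta=\pi/2$. Since the hypotheses include those of Lemma \ref{propeqlaV} together with the parabolic point being of coincident asymptotic directions, Proposition \ref{prop10} gives $a_{2}=-b_{1}$ and $b_{2}\neq 0$, so the asymptotic direction is $\mathcal{A}=(1,0,0)$, and Proposition \ref{propLC1}(2) identifies the only candidate singular point in the fiber above $(0,0,0)$ as $(0,0,0,0)$.

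Next I would compute $\mathcal{X}(0,0,0,0)$. Using the normalization $c(0,0,0)=1$, $a(0,0,0)=b(0,0,0)=0$ from \eqref{etaV}, the function $q=-a/c-(b/c)p$ vanishes at $(0,0,0,0)$. From $F=e+2fp+gp^{2}$ together with $f(0,0,0)=\tfrac{1}{2}(a_{2}+b_{1})=0$ and $p=0$, we obtain $F_{p}(0,0,0,0)=0$. Consequently the first three components of $\mathcal{X}(0,0,0,0)$ vanish automatically, and $(0,0,0,0)$ is singular if and only if
\begin{equation}
F_{x}(0,0,0,0)+p\,F_{y}(0,0,0,0)+q\,F_{z}(0,0,0,0)=F_{x}(0,0,0,0)=e_{x}(0,0,0)=0.
\end{equation}

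The key calculation is then $e_{x}(0,0,0)$. Using the explicit formula
\begin{equation}
e=a_{x}-\frac{(a_{z}+c_{x})a}{c}+\frac{a^{2}c_{z}}{c^{2}}
\end{equation}
from Lemma \ref{propeqlaV}, one differentiates with respect to $x$ and evaluates at the origin. Because $a$, $c_{x}$ and $c_{z}$ all vanish at $(0,0,0)$ (by \eqref{etaV} and the normalization $c_{x}(0)=c_{y}(0)=c_{z}(0)=0$ used in the proof of Lemma \ref{propetaV}), the $x$-derivatives of the two correction terms are $0$, and only $a_{xx}(0,0,0)=2a_{11}$ survives. Hence $e_{x}(0,0,0)=2a_{11}$, and so $(0,0,0,0)$ is singular iff $a_{11}=0$.

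Finally I would compare with the gradient formula in Lemma \ref{propetaV}, which gives
\begin{equation}
\langle\nabla\mathcal{K}(0,0,0),\mathcal{A}\rangle=2b_{2}a_{11}.
\end{equation}
Since $b_{2}\neq 0$, the orthogonality condition $\theta=\pi/2$ is equivalent to $a_{11}=0$, which matches the singularity condition obtained above, and the implication $\theta=\pi/2\Rightarrow a_{11}=0$ follows as a byproduct. The only nontrivial step is the verification that $e_{x}(0,0,0)=2a_{11}$; it is a direct Taylor computation that presents no real obstacle, just care with the higher-order terms in \eqref{etaV}.
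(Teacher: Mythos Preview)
Your proof is correct and follows essentially the same route as the paper: both reduce the question, via the normal form of Lemma~\ref{propetaV} and Proposition~\ref{prop10}, to the vanishing of the fourth component of $\mathcal{X}(0,0,0,0)$, identify this component with $-e_{x}(0,0,0)=-2a_{11}$, and then use $\langle\nabla\mathcal{K}(0,0,0),\mathcal{A}\rangle=2b_{2}a_{11}$ with $b_{2}\neq 0$ to translate $a_{11}=0$ into $\theta=\pi/2$. Your write-up is in fact slightly more explicit about why the correction terms in $e$ contribute nothing to $e_{x}(0,0,0)$, which is a nice touch.
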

	\begin{proof}
		By Lemma \ref{propetaV}, $\xi=(a,b,c)$, where $a$, $b$, $c$ are given by \ref{etaV}. By Proposition \ref{prop10}, $a_{2}=-b_{1}$, $b_{2}\neq0$
		and the asymptotic direction at $(0,0,0)$ is $\mathcal{A}=\left(1,0,0\right)$.
		
		From $(\mathcal{K})_{x}(0,0,0)=2a_{11}b_{2}$ it follows that
		\begin{equation}\label{ang}
		\langle\nabla\mathcal{K}(0,0,0),\mathcal{A}\rangle=
		|\nabla\mathcal{K}(0,0,0)|cos(\theta)=2a_{11}b_{2}.
		\end{equation}
		After calculations, $\mathcal{X}(0,0,0,0)=\left(0,0,0,-\frac{|\nabla\mathcal{K}(0,0,0)|cos(\theta)}{b_{2}}\right)$.
		Then $(0,0,0,0)$ is a singular point of the Lie-Cartan vector field if, and only if, $\theta=\frac{\pi}{2}$.
		If $\theta=\frac{\pi}{2}$, $a_{11}=0$ follows from \eqref{ang}.
	\end{proof}

	\begin{prop}\label{prop2}
		Let $\Delta$ be a plane field satisfying the assumptions of the Lemma \ref{propeqlaV} and let $\widetilde{\varphi}$ be a curve of singular points of $\mathcal{X}$
		passing by $(0,0,0,0)$. Then in a neighbourhood of $(0,0,0,0)$, the jacobian matrix $D\mathcal{X}(\widetilde{\varphi})$ of $\mathcal{X}$
		evaluated at $\widetilde{\varphi}$ is given by
		\begin{equation}
		D\mathcal{X}(\widetilde{\varphi})=\left(
		\begin{array}{cccc}
		F_{px} & F_{py} & F_{pz} & F_{pp} \\
		pF_{px} & pF_{py} & pF_{pz} & pF_{pp} \\
		qF_{px} & qF_{py} & qF_{pz} & qF_{pp} \\
		A & B & C & D \\
		\end{array}
		\right)
		\end{equation}
		\noindent where $A=-(F_{xx}+pF_{xy}+q_{x}F_{z}+qF_{xz})$, $B=-(F_{xy}+pF_{yy}+q_{y}F_{z}+qF_{yz})$, $C=-(F_{xz}+pF_{yz}+q_{z}F_{z}+qF_{zz})$
		and $D=-(F_{xp}+pF_{yp}+F_{y}+q_{p}F_{z}+qF_{pz})$.
		
		Furthermore, the not necessarily zero eigenvalues $\lambda_{1}$ and $\lambda_{2}$ of $D\mathcal{X}$ are given by
		\begin{equation}\label{eigenVa}
		\begin{split}
		\lambda_{1}=-\frac{F_{y}+q_{p}F_{z}}{2}+\frac{\sqrt{\Omega}}{2}, \quad \lambda_{2}=-\frac{F_{y}+q_{p}F_{z}}{2}-\frac{\sqrt{\Omega}}{2}
		\end{split}
		\end{equation}
		\noindent where
		\begin{equation}\label{omega}
		\begin{split}
		\Omega&=F_{y}^2+4(pF_{py}+qF_{pz})F_{y}+4F_{px}^2+4(F_{y}+2pF_{py}+2qF_{pz})F_{px}\\
		&+4[p^2F_{py}^2+2pqF_{py}F_{pz}+q^2F_{pz}^2+(A_{x}+pB_{y}+qC_{z})F_{pp}].
		\end{split}
		\end{equation}
		The eigenvectors $\vartheta_{1},\vartheta_{2}$, associated to the eigenvalues $\lambda_{1},\lambda_{2}$ respectively,  are given by
		\begin{equation}\label{eigenVe}
		\begin{split}
		\vartheta_{1}&=\Bigg(1,p,q,-\frac{F_{y}+2(F_{px}+pF_{py}+qF_{pz})}{2F_{pp}}+\frac{\sqrt{\Omega}}{2F_{pp}}\Bigg)\\
		&=\Bigg(1,p,q,\frac{\lambda_{1}}{F_{pp}}-\frac{F_{px}+pF_{py}+qF_{pz}}{F_{pp}}\Bigg),\\
		\vartheta_{2}&=\Bigg(1,p,q,-\frac{F_{y}+2(F_{px}+pF_{py}+qF_{pz})}{2F_{pp}}-\frac{\sqrt{\Omega}}{2F_{pp}}\Bigg)\\
		&=\Bigg(1,p,q,\frac{\lambda_{2}}{F_{pp}}-\frac{F_{px}+pF_{py}+qF_{pz}}{F_{pp}}\Bigg).
		\end{split}
		\end{equation}
	\end{prop}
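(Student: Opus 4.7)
The plan is to compute the Jacobian by direct differentiation, exploit the strong row proportionality of the resulting matrix to peel off two trivial zero eigenvalues, and then reduce the remaining eigenvalue problem to a $2\times 2$ (equivalently, a quadratic in $\lambda$) whose coefficients simplify via an algebraic identity relating $D$ and the other entries.

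First I would write down the Jacobian entry by entry, using repeatedly that on $\widetilde{\varphi}$ one has $F_p=0$, since the singular point condition forces both $F_p=0$ and $F_x+pF_y+qF_z=0$. The vanishing of $F_p$ kills all the cross-terms in the first three rows; for instance $\partial_p(pF_p)=F_p+pF_{pp}=pF_{pp}$ on $\widetilde{\varphi}$, and analogously for $\partial_p(qF_p)$ and for the $x,y,z$-derivatives (where $q$ depends on $x,y,z,p$ through $q=-a/c-(b/c)p$ and thus contributes terms like $q_xF_z$ in the last row). This produces exactly the displayed matrix, with $A,B,C,D$ arising from expanding $-\partial_\bullet(F_x+pF_y+qF_z)$ via the chain rule.

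Second, the crucial structural observation: the second and third rows of $D\mathcal{X}(\widetilde{\varphi})$ are $p$ and $q$ times the first row, respectively. Hence $\mathrm{rank}\,D\mathcal{X}\leq 2$, so its kernel has dimension at least $2$ and there are at least two zero eigenvalues. For any nonzero eigenvalue $\lambda$ with eigenvector $v=(v_1,v_2,v_3,v_4)$, the first three equations force $v_2=pv_1$ and $v_3=qv_1$; normalizing $v_1=1$, the first-row equation gives
\begin{equation}
v_4=\frac{\lambda-S}{F_{pp}},\qquad S:=F_{px}+pF_{py}+qF_{pz},
\end{equation}
and substituting into the fourth-row equation produces the scalar condition $(\lambda-D)(\lambda-S)=(A+Bp+Cq)F_{pp}$, i.e.\ a quadratic in $\lambda$.

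Third, I would simplify the coefficients of this quadratic using the identity $D=-(F_y+q_pF_z)-S$, which follows from the explicit definition of $D$ together with $F_{xp}+pF_{yp}+qF_{pz}=S$. The sum of the roots becomes $D+S=-(F_y+q_pF_z)$, which matches the common term in \eqref{eigenVa}, and the discriminant $(D-S)^2+4(A+Bp+Cq)F_{pp}=(F_y+q_pF_z+2S)^2+4(A+Bp+Cq)F_{pp}$ is the compact form of $\Omega$. Reading $v_4=(\lambda_i-S)/F_{pp}$ back off produces the eigenvectors $\vartheta_i$ in the form \eqref{eigenVe}.

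The main obstacle I foresee is purely computational: showing that the compact expression for $\Omega$ obtained from the discriminant agrees, after full expansion, with the polynomial form displayed in \eqref{omega}. This is routine but tedious bookkeeping, and requires carefully tracking the $q$-dependence in the chain-rule expansion for $D$ so that the identity $D+S=-(F_y+q_pF_z)$ is available; once that identity is in hand, the rest of the proposition collapses to verifying the quadratic formula for the two nonzero roots.
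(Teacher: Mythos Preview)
Your proposal is correct and follows essentially the same approach as the paper: compute $D\mathcal{X}$ directly, simplify on the singular locus using $F_p=0$, and extract the two nontrivial eigenvalues and eigenvectors by direct linear algebra. The paper's own proof is in fact considerably terser than yours --- after displaying the Jacobian it simply says ``performing the calculations, we find the expressions'' --- so your explicit reduction via the row-proportionality observation and the identity $D+S=-(F_y+q_pF_z)$ supplies exactly the detail the paper omits. Your compact discriminant $(F_y+q_pF_z+2S)^2+4(A+pB+qC)F_{pp}$ is the right object; the polynomial displayed as $\Omega$ in the statement is just its expansion (modulo what appears to be a notational quirk in the last bracket), so the ``tedious bookkeeping'' you flag is genuinely the only remaining work.
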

	\begin{proof}
		Let $\mathcal{X}_{4}=-(F_{x}+pF_{y}+qF_{z})$. Then the jacobian matrix $D\mathcal{X}$ is given by
		\begin{equation}
		D\mathcal{X}=\left(
		\begin{array}{cccc}
		F_{px} & F_{py} & F_{pz} & F_{pp} \\
		pF_{px} & pF_{py} & pF_{pz} & F_{p}+pF_{pp} \\
		q_{x}F_{p}+qF_{px} & q_{y}F_{p}+qF_{py} & q_{z}F_{p}+qF_{pz} & q_{p}F_{p}+qF_{pp} \\
		(\mathcal{X}_{4})_{x} & (\mathcal{X}_{4})_{y} & (\mathcal{X}_{4})_{z} & (\mathcal{X}_{4})_{p} \\
		\end{array}
		\right)
		\end{equation}
		
		But in $\widetilde{\mathbb{P}}$ we have that $F=0$, $F_{p}=0$ and so
		
		\begin{equation}
		D\mathcal{X}(\widetilde{\varphi})=\left(
		\begin{array}{cccc}
		F_{px} & F_{py} & F_{pz} & F_{pp} \\
		pF_{px} & pF_{py} & pF_{pz} & pF_{pp} \\
		qF_{px} & qF_{py} & qF_{pz} & qF_{pp} \\
		(\mathcal{X}_{4})_{x} & (\mathcal{X}_{4})_{y} & (\mathcal{X}_{4})_{z} & (\mathcal{X}_{4})_{p} \\
		\end{array}
		\right).
		\end{equation}
		The matrix $D\mathcal{X}(\widetilde{\varphi})$ has two zero eigenvalues and two not necessarily zero eigenvalues
		$\lambda_{1}$ and $\lambda_{2}$.
		Performing the calculations, we find the expressions for $\lambda_{1}$ and $\lambda_{2}$ given by \eqref{eigenVa} and
		the expressions for the associated eigenvectors given by \eqref{eigenVe}.
	\end{proof}
	
	\begin{prop}\label{prop12}
		Let $\Delta$ be a plane field satisfying the assumptions of the Lemma \ref{propeqlaV}. Suppose that $(0,0,0)$ is a parabolic point
		such that the two asymptotic directions coincides and suppose that $(0,0,0,0)$ is a singular point of the Lie-Cartan vector field with real not necessarily zero eigenvalues $\lambda_{1}$ and $\lambda_{2}$.
		Then, at $(0,0,0,0)$, the eigenvector $\vartheta_{i}$ associated  to the eigenvalue $\lambda_{i}$  is tangent to the criminant if, and only if, $\lambda_{i}=0$.
	\end{prop}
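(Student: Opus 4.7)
By Proposition \ref{prop13}, in a neighbourhood of $(0,0,0,0)$ the criminant $\widetilde{\mathbb{P}}$ is a regular $2$-dimensional surface cut out by the equations $F=0$ and $F_{p}=0$. Hence a vector $v\in T_{(0,0,0,0)}\mathbb{R}^{4}$ is tangent to $\widetilde{\mathbb{P}}$ at the singular point if, and only if, $dF(v)=0$ and $dF_{p}(v)=0$. The plan is to test both of these linear conditions on the explicit eigenvector $\vartheta_{i}$ furnished by Proposition \ref{prop2}, using crucially that at a point of the singular curve $\widetilde{\varphi}$ of $\mathcal{X}$ we already have $F_{p}=0$ and $F_{x}+pF_{y}+qF_{z}=0$ (by the definition of $\mathcal{X}$ and the inclusion $\widetilde{\varphi}\subset\widetilde{\mathbb{P}}$).

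First I would verify that $dF(\vartheta_{i})=0$ is automatic. Recall from \eqref{eigenVe} that
\begin{equation}
\vartheta_{i}=\Bigl(1,\;p,\;q,\;\tfrac{\lambda_{i}}{F_{pp}}-\tfrac{F_{px}+pF_{py}+qF_{pz}}{F_{pp}}\Bigr).
\end{equation}
Since $F_{p}=0$ at the singular point, the last coordinate of $\vartheta_{i}$ contributes $F_{p}\cdot(\cdot)=0$ to $dF(\vartheta_{i})$, so
\begin{equation}
dF(\vartheta_{i})=F_{x}+pF_{y}+qF_{z}=0,
\end{equation}
where the final equality is precisely the vanishing of the fourth component of $\mathcal{X}$ at the singular point. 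Thus $\vartheta_{i}$ lies in $\ker dF$ regardless of the value of $\lambda_{i}$.

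Next I would compute $dF_{p}(\vartheta_{i})$. Substituting the coordinates of $\vartheta_{i}$,
\begin{equation}
dF_{p}(\vartheta_{i})=F_{px}+pF_{py}+qF_{pz}+F_{pp}\Bigl(\tfrac{\lambda_{i}}{F_{pp}}-\tfrac{F_{px}+pF_{py}+qF_{pz}}{F_{pp}}\Bigr)=\lambda_{i}.
\end{equation}
Therefore the tangency condition $dF_{p}(\vartheta_{i})=0$ is equivalent to $\lambda_{i}=0$, and combining this with the preceding paragraph proves the proposition. The only care needed is that $F_{pp}\neq 0$ in a neighbourhood of $(0,0,0,0)$, which is guaranteed by Proposition \ref{prop13} (where $F_{pp}(0,0,0,0)=2b_{2}\neq 0$ was used to deduce regularity of $\widetilde{\mathbb{P}}$), so that the expression \eqref{eigenVe} for $\vartheta_{i}$ is well defined. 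No genuine obstacle arises; the argument is a direct substitution once one observes that the singular-point relations for $\mathcal{X}$ make the $dF$-condition free.
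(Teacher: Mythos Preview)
Your proof is correct and follows essentially the same route as the paper's: both test the two tangent-plane equations $dF=0$ and $dF_{p}=0$ against the explicit eigenvector from \eqref{eigenVe}, using $F_{p}=0$ and $F_{x}+pF_{y}+qF_{z}=0$ at the singular point to make the first equation automatic and reduce the second to $\lambda_{i}=0$. The paper merely specializes earlier to $p=q=0$ at $(0,0,0,0)$ before carrying out the same substitution, whereas you keep the general $p,q$ notation throughout; the content is identical.
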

	\begin{proof}
		The tangent plane, in coordinates $(dx,dy,dz,dp)$, of the criminant surface  at $(0,0,0,0)$ is given by
		\begin{equation}\label{tpc}
		F_{x}dx+F_{y}dy+F_{z}dz=0, \ \ F_{px}dx+F_{py}dy+F_{pz}dz+F_{pp}dp=0,
		\end{equation}
		\noindent where $F_{i}$ and $F_{jk}$ are evaluated at $(0,0,0,0)$.
		By Proposition \ref{prop2}, at $(0,0,0,0)$, the eigenvector $\vartheta_{i}$ associated to the eigenvalue $\lambda_{i}$ is given by
		\begin{equation}
		\vartheta_{i}=\left(1,0,0,\frac{\lambda_{i}-F_{px}}{F_{pp}}\right).
		\end{equation}
		Set $\ell(\Phi)=(1,0,0,\Phi)$. We will prove that $\ell(\Phi)$ is a solution of \eqref{tpc} only if $\Phi=-\frac{F_{px}}{F_{pp}}$.
		After replacing $(dx,dy,dz,dp)=\ell(\Phi)$ at \eqref{tpc} we get $F_{px}+F_{pp}\Phi=0$ and $F_{x}=0$, which is already satisfied since $(0,0,0,0)$ is a singular point of
		$\mathcal{X}$. Then we get $\Phi=-\frac{F_{px}}{F_{pp}}=-\frac{a_{12}+2b_{11}-a_{3}b_{1}}{2b_{2}}$ is the only solution of \eqref{tpc} evaluated at $\ell(\Phi)$.
		Since $\vartheta_{i}=\ell\left(\frac{\lambda_{i}-F_{px}}{F_{pp}}\right)$, it follows that $\vartheta_{i}$ is tangent to the criminant if, and only if, $\lambda_{i}=0$.
	\end{proof}
	
	\subsection{Parabolic surface and curve of special parabolic points}
	
	\begin{defn}
		A point $r$ of the parabolic surface is called special parabolic point if
		$\langle\nabla\mathcal{K}(r),\mathcal{A}(r)\rangle=0$ at $r$.
	\end{defn}
	
	\begin{lem}\label{lemma:s1}
		Let $\Delta$ be a plane field satisfying the assumptions of the Lemma \ref{propeqlaV}.  Let $\varphi$ be 
		a regular curve of parabolic points. Then $\varphi$
		is a curve of special parabolic points if, and only if,  $\widetilde{\varphi}$ is a curve of singular points of the Lie-Cartan vector field $\mathcal{X}$.	
	\end{lem}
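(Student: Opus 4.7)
The plan is to carry out, at each point of $\varphi$, the same computation that underlies Proposition \ref{propsing}, and to show that the geometric condition $\langle\nabla\mathcal{K},\mathcal{A}\rangle=0$ translates literally into the vanishing of the fourth component $-(F_x+pF_y+qF_z)$ of the Lie-Cartan vector field $\mathcal{X}$ along the lifted curve $\widetilde{\varphi}\subset\widetilde{\mathbb{P}}$. Since on $\widetilde{\mathbb{P}}$ the first three components of $\mathcal{X}$ vanish automatically (because $F_p=0$), this fourth component is the only thing to control.

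Working in the coordinates of Lemma \ref{propeqlaV}, the parabolic set is $\{\mathcal{K}=eg-f^2=0\}$. At a point $r\in\varphi$ I would assume, after rotating the $(x,y)$-plane if needed, that $g(r)\neq 0$; then the double root of $edx^2+2fdx\,dy+gdy^2=0$ gives the asymptotic slope $p=-f/g$, so the lift is $\widetilde{\varphi}(r)=(r,-f(r)/g(r))$ and the asymptotic direction is $\mathcal{A}(r)=(1,p,q)$ with $q=-a/c-(b/c)p$.

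The main computational step is the identity
\[
g\,F_x=\mathcal{K}_x,\qquad g\,F_y=\mathcal{K}_y,\qquad g\,F_z=\mathcal{K}_z,
\]
holding at every point of $\widetilde{\varphi}$. Differentiating $F=e+2fp+gp^2$ in a spatial variable with $p$ held fixed, and then substituting $p=-f/g$ together with the parabolic relation $e=f^2/g$, one finds
\[
g\,F_x=e_x g+e\,g_x-2f f_x=\mathcal{K}_x,
\]
and symmetrically for $y$ and $z$. Multiplying the singularity condition $F_x+pF_y+qF_z=0$ by the nonzero factor $g$ and using this identity gives
\[
g\bigl(F_x+pF_y+qF_z\bigr)=\mathcal{K}_x+p\,\mathcal{K}_y+q\,\mathcal{K}_z=\langle\nabla\mathcal{K},\mathcal{A}\rangle,
\]
which yields the desired equivalence: $\widetilde{\varphi}$ is a curve of singular points of $\mathcal{X}$ if and only if $\varphi$ is a curve of special parabolic points.

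The only subtle point, and the main obstacle, is what happens at $r\in\varphi$ where $g(r)=0$; there the double asymptotic direction is transverse to $\partial/\partial x$ in the $\Delta$-plane and the projective parameter $p=dy/dx$ escapes to infinity. I would handle this by working in the alternative chart with $q=dx/dy$ and the companion function $G(x,y,z,q)=gq^2+2fq+e$ alluded to in the remark after Proposition \ref{prop0}; the same algebra then produces the analogous identity $e\,G_i=\mathcal{K}_i$ and the equivalence carries over. Because $\varphi$ is regular and $(e,f,g)$ never vanish simultaneously on it, the two charts cover $\varphi$, so the local statement above upgrades to the statement along the whole curve. No analytic difficulty arises; the issue is purely the bookkeeping of transition between the two affine charts on the criminant.
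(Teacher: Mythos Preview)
Your proposal is correct and follows essentially the same approach as the paper: both establish, on the parabolic set with $g\neq0$ and $p=-f/g$, the identity $g\,(F_x+pF_y+qF_z)=\langle\nabla\mathcal{K},\mathcal{A}\rangle$, from which the equivalence is immediate since the first three components of $\mathcal{X}$ vanish on $\widetilde{\mathbb{P}}$. Your derivation via the coordinate-wise relations $g\,F_x=\mathcal{K}_x$, $g\,F_y=\mathcal{K}_y$, $g\,F_z=\mathcal{K}_z$ (using $e=f^2/g$) is in fact a slightly cleaner packaging than the paper's single algebraic identity
\[
cg^2\langle\nabla\mathcal{K},\mathcal{A}\rangle-cg^3(F_x+pF_y+qF_z)=\bigl[(bg_z-cg_y)f+(cg_x-ag_z)g\bigr](eg-f^2),
\]
and your explicit discussion of the alternate chart when $g(r)=0$ addresses a point the paper leaves implicit.
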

	
	\begin{proof}
		From $edx^2+2fdxdy+gdy^2=0$ and $F=e+2fp+gp^2=0$ it follows that, at the parabolic surface, 
		$p=\frac{dy}{dx}=-\frac{f}{g}$. Then
		\begin{equation}
		q=\frac{dz}{dx}=-\frac{a}{c}+\frac{bf}{bg}.
		\end{equation}
		Then, at the parabolic surface, the asymptotic direction at a point $r$ is given by
		\begin{equation}
		\mathcal{A}=\left(1,-\frac{f}{g},-\frac{a}{c}+\frac{bf}{bg}\right).
		\end{equation}
		After a straightfoward calculation, it follows that
		\begin{equation}
		cg^2\langle\nabla\mathcal{K},\mathcal{A}\rangle-cg^3(F_{x}+pF_{y}+qF_{z})=
		[(bg_{z}-cg_{y})f+(cg_{x}-ag_{z})g](eg-f^2).
		\end{equation}
		The result follows since $eg-f^2=0$ at the parabolic surface,d $F_{p}=0$ if, and only if, $eg-f^2=0$
		and $\mathcal{X}=(F_{p},pF_{p},qF_{p},-(F_{x}+pF_{y}+qF_{z}))$.
	\end{proof}
	
	\begin{lem}\label{lemma:spct}
		Let $\Delta$ be a plane field satisfying the assumptions of the Lemma \ref{propeqlaV}.  Let $\varphi$ be 
		a regular curve of special parabolic points passing though $(0,0,0)$.
		Then $\varphi'(0,0,0)$ belongs to the plane field if, and only if, 
		$\lambda_{1}+\lambda_{2}=0$, $\lambda_{1}\lambda_{2}=0$ or $\Upsilon=0$
		where $\Upsilon$ is given by \eqref{upsilon}. 
		
		Furthermore, if 	$\lambda_{1}+\lambda_{2}=0$ then $\varphi'(0,0,0)$
		is an asymptotic direction and if  $\lambda_{1}\lambda_{2}=0$  then
		the  asymptotic direction at $(0,0,0)$ and $\varphi'(0,0,0)$ generate the tangent plane of the parabolic surface.

	\end{lem}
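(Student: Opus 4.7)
The plan is to compute the tangent vector $\widetilde{\varphi}'(0,0,0,0)$ directly from the linear conditions that characterize it, and then translate the condition $\varphi'(0,0,0)\in\Delta_{(0,0,0)}$ (equivalently $dz=0$, since by Lemma~\ref{propetaV} the plane field at the origin is $\{dz=0\}$) into eigenvalue conditions on $D\mathcal{X}(0,0,0,0)$.

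First I would set up the $3\times 4$ system. Since $\widetilde{\varphi}$ is a regular curve of singular points of $\mathcal{X}$ contained in the criminant $\widetilde{\mathbb{P}}$, $\widetilde{\varphi}'(0,0,0,0)$ lies in the 1-dimensional intersection $\ker D\mathcal{X}(0,0,0,0)\cap T_{(0,0,0,0)}\widetilde{\mathbb{P}}$. At the origin $p=q=0$, so Proposition~\ref{prop2} shows that rows 2 and 3 of $D\mathcal{X}$ vanish; combined with $F_x(0,0,0,0)=F_p(0,0,0,0)=0$, the defining conditions become
\begin{equation*}
F_y\,dy+F_z\,dz=0,\quad F_{px}\,dx+F_{py}\,dy+F_{pz}\,dz+F_{pp}\,dp=0,\quad A\,dx+B\,dy+C\,dz+D\,dp=0.
\end{equation*}
Extracting $\widetilde{\varphi}'(0,0,0,0)$ via signed $3\times 3$ minors and expanding the minor corresponding to the third coordinate yields $dz\propto -F_y\,(F_{px}D-F_{pp}A)$.

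Next I would identify these factors with the eigenvalue data. The block-triangular shape of $D\mathcal{X}(0,0,0,0)$ makes its characteristic polynomial factor as $\lambda^2\bigl[\lambda^2-(F_{px}+D)\lambda+(F_{px}D-F_{pp}A)\bigr]$; with $D=-F_{px}-F_y$ at the origin this gives $\lambda_1+\lambda_2=-F_y$ and $\lambda_1\lambda_2=F_{px}D-F_{pp}A$. Hence $dz$ is (up to sign) proportional to $(\lambda_1+\lambda_2)(\lambda_1\lambda_2)$, and $dz=0$ holds precisely when $\lambda_1+\lambda_2=0$ or $\lambda_1\lambda_2=0$. The additional alternative $\Upsilon=0$ covers the degenerate subcase where the common proportionality scalar of the minor formulas also vanishes, so that $\widetilde{\varphi}'(0,0,0,0)$ is determined only at higher order; $\Upsilon$ is then the resulting second-order compatibility condition computed from the next-order expansion of the criminant-kernel intersection.

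For the furthermore part, the case $\lambda_i=0$ is handled using Proposition~\ref{prop12}: the eigenvector $\vartheta_i$ then lies in $\ker D\mathcal{X}$ and is tangent to the criminant, so $\widetilde{\varphi}'(0,0,0,0)\propto\vartheta_i$; combined with the explicit form of $\vartheta_i$ from \eqref{eigenVe} at $p=q=0$ and the description of $\nabla\mathcal{K}(0,0,0)$ from Lemma~\ref{propetaV}, this yields the asserted relation between $\varphi'(0,0,0)$, the asymptotic direction, and the tangent plane of the parabolic surface. In the trace-zero case $\lambda_1+\lambda_2=0$, the vanishing of $F_y$ aligns $\nabla\mathcal{K}(0,0,0)$ with the $z$-axis, so the tangent plane to the parabolic surface at the origin coincides with $\Delta_{(0,0,0)}$, and the minor formulas then give the claimed description of $\varphi'(0,0,0)$. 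The main obstacle will be the degenerate regime $\Upsilon=0$, where linear analysis is inconclusive and one must expand the kernel and criminant tangency to second order to read off the tangent direction of $\widetilde{\varphi}$.
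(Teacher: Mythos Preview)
Your linear-algebraic approach via the Lie--Cartan lift is a genuine alternative to the paper's proof, which instead works entirely downstairs: it parametrises the parabolic surface explicitly (as $z(x,y)$ or $y(x,z)$), writes the special-parabolic condition $\langle\nabla\mathcal{K},\mathcal{A}\rangle=0$ in those coordinates, reads off $\varphi$ and $\varphi'(0)$ as explicit Taylor expressions in the coefficients of~\eqref{etaV}, and then checks $\langle\xi(0,0,0),\varphi'(0)\rangle$ case by case. Your identification of the $dz$-minor with $(\lambda_1+\lambda_2)\lambda_1\lambda_2$ is correct and recovers the first two alternatives cleanly.

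The real gap is your treatment of $\Upsilon$. You interpret $\Upsilon=0$ as the degenerate situation in which \emph{all} $3\times3$ minors vanish and second-order data is needed. That is not what $\Upsilon$ is. If you push your own computation one step further, the $dx$-minor is
\[
F_y\bigl(DF_{pz}-F_{pp}C\bigr)-F_z\bigl(DF_{py}-F_{pp}B\bigr),
\]
and comparing with the paper's parametrisation, in which $\varphi'(0)$ is proportional to $\bigl(\Upsilon,\,-[a_{13}-(a_3)^2]\lambda_1\lambda_2,\,(\lambda_1+\lambda_2)\lambda_1\lambda_2\bigr)$, shows that $\Upsilon$ is (up to a nonzero factor) exactly this $dx$-minor. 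So $\Upsilon=0$ is a \emph{first}-order condition --- the vanishing of the $dx$-component of $\varphi'(0)$ --- not a higher-order compatibility, and no second-order expansion of the criminant--kernel intersection is needed. This also means your sentence ``$dz=0$ holds precisely when $\lambda_1+\lambda_2=0$ or $\lambda_1\lambda_2=0$'' already captures the content: in your setup $\varphi'(0)\propto(v_1,v_2,v_3)$ with $v_2\propto F_z\lambda_1\lambda_2$ and $v_3\propto F_y\lambda_1\lambda_2$, and regularity of $\varphi$ ensures this vector is nonzero; the $\Upsilon$ alternative should be reconciled with the statement rather than explained by an invented higher-order mechanism.

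For the ``furthermore'', your minor formulas already separate the two subcases without Proposition~\ref{prop12}: when $\lambda_1\lambda_2=0$ you get $v_2=v_3=0$, so $\varphi'(0)\propto(1,0,0)=\mathcal{A}$; when $\lambda_1+\lambda_2=0$ (i.e.\ $F_y=0$) you get $\nabla\mathcal{K}(0,0,0)\parallel(0,0,1)$, so the parabolic tangent plane is $\{dz=0\}=\Delta_{(0,0,0)}$, and $\varphi'(0)$ has nonzero $dy$-component, hence together with $\mathcal{A}$ spans that plane. Note that this assignment of the two geometric conclusions to the two eigenvalue conditions agrees with the paper's Introduction (saddle--node versus Hopf); compare carefully with the wording of the lemma's ``furthermore'' clause.
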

	
	\begin{proof}
		The curve $\varphi$ of special parabolic points is given by 
		$\mathcal{K}(x,y,z)=0$ and $\langle\nabla\mathcal{K}(x,y,z),\mathcal{A}(x,y,z)\rangle=0$,  where
		\begin{equation}
		\mathcal{A}(x,y,z)=\left(1,-\frac{f(x,y,z)}{g(x,y,z)},
		-\frac{a(x,y,z)}{b(x,y,z)}+\frac{b(x,y,z)f(x,y,z)}{c(x,y,z)g(x,y,z)}\right).
		\end{equation}
		By Lemma \ref{propeqlaV} and Proposition \ref{propsing} or Lemmma \ref{lemma:s1}
		\begin{equation}
		\nabla\mathcal{K}(0,0,0)=(0,(a_{3}b_{1}+a_{12})b_{2},[a_{13}-(a_{3})^2]b_{2}).
		\end{equation}
		Since the parabolic set is a regular surface and $b_{2}\neq0$ then 
		$a_{3}b_{1}+a_{12}\neq0$ or $a_{13}-(a_{3})^2\neq0$.

		\begin{itemize}
			\item Suppose that $a_{13}-(a_{3})^2\neq0$.
		\end{itemize}
		Then, in a neighbourhood of $(0,0,0)$, the parabolic surface is given by
		\begin{equation}
		z(x,y)=-\frac{(a_{3}b_{1}+a_{12})y}{a_{13}-(a_{3})^2}+\mathcal{O}^2(x,y).
		\end{equation}
		It follows that the equation $\langle\nabla\mathcal{K},\mathcal{A}\rangle=0$
		becomes
		\begin{equation}
		[6 a_{111}b_{2}-(a_{12}+b_{11})(a_{12}-a_{3}b_{1}+2b_{11})]x+\frac{\Upsilon y}{a_{13}-(a_{3})^2}+\mathcal{O}^2(x,y)=0,
		\end{equation}
		\noindent where 
			\begin{equation}\label{upsilon}
		\begin{split}
		\Upsilon&=(a_{12})^2 a_{23}-2a_{12} a_{13}a_{22}+2a_{12}a_{22}(a_{3})^2 
		+ a_{12} a_{23} a_{3} b_{1} - 
		2 a_{13} a_{22} b_{11} + a_{12} a_{23} b_{11} \\
		&
		+ 2 a_{22} (a_{3})^2 b_{11}
		+ a_{23} a_{3} b_{1} b_{11} - 
		a_{12} a_{13} b_{12} + a_{12} (a_{3})^2 b_{12} - a_{13} b_{11} b_{12} 
		+ (a_{3})^2 b_{11} b_{12} \\
		&
		+ (a_{12})^2 b_{13} +
		a_{12} a_{3} b_{1} b_{13} + a_{12} b_{11} b_{13} + a_{3} b_{1} b_{11} b_{13}
		- 2 a_{113} a_{12} b_{2} + 
		2 a_{112} a_{13} b_{2} \\
		&
		+ 3 a_{12} a_{13} a_{3} b_{2} 
		- 2 a_{112} (a_{3})^2 b_{2} - a_{12} (a_{3})^3 b_{2} + 
		2 (a_{13})^2 b_{1} b_{2} - 2 a_{113} a_{3} b_{1} b_{2} \\
		&
		+ a_{13} a_{3} b_{11} b_{2} - (a_{3})^3 b_{11} b_{2} - 
		2 (a_{12})^2 a_{3} b_{3} - a_{12} a_{13} b_{1} b_{3} 
		- a_{12} (a_{3})^2 b_{1} b_{3} \\
		&
		- 2 a_{12} a_{3} b_{11} b_{3} - 
		a_{13} b_{1} b_{11} b_{3} - (a_{3})^2 b_{1} b_{11} b_{3} 
		+ 2 a_{12} a_{3} b_{2} c_{11} + 2 a_{13} b_{1} b_{2} c_{11}.
		\end{split}
		\end{equation}
		Direct calculations shows that
		\begin{equation}
		\lambda_{1}+\lambda_{2}=a_{3}b_{1}+a_{12}, \quad
		\lambda_{1}\lambda_{2}=6 a_{111}b_{2}-(a_{12}+b_{11})(a_{12}-a_{3}b_{1}+2b_{11})
		\end{equation}
		
		Since $\varphi$ is a regular curve, then $\lambda_{1}\lambda_{2}\neq 0$ or $\Upsilon\neq0$.
		
		If $\Upsilon\neq0$ then
		\begin{equation}
		\varphi(x)=\left(x,-\frac{[a_{13}-(a_{3})^2]\lambda_{1}\lambda_{2}x}{\Upsilon}
		+\mathcal{O}^2(x),
		\frac{(\lambda_{1}+\lambda_{2})
			\lambda_{1}\lambda_{2}x}{\Upsilon}+\mathcal{O}^2(x)\right)
		\end{equation}
		Since $\xi(0,0,0)=(0,0,1)$ then
		\begin{equation}
		\langle\xi(0,0,0),\varphi'(0)\rangle=\frac{(\lambda_{1}+\lambda_{2})
			\lambda_{1}\lambda_{2}}{\Upsilon}.
		\end{equation}
		
		If $\lambda_{1}\lambda_{2}\neq0$ then
		
		\begin{equation}
		\varphi(y)=\left(-\frac{\Upsilon y}{[a_{13}-(a_{3})^2]\lambda_{1}\lambda_{2}}+\mathcal{O}^2(y)+\mathcal{O}^2(y),y,
		-\frac{(\lambda_{1}+\lambda_{2})y}{a_{13}-(a_{3})^2}+\mathcal{O}^2(y)\right)
		\end{equation}
		
		\noindent and 
		\begin{equation}
		\langle\xi(0,0,0),\varphi'(0)\rangle=-\frac{\lambda_{1}+\lambda_{2}}{a_{13}-(a_{3})^2}.
		\end{equation}
		
		\begin{itemize}
			\item Suppose that $a_{3}b_{1}+a_{12}\neq0$, i.e, 
			$\lambda_{1}+\lambda_{2}\neq0$.
		\end{itemize}
		Then, in a neighbourhood of $(0,0,0)$, the parabolic surface is given by
		\begin{equation}
		y(x,z)=-\frac{(a_{13}-(a_{3})^2)z}{a_{3}b_{1}+a_{12}}+\mathcal{O}^2(x,z)
		\end{equation}
		\noindent and $\langle\nabla\mathcal{K},\mathcal{A}\rangle=0$
		becomes
		\begin{equation}
		[6 a_{111}b_{2}-(a_{12}+b_{11})(a_{12}-a_{3}b_{1}+2b_{11})]x+\frac{\Upsilon z}{a_{3}b_{1}+a_{12}}+\mathcal{O}^2(x,z)=0,
		\end{equation}
		\noindent that is, $\lambda_{1}\lambda_{2}x+\frac{\Upsilon z}{\lambda_{1}+\lambda_{2}}+\mathcal{O}^2(x,z)=0$.
		
		If $\Upsilon\neq0$ then 
		
		\begin{equation}
		\varphi(x)=\left(x,\frac{[a_{13}-(a_{3})^2]\lambda_{1}\lambda_{2}x}{\Upsilon}
		+\mathcal{O}^2(x)	,-\frac{(\lambda_{1}+\lambda_{2})\lambda_{1}\lambda_{2}x}{\Upsilon}+\mathcal{O}^2(x)\right)
		\end{equation}
		\noindent and
		\begin{equation}
		\langle\xi(0,0,0),\varphi'(0)\rangle=\frac{(\lambda_{1}+\lambda_{2})\lambda_{1}\lambda_{2}}{\Upsilon}.
		\end{equation}
		
		If $\lambda_{1}\lambda_{2}\neq0$ then
		
		\begin{equation}
		\varphi(z)=\left(-\frac{\Upsilon z}{(\lambda_{1}+\lambda_{2})\lambda_{1}\lambda_{2}}+\mathcal{O}^2(z),
		\frac{(a_{13}-(a_{3})^2)z}{\lambda_{1}+\lambda_{2}}+\mathcal{O}^2(z),z\right)
		\end{equation}
		\noindent and
		$\langle\xi(0,0,0),\varphi'(0)\rangle=1$.

	\end{proof}
	
	\section{Asymptotic lines near the parabolic surface}
	\label{par}
	In this section, assume that $\Delta$ is a plane field satisfying the assumptions of the Lemma \ref{propeqlaV} and assume that the parabolic set is a regular surface
	such that $d\mathcal{K}\neq0$ at it, where $d\mathcal{K}=k_{1}dk_{2}+k_{2}dk_{1}$. This implies that $\mathcal{H}$ does not vanishes at the parabolic surface.
	\subsection{Cuspidal parabolic point}
	\begin{prop}\label{propcusp1}
		Suppose that $(0,0,0)$ is a parabolic point where the two asymptotic directions coincides at it. Let $\theta$ be the angle between $\nabla\mathcal{K}(0,0,0)$ and the asymptotic direction at $(0,0,0)$. If
		$\theta\neq\frac{\pi}{2}$,
		then $a_{11}\neq0$ and the parabolic surface in the neighbourhood of $(0,0,0)$ is given by
		\begin{equation}\label{cuspPsL}
		x(y,z)=-\left(\frac{a_{3}b_{1}+a_{12}}{2a_{11}}\right)y-\left(\frac{a_{13}-(a_{3})^2}{2a_{11}}\right)z+\mathcal{O}^{2}(y,z).
		\end{equation}
	\end{prop}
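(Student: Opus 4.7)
The plan is to combine the explicit formulas from Lemma \ref{propetaV} and Proposition \ref{prop10} to reduce the statement to a direct Implicit Function Theorem argument. First, I would invoke Proposition \ref{prop10}: since $(0,0,0)$ is a parabolic point with coinciding asymptotic directions, $a_{2}=-b_{1}$, $b_{2}\neq 0$, and the asymptotic direction at the origin is $\mathcal{A}=(1,0,0)$.

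Next I would use the formula
\begin{equation*}
\nabla \mathcal{K}(0,0,0)=(2 b_{2} a_{11},\, (a_{3}b_{1}+a_{12})b_{2},\, [a_{13}-(a_{3})^{2}]b_{2})
\end{equation*}
from Lemma \ref{propetaV} to compute $\langle \nabla\mathcal{K}(0,0,0),\mathcal{A}\rangle=2 b_{2}a_{11}$. The hypothesis $\theta\neq \pi/2$ is equivalent to this inner product being nonzero; combined with $b_{2}\neq 0$, this immediately yields $a_{11}\neq 0$, which is the first conclusion.

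For the second conclusion, the parabolic surface is the zero set of $\mathcal{K}$, and by the standing assumption of Section \ref{par} we have $d\mathcal{K}\neq 0$ along it, so it is a regular surface. Since the first component of $\nabla\mathcal{K}(0,0,0)$ is $2 b_{2}a_{11}\neq 0$, the Implicit Function Theorem lets me solve $\mathcal{K}(x,y,z)=0$ locally for $x$ as a smooth function $x(y,z)$ with $x(0,0)=0$. The linear part is read off directly from the remaining components of $\nabla\mathcal{K}(0,0,0)$:
\begin{equation*}
x_{y}(0,0)=-\frac{(a_{3}b_{1}+a_{12})b_{2}}{2 b_{2}a_{11}}=-\frac{a_{3}b_{1}+a_{12}}{2 a_{11}},\qquad x_{z}(0,0)=-\frac{a_{13}-(a_{3})^{2}}{2 a_{11}},
\end{equation*}
giving exactly the expansion \eqref{cuspPsL} up to second-order terms in $(y,z)$.

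No step here looks genuinely hard: the algebra is already packaged into Lemma \ref{propetaV} and Proposition \ref{prop10}, and the geometric content is just the Implicit Function Theorem. The only place where a small care is needed is bookkeeping: one must verify that the hypothesis of Lemma \ref{propetaV} (which forces $c(0,0,0)\neq 0$ and the specific normal form of $\xi$) is compatible with the coordinate system implicitly used in the statement, so that the components of $\nabla\mathcal{K}$ match the coefficients $a_{ij}$, $b_{ij}$ appearing in \eqref{cuspPsL}. Once that is observed, the proof is a one-line application of the Implicit Function Theorem.
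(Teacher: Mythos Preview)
Your proposal is correct and follows essentially the same route as the paper: invoke Proposition \ref{prop10} to get $a_{2}=-b_{1}$, $b_{2}\neq0$, $\mathcal{A}=(1,0,0)$, compute $\langle\nabla\mathcal{K}(0,0,0),\mathcal{A}\rangle=2b_{2}a_{11}$ from Lemma \ref{propetaV}, conclude $a_{11}\neq0$, and apply the Implicit Function Theorem to read off the linear part of $x(y,z)$. The paper's proof is in fact terser than yours, omitting the explicit computation of $x_{y}(0,0)$ and $x_{z}(0,0)$ that you spell out.
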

	\begin{proof}
		By Proposition \ref{prop10}, $b_{2}\neq0$ and $a_{2}=-b_{1}$ and $\mathcal{A}=(1,0,0)$. It follows that $\langle\nabla\mathcal{K}(0,0,0),\mathcal{A}\rangle=\mathcal{K}_{x}(0,0,0)=2b_{2}a_{11}\neq0$. By the Implicit Function Theorem, the parabolic surface
		in a neighbourhood of $(0,0,0)$ is given by \ref{cuspPsL}.
	\end{proof}
	
	\begin{thm}\label{t1}
		Suppose that $(0,0,0)$ is a parabolic point where the two asymptotic directions coincides at it.
		Denote by $\Gamma_{1}$ and $\Gamma_{2}$ the asymptotic lines in $\mathbb{H}\cup\mathbb{P}$ such that
		$\Gamma_{1}(0,0,0)=\Gamma_{2}(0,0,0)=(0,0,0)$.
		Let $\theta$ be the angle between $\nabla\mathcal{K}(0,0,0)$ and the asymptotic direction at $(0,0,0)$. If
		$\theta\neq\frac{\pi}{2}$, then the image of the asymptotic lines $\Gamma_{1}$ and $\Gamma_{2}$ is locally
		parameterized by $\gamma:(-\varepsilon,\varepsilon)\rightarrow\mathbb{R}^3$,
		\begin{equation*}
		\begin{split}
		\gamma(t)&=\Bigg(-\left(\frac{3^{\frac{2}{3}}\mathcal{I}}{2^{\frac{1}{3}}(a_{11})^{\frac{2}{3}}}\right)t^2
		+\mathcal{O}^3(t),t^{3}+\mathcal{O}^4(t),\left(\frac{3^{\frac{2}{3}}\mathcal{I}\mathcal{R}}{2^{\frac{4}{3}}5(a_{11})^{\frac{2}{3}}}\right)t^5+\mathcal{O}^6(t)\Bigg),
		\end{split}
		\end{equation*}
		\noindent where $\mathcal{I}=|\nabla\mathcal{K}(0,0,0)||\mathcal{A}|cos(\theta)$ and $\mathcal{R}=\langle curl(\xi)(0,0,0),\xi(0,0,0)\rangle$.
		The curve $\gamma$ is locally of the type of the curve $(t^2,t^3,t^5)$
		and has a cuspidal singularity of the type $(t^2,t^3,0)$ in $(0,0,0)$. Furthermore, the asymptotic lines in a neighbourhood of $(0,0,0)$ is as show in the Figures \ref{cusp} and \ref{cusp2}.
	\end{thm}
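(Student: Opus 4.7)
The plan is to lift the analysis to the Lie–Cartan hypersurface $\mathbb{L}$ and work with the Lie–Cartan vector field $\mathcal{X}$ of Proposition \ref{prop0}. By Propositions \ref{prop10} and \ref{propcusp1}, the hypothesis $\theta\neq\pi/2$ places us in the normal form \eqref{etaV} with $a_{2}=-b_{1}$, $b_{2}\neq 0$, $a_{11}\neq 0$, $\xi(0)=(0,0,1)$, and $\mathcal{A}=(1,0,0)$. A direct substitution gives
\[
F_{p}(0,0,0,0)=2f(0)=0,\qquad F_{pp}(0,0,0,0)=2b_{2},\qquad F_{x}(0,0,0,0)=e_{x}(0)=2a_{11},
\]
so $\mathcal{X}(0,0,0,0)=(0,0,0,-2a_{11})$. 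This vector is nonzero (hence by Proposition \ref{propsing} the point $(0,0,0,0)$ is a \emph{regular} point of $\mathcal{X}$ on the regular $3$-manifold $\mathbb{L}$ of Proposition \ref{prop13}), but it lies in the vertical direction killed by $d\pi$. This discrepancy is precisely the geometric source of the cuspidal singularity in $\mathbb{R}^{3}$.

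The Tubular Flow Theorem then yields a smooth integral curve $\widetilde{\gamma}(t)=(x(t),y(t),z(t),p(t))$ of $\mathcal{X}$ through $(0,0,0,0)$, and by Proposition \ref{prop0} its projection $\gamma=\pi\widetilde{\gamma}$ is an asymptotic line. Since $\pi|_{\mathbb{L}}$ has a fold along the criminant $\widetilde{\mathbb{P}}$ at $(0,0,0,0)$, the two sides $t<0$ and $t>0$ parameterize $\Gamma_{1}$ and $\Gamma_{2}$ respectively. Taylor expanding via the ODE $\dot{x}=F_{p},\ \dot{y}=pF_{p},\ \dot{z}=qF_{p},\ \dot{p}=-(F_{x}+pF_{y}+qF_{z})$ with zero initial data and $\dot{p}(0)=-2a_{11}$, one obtains
\[
\ddot{x}(0)=F_{pp}(0)\dot{p}(0)=-4a_{11}b_{2},\qquad y^{(3)}(0)=2\dot{p}(0)\dot{F_{p}}(0)=16a_{11}^{2}b_{2},
\]
while the identities $q(0)=q_{x}(0)=q_{p}(0)=0$ (which hold in the normal form because $a(0)=a_{x}(0)=b(0)=0$) combined with $\dot{x}(0)=\dot{y}(0)=\dot{z}(0)=0$ force $q(\widetilde{\gamma}(t))=\mathcal{O}(t^{3})$, so $z^{(k)}(0)=0$ for $k\leq 4$ and $z^{(5)}(0)=4q^{(3)}(0)\dot{F_{p}}(0)$.

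The delicate step is to identify $q^{(3)}(0)$ as a multiple of $\mathcal{R}=\langle curl(\xi),\xi\rangle(0)=b_{x}(0)-a_{y}(0)=2b_{1}$. Expanding $q(\widetilde{\gamma}(t))=-a(\gamma(t))-b(\gamma(t))p(t)+\mathcal{O}(t^{4})$ to third order and using $a_{2}=-b_{1}$, the $b_{1}$-independent contributions from $-a_{2}y(t)$ and from $b(\gamma(t))p(t)$ collapse into a single $b_{1}$-multiple; this is the algebraic shadow of the completely integrable case $\mathcal{R}=0$, in which $\gamma$ lies on an integral surface and $z$ must be a smooth function of $(x,y)$ at the origin, forcing the $t^{5}$-coefficient to vanish. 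Once the expansions of $x,y,z$ are obtained in the flow parameter, a single cube-root rescaling $t\mapsto\lambda t$ with $\lambda^{3}$ absorbing the leading coefficient of $y$ places $\gamma$ into the canonical form $(-C_{1}t^{2},t^{3},C_{2}t^{5})+(\text{higher})$, and routine algebraic simplification rewrites $C_{1}$ and $C_{2}$ in terms of $\mathcal{I}=\langle\nabla\mathcal{K}(0),\mathcal{A}\rangle=2a_{11}b_{2}$ and $\mathcal{R}$. The cusp classification $(t^{2},t^{3},t^{5})$ with projection $(t^{2},t^{3},0)$ then follows from the nonvanishing of these three leading coefficients; the main obstacle in the whole argument is the fifth-order computation of $z(t)$ and the verification of the $\mathcal{R}$-factorization, all other steps being essentially routine Taylor expansions in the normal form.
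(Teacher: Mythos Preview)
Your approach is correct but differs from the paper's. The paper proceeds by \emph{direct verification}: it writes down the parametrization $\gamma(t)$ of the statement, checks that $\langle\xi(\gamma(t)),\gamma'(t)\rangle$ and $\langle\xi(\gamma(t)),\gamma''(t)\rangle$ both vanish to order~$5$, observes that $\gamma(\pm\sqrt{s})$ trace out $\Gamma_{1}$ and $\Gamma_{2}$, and then applies a linear change of coordinates in $\mathbb{R}^{3}$ (not a reparametrization) together with a singularity-theory normal-form result to identify the $(t^{2},t^{3},0)$ cusp type.

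Your argument is \emph{constructive}: you exploit that $(0,0,0,0)$ is a regular point of $\mathcal{X}$ on $\mathbb{L}$ with velocity in $\ker d\pi$, integrate the flow, and read off the Taylor coefficients of the projection. This has the conceptual advantage of explaining \emph{why} the cusp arises --- as the $\pi$-image of a smooth curve tangent to the fibre direction at a criminant point --- whereas the paper's route is shorter once one already knows the answer. Your key computation, that $q^{(3)}(0)$ is a multiple of $b_{1}=\mathcal{R}/2$, is correct: at order $t^{3}$ the only contributions to $q(\widetilde{\gamma}(t))$ are $-a_{2}y(t)=b_{1}y(t)$ and $-b_{1}x(t)p(t)$, both visibly proportional to $b_{1}$, and the integrability heuristic ($\mathcal{R}=0\Rightarrow z$ smooth in $(x,y)$) is a nice consistency check. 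One small caution: the rescaling $t\mapsto\lambda t$ normalizes only the leading coefficient of $y$; to match the statement literally (with $y(t)=t^{3}+\mathcal{O}^{4}$) you need a further analytic reparametrization absorbing the higher terms of $y$, which is indeed routine but should be mentioned.
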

	\begin{figure}[H]
		\captionsetup[subfigure]{width=1\linewidth}
		\centering
		\subfloat{\includegraphics[width=.7\textwidth]{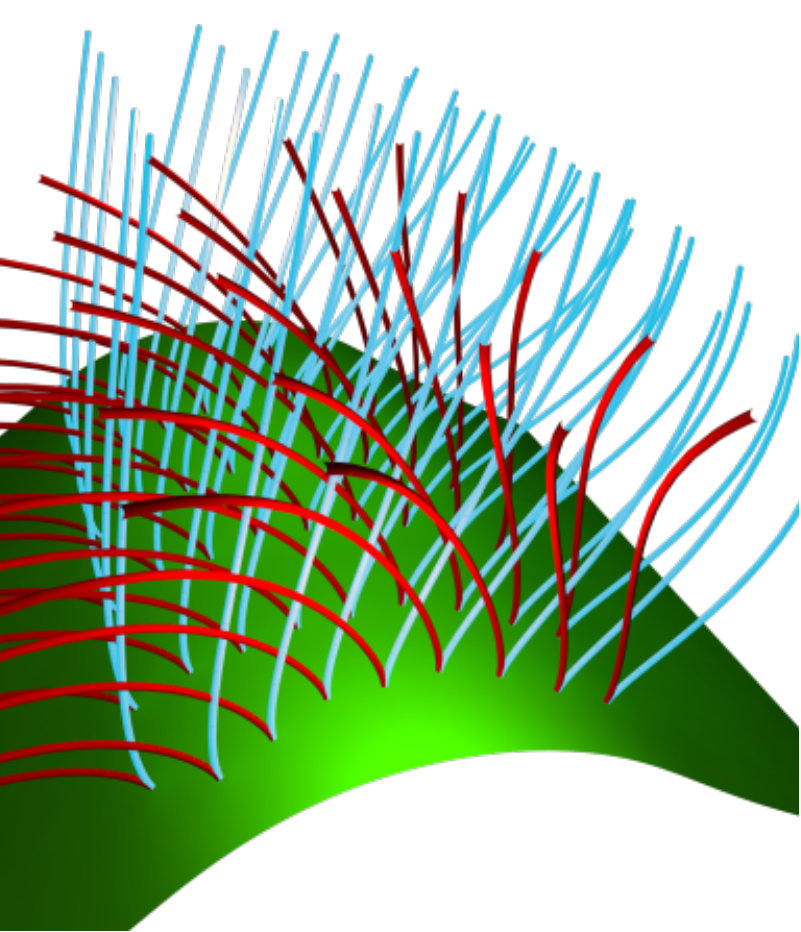}\label{p1}}
		\caption{The two foliations of asymptotic lines near the parabolic surface (green surface) with cuspidal parabolic points. Here, one foliation is coloured with blue and the other one with red. Let $\Gamma_{1}$ (resp. $\Gamma_{2}$) be a blue (resp. red) asymptotic line such that $\Gamma_{1}(0)=\Gamma_{2}(0)\in\mathbb{P}$. Then, near $P$, the the image of the asymptotic lines $\Gamma_{1}$ and $\Gamma_{2}$ can be parametrized by a curve with a cuspidal point in $P$ of the type $(t^2,t^3,t^5)$.}
		\label{cusp}
	\end{figure}
	\begin{figure}[H]
		\centering
		\captionsetup[subfigure]{width=.5\linewidth}
		\subfloat[][Projection of the Figure \ref{cusp} on the plane $(x,y)$.]{\includegraphics[width=.5\textwidth]{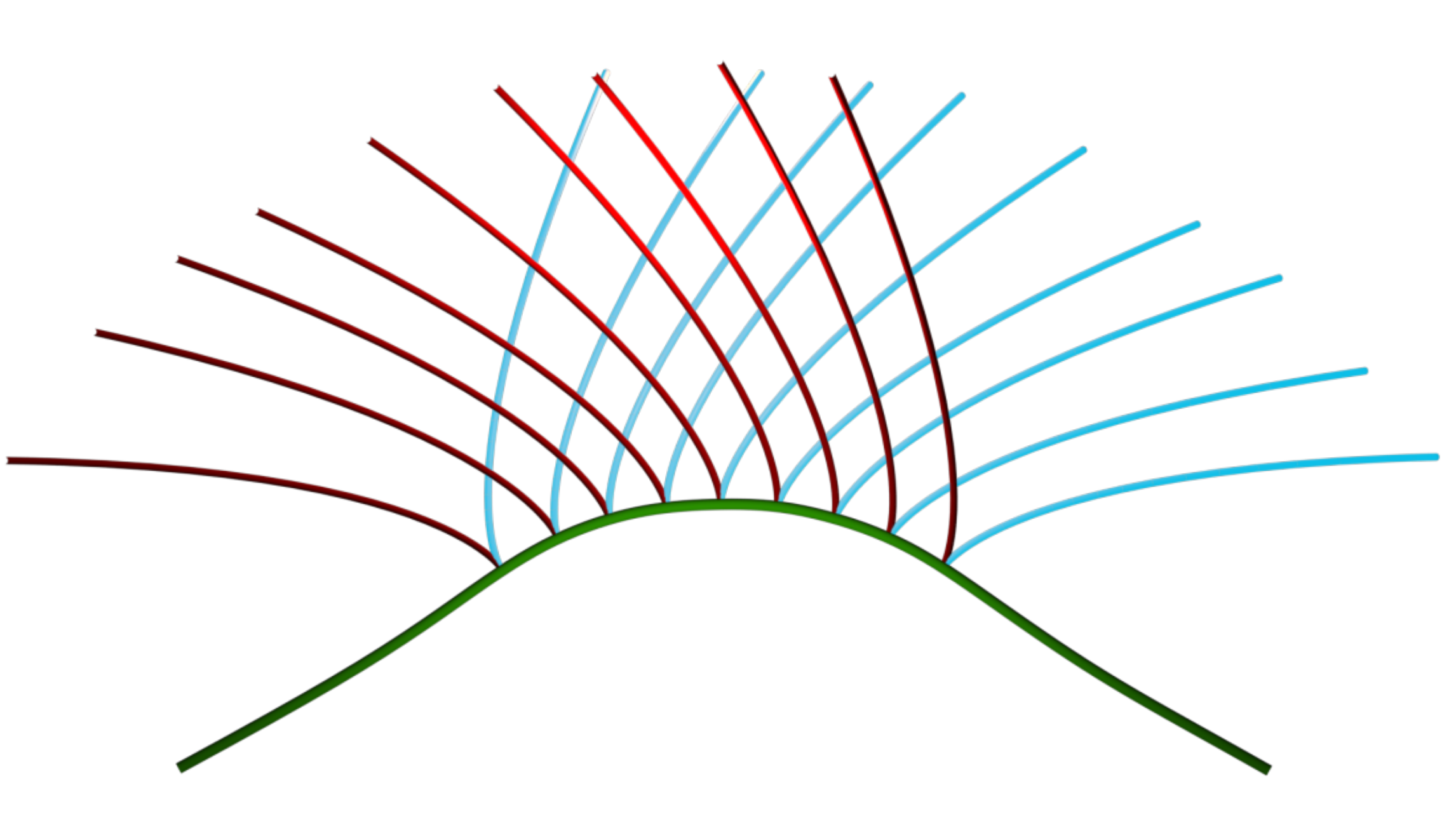}\label{p2}}
		\captionsetup[subfigure]{width=.3\linewidth}
		\qquad
		\subfloat[][Curve $c(t)=(t^2,t^3,t^5)$.]{\includegraphics[width=.3\textwidth]{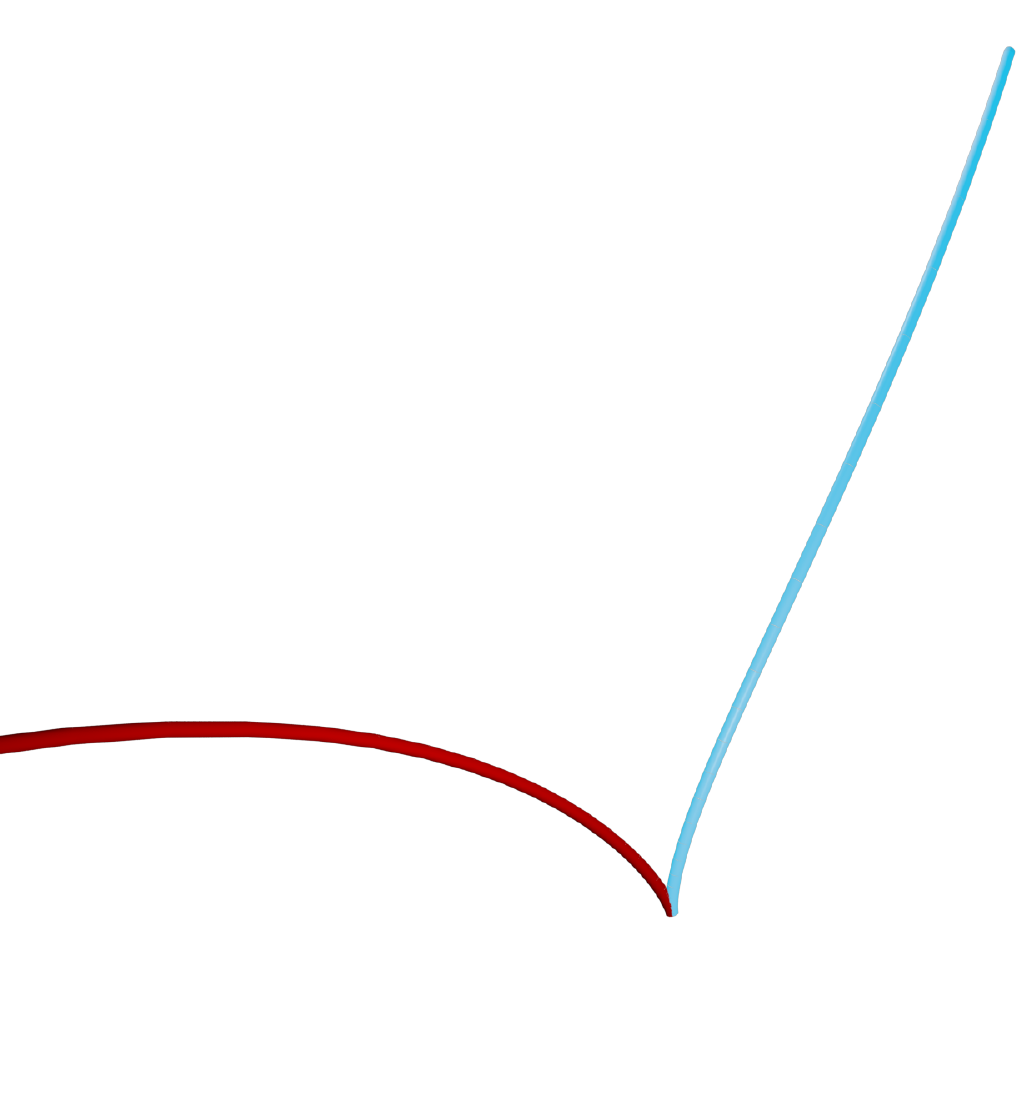}\label{p3}}
		\caption{Cuspidal parabolic point.}
		\label{cusp2}
	\end{figure}
	\begin{proof}
		By Propositions \ref{prop10} and \ref{propcusp1}, $b_{2}\neq0$, $a_{2}=-b_{1}$ and $a_{11}\neq0$.
		We first observe that $\mathcal{F}(t)=0+\mathcal{O}^5(t)$ and $\mathcal{G}(t)=0+\mathcal{O}^5(t)$, where $\mathcal{F}(t)=\langle\xi(\gamma(t)),\gamma'(t)\rangle$ and $\mathcal{G}(t)=\langle \xi(\gamma(t)),\gamma''(t)\rangle$, which
		are nothing but \eqref{eqla} evaluated at $\gamma(t)$ in the direction $\gamma'(t)$. It follows that the image of the asymptotic lines $\Gamma_{1}$ and $\Gamma_{2}$ is locally
		parameterized by $\gamma$. Note that $\gamma_{1}(s)=\gamma(\sqrt{s})$ parametrises one asymptotic line, say $\Gamma_{1}$, and $\gamma_{2}(s)=\gamma(-\sqrt{s})$
		parametrises the another asymptotic line $\Gamma_{2}$, and $\gamma_{1}'(0)=\gamma_{2}'(0)=-\left(\frac{3^{\frac{2}{3}}\mathcal{I}}{2^{\frac{1}{3}}(a_{11})^{\frac{2}{3}}}\right)\mathcal{A}$, where $\mathcal{A}$ is the asymptotic direction
		at $(0,0,0)$ given in \ref{prop10}, with $dx=1$.
		Applying the change of coordinates $\varsigma(x,y,z)=\left(-\left(\frac{2^{\frac{1}{3}}(a_{11})^{\frac{2}{3}}}{3^{\frac{2}{3}}\mathcal{I}}\right)x,y,
		\left(\frac{2^{\frac{4}{3}}5(a_{11})^{\frac{2}{3}}}{3^{\frac{2}{3}}\mathcal{I}\mathcal{R}}\right)z\right)$ in $\gamma$, we get
		$\varsigma\circ\gamma(t)=\left(t^2+\mathcal{O}^{3}(t),t^3+\mathcal{O}^4(t),t^5+\mathcal{O}^6(t)\right)$. From \cite[Theorem 2.1]{MR1198413} it follows that
		$\gamma$ has a cuspidal singularity of the type $(t^2,t^3,0)$ in $(0,0,0)$.
	\end{proof}

	\subsection{Parabolic point of saddle type, node type and focus type}
	\begin{defn}
		A singular point $(x,y,z,p)$ of the
		Lie-Cartan vector field is called of saddle type (resp. node type) if the eigenvalues $\lambda_{1}$ and $\lambda_{2}$ of $D\mathcal{X}(x,y,z,p)$, given in Proposition \ref{prop2},  are real and $\lambda_{1}\lambda_{2}<0$ (resp. $\lambda_{1}\lambda_{2}>0$) and is called of focus type if the eigenvalues $\lambda_{1}$ and $\lambda_{2}$ are complex and $\lambda_{1}\lambda_{2}\neq0$. If $(x,y,z,p)$ is a singular point of saddle type (resp. node type and focus type), then $\pi(x,y,z,p)=(x,y,z)$
		is called of parabolic point of saddle type (resp. node type and focus type).
	\end{defn}
	\begin{thm}\label{t2}
		Suppose that $(0,0,0)$ is a parabolic point where the two asymptotic directions coincides at it
		and suppose
		there exists a curve $\varphi$ of parabolic points passing by $(0,0,0)$ such that the two asymptotic directions coincides at each of then and
		where his lift $\widetilde{\varphi}$ to the Lie-Cartan hypersurface is a curve of singular points of the
		Lie-Cartan vector field $\mathcal{X}$ such that the singular points are entirely of the saddle, node or
		focus type and in a neighbourhood of $\widetilde{\varphi}$, the only singular points are in $\widetilde{\varphi}$.
		Then the integral curves of $\mathcal{X}$ near $(0,0,0,0)$ is as show in Figures \ref{snf1}, \ref{snf2}, \ref{snf3} and the asymptotic lines near $(0,0,0)$ is as show in Figures \ref{snf4}, \ref{snf7}, \ref{snf2}, \ref{snf8}, \ref{snf6}, \ref{snf9}.
	\end{thm}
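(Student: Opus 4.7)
The plan is to work upstairs on the Lie-Cartan hypersurface $\mathbb{L}$, analyze the vector field $\mathcal{X}$ near the curve $\widetilde{\varphi}$ of singular points, and then push the picture down to $\mathbb{R}^{3}$ via the projection $\pi$ to recover the asymptotic lines. By Proposition \ref{prop13}, $\mathbb{L}$ is a regular $3$-dimensional hypersurface in $\mathbb{R}^{4}$ in a neighbourhood of $(0,0,0,0)$ and $\mathcal{X}$ is tangent to $\mathbb{L}$, so the analysis reduces to studying a smooth vector field on a $3$-manifold with a smooth curve of equilibria through the origin.

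The first step is the local linear analysis along $\widetilde{\varphi}$. By Proposition \ref{prop2}, at each point of $\widetilde{\varphi}$ the jacobian $D\mathcal{X}$ has two zero eigenvalues and two eigenvalues $\lambda_{1},\lambda_{2}$ given by \eqref{eigenVa}. One zero eigenvalue has eigenspace tangent to $\widetilde{\varphi}$ (trivially, since $\widetilde{\varphi}$ is a curve of equilibria) and the other is normal to $\mathbb{L}$; thus the restriction of $D\mathcal{X}$ to $T\mathbb{L}$ transverse to $\widetilde{\varphi}$ has exactly the pair of eigenvalues $\lambda_{1},\lambda_{2}$. I would pick a smooth $2$-disc $\Sigma\subset\mathbb{L}$ transverse to $\widetilde{\varphi}$ at $(0,0,0,0)$ and consider the local flow box / center manifold of $\mathcal{X}$ along $\widetilde{\varphi}$: since the hypothesis assumes $\lambda_{1}\lambda_{2}\neq 0$ (saddle, node, or focus), the dynamics transverse to $\widetilde{\varphi}$ is hyperbolic, and the flow in a tubular neighbourhood of $\widetilde{\varphi}$ in $\mathbb{L}$ is locally conjugate (at least topologically) to the product of a translation along $\widetilde{\varphi}$ with the planar linear system having eigenvalues $\lambda_{1},\lambda_{2}$.

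Next I would classify the planar reduction. With $\lambda_{1}\lambda_{2}<0$ the transverse section is a topological saddle; with $\lambda_{1}\lambda_{2}>0$ and $\lambda_{1},\lambda_{2}\in\mathbb{R}$, it is a topological node; and with $\lambda_{1},\lambda_{2}\in\mathbb{C}\setminus\mathbb{R}$ (with nonzero real parts, since $\lambda_{1}\lambda_{2}\neq 0$ and they must be a complex conjugate pair) it is a focus. Hartman-Grobman in the saddle and focus cases (and the usual topological normal form for nodes) yields the phase portraits of $\mathcal{X}$ on $\mathbb{L}$ shown in Figures \ref{snf1}, \ref{snf2} and \ref{snf3}. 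The separatrices of saddle type and the generic/strong invariant curves of node type come from the eigendirections $\vartheta_{1},\vartheta_{2}$ in \eqref{eigenVe}; by Proposition \ref{prop12}, none of these eigendirections is tangent to the criminant $\widetilde{\mathbb{P}}$ (since $\lambda_{i}\neq 0$), which is crucial for the next step.

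Finally I would project. By Proposition \ref{prop0}, $\pi$-images of integral curves of $\mathcal{X}$ are asymptotic lines of $\Delta$. Near $(0,0,0,0)$, the restriction of $\pi$ to $\mathbb{L}$ has a fold along the criminant $\widetilde{\mathbb{P}}$ (using $F_{pp}(0,0,0,0)=2b_{2}\neq 0$ from Proposition \ref{prop13}), and $\widetilde{\mathbb{P}}$ is mapped diffeomorphically to $\varphi\subset\mathbb{P}$. Away from $\widetilde{\mathbb{P}}$ the projection is a two-sheeted covering whose two sheets correspond exactly to the two asymptotic line fields $\ell_{1},\ell_{2}$ of Proposition \ref{fam}. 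Pushing down each of the saddle/node/focus pictures through this fold, using that the separatrices and nodal invariant curves are transverse to $\widetilde{\mathbb{P}}$ on $\mathbb{L}$ (Proposition \ref{prop12}), produces the projected patterns in Figures \ref{snf4}, \ref{snf7}, \ref{snf2}, \ref{snf8}, \ref{snf6}, \ref{snf9}. The main obstacle is handling this fold carefully: one must check that invariant manifolds on $\mathbb{L}$ meet $\widetilde{\mathbb{P}}$ transversally (given by Proposition \ref{prop12}), identify which of the two asymptotic foliations on $\mathbb{H}$ corresponds to each local sheet of $\pi|_{\mathbb{L}}$, and describe the gluing of the two sheets across the cuspidal edge $\varphi$, including the node case where different invariant curves merge smoothly or with cuspidal contact with $\varphi$ after projection.
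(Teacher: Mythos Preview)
Your proposal is correct and follows essentially the same approach as the paper's own proof: both rely on Proposition \ref{prop13} for the regularity of $\mathbb{L}$ and of the criminant, Proposition \ref{prop12} to guarantee that the eigendirections $\vartheta_{1},\vartheta_{2}$ are transversal to the criminant (since $\lambda_{i}\neq 0$), the standard saddle/node/focus classification for the transverse dynamics, and Proposition \ref{prop0} to project the integral curves of $\mathcal{X}$ to asymptotic lines. Your version is more detailed---you make explicit the normally hyperbolic structure along $\widetilde{\varphi}$, the fold of $\pi|_{\mathbb{L}}$ along the criminant, and the identification of the two sheets with the two asymptotic foliations---but the skeleton is identical to the paper's (rather terse) argument.
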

	
		\begin{figure}[H]
		\centering
		\captionsetup[subfigure]{width=.7\linewidth}
		\subfloat[][Projection by $\pi$ of the integral curves of Fig. \ref{snf1} onto the asymptotic lines (coloured as red and blue), the criminant surface of Fig. \ref{snf1} onto the parabolic surface (coloured as green) and the curve of singular points of 
		Fig. \ref{snf1} onto the curve of parabolic point of the type saddle (coloured as yellow).]{\includegraphics[width=.7\textwidth]{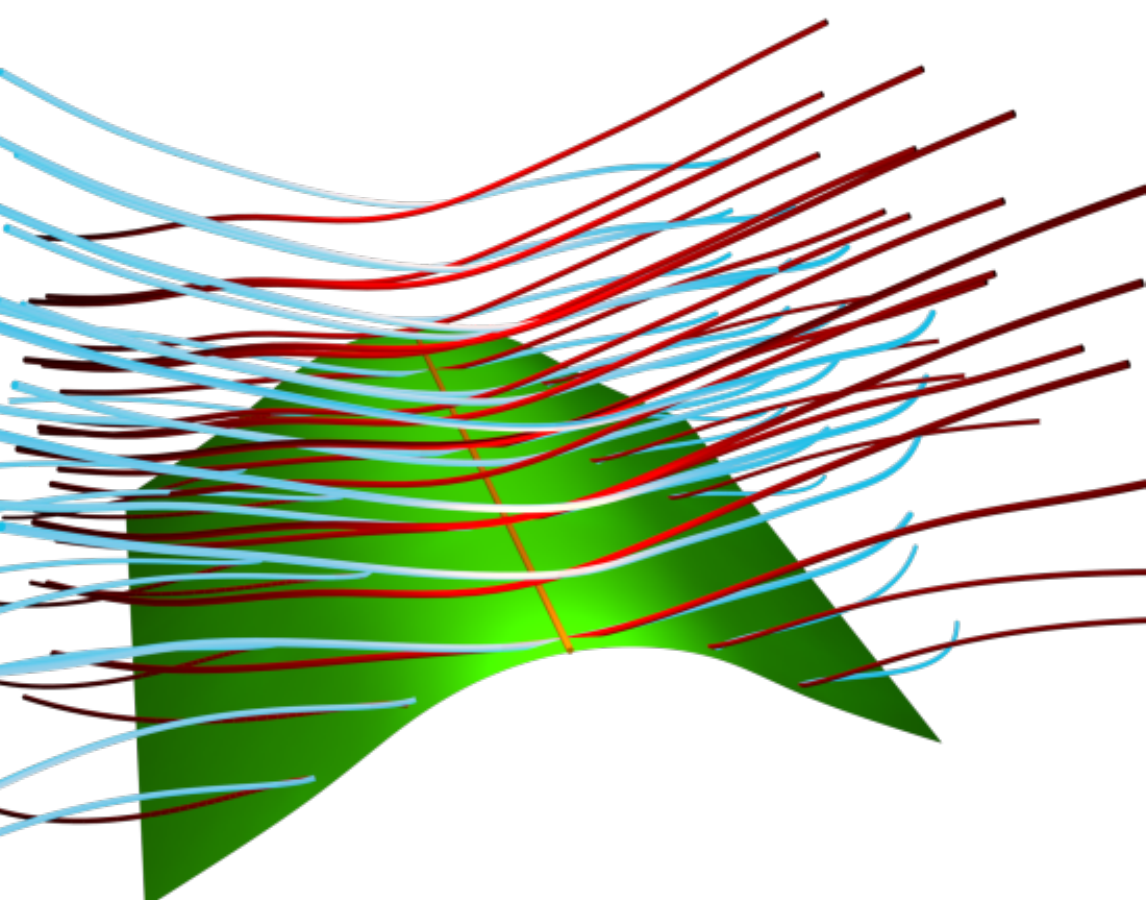}\label{snf4}}
		\\
		\captionsetup[subfigure]{width=.5\linewidth}
		\subfloat[][Integral curves (coloured as pink) of the Lie Cartan vector field with a curve (coloured as green) of singular points of saddle type. The criminant surface is coloured as blue.]{\includegraphics[width=.5\textwidth]{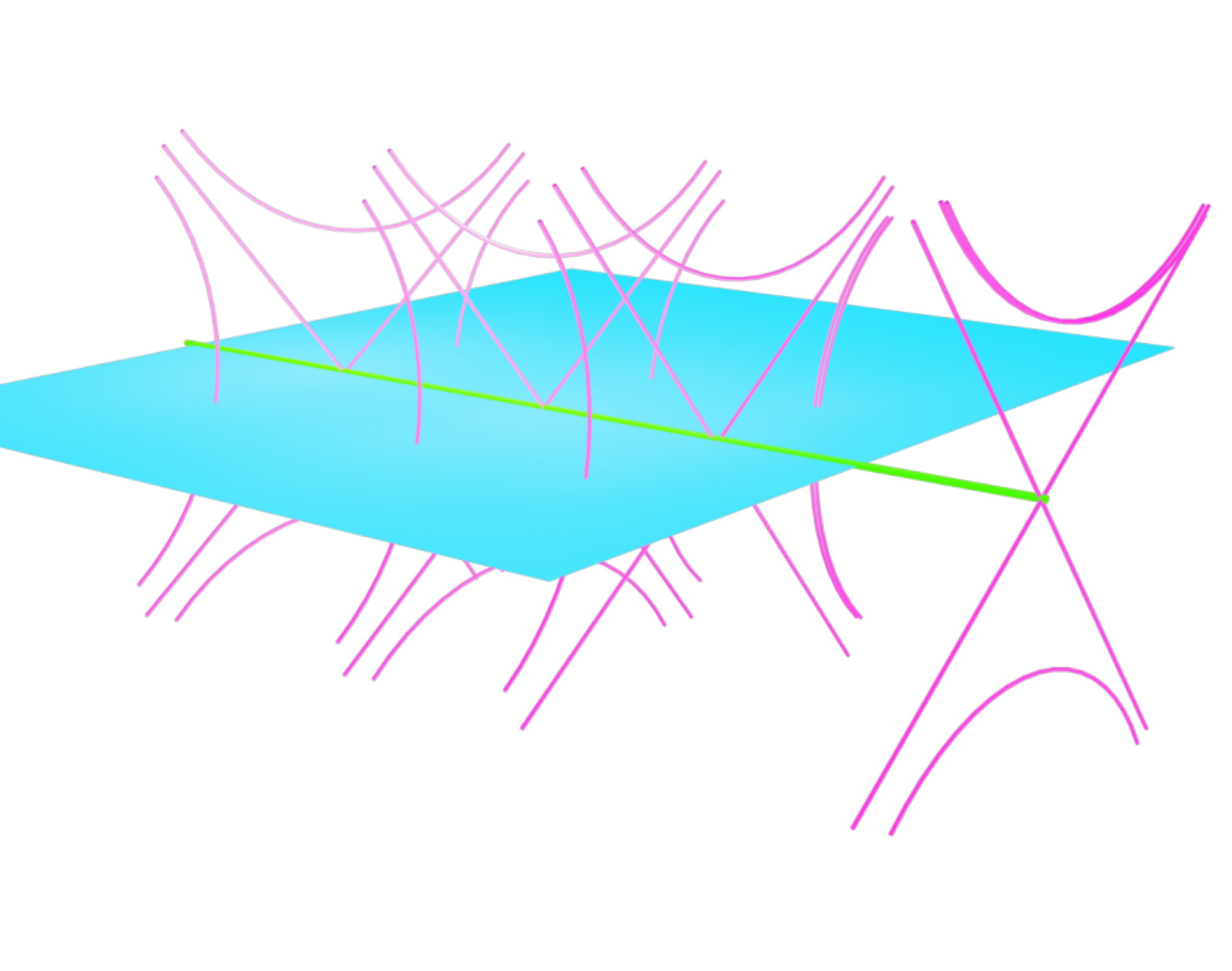}\label{snf1}}
		\qquad
		\captionsetup[subfigure]{width=.3\linewidth}
		\subfloat[][Frontal view of \ref{snf4}.]{\includegraphics[width=.3\textwidth]{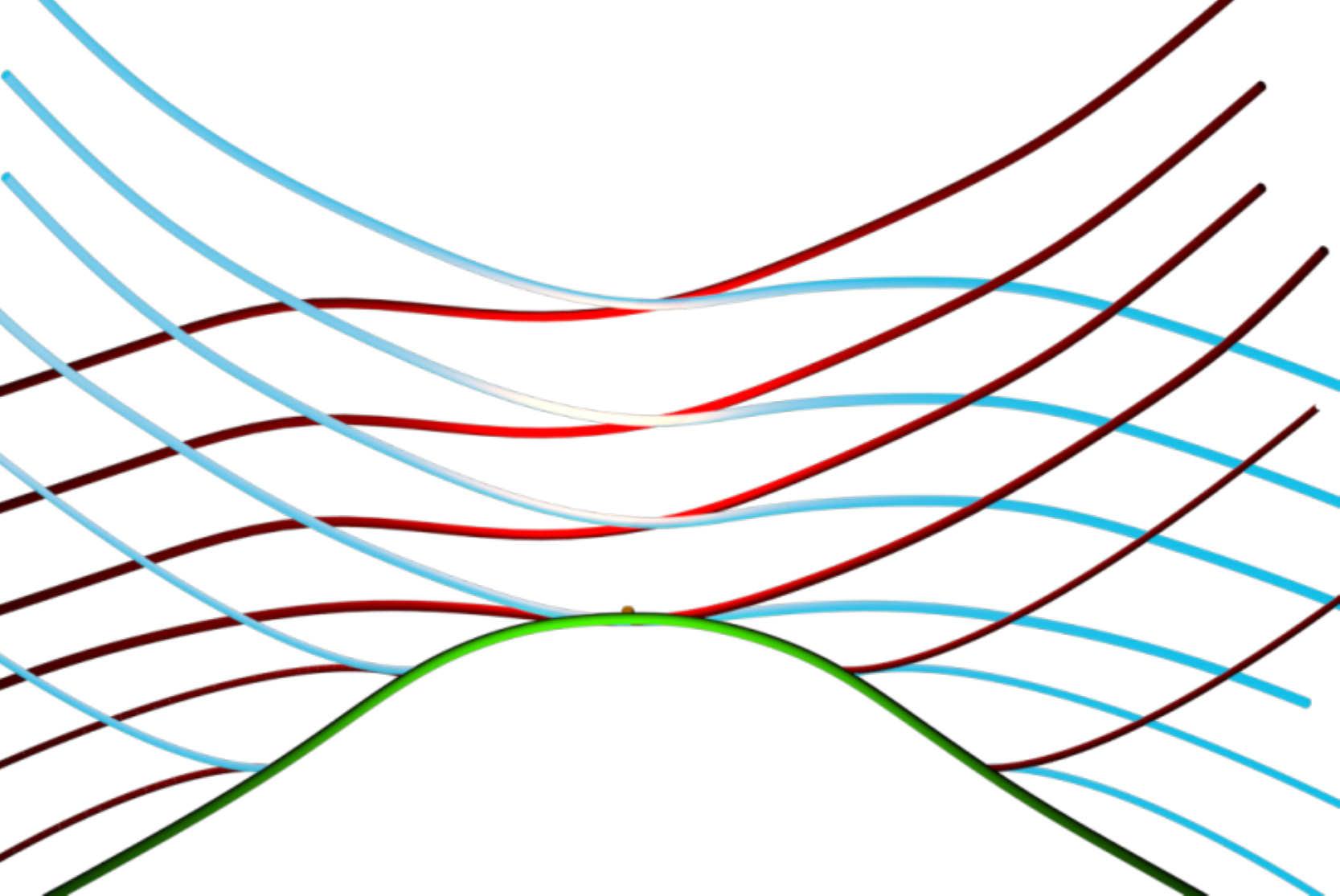}\label{snf7}}
		\caption{Parabolic point of saddle type.}
		\label{snfp1}
	\end{figure}
	\begin{figure}[H]
		\centering
		\captionsetup[subfigure]{width=.7\linewidth}
		\subfloat[][Projection by $\pi$ of the integral curves of Fig. \ref{snf2} onto the asymptotic lines (coloured as red and blue), the criminant surface of Fig. \ref{snf2} onto the parabolic surface (coloured as green) and the curve of singular points of Fig. \ref{snf2} onto the curve of parabolic point of the type node (coloured as yellow).]{\includegraphics[width=.7\textwidth]{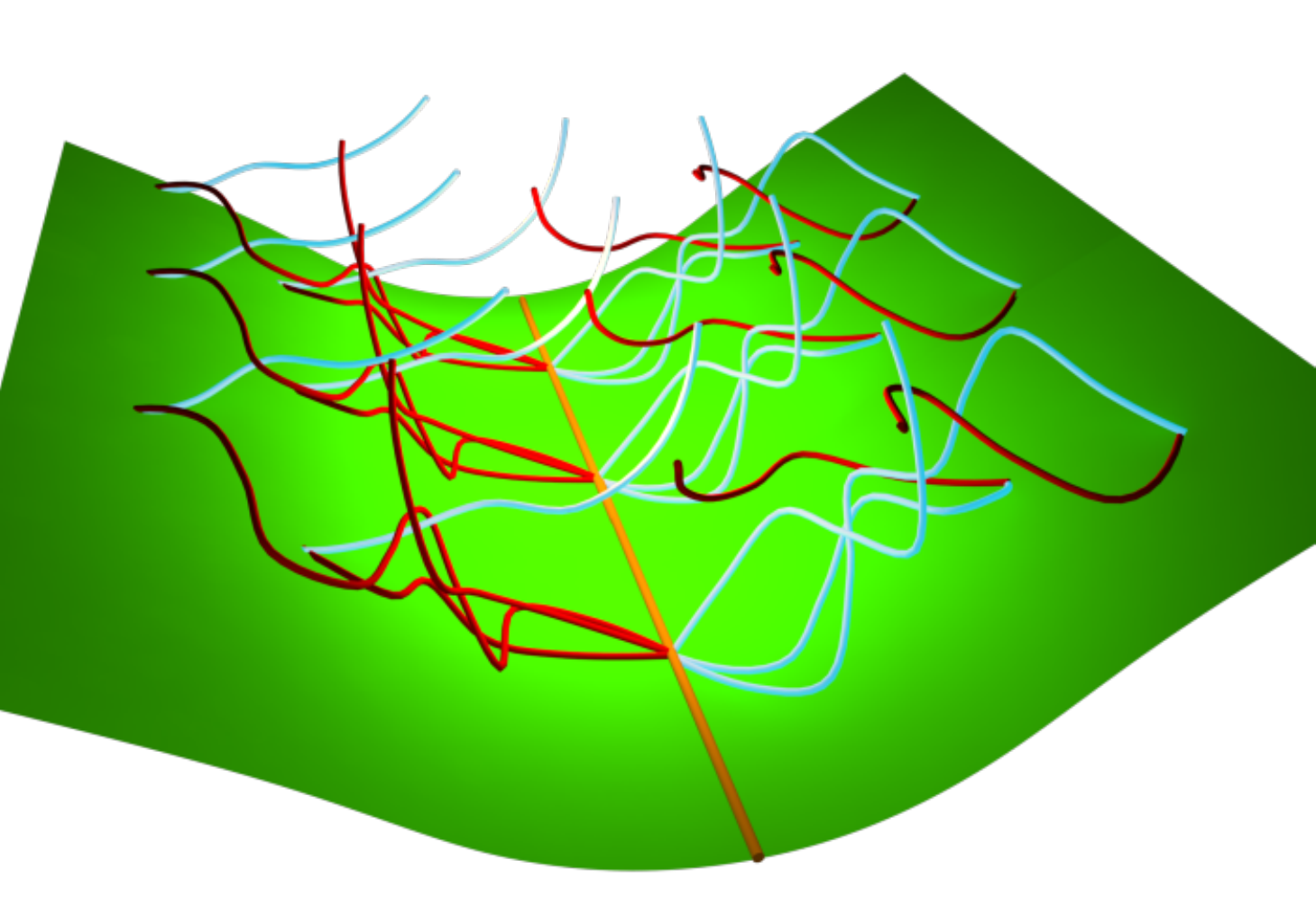}\label{snf5}}
		\\
		\captionsetup[subfigure]{width=.5\linewidth}
		\subfloat[][Integral curves (coloured as pink) of the Lie Cartan vector field with a curve (coloured as green) of singular points of node type. The criminant surface is coloured as blue.]{\includegraphics[width=.4\textwidth]{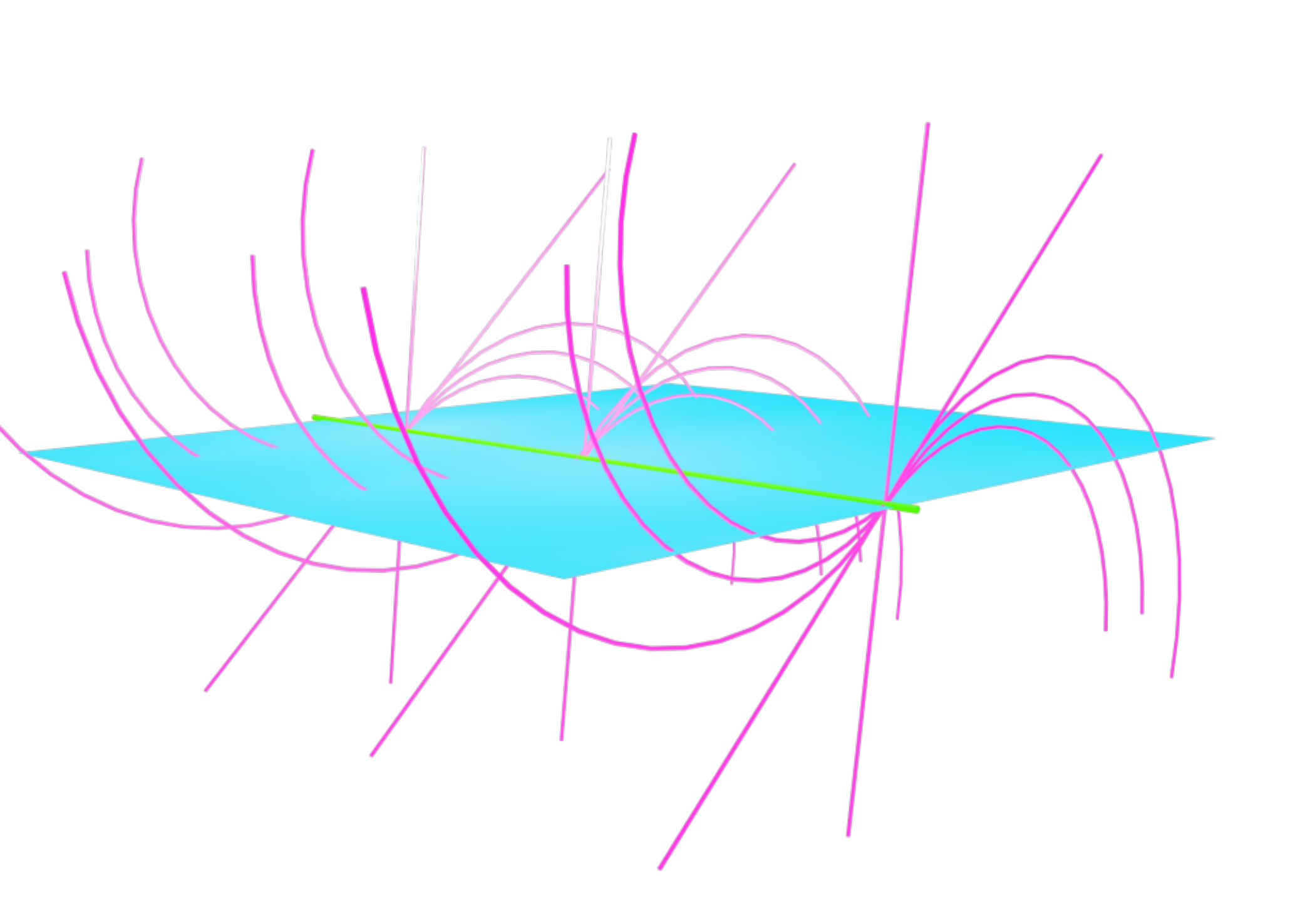}\label{snf2}}
		\captionsetup[subfigure]{width=.3\linewidth}
		\subfloat[][Frontal view of \ref{snf5}.]{\includegraphics[width=.4\textwidth]{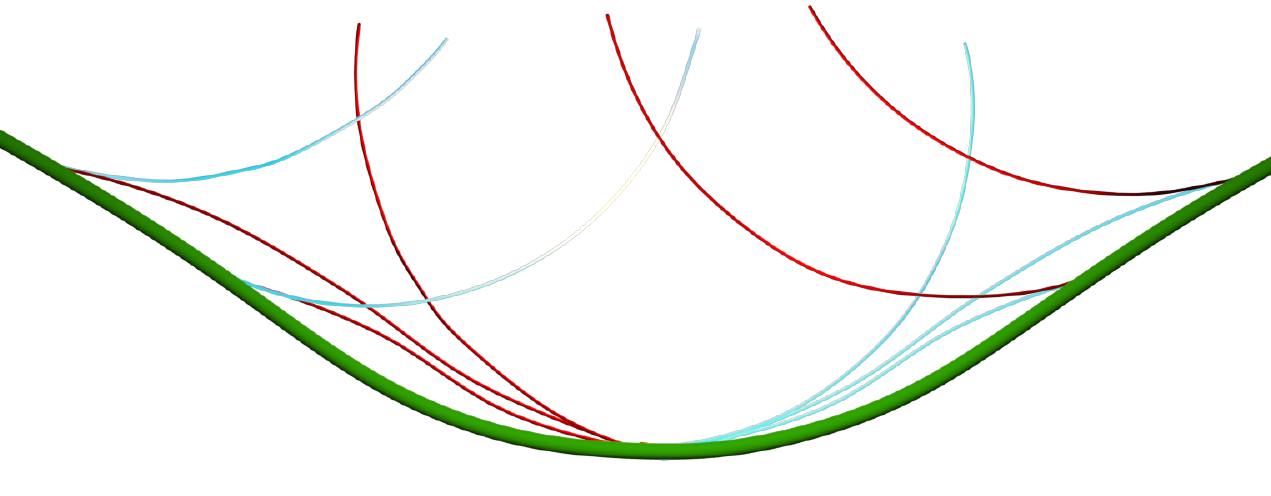}\label{snf8}}
		\caption{Parabolic point of node type.}
		\label{snfp2}
	\end{figure}

	\begin{figure}[H]
		\centering
		\captionsetup[subfigure]{width=.7\linewidth}
		\subfloat[][Projection by $\pi$ of the integral curves of Fig. \ref{snf3} onto the asymptotic lines (coloured as red and blue), the criminant surface of Fig. \ref{snf3} onto the parabolic surface (coloured as green) and the curve of singular points of Fig. \ref{snf3} onto the curve of parabolic point of the type focus (coloured as yellow).]{\includegraphics[width=.7\textwidth]{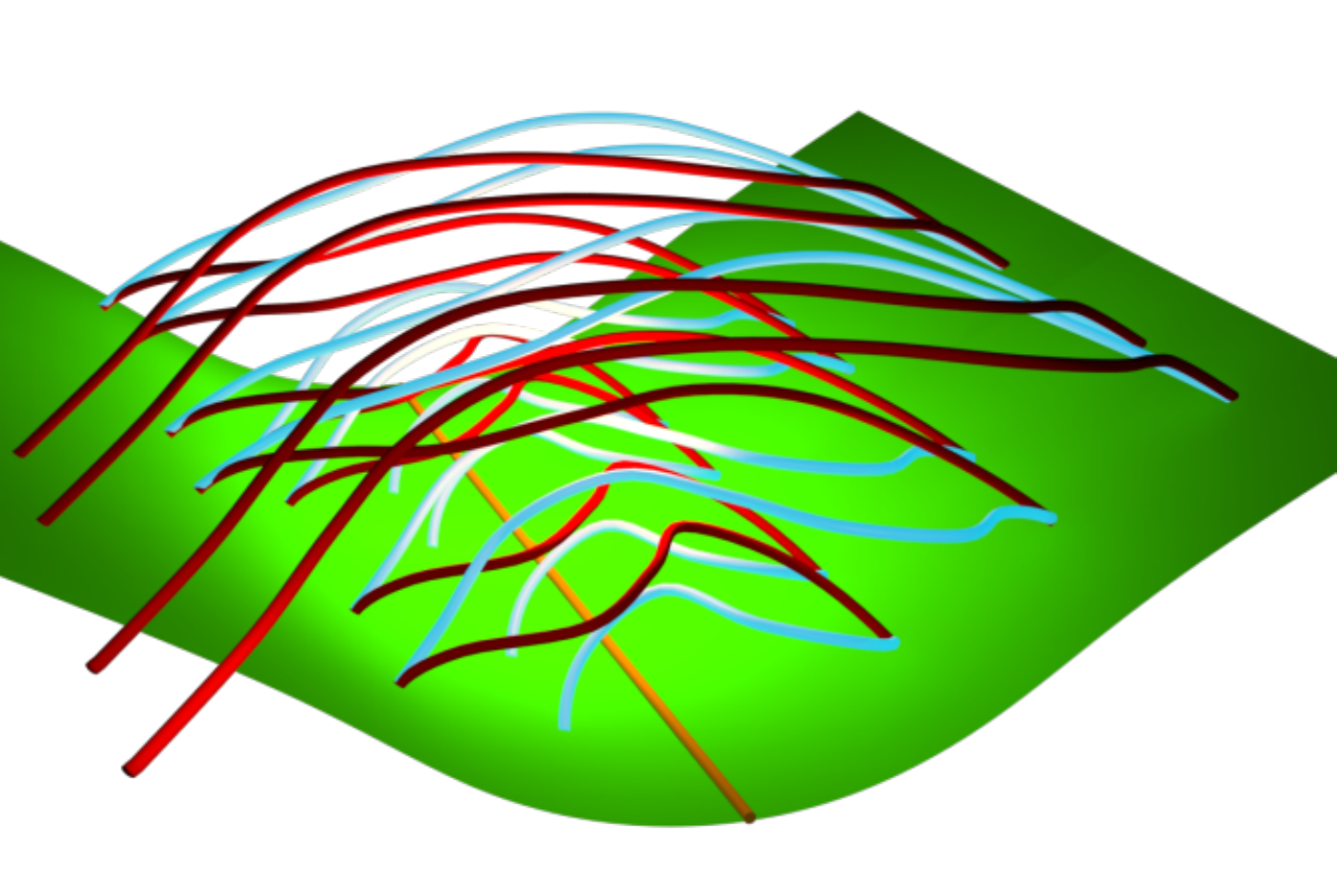}\label{snf6}}
		\\
		\captionsetup[subfigure]{width=.5\linewidth}
		\subfloat[][Integral curves (coloured as pink) of the Lie Cartan vector field with a curve (coloured as green) of singular points of focus type. The criminant surface is coloured as blue.]{\includegraphics[width=.4\textwidth]{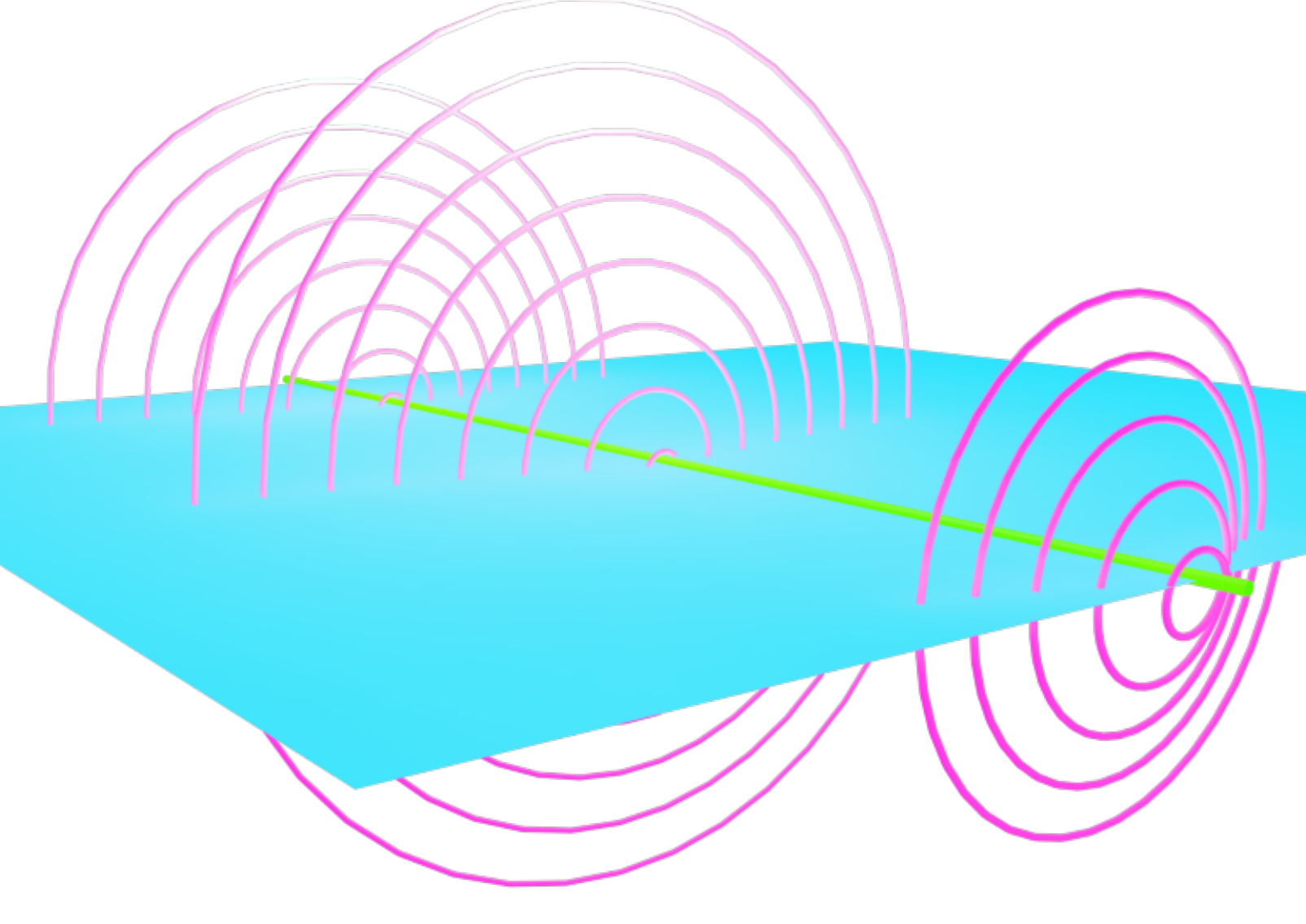}\label{snf3}}
		\qquad
		\captionsetup[subfigure]{width=.3\linewidth}
		\subfloat[][Frontal view of \ref{snf6}.]{\includegraphics[width=.4\textwidth]{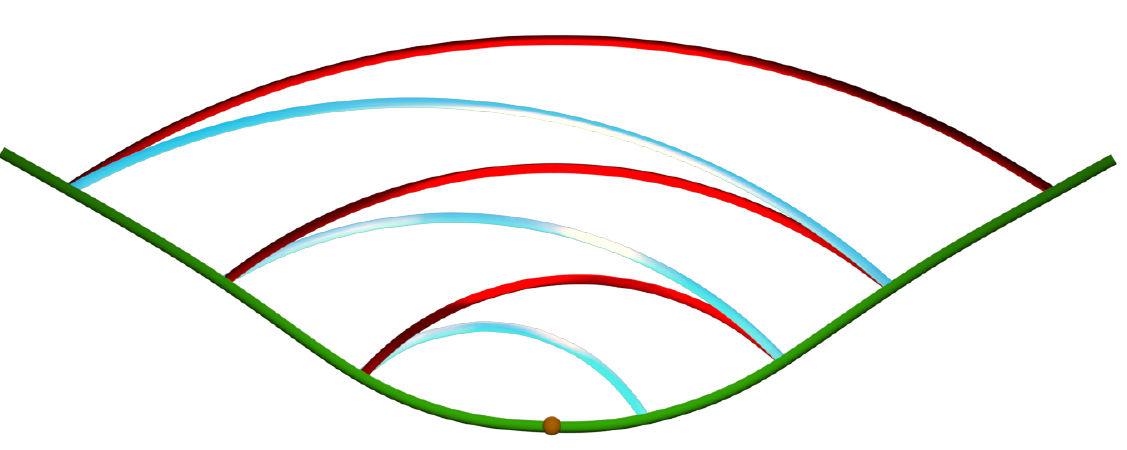}\label{snf9}}
		\caption{Parabolic point of focus type.}
		\label{snfp3}
	\end{figure}
	
	\begin{proof}
		By Proposition \ref{prop13}, the criminant set is a regular surface and
		by Proposition \ref{prop12}, at each singular point $\widetilde{\varphi}(t)$, both eigenvectors of $DX(\widetilde{\varphi}(t))$ are transversal to the criminant surface.
		Then the integral curves of $\mathcal{X}$ near $(0,0,0,0)$ is as show in Figures \ref{snf1}, \ref{snf2}, \ref{snf3}. By Proposition \ref{prop0}, the asymptotic lines are the projection of
		the integral curves of $\mathcal{X}$ by $\pi(x,y,z,p)=\pi(x,y,z)$. Then the asymptotic lines near $(0,0,0)$ is as show in Figures
		\ref{snf4}, \ref{snf7}, \ref{snf2}, \ref{snf8}, \ref{snf6}, \ref{snf9}.
	\end{proof}

	\subsection{Parabolic point of node-focus transition type}
	\label{node-focus}

	\begin{defn}
		A singular point $(x,y,z,p)$ of the
		Lie-Cartan vector field $\mathcal{X}$ is called node-focus if the eigenvalues $\lambda_{1}$ and $\lambda_{2}$ of $D\mathcal{X}(x,y,z,p)$, given in Proposition \ref{prop2},
		satisfies the following conditions:
		\begin{itemize}
			\item $\lambda_{1}-\lambda_{2}=0$ and $(\lambda_{1},\lambda_{2})\neq(0,0)$
			at $(x,y,z,p)$,
			\item the $\Omega$ given by \eqref{omega}  changes sign at $(x,y,z,p)$ as we move along $\widetilde{\varphi}$.
		\end{itemize}
	\end{defn}

By Propositions \ref{prop10} and \ref{propsing}, $b_{2}\neq0$, $a_{2}=-b_{1}$ and $a_{11}=0$. It follows that, at $(0,0,0,0)$,
\begin{equation}
\lambda_{i}=-\frac{a_{3}b_{1}+a_{12}}{2}\pm\frac{\sqrt{(3a_{12}-a_{3}b_{1}+4b_{11})^2-48b_{2}a_{111}}}{2}.
\end{equation}

The assumption $\lambda_{1}-\lambda_{2}=0$ means that

\begin{equation}
a_{111}=\frac{(3a_{12}-a_{3}b_{1}+4b_{11})^2}{48b_{2}}.
\end{equation}
	
	\begin{thm}\label{thm:node-focus}
		Suppose that $(0,0,0)$ is a parabolic point where the two asymptotic directions coincide at it
		and suppose
		there exists a curve $\varphi$ of parabolic points passing by $(0,0,0)$ such that the two asymptotic directions coincide at each of then and
		where its lift $\widetilde{\varphi}$ to the Lie-Cartan hypersurface is a curve of singular points of the
		Lie-Cartan vector field $\mathcal{X}$ such that at $(0,0,0,0)$ there exists a node-focus transition.
		
		Then the integral curves of $\mathcal{X}$ near $(0,0,0,0)$ is as show in Figure \ref{fig:foco-no} and the asymptotic lines near $(0,0,0)$ is as shown in Figure \ref{fig:la-foco_no}.
	\end{thm}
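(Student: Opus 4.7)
The plan is to mirror the structure of the proof of Theorem \ref{t2}, adapting it to handle the transition point. First I would invoke Proposition \ref{prop13} to guarantee that the criminant surface $\widetilde{\mathbb{P}}$ is a regular surface in a neighbourhood of $(0,0,0,0)$, and that the Lie-Cartan hypersurface $\mathbb{L}$ is regular there. I would then parametrize $\widetilde{\varphi}$ by a parameter $t$ with $\widetilde{\varphi}(0)=(0,0,0,0)$, so that along this curve one has two eigenvalues $\lambda_1(t)$ and $\lambda_2(t)$ of $D\mathcal{X}(\widetilde{\varphi}(t))$ given by the formula \eqref{eigenVa}, together with two trivial zero eigenvalues associated to the tangent direction of $\widetilde{\varphi}$ and to the direction normal to $\mathbb{L}$.

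Next, I would analyze the transition. By hypothesis $\lambda_1(0)=\lambda_2(0)\neq 0$ and $\Omega(t)$ changes sign at $t=0$. Writing $\lambda_{1,2}(t)=-\tfrac12(F_y+q_pF_z)(\widetilde{\varphi}(t))\pm\tfrac12\sqrt{\Omega(t)}$, for $t$ on the side where $\Omega>0$ the pair $(\lambda_1(t),\lambda_2(t))$ is real with $\lambda_1(t)\lambda_2(t)\neq 0$, so $\widetilde{\varphi}(t)$ is of node type; on the side where $\Omega<0$ the pair is a nonreal complex conjugate with nonzero real part, so $\widetilde{\varphi}(t)$ is of focus type. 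Both types fall under Proposition \ref{prop12}, which guarantees that whenever an eigenvalue is nonzero the associated (possibly complex) eigendirection is transversal to the criminant surface; in particular $\widetilde{\varphi}$ is a normally hyperbolic invariant curve of $\mathcal{X}$ throughout the neighbourhood under consideration.

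Given normal hyperbolicity along the whole of $\widetilde{\varphi}$, I would slice $\mathbb{L}$ by the two-dimensional local transverse sections to $\widetilde{\varphi}$ at each point, and on each slice examine the induced planar vector field obtained by restricting $\mathcal{X}$ (this is essentially the Hartman--Grobman picture fibered over $\widetilde{\varphi}$, made rigorous via an invariant foliation by strong stable/unstable sheets). On each transverse slice one gets either a linear node or a linear focus, with the qualitative change occurring exactly at $t=0$, which produces the integral-curve picture of Figure \ref{fig:foco-no}: a curve of singularities along which spiralling degenerates into straight approach, continuously as $t$ crosses the transition. Finally, applying Proposition \ref{prop0} the asymptotic foliation is obtained as $\pi(x,y,z,p)=(x,y,z)$ of the integral curves of $\mathcal{X}$, which yields Figure \ref{fig:la-foco_no}.

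The main obstacle is precisely the behaviour at the transition point $t=0$, where $D\mathcal{X}(0,0,0,0)$ has a double nonzero eigenvalue: the linearization alone does not distinguish between a node and a focus, so the standard $C^0$-linearization on a single slice is not enough. The argument must instead be carried out for the one-parameter family as a whole, using normal hyperbolicity along $\widetilde{\varphi}$ to produce a smooth strong (un)stable foliation whose leaves carry the straight/spiral dynamics that vary continuously with $t$; verifying the hypotheses of this family version (nonvanishing real part of the eigenvalues throughout a neighbourhood, which follows from $\lambda_1(0)+\lambda_2(0)\neq 0$ together with continuity) is the technical core of the argument.
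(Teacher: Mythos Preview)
Your proposal is correct in spirit and shares the paper's key ingredient, namely the use of Proposition \ref{prop12} to ensure that the nonzero eigendirections are transversal to the criminant. However, the paper's own proof is far more economical than your outline suggests.

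The paper does not introduce a one-parameter family of transverse slices, does not invoke normal hyperbolicity or invariant strong stable/unstable foliations, and does not treat the transition point as an obstacle requiring a separate family argument. Instead, the paper simply observes that at $(0,0,0,0)$ itself the two eigenvalues coincide and are nonzero, so $(0,0,0,0)$ is already a singular point of node type (a degenerate node with a single repeated real eigenvalue); Proposition \ref{prop12} then gives directly that both the weak and strong node separatrices are transversal to the criminant surface at that point. That is the entire proof. The behaviour along the rest of $\widetilde{\varphi}$ (node on one side, focus on the other) is taken as already covered by Theorem \ref{t2}, and the global picture in the figures follows by continuity and by the projection $\pi$ from Proposition \ref{prop0}.

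What your approach buys is a more explicit justification of why the qualitative picture varies continuously across the transition, which the paper leaves implicit. What the paper's approach buys is brevity: by recognizing that the node-focus transition point is itself a (degenerate) node, there is no new phenomenon to analyse beyond what Proposition \ref{prop12} already handles. One small caveat in your write-up: Proposition \ref{prop12} is stated only for real eigenvalues, so your appeal to it for the complex case on the focus side is not literally covered; the paper sidesteps this by only applying Proposition \ref{prop12} at the transition point, where the eigenvalues are real.
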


 
	\begin{figure}[H]
	\captionsetup[subfigure]{width=.3\linewidth}
	\centering
	\subfloat{\includegraphics[trim=0 0 0 100,clip,width=.7\textwidth]{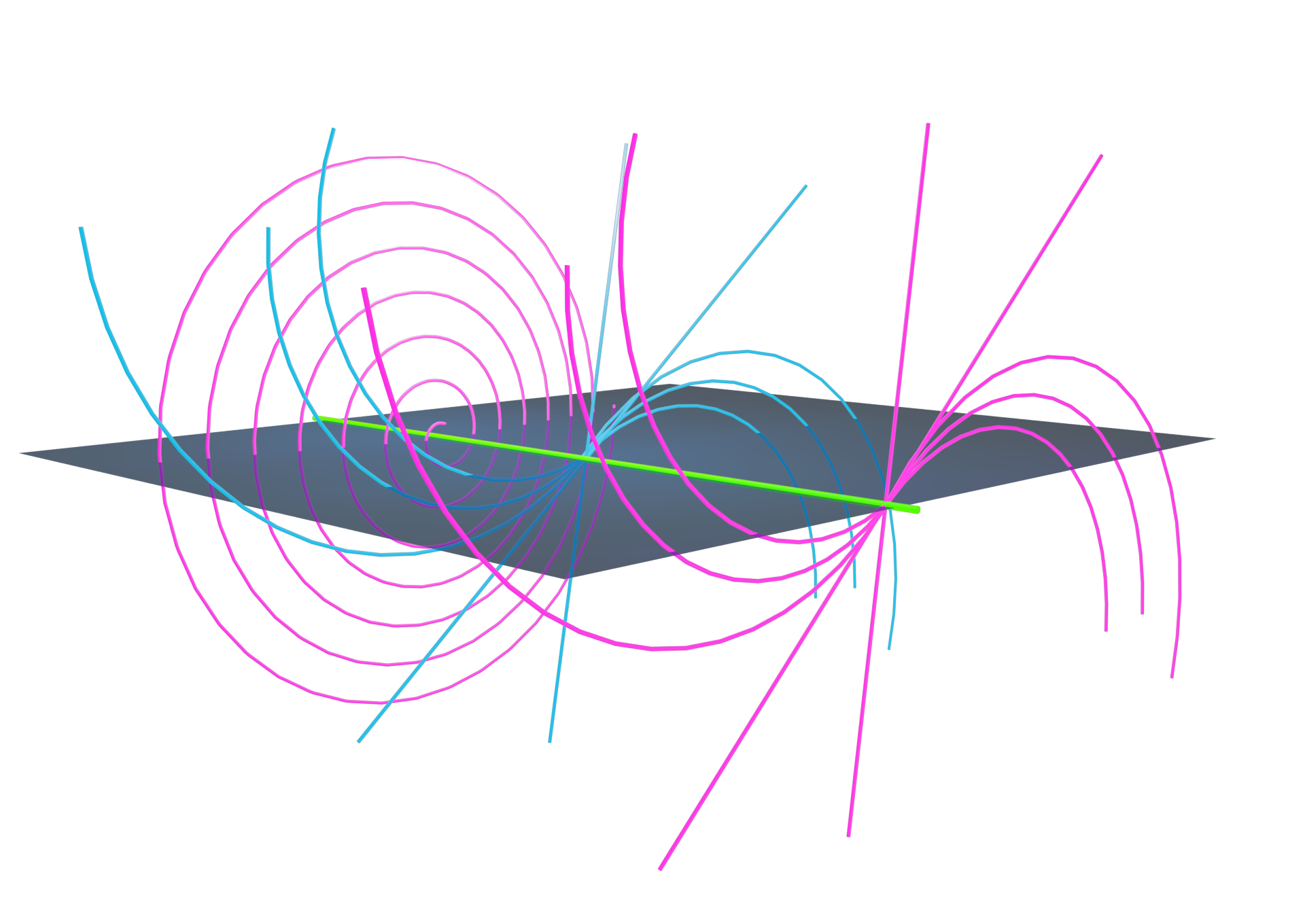}}
	\caption{Integral curves (coloured as pink and blue) of the Lie Cartan vector field with a curve (coloured as green) of singular points of node and focus type and the point of node-focus type. The tangent plane of the criminant surface at the node-focus point is coloured as transparent grey.}
	\label{fig:foco-no}
\end{figure}

	\begin{figure}[H]
	\centering
	\subfloat{\includegraphics[width=.7\textwidth]{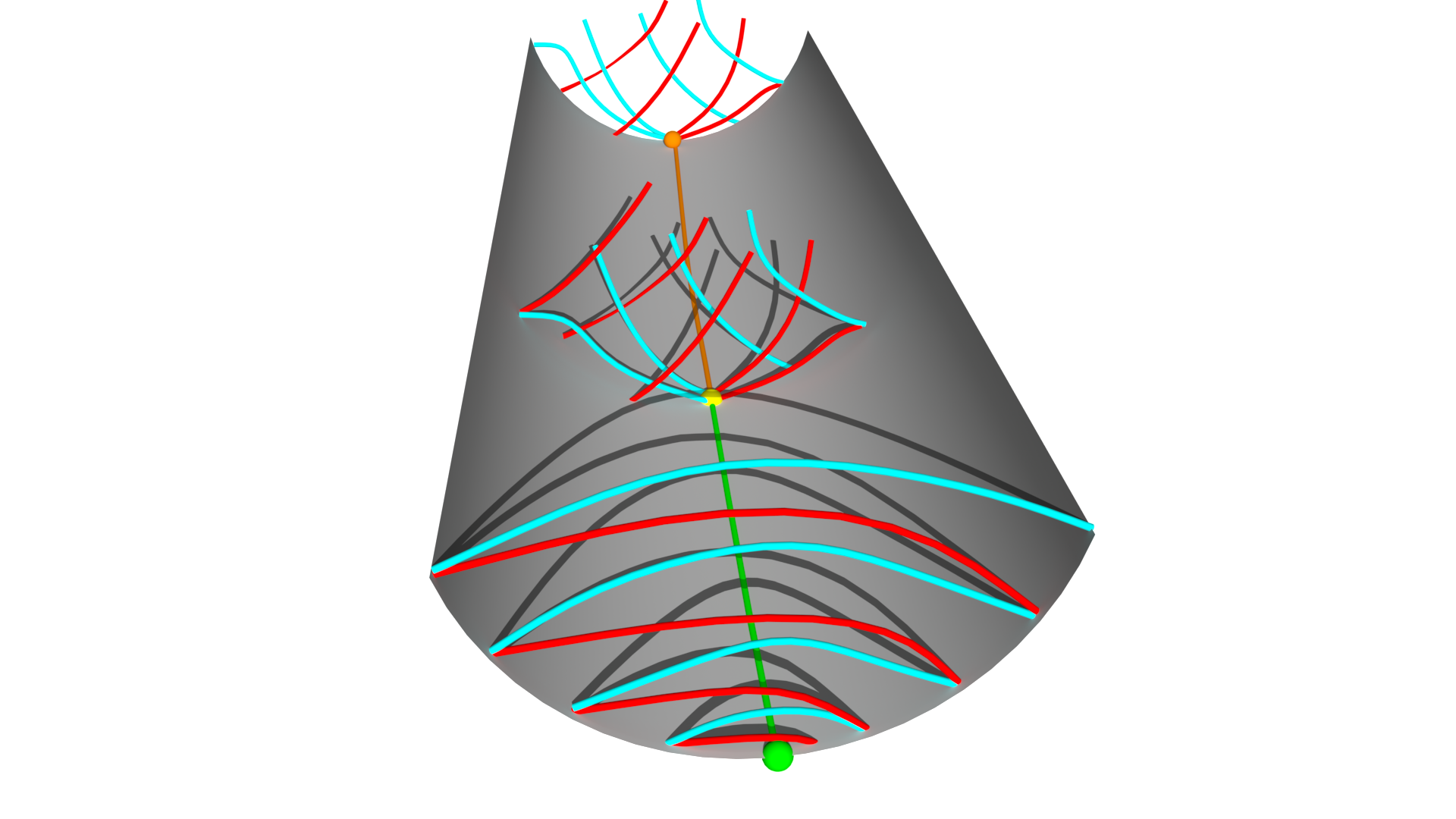}}
	\caption{Projection by $\pi$ of the integral curves of Fig. \ref{fig:foco-no} onto the asymptotic lines (coloured as red and blue), the criminant surface of Fig. \ref{fig:foco-no} onto the parabolic surface (coloured as gray) and the curve of singular points of Fig. \ref{fig:foco-no} onto the curve of parabolic points of the type foci (coloured as green) and nodes (coloured as orange). The node-focus point is the yellow point.}
	\label{fig:la-foco_no}
\end{figure}

\begin{figure}[H]
	\centering
		\subfloat{\includegraphics[width=.9\textwidth]{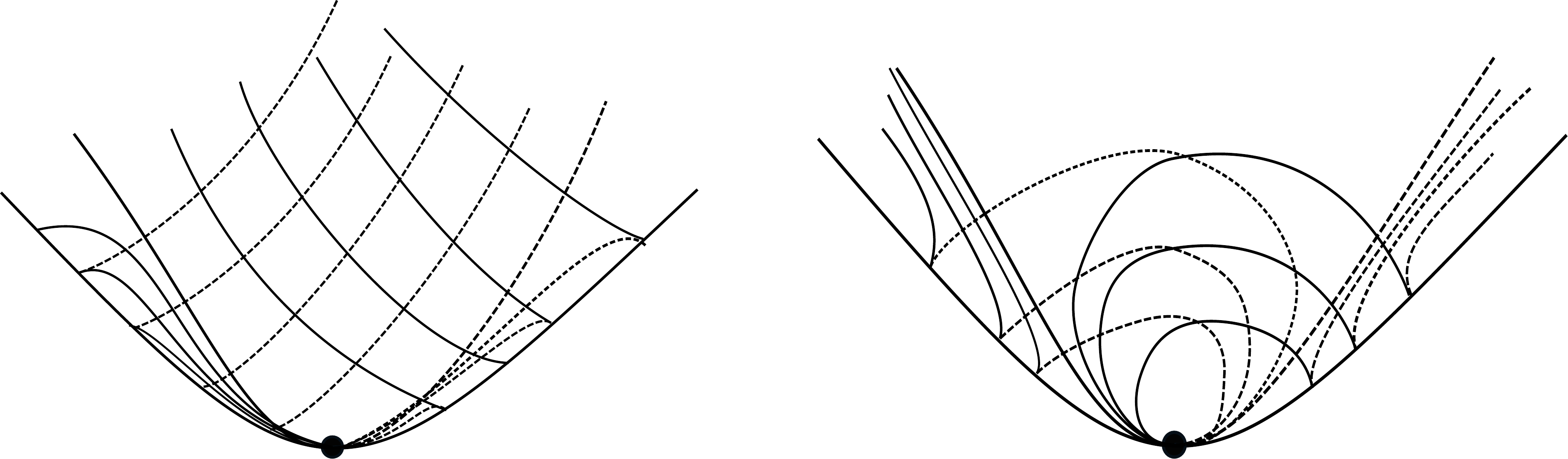}}
	\caption{Projection by $\pi$ of the integral curves of Fig. \ref{fig:foco-no} onto the asymptotic lines at the node-focus parabolic point.}
	\label{fig:foco_no}
\end{figure}

	\begin{proof}
		The singular point $(0,0,0,0)$ is of node type since $\lambda_{1}=\lambda_{2}\neq0$
		at it.
		
		By Proposition \ref{prop12}, both node weak and strong separatrices are not tangent to the criminant surface at $(0,0,0,0)$.
		
		\textcolor{blue}{}
	\end{proof}

	\subsection{Parabolic point of saddle-node transition type}
	\label{saddle-node}
	\begin{thm}\label{t3}
		Suppose that $(0,0,0)$ is a parabolic point where the two asymptotic directions coincides at it
		and suppose
		there exists a curve $\varphi$ of parabolic points passing by $(0,0,0)$ such that the two asymptotic directions coincides at each of then and
		where his lift $\widetilde{\varphi}$ to the Lie-Cartan hypersurface is a curve of singular points of the
		Lie-Cartan vector field $\mathcal{X}$ such that at $(0,0,0,0)$ there exists a saddle-node transition.
		More specifically,
		the eigenvalues
		$\lambda_{1}$ and $\lambda_{2}$, given in \ref{prop2}, satisfies the conditions that $\lambda_{1}\lambda_{2}$ changes sign at $(0,0,0,0)$ as we move along $\widetilde{\varphi}$ and $\lambda_{1}\lambda_{2}=0$, $(\lambda_{1},\lambda_{2})\neq(0,0)$
		at $(0,0,0,0)$.
	\end{thm}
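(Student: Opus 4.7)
The plan is to analyze the saddle-node transition at $(0,0,0,0)$ by combining the structural results already established for the Lie-Cartan vector field with a local normal-form/center-manifold analysis, then push the picture down to $\mathbb{R}^3$ via the projection $\pi$.

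First I would collect the geometric data already available at $(0,0,0,0)$. By Proposition \ref{prop13} both the Lie-Cartan hypersurface $\mathbb{L}$ and the criminant $\widetilde{\mathbb{P}}$ are regular at this point, and $\mathcal{X}$ is tangent to $\mathbb{L}$, so the whole analysis can be carried out on the three-dimensional regular manifold $\mathbb{L}$ with the regular surface $\widetilde{\mathbb{P}}\subset\mathbb{L}$ marked on it. Since $\widetilde{\varphi}$ is a curve of singular points, its tangent direction at $(0,0,0,0)$ is automatically a zero eigenvector of $D\mathcal{X}$; by Proposition \ref{prop2}, the remaining two eigenvalues are $\lambda_{1},\lambda_{2}$ given by \eqref{eigenVa}. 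The hypothesis $\lambda_{1}\lambda_{2}=0$ with $(\lambda_{1},\lambda_{2})\neq(0,0)$ means that, in addition to the eigenvector tangent to $\widetilde{\varphi}$, one of the two transverse eigenvalues (say $\lambda_{2}$) vanishes while $\lambda_{1}\neq 0$. By Lemma \ref{lemma:spct} this degeneracy also forces $\varphi'(0,0,0)$ to lie in the plane of $\Delta$ (in fact so that $\varphi'(0)$ together with the asymptotic direction $\mathcal A$ spans the tangent plane of the parabolic surface), and by Proposition \ref{prop12} the eigenvector associated with the vanishing transverse eigenvalue $\lambda_{2}$ is exactly the eigenvector tangent to the criminant $\widetilde{\mathbb{P}}$.

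Next I would perform a center-manifold reduction on $\mathbb{L}$. Choose local coordinates on $\mathbb{L}$ adapted to the eigenspace decomposition: one coordinate $s$ along $\widetilde{\varphi}$, one coordinate $u$ along the $\lambda_{1}$-eigendirection (the hyperbolic direction), and one coordinate $v$ transverse to $\widetilde{\varphi}$ inside the criminant (the degenerate direction). In these coordinates $\mathcal{X}$ is hyperbolic in $u$ along the whole $(s,v)$-plane, so by the Center-Manifold Theorem in the parametric form of Hirsch-Pugh-Shub, the full dynamics is topologically conjugate near $(0,0,0,0)$ to the product of a one-dimensional hyperbolic flow in $u$ with a two-dimensional flow on the $(s,v)$-center manifold $W^c$. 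On $W^c$ the curve $\widetilde{\varphi}$ of singular points persists, and the transverse eigenvalue along $\widetilde{\varphi}$ is a smooth function $\mu(s)$ with $\mu(0)=\lambda_{2}(0)=0$. The hypothesis that $\lambda_{1}\lambda_{2}$ changes sign as one moves along $\widetilde{\varphi}$ translates to $\mu'(0)\neq 0$: this is a one-parameter family of singular points undergoing a non-degenerate zero-eigenvalue transition.

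Then I would identify the phase portrait on $W^c$. With $\mu(s)=\mu'(0)\,s+O(s^2)$ the restriction of $\mathcal{X}$ to $W^c$ takes, up to $C^{0}$-conjugacy, the normal form $\dot s=0$, $\dot v=\mu'(0)\,s\,v+O_s(v^2)$ along the line $v=0$, which is the classical transcritical/saddle-node picture along a curve of equilibria: on one side of $s=0$ the equilibrium is stable (node) and on the other side unstable (saddle) in the $(u,v)$-slice, with $\widetilde{\varphi}$ itself tangent to the criminant by Lemma \ref{lemma:spct} and the degenerate separatrix lying in $\widetilde{\mathbb{P}}$ by Proposition \ref{prop12}. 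Reattaching the hyperbolic $u$-direction gives the three-dimensional picture on $\mathbb{L}$: on one side of $(0,0,0,0)$ along $\widetilde{\varphi}$ the singular points are saddles (as in Fig. \ref{snf1}), on the other side nodes (as in Fig. \ref{snf2}), with the transition point having exactly one nontrivial eigenvector tangent to $\widetilde{\mathbb{P}}$.

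Finally I would project by $\pi$ using Proposition \ref{prop0}. The integral curves of $\mathcal{X}$ project onto asymptotic lines; combined with Proposition \ref{propLC1} and the regularity of $\widetilde{\mathbb{P}}\to\mathbb{P}$ near $(0,0,0,0)$ (Proposition \ref{prop13}), one obtains the described configuration of asymptotic lines on each side of the saddle-node point along $\varphi$, matching Figures \ref{snfp1} and \ref{snfp2} in the two half-neighbourhoods and a distinguished degenerate configuration at $(0,0,0)$ itself. The main obstacle I anticipate is the delicate step of showing that the center-manifold reduction preserves the relative position of the criminant $\widetilde{\mathbb{P}}$ inside $\mathbb{L}$ well enough to conclude that the degenerate separatrix at $(0,0,0,0)$ remains tangent to $\widetilde{\mathbb{P}}$ and therefore projects in a controlled way to the tangent plane of the parabolic surface; the sign computation $\mu'(0)\neq 0$ must be verified from the explicit expressions \eqref{eigenVa}--\eqref{omega}, which is the only place where genuine computation is unavoidable.
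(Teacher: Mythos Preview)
Your approach is essentially the paper's — reduce on $\mathbb{L}$ via an invariant-manifold argument, identify the degenerate direction with the criminant through Proposition~\ref{prop12}, and project by $\pi$ — but the two reductions slice differently, and this difference is where your outline becomes loose.

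The paper does \emph{not} take a center manifold containing $\widetilde\varphi$. Instead it parametrizes $\mathbb{L}$ as $y=y(x,z,p)$, then passes to a 2D invariant manifold $p=O^{2}(x,z)$ tangent to $\mathrm{span}(\vartheta_{1},\vartheta_{2})$; since $\widetilde\varphi'(0)$ is linearly independent of $\vartheta_{1},\vartheta_{2}$, this slice is \emph{transverse} to $\widetilde\varphi$, and the origin becomes an \emph{isolated} semi-hyperbolic equilibrium with eigenvalues $0,\lambda_{2}$. A further 1D center-manifold reduction then yields $\dot x=\widetilde G(x)=\rho_{2}x^{2}+O(x^{3})$, and the planar classification theorem of Dumortier--Llibre--Art\'es applies verbatim. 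Your $W^{c}$ instead \emph{contains} $\widetilde\varphi$, so you carry an entire curve of equilibria; your normal form ``$\dot s=0$'' is only correct along $v=0$ (in general $\dot s=\alpha(s)\,v+O(v^{2})$), and the hypothesis $\mu'(0)\neq 0$ is just the theorem's sign-change assumption — by itself it does not determine the topological type without also controlling $\alpha(0)$, which is exactly the paper's $\rho_{2}$ computation. So the ``unavoidable computation'' you flag at the end is not merely checking $\mu'(0)\neq 0$ from \eqref{eigenVa}--\eqref{omega}; it is the quadratic coefficient on the reduced 1D flow, and the paper carries this out explicitly.
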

	
		\begin{figure}[H]
		\centering
 	\includegraphics[width=.4\textwidth]{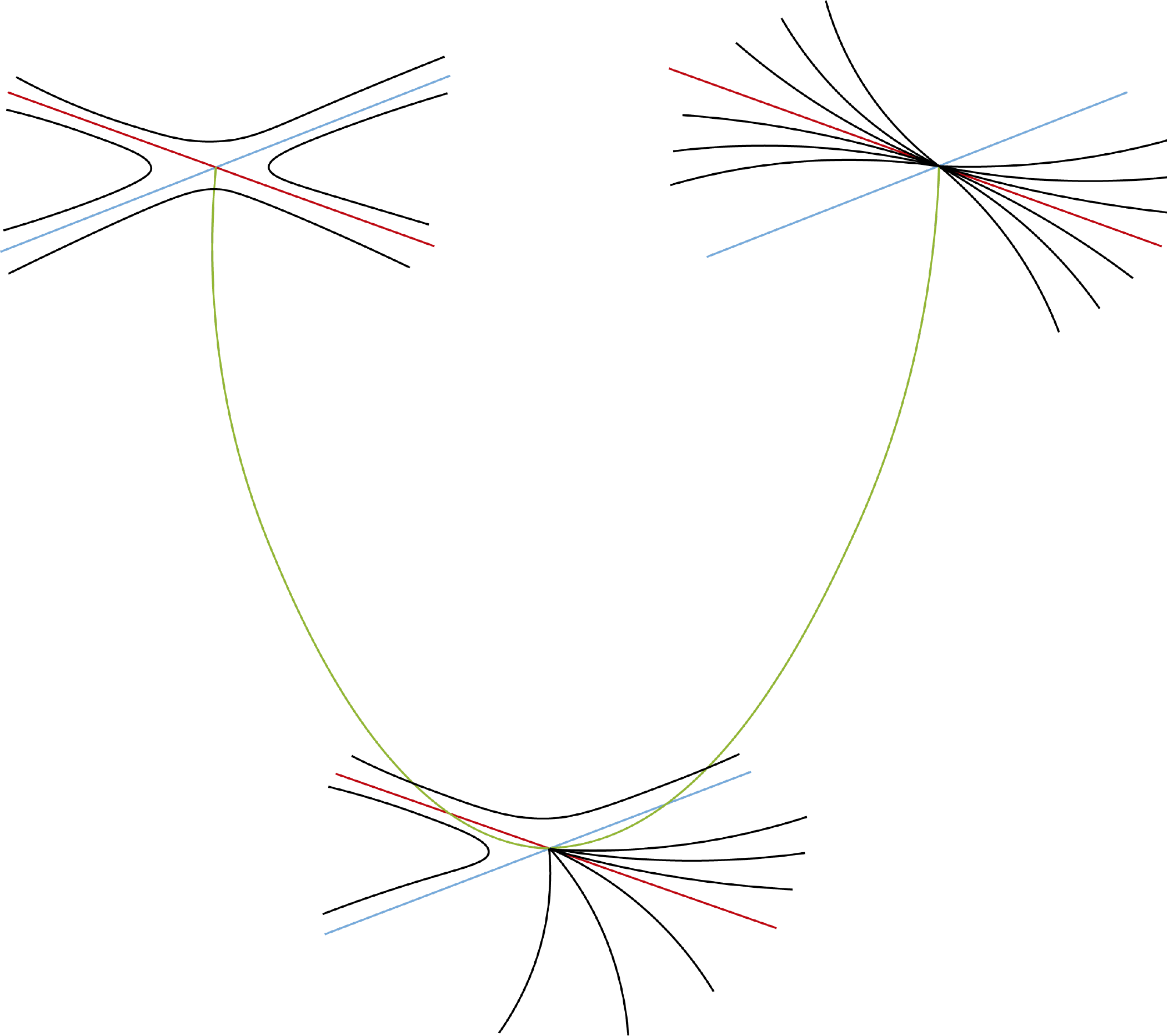}
		\caption{Saddle-node parabolic point. The curve of singular points of the Lie-Cartan vector field is coloured with green.
			At a singular point, the strong separatrix (resp. weak separatix) is coloured with blue (resp. red).}
		\label{sd}
	\end{figure}
	
	\begin{figure}[H]
		\centering
 \includegraphics[trim=0 0  0 20,clip,width=.3\textwidth]{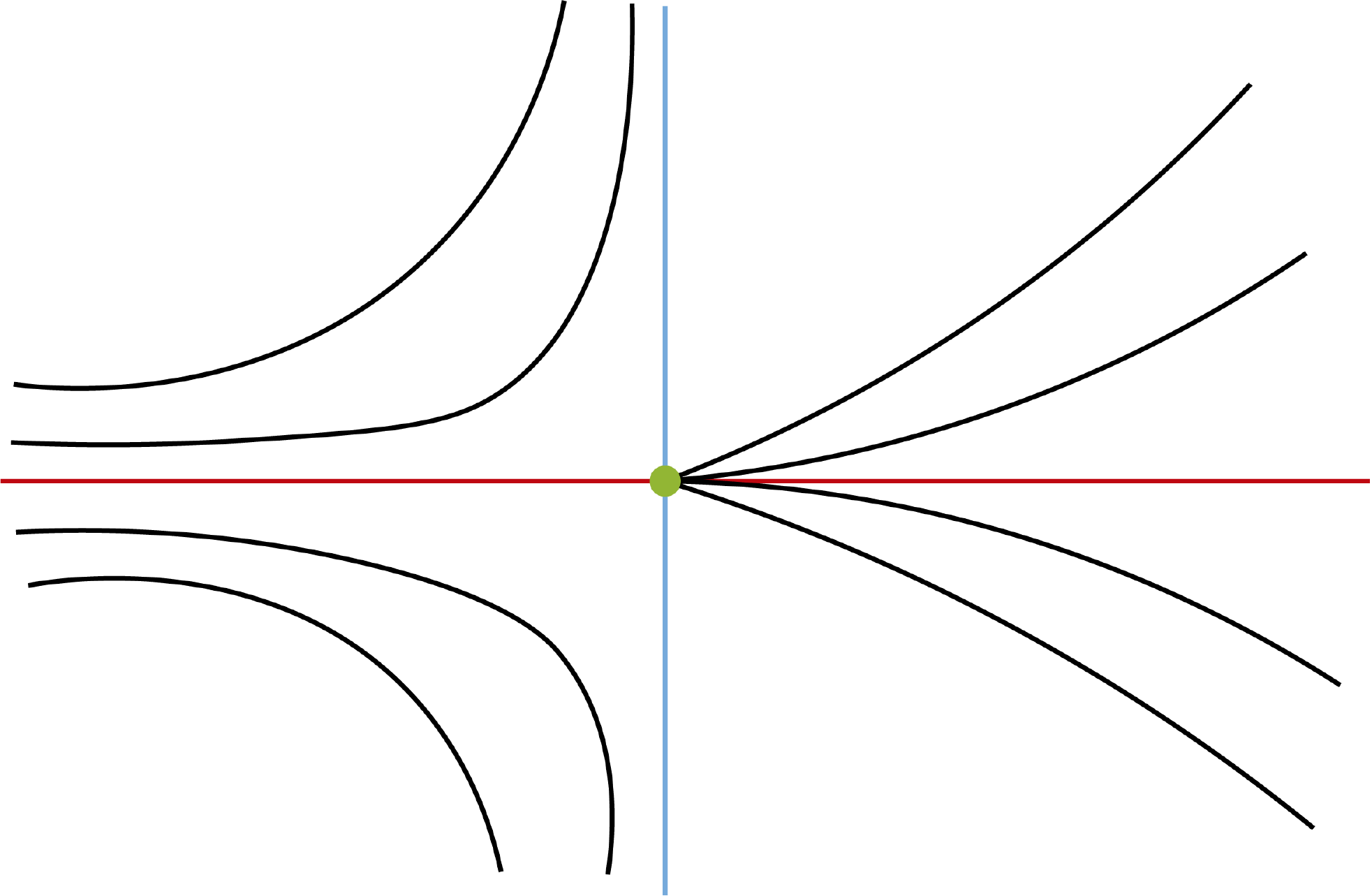}
		\caption{Saddle-node parabolic point. The strong separatrix (resp. weak separatix) is coloured with blue (resp. red).}
		\label{sd2}
	\end{figure}

\begin{figure}[H]
	\centering
 	\includegraphics[trim=250 50 200 50,clip,width=.55\textwidth]{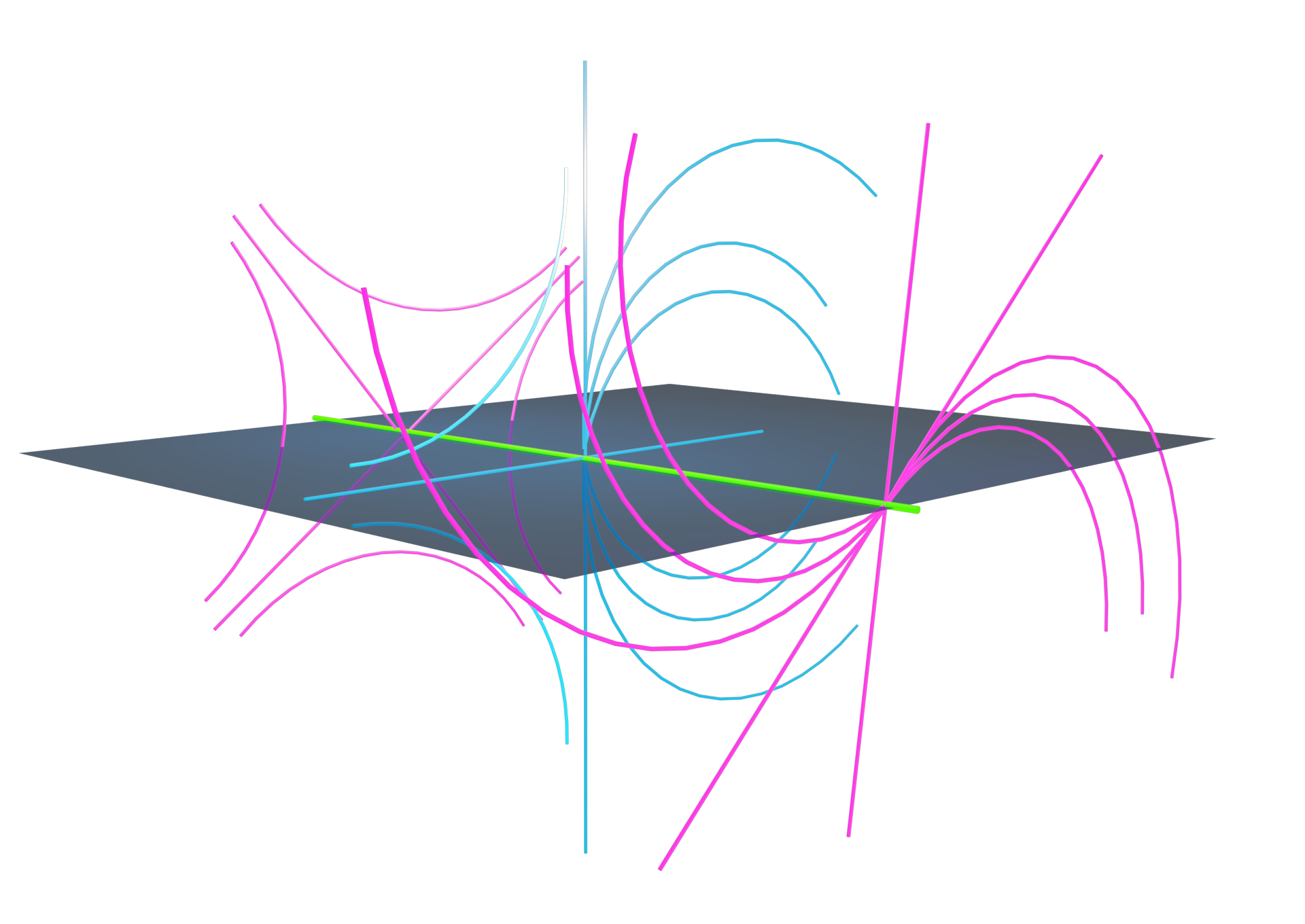}
	\caption{Integral curves (coloured as pink and blue) of the Lie Cartan vector field with a curve (coloured as green) of singular points of saddle and node type and the point of saddle-node type. The tangent plane of the criminant surface at the saddle-node point is coloured as transparent grey.  The strong separatrix (resp. weak separatix) is coloured with blue (resp. red).}
	\label{lc-sn}
\end{figure}

\begin{figure}[H]
	\centering
 \includegraphics[trim=250 50 200 50,clip,width=.6\textwidth]{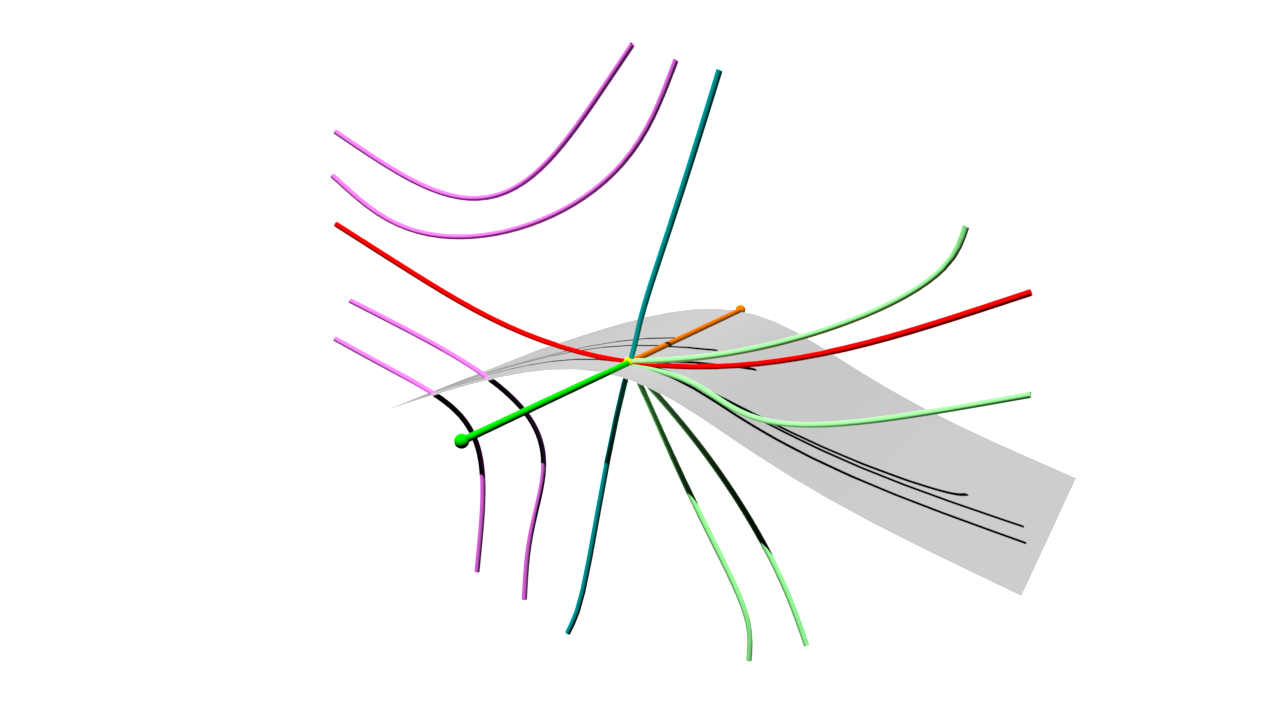}
	\caption{Integral curves of the Lie Cartan vector field with a curve of singular points of saddle (coloured as green)  and node (coloured as orange) type and the point of saddle-node type (coloured as yellow). The criminant surface is coloured as grey. The strong separatrix (resp. weak separatix) is coloured with blue (resp. red). The weak separatix has quadratic contact with
		the criminant.}
	\label{lc-sn-q}
\end{figure}

\begin{figure}[H]
	\centering
{	\includegraphics[trim=250 50 200 50,clip,width=.6\textwidth]{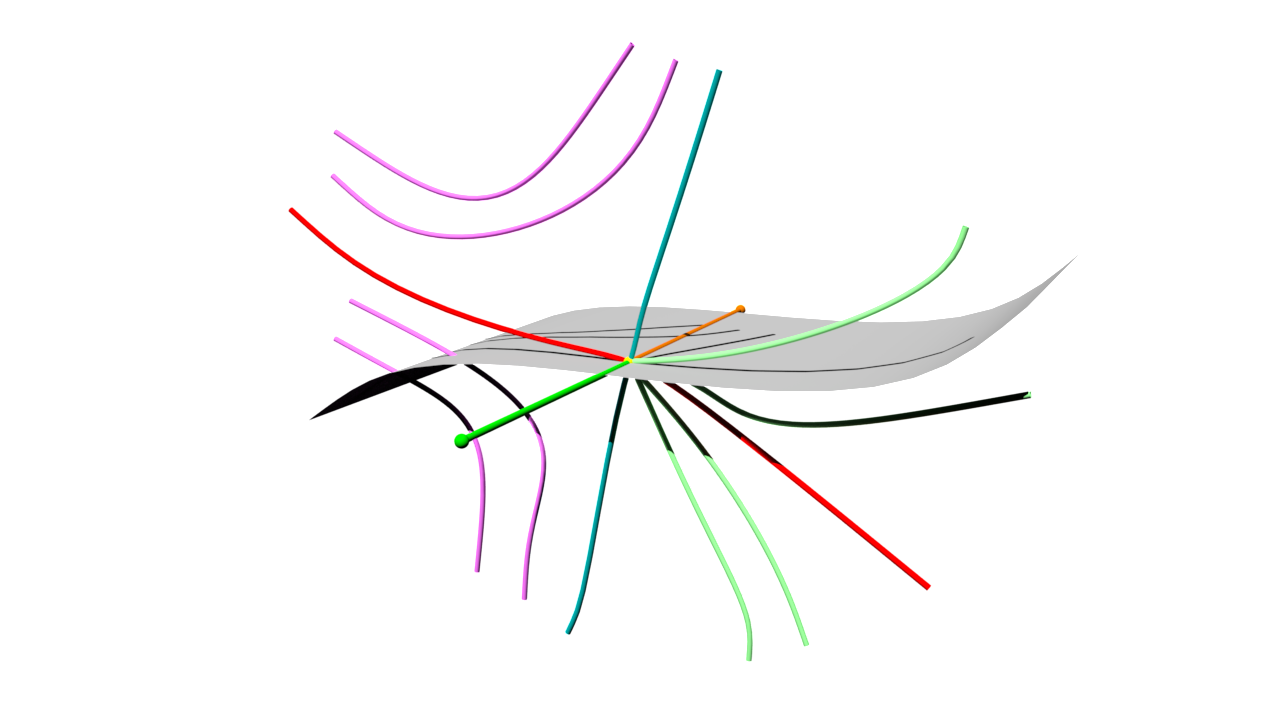}}
	\caption{Integral curves of the Lie Cartan vector field with a curve of singular points of saddle (coloured as green)  and node (coloured as orange) type and the point of saddle-node type (coloured as yellow). The criminant surface is coloured as grey. The strong separatrix (resp. weak separatix) is coloured with blue (resp. red). The weak separatix has cubic contact with
		the criminant.}
	\label{lc-sn-c}
\end{figure}

	\begin{figure}[H]
	\centering
 	\includegraphics[trim=450 50 300 50,clip,width=.6\textwidth]{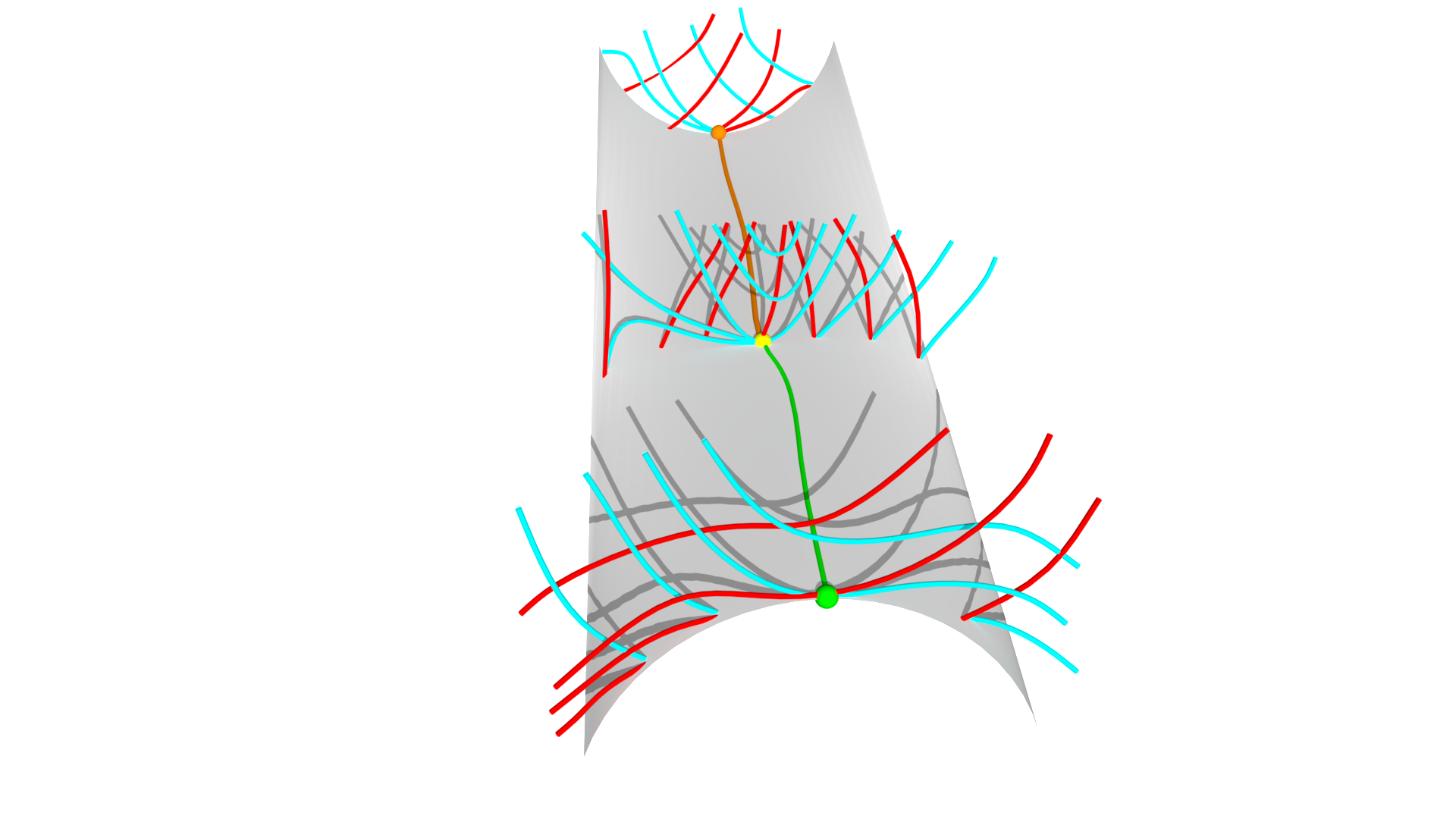} 
	\caption{Projection by $\pi$ of the integral curves of Fig. \ref{lc-sn-q}  onto the asymptotic lines (coloured as red and blue), the criminant surface of Fig. \ref{lc-sn-q} onto the parabolic surface (coloured as gray) and the curve of singular points of \ref{lc-sn-q} onto the curve of parabolic point of the type saddle (coloured as green) and node (coloured as orange). The saddle-node point is the yellow point.}
	\label{la-sn-q}
\end{figure}

	\begin{figure}[H]
	\centering
 \includegraphics[trim=450 50 300 50,clip,width=.6\textwidth]{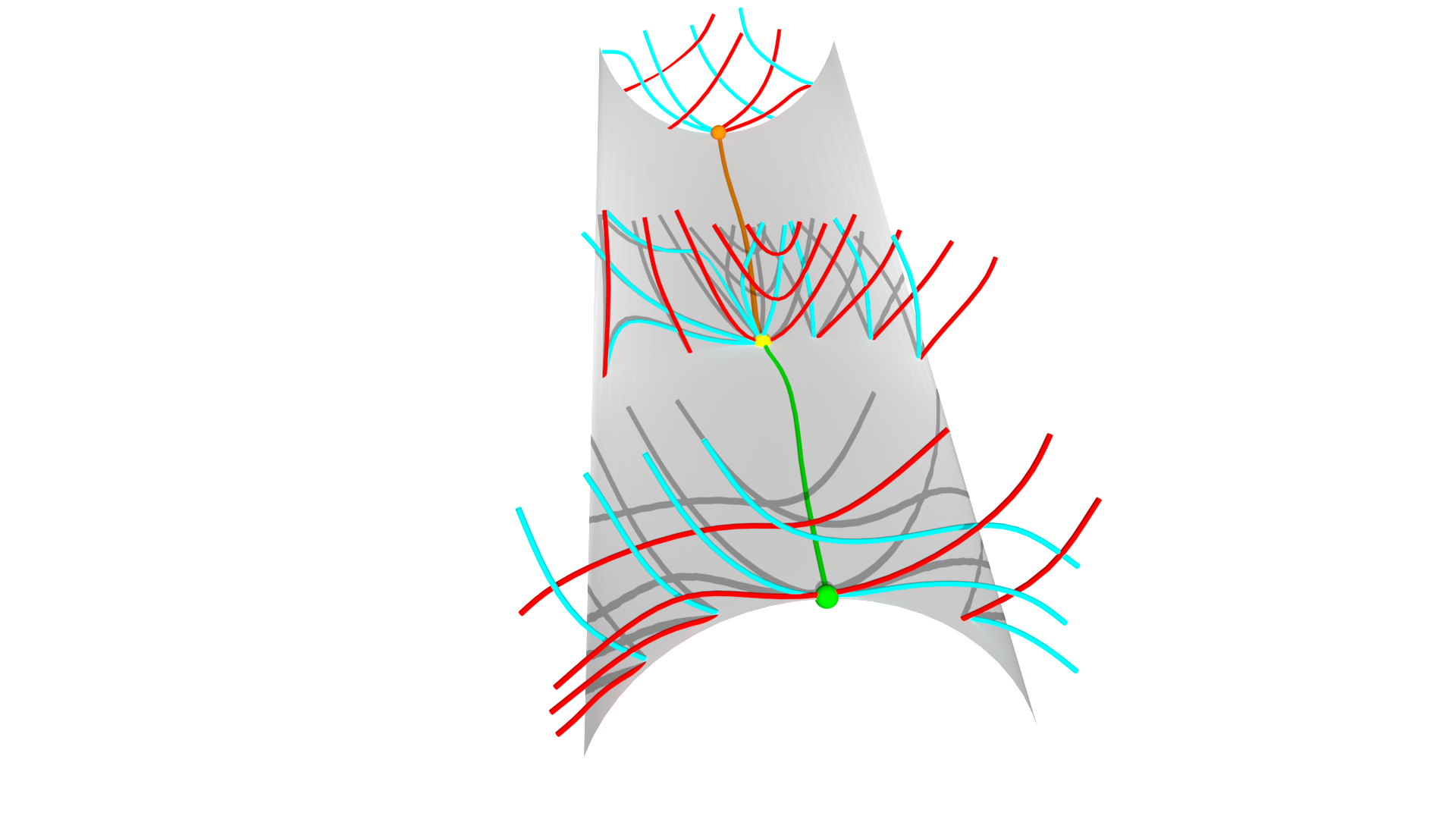} 
	\caption{Projection by $\pi$ of the integral curves of Fig. \ref{lc-sn} onto the asymptotic lines (coloured as red and blue), the criminant surface of Fig. \ref{lc-sn-c} onto the parabolic surface (coloured as gray) and the curve of singular points of \ref{lc-sn-c} onto the curve of parabolic point of the type saddle (coloured as green) and node (coloured as orange). The saddle-node point is the yellow point.}
	\label{la-sn-c}
\end{figure}
	
	\begin{proof}
		By Propositions \ref{prop10} and \ref{propsing}, $b_{2}\neq0$, $a_{2}=-b_{1}$ and $a_{11}=0$.
		The assumption $\lambda_{1}\lambda_{2}=0$ means that
		\begin{equation}
		a_{111}=\frac{(a_{12})^2+3a_{12}b_{11}+2(b_{11})^2-a_{3}b_{1}(a_{12}+b_{11})}{6b_{2}}.
		\end{equation}
		With that, $\lambda_{1}=0$. Now, the assumption that $\lambda_{2}\neq0$ means that $a_{3}b_{1}+a_{12}\neq0$.
		By the Implicit Function Theorem, the Lie-Cartan hypersurface in a neighbourhood of
		$(0,0,0,0)$ is parametrized by $y=y(x,z,p)$, where
		\begin{equation}\label{snLCH}
		\begin{split}
		&y(x,z,p)=\left(\frac{(a_{3})^2-a_{13}}{a_{3}b_{1}+a_{12}}\right)z+L_{1}pz+L_{2}xz+L_{3}z^2-\left(\frac{2b_{2}}{a_{3}b_{1}+a_{12}}\right)p^2\\
		&+\left(\frac{(a_{12}+b_{11})(a_{3}b_{1}-a_{12}-2b_{11})}{(a_{3}b_{1}+a_{12})b_{2}}\right)x^2+\left(\frac{a_{3}b_{1}-a_{12}-2b_{11}}{a_{3}b_{1}+a_{12}}\right)px
		+\mathcal{O}^3(x,z,p).
		\end{split}
		\end{equation}
		Let $\mathcal{Y}$ be the Lie-Cartan vector field $\mathcal{X}$ restricted to \eqref{snLCH}. Then
		\begin{equation}
		\mathcal{Y}(x,z,p)=(F_{p},qF_{p},-(F_{x}+pF_{y}+qF_{z}))(x,y(x,y,p),z,p).
		\end{equation}
		The eigenvector $\vartheta_{2}$ associated to the eigenvalue $\lambda_{2}=-(a_{3}b_{1}+a_{12})$ is given by
		$\vartheta_{2}=\left(1,0,-\frac{a_{12}+b_{11}}{b_{2}}\right)$ and
		the eigenvector $\vartheta_{1}$ associated to the eigenvalue $\lambda_{1}=0$
		is given by $\vartheta_{1}=\left(1,0,\frac{a_{3}b_{1}-a_{12}-b_{11}}{2b_{2}}\right)$.
		
		The criminant surface $F=F_{p}=0$ can be parametrized by $p=p(x,z)$ and the curve $\widetilde{\varphi}$ can be parametrized by $z=z(x)$.
		
		We have that $\frac{\partial}{\partial x}(\lambda_{1}\lambda_{2})(\widetilde{\varphi})=(\lambda_{1})_{x}(0)\lambda_{2}\neq0$.
		Then, by \cite[Theorem 4.1, p.42]{MR0501173}, there exist a invariant manifold $W^{s}(\widetilde{\varphi})$, of class $C^{k-3}$,
		and $\widetilde{\varphi}$ is normally hyperbolic attractor, which is locally given by $p(x,z)=0+\mathcal{O}^2(x,z)$.
		
		Let $\mathcal{Z}$ be the Lie-Cartan vector field $\mathcal{Y}$ restricted to $W^{s}(\widetilde{\varphi})$.
		After the change of coordinates $(\widetilde{x},\widetilde{z})=M\cdot(x,y)$, where
		\begin{equation}
		M=\left(
		\begin{array}{cc}
		\frac{(a_{12}+b_{11})(a_{12}-a_{3}b_{1}+2b_{11})}{b_{2}\lambda_{2}} & \frac{a_{12}-a_{3}b_{1}+2b_{11}}{\lambda_{2}} \\
		\frac{-(a_{12}+b_{11})(a_{12}-a_{3}b_{1}+2b_{11})}{b_{2}\lambda_{2}} & -\frac{2(a_{12}+b_{11})}{\lambda_{2}} \\
		\end{array}
		\right),
		\end{equation}
		\noindent we get
		$\widetilde{\mathcal{Z}}(x,z)=M\cdot\mathcal{Z}(\widetilde{x},\widetilde{z})=(A(x,z),\lambda_{2}z+B(x,z))$, where
		$A(0,0)=B(0,0)=A_{x}(0,0)=B_{x}(0,0)=A_{z}(0,0)=B_{z}(0,0)=0$.
		
		Let $z=z(x)$ be the solution of the equation $\lambda_{2}z+B(x,z)=0$ in a neighbourhood of $(0,0)$ and set $\widetilde{G}(x)=A(x,z(x))$. Performing the calculations, we have that $\widetilde{G}(x)=\rho_{2}x^2+\mathcal{O}^3(x)$.
		
		By \cite[Theorem 2.19, item $iii$, p. 74, 75]{MR2256001} the Lie-Cartan vector field, in a neighbourhood of $(0,0,0,0)$, is as show in the Figures \ref{sd} and \ref{sd2}.
		
		By Proposition \ref{prop12}, the strong separatrix (resp. weak separatrix) is transversal (resp. tangent) to the criminant surface at
		the point $(0,0,0,0)$.
	\end{proof}

	\subsection{Parabolic point with a pair of complex eigenvalues crossing the imaginary axis}
	\label{hopf}
	
	\begin{thm}\label{aci}
		Let $\varphi$ be a parabolic   curve where the tangent directions coincides and suppose $(0,0,0)$ that is a parabolic point
		of $\varphi$ where its lift $(0,0,0,0)$ to the Lie-Cartan hypersurface is a singular point of the Lie-Cartan vector-field
		which  possesses a pair of nonzero eigenvalues which cross the imaginary axis as we move along
		the curve of singular points $\widetilde{\varphi}$ in such way that the derivative $\delta$ of the real part of the
		eigenvalues \eqref{eigenVa} in the direction of the tangent of $\widetilde{\varphi}$ does not vanished when evaluated in $(0,0,0,0)$. This means
		that the nonzero eigenvalues cross the imaginary axis transversely.
		
		Then if $\delta>0$ (resp. $\delta<0$)
		the stable and unstable invariant manifolds of the Lie-Cartan vector field is as shown in the Figures \ref{hop1} (resp. Figures \ref{hop2}),
		and the asymptotic lines are 
		as shown in the Figure \ref{lahop1} (resp. Figure \ref{lahop2}).
	\end{thm}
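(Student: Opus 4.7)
The plan is to reduce the analysis to a classical Andronov--Hopf bifurcation carried out on the Lie--Cartan hypersurface. A direct computation from the expression for $\mathcal{X}$ in Proposition \ref{prop0} yields $\mathcal{X}(F)=F_{p}(F_{x}+pF_{y}+qF_{z})-(F_{x}+pF_{y}+qF_{z})F_{p}\equiv 0$, so the hypersurface $\mathbb{L}=\{F=0\}$ is invariant under the flow of $\mathcal{X}$; by Proposition \ref{prop13} it is a regular $3$-manifold near $(0,0,0,0)$. Thus $\mathcal{X}$ restricts to a smooth vector field $\mathcal{X}_{\mathbb{L}}$ on $\mathbb{L}$, and by Lemma \ref{lemma:s1} the curve $\widetilde{\varphi}$ is a curve of singular points of $\mathcal{X}_{\mathbb{L}}$.

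The spectrum of the restricted linearization is the second key point. By Proposition \ref{prop2}, in the ambient $\mathbb{R}^{4}$ the Jacobian $D\mathcal{X}(\widetilde{\varphi}(s))$ has two zero eigenvalues together with the complex conjugate pair $\lambda_{1}(s),\lambda_{2}(s)$. Since $D\mathcal{X}_{\mathbb{L}}$ preserves $T\mathbb{L}$ by invariance, and complex conjugate pairs cannot be split in the spectrum of a real matrix, both $\lambda_{1},\lambda_{2}$ must appear among the three eigenvalues of $D\mathcal{X}_{\mathbb{L}}(\widetilde{\varphi}(s))$; the remaining eigenvalue is therefore zero, and its eigenvector is necessarily $\widetilde{\varphi}'(s)$, the other kernel vector of $D\mathcal{X}$ being transverse to $\mathbb{L}$. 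Hence $D\mathcal{X}_{\mathbb{L}}(\widetilde{\varphi}(s))$ has the three eigenvalues $0,\lambda_{1}(s),\lambda_{2}(s)$, which is exactly the spectrum of an Andronov--Hopf scenario with $\widetilde{\varphi}$ as the curve of equilibria.

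Parametrizing $\widetilde{\varphi}$ so that $\widetilde{\varphi}(0)=(0,0,0,0)$, set $\mu(s)=\tfrac{1}{2}(\lambda_{1}(s)+\lambda_{2}(s))$. The hypothesis gives $\mu(0)=0$ and $\mu'(0)=\delta\neq 0$, which is the standard transversality condition for a Hopf bifurcation in the one-parameter family of $2$-dimensional systems obtained by restricting $\mathcal{X}_{\mathbb{L}}$ to a smooth family of sections transverse to $\widetilde{\varphi}$. The classical parametric Hopf theorem then produces $2$-dimensional invariant manifolds of $\widetilde{\varphi}$, paraboloidal in shape, whose stability reverses as we cross $(0,0,0,0)$ along $\widetilde{\varphi}$; the two possible orientations correspond to $\delta>0$ and $\delta<0$, yielding Figures \ref{hop1} and \ref{hop2} respectively. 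Finally, since by Proposition \ref{prop0} the projection $\pi$ carries integral curves of $\mathcal{X}$ to asymptotic lines of $\Delta$, these invariant manifolds project to the spiralling asymptotic-line structure shown in Figures \ref{lahop1} and \ref{lahop2}.

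The hardest step will be organising the parametric Hopf analysis for a curve of non-isolated equilibria, where the ``bifurcation parameter'' is an internal coordinate along $\widetilde{\varphi}$ rather than an external scalar; one has to verify that the standard normal-form reduction applies uniformly in $s$ and that the resulting invariant manifolds fit smoothly along the whole of $\widetilde{\varphi}$. A secondary technical point is that the first Lyapunov coefficient is not controlled by the hypotheses, so the theorem gives no information about the super- or subcritical nature of any bifurcating periodic orbits; fortunately, the stated conclusion describes only the gross topology of the stable and unstable manifolds, for which the transversality $\delta\neq 0$ is enough.
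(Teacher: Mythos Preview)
Your reduction to $\mathbb{L}$ and the spectral analysis of $D\mathcal{X}_{\mathbb{L}}|_{\widetilde{\varphi}}$ are correct and essentially what the paper does, but there is a genuine conceptual gap in the bifurcation step. This is \emph{not} a classical parametric Hopf bifurcation: there is no external parameter, and the equilibria form a curve \emph{inside} the phase space. You seem to recognise this (``the bifurcation parameter is an internal coordinate along $\widetilde{\varphi}$''), but then invoke the ``classical parametric Hopf theorem'' anyway. That theorem does not apply; its hypotheses require isolated equilibria for each fixed parameter value, and its conclusions (limit cycles, first Lyapunov coefficient) are the wrong ones here.

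The correct framework is \emph{Hopf bifurcation from a line of equilibria without parameters} (Fiedler--Liebscher--Alexander), which is what the paper invokes. In that theory the cubic Lyapunov coefficient is irrelevant; the decisive non-degeneracy is a \emph{quadratic} drift condition measuring how the flow moves along the equilibrium curve in the centre directions. The paper computes this explicitly: after restricting $\mathcal{X}$ to $\mathbb{L}$ in coordinates $(x,y,p)$ with the $(x,p)$-plane the centre eigenspace, it checks that $(\widetilde{\mathcal{X}}_{2})_{xx}(0,0,0)+(\widetilde{\mathcal{X}}_{2})_{pp}(0,0,0)=4b_{2}\neq 0$ (equivalently, $\mathcal{H}(0,0,0)\neq 0$). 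Your proposal omits this condition entirely. Without it the Fiedler--Liebscher dichotomy between the \emph{hyperbolic} case (all non-equilibrium orbits leave a neighbourhood; stable/unstable sets are cones) and the \emph{elliptic} case (nearby orbits are heteroclinic between equilibria on opposite sides of $\widetilde{\varphi}(0)$) does not follow, and those are precisely the two pictures claimed in Figures~\ref{hop1}--\ref{hop2}. Your closing remark that ``the transversality $\delta\neq 0$ is enough'' is therefore incorrect: you need $\delta\neq 0$ \emph{and} the quadratic drift $\neq 0$, and the sign of their product selects hyperbolic versus elliptic.
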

	\begin{defn}
		In the Theorem \ref{aci}, the point $(0,0,0)$ is called Hopf parabolic point and $(0,0,0,\widetilde{p})$ is called of
		Hopf singular point of the Lie-Cartan vector field. The parabolic point of the case $(a)$ (respectively $(b)$) is called
		hyperbolic Hopf parabolic point (respectively elliptic Hopf parabolic point).
	\end{defn}
	
		\begin{figure}[H]
		\captionsetup[subfigure]{width=.3\linewidth}
		\centering
		\subfloat[][]{\includegraphics[width=.55\textwidth]{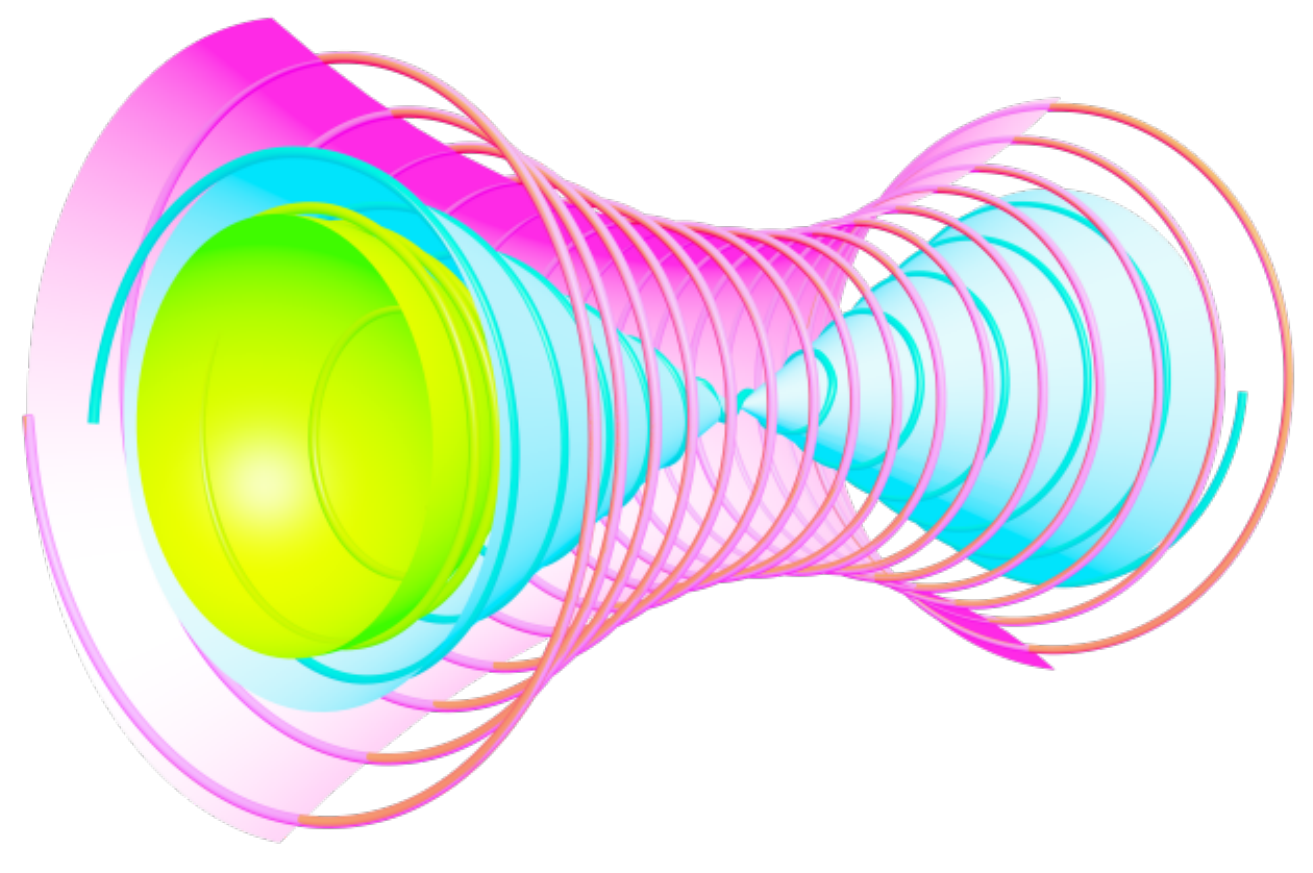}\label{thi2}}
		\\
		\subfloat[][]{\includegraphics[width=.35\textwidth]{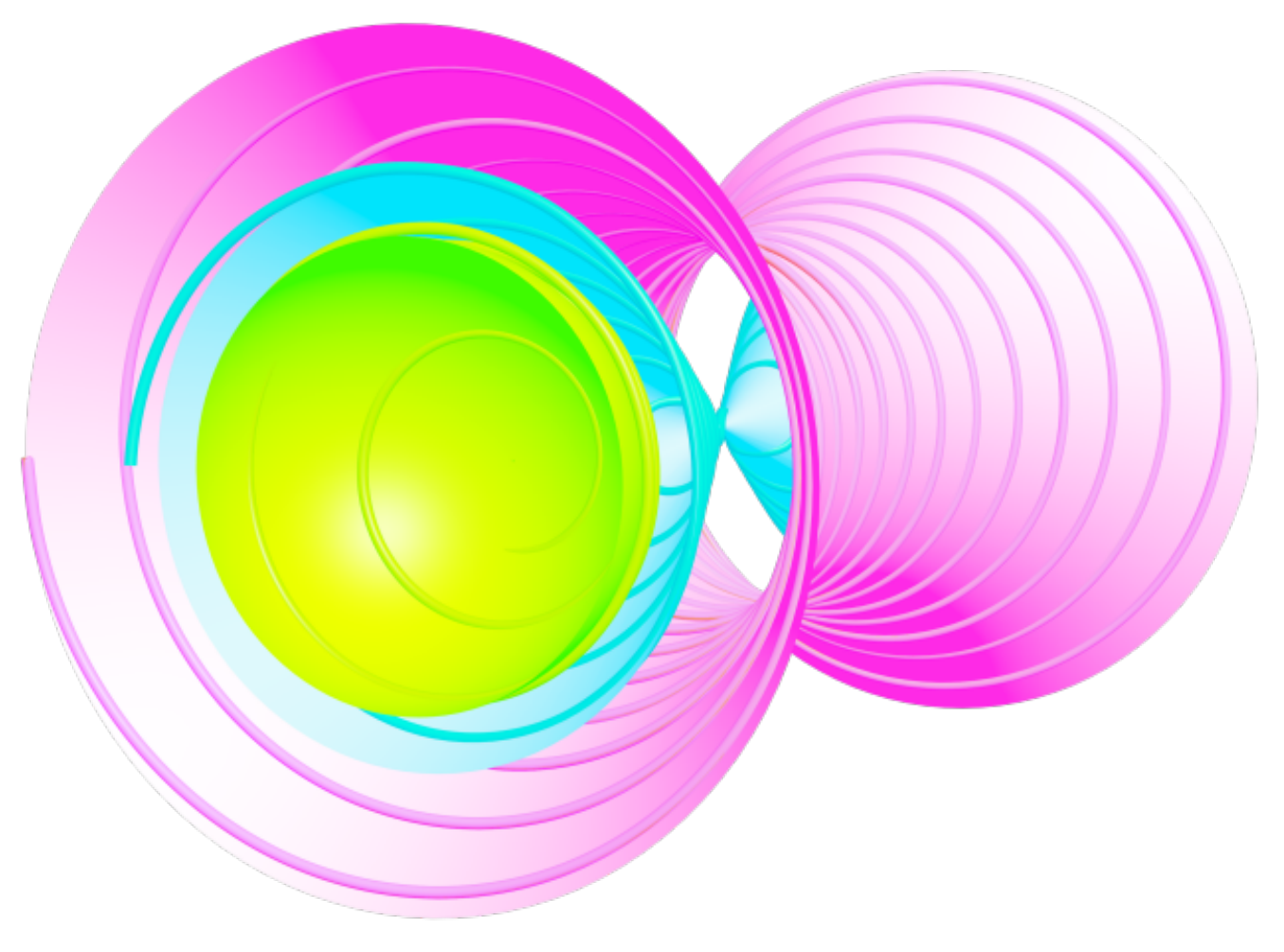}\label{thi4}}
		\subfloat[][]{\includegraphics[width=.35\textwidth]{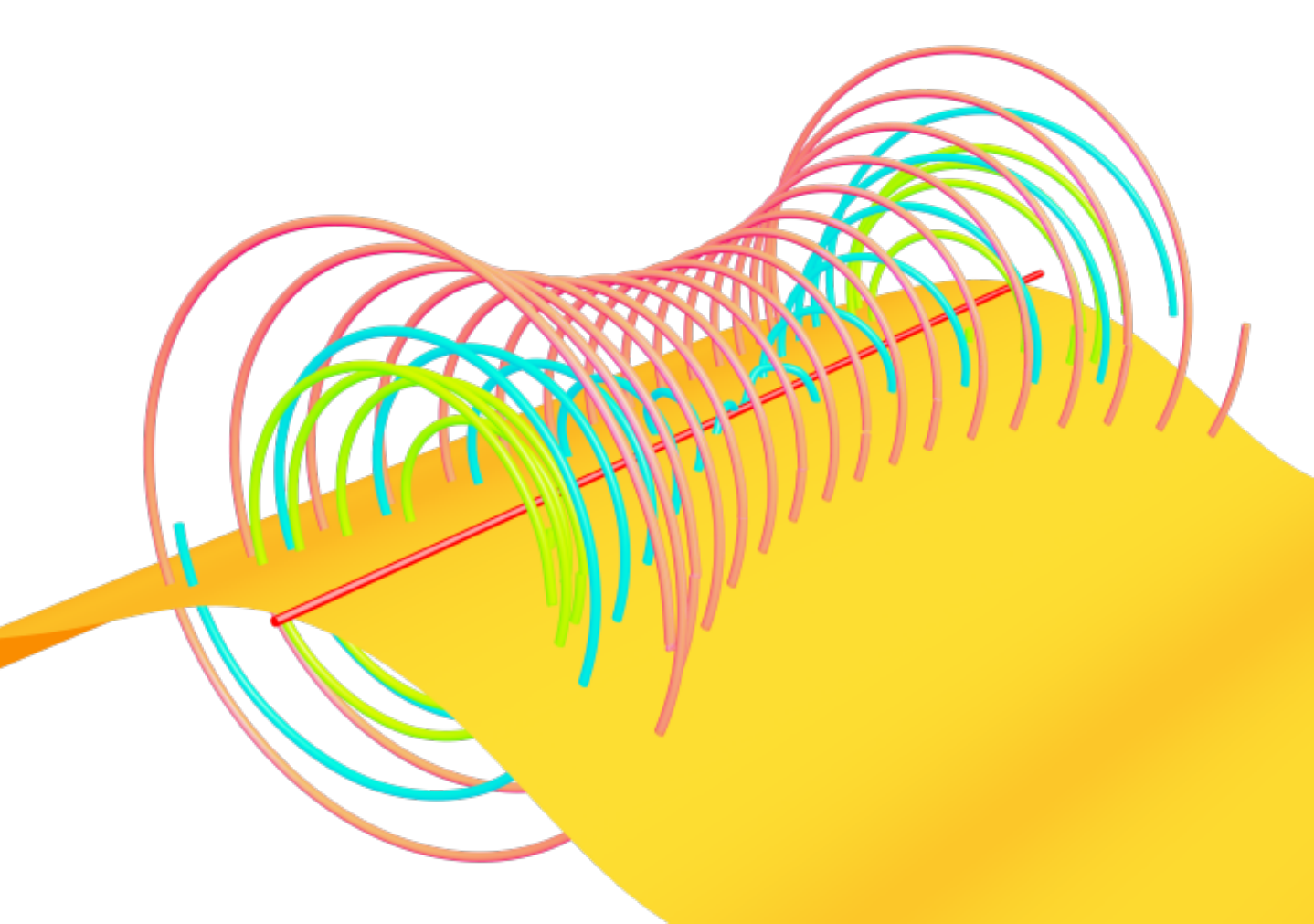}\label{thi3}}
		\\
		\subfloat[][]{\includegraphics[width=.65\textwidth]{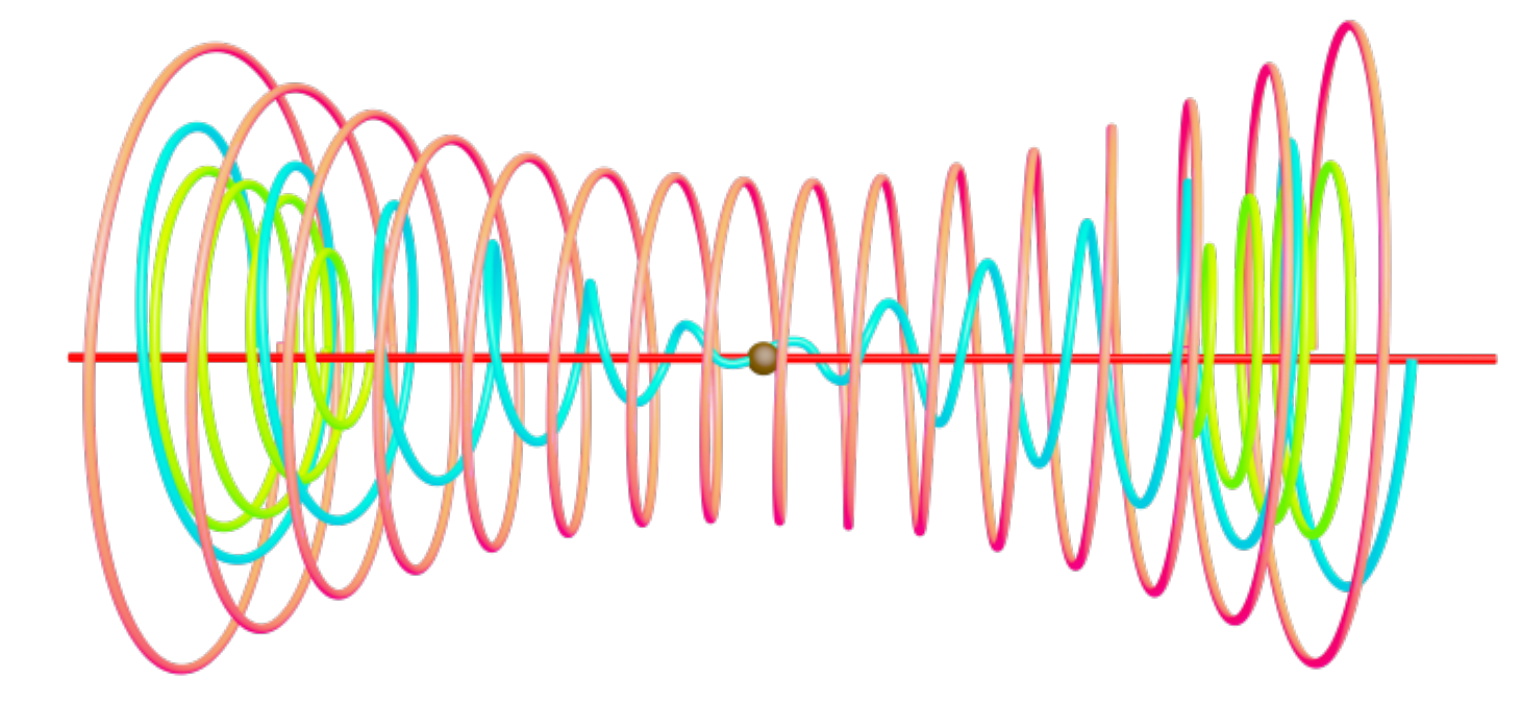}\label{thi5}}
		\caption{The hyperbolic Hopf singularity associated to the hyperbolic Hopf parabolic point.}
		\label{hop1}
	\end{figure}

\begin{figure}[H]
 	\captionsetup[subfigure]{width=.5\linewidth}
	\centering
 	\subfloat[][]{\includegraphics[width=.5\linewidth]{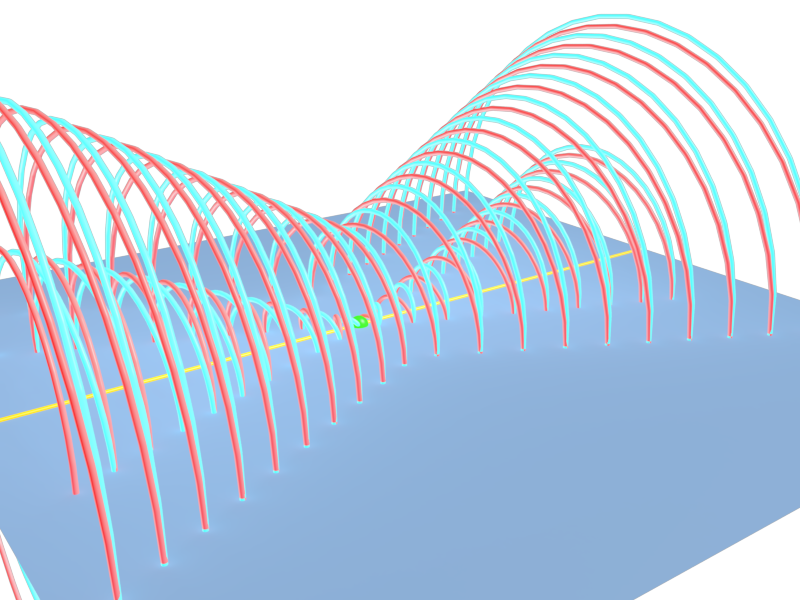}\label{lath1}}
	\qquad
	\subfloat[][]{\includegraphics[width=.5\linewidth]{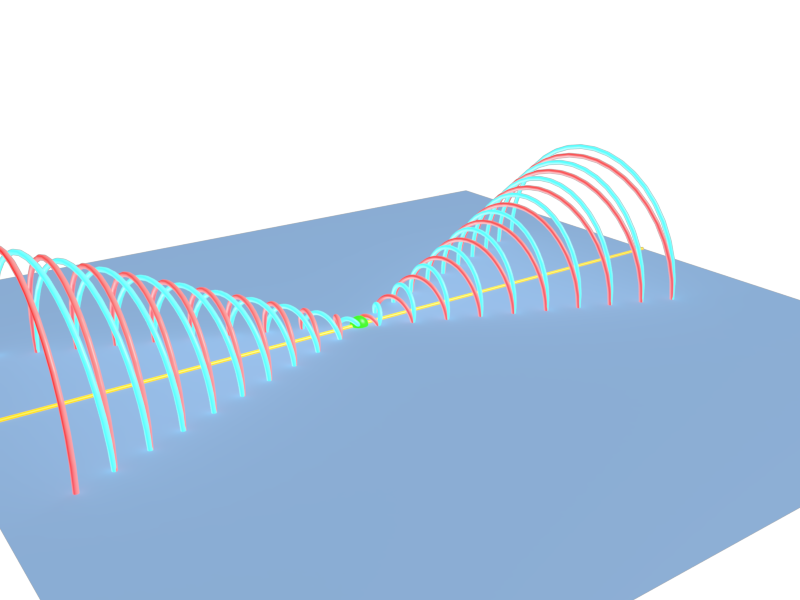}\label{lath2}}
	\caption{Projection by $\pi$ of the integral curves of 
	Fig. \ref{hop1} onto the asymptotic lines (coloured as red and blue), the criminant surface of Fig. \ref{hop1} onto the parabolic surface (coloured as gray) and the curve of singular points of Fig. \ref{hop1} onto the curve of special parabolic points (coloured as yellow). The Hopf point is the green point.}
	\label{lahop1}
\end{figure}

	\begin{figure}[H]
		\captionsetup[subfigure]{width=.3\linewidth}
		\centering
		\subfloat[][]{\includegraphics[width=.35\linewidth]{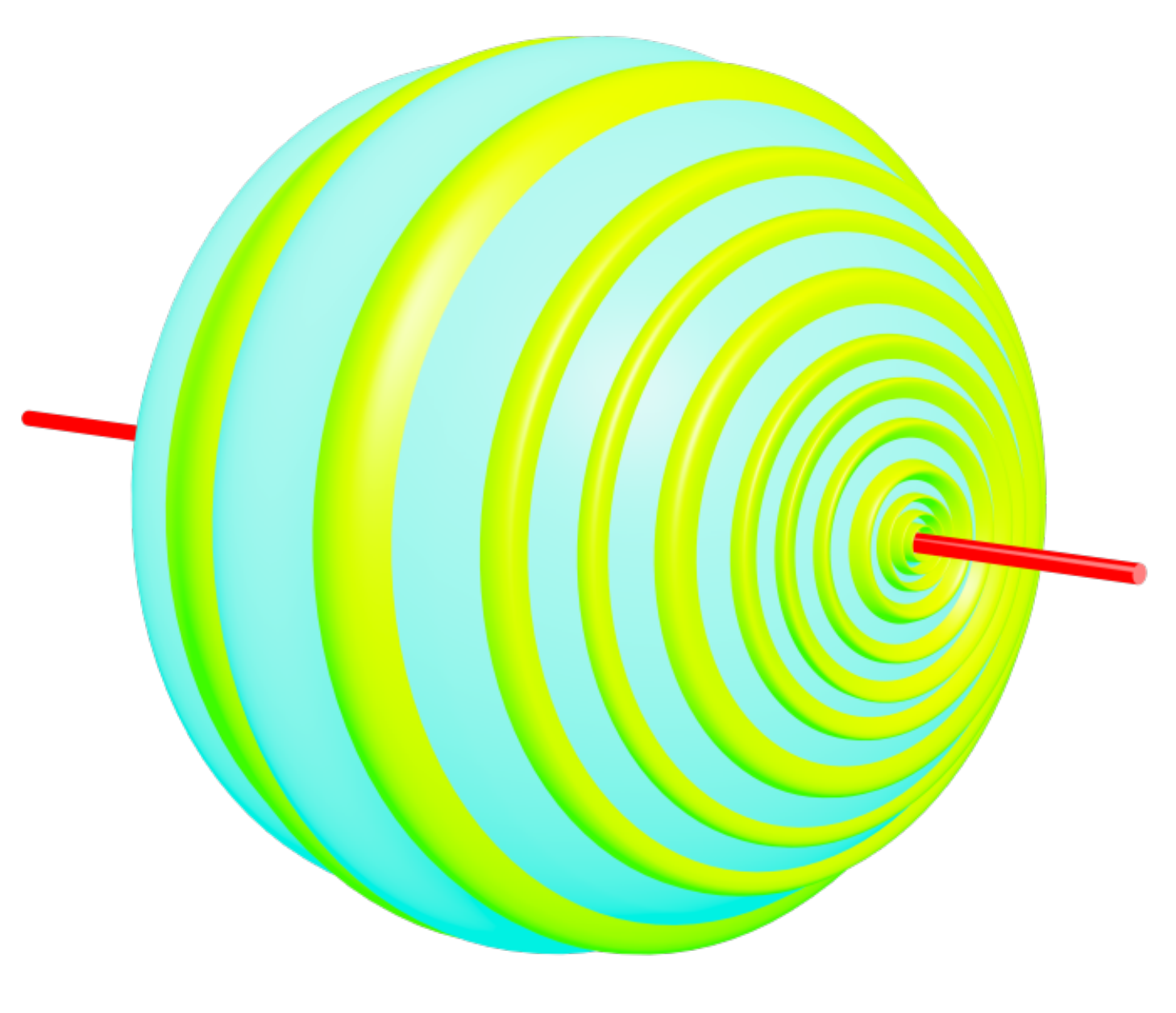}\label{th1}}
		\qquad
		\subfloat[][]{\includegraphics[width=.35\linewidth]{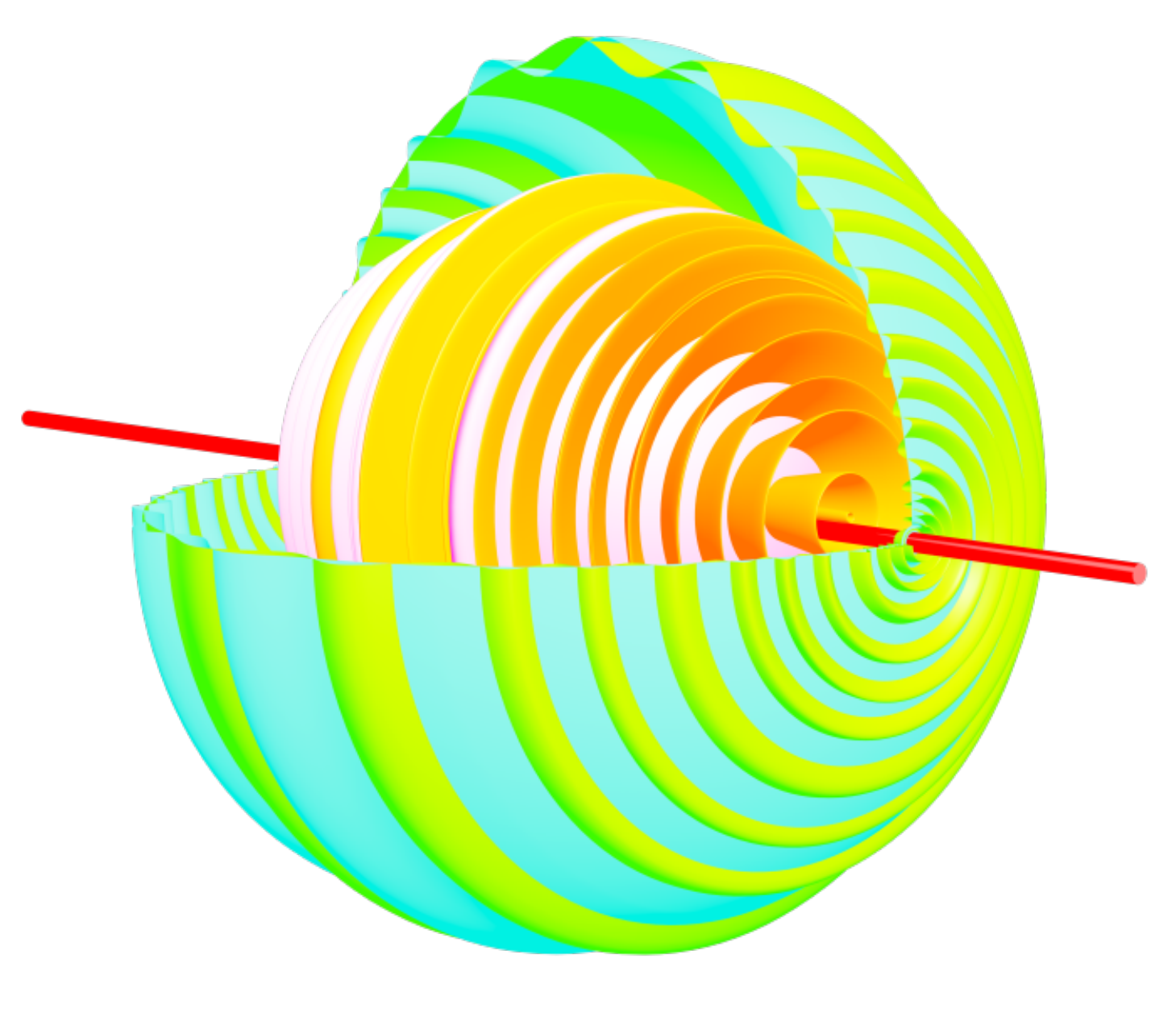}\label{th2}}
		\\
		\subfloat[][]{\includegraphics[width=.42\linewidth]{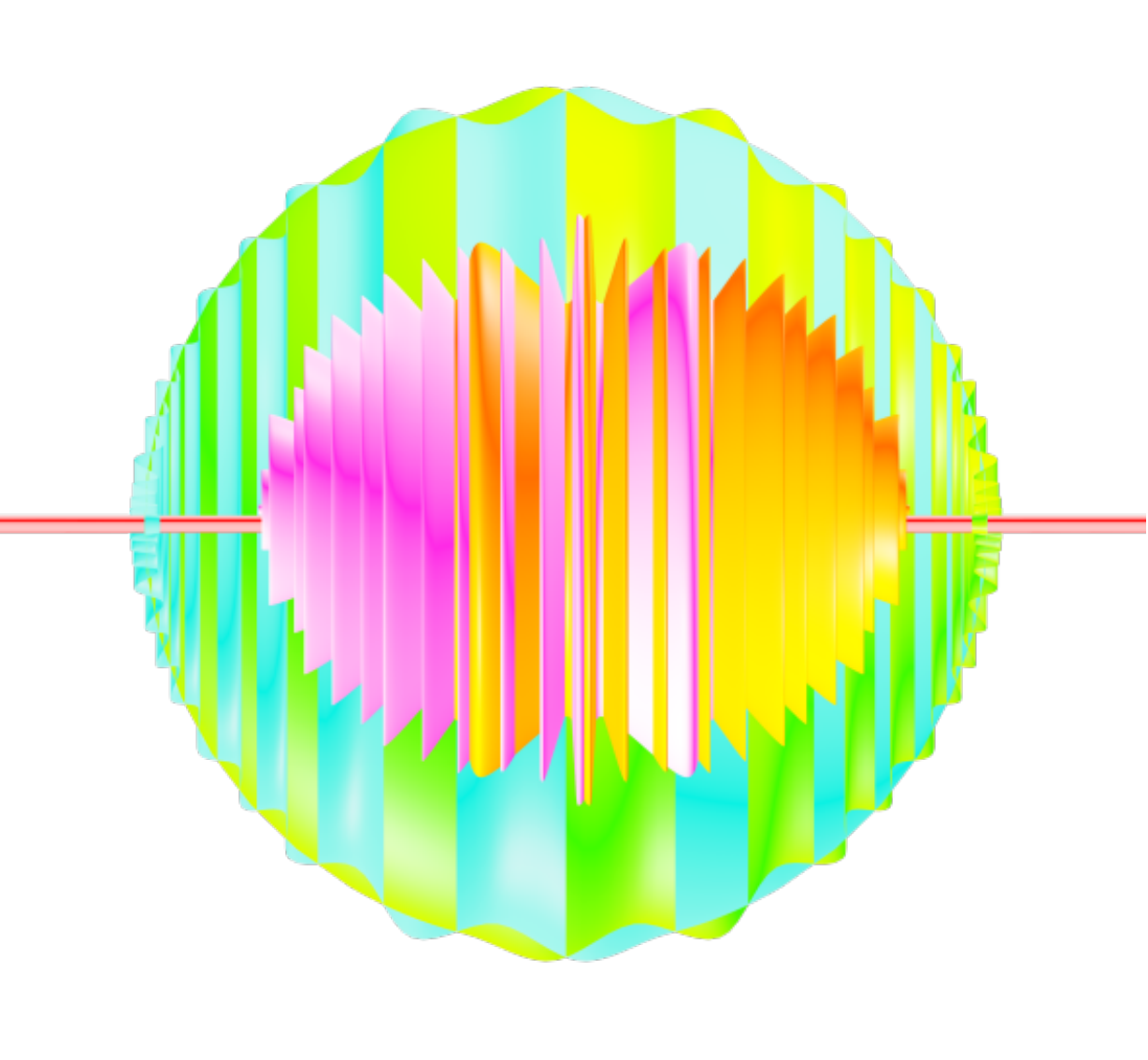}\label{th3}}
		\qquad
		\subfloat[][]{\includegraphics[width=.4\linewidth]{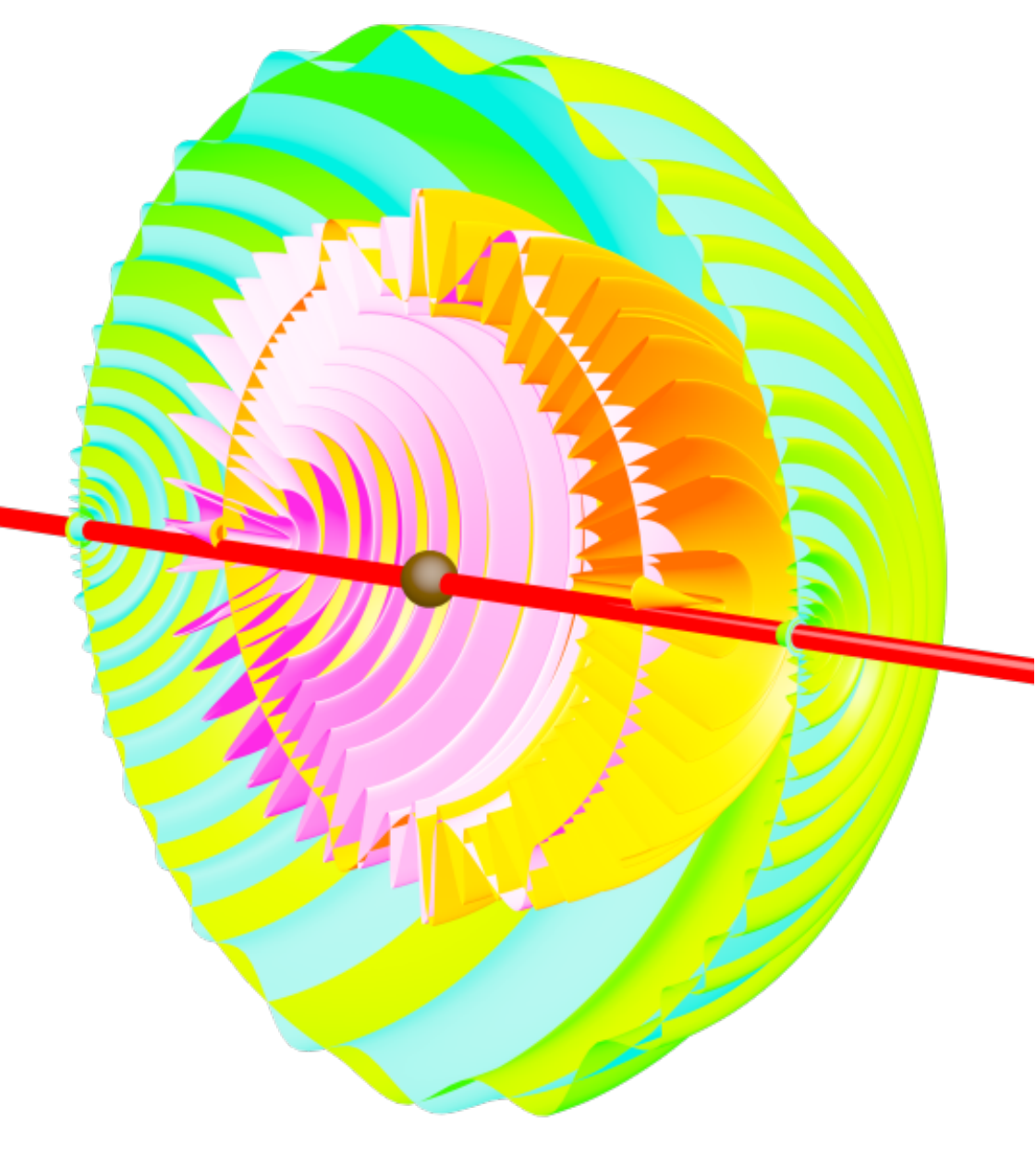}\label{th4}}
		\\
		\subfloat[][]{\includegraphics[width=.4\linewidth]{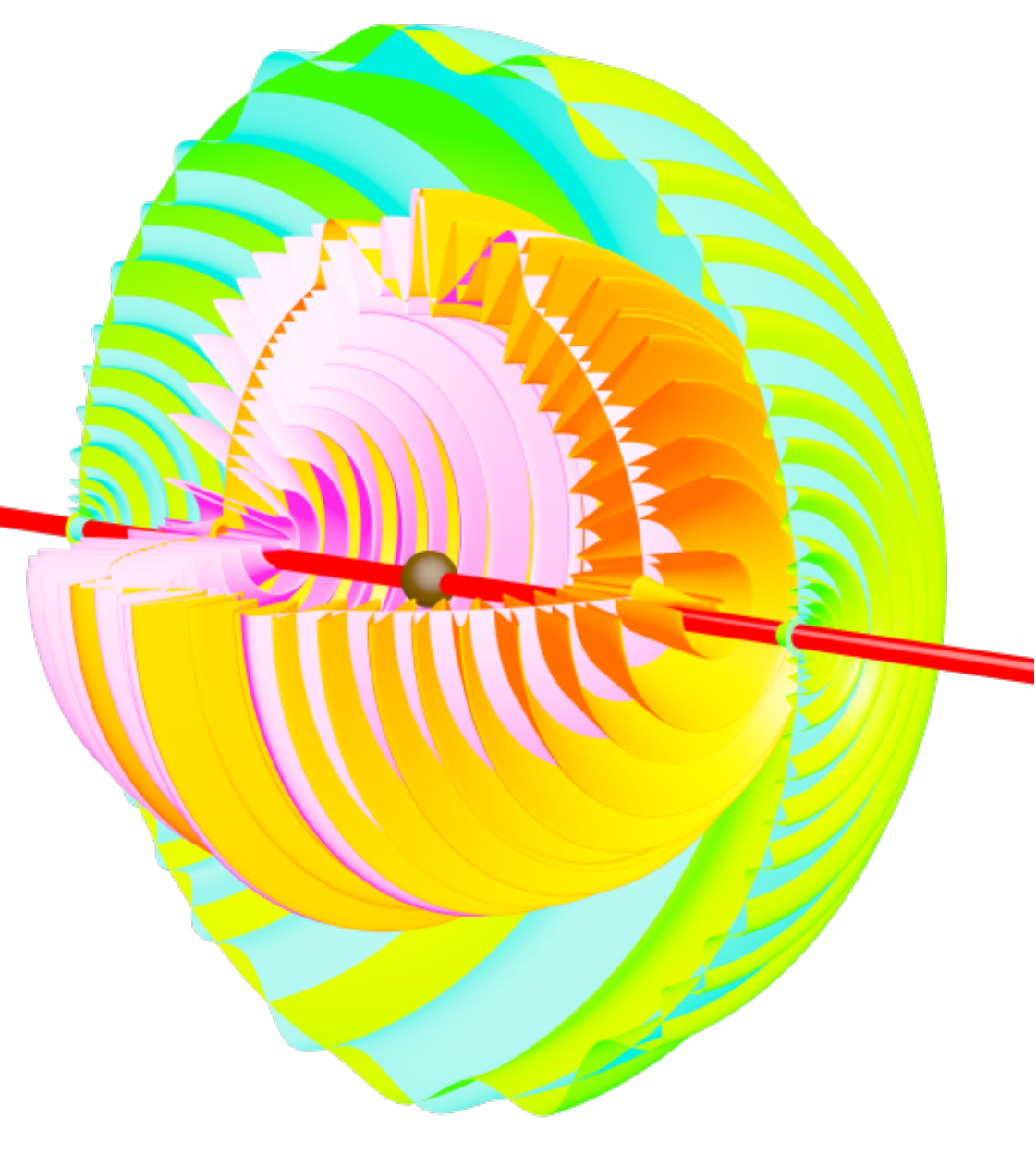}\label{th5}}
		\qquad
		\subfloat[][]{\includegraphics[width=.4\linewidth]{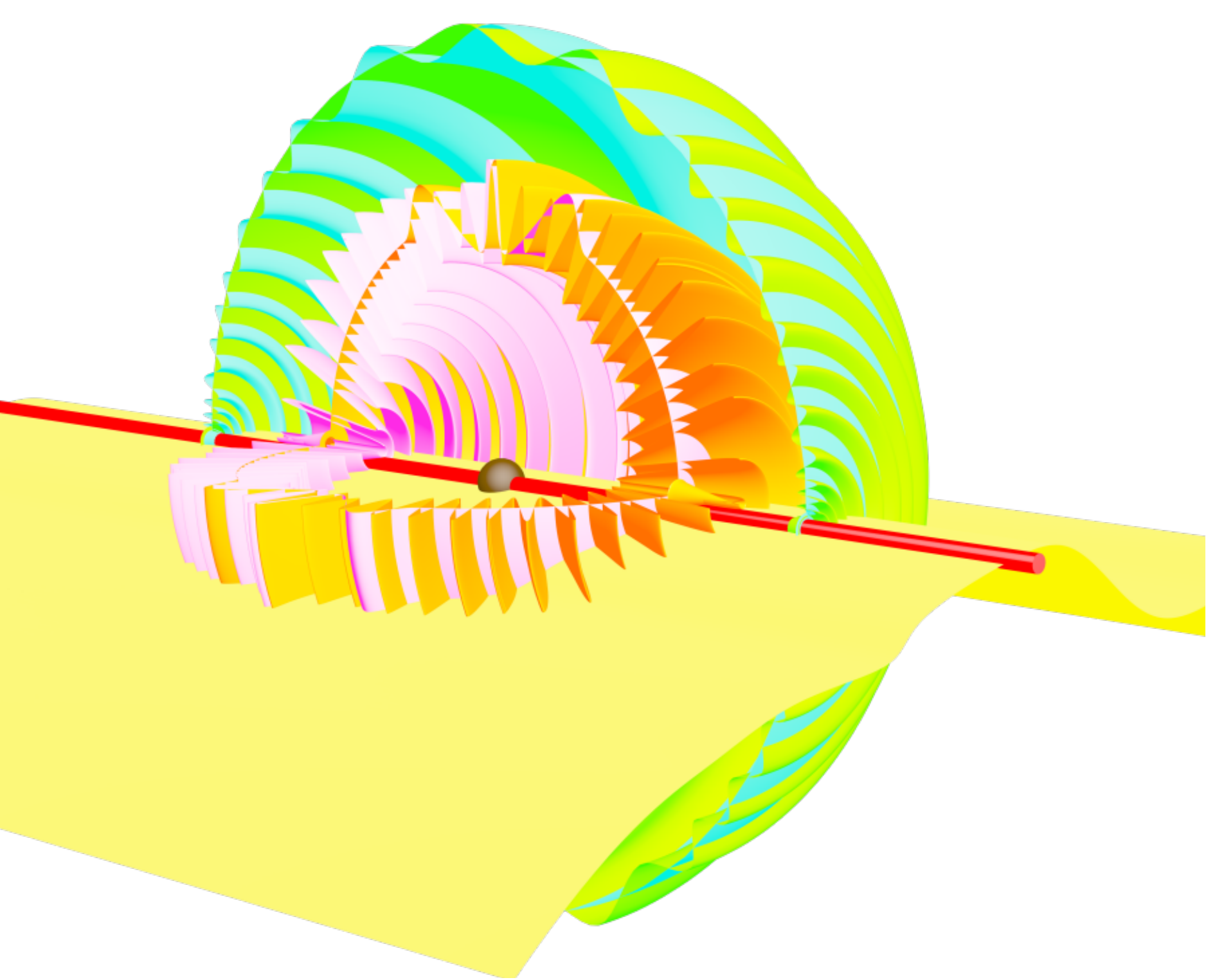}\label{th6}}
		\caption{The elliptic Hopf singularity associated to the elliptic Hopf parabolic point.}
		\label{hop2}
	\end{figure}

\begin{figure}[H]
	\captionsetup[subfigure]{width=.9\linewidth}
	\centering
	\subfloat[][]   {\includegraphics[width=.9\linewidth]{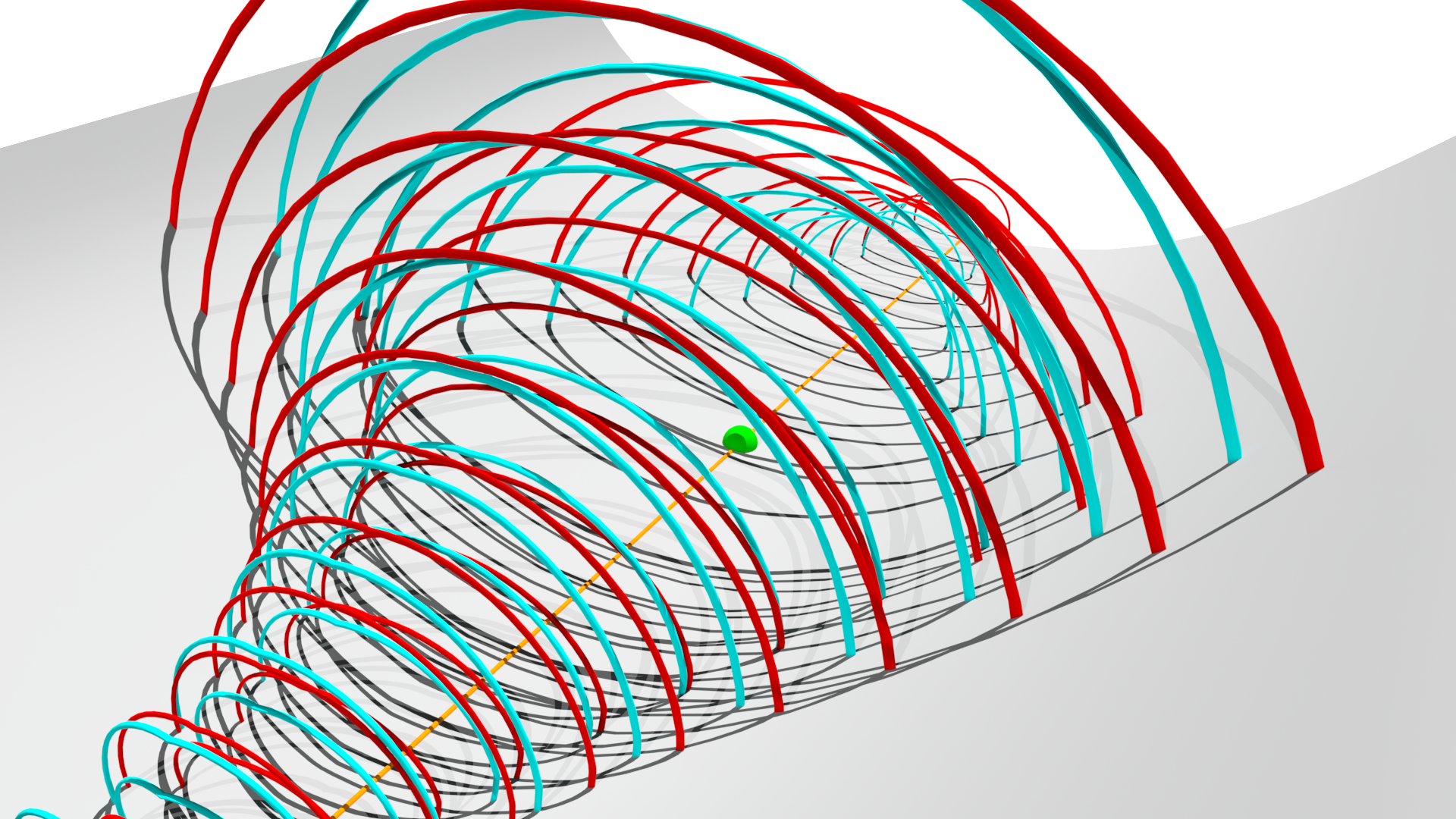}\label{lahe}}
	\caption{Projection by $\pi$ of the integral curves of 
	Fig. \ref{hop2} onto the asymptotic lines (coloured as red and blue), the criminant surface of Fig. \ref{hop2} onto the parabolic surface (coloured as gray) and the curve of singular points of Fig. \ref{hop2} onto the curve of special parabolic points (coloured as yellow). The Hopf point is the green point.}
	\label{lahop2}
\end{figure}
	
	\begin{proof}[Proof of Theorem \ref{aci}]
		By Propositions \ref{prop10} and \ref{propsing}, $b_{2}\neq0$, $a_{2}=-b_{1}$ and $a_{11}=0$. By Proposition \ref{prop2}, $F_{y}(0,0,0,0)=0$, which means that $a_{12}=-a_{3}b_{1}$. The Implicit Function Theorem implies that the Lie-Cartan hypersurface $\mathbb{L}$ (resp. the criminant surface $\mathbb{P}$ and the curve $\widetilde{\varphi}$) is locally given by $z=z(x,y,p)$ (resp. $p=p(x,y)$ and $x=x(y)$) where
		\begin{equation}
		\begin{split}
		z(x,y,p)&=\left(\frac{3a_{111}}{(a_{3})^2-a_{13}}\right)x^2+\left(\frac{(a_{3})^2b_{1}+a_{13}b_{1}+2b_{1}c_{11}+2a_{112}}{(a_{3})^2-a_{13}}\right)xy\\
		&+\left(\frac{a_{23}b_{1}-a_{22}a_{3}+b_{1}c_{12}+a_{122}}{(a_{3})^2-a_{13}}\right)y^2+\left(\frac{2b_{11}-2a_{3}b_{1}}{(a_{3})^2-a_{13}}\right)px\\
		&+\left(\frac{b_{12}-a_{3}b_{2}+b_{1}b_{3}+2a_{22}}{(a_{3})^2-a_{13}}\right)py+\left(\frac{b_{2}}{(a_{3})^2-a_{13}}\right)p^2+\mathcal{O}^3(x,y,p).
		\end{split}
		\end{equation}
		\begin{equation}
		p(x,y)=\left(\frac{a_{3}b_{1}-b_{11}}{b_{2}}\right)x+\left(\frac{a_{3}b_{2}-b_{1}b_{3}+2a_{22}+b_{12}}{b_{2}}\right)y+\mathcal{O}^{2}(x,y).
		\end{equation}
		\begin{equation}
		\begin{split}
		x(y)&=8\Bigg(\frac{
			(a_{3}b_{2}-b_{1}b_{3}-2a_{22}-b_{12})b_{11}+a_{3}(b_{1})^2b_{3}+2a_{112}b_{2}
		}{(\lambda_{1}-\lambda_{2})^2}\\
		&+\frac{(2a_{13}b_{2}-(a_{3})^2b_{2}+2a_{22}a_{3}+a_{3}b_{12}+2b_{2}c_{11})b_{1}}{(\lambda_{1}-\lambda_{2})^2}
		\Bigg)y+\mathcal{O}^2(y).
		\end{split}
		\end{equation}
		The pair of nonzero eigenvalues crosses the imaginary axis as we move along
		the curve of singular points $\widetilde{\varphi}$ in such way that the derivative of each
		eigenvalue in the direction of the tangent of $\varphi$ does not vanishes if
		\begin{equation}
		\delta=\left.\frac{\partial}{\partial y}\bigg(F_{y}\Big(\mu(y)\Big)+q_{p}\Big(\mu(y)\Big)F_{z}\Big(\mu(y)\Big)\bigg)\right|_{y=0}\neq0,
		\end{equation}
		\noindent where $\mu(y)=(x(y),x,z\big(x(y),x(y),p(x(y),x)\big),p(x(y),y))$.
		Let $\widetilde{\mathcal{X}}$ be the restriction of the Lie-Cartan vector field to the Lie-Cartan hypersurface,
		\begin{equation}
		\widetilde{\mathcal{X}}(x,y,p)=(\widetilde{\mathcal{X}}_{1},\widetilde{\mathcal{X}}_{2},\widetilde{\mathcal{X}}_{3})=\left(F_{p},pF_{p},-(F_{x}+pF_{y}+qF_{z})\right)(x,y,z(x,y,p),p).
		\end{equation}
		By \cite[Theorem 1.1]{MR1766554}, \cite[Theorem 1.1]{MR1780924}, \cite[Theorem 5.1]{MR3243511},
		the coordinates in the eigenspace of $D\widetilde{\mathcal{X}}$ associated with the pair of complex eigenvalues at $(0,0,0)$ are the $x$ and $p$ axis.
		It follows that $(\widetilde{\mathcal{X}}_{2})_{xx}(0,0,0)+(\widetilde{\mathcal{X}}_{2})_{pp}(0,0,0)=4b_{2}=\frac{\mathcal{H}(0,0,0)}{4}\neq0$.
		By \cite[Theorem 1.5]{MR1785113}, there exist $\varepsilon>0$ such that any solution $\widetilde{\gamma}(t)$
		witch stays in a neighbourhood of radios $\varepsilon$ of $(0,0,0,\widetilde{p})$ for all positive or negative times
		(possibly both) converges to a single singular point on the curve $\widetilde{\varphi}$, which locally is the $x$ axis.
		
		If $\delta>0$, then we have the hyperbolic case of the \cite[Theorem 1.5]{MR1785113}: all the nonequilibrium trajectories leave the
		neighborhood $U$ in positive or negative time directions (possibly both). The asymptotically
		stable and unstable sets of $(0,0,0,0)$ form the cone of the Figures \ref{thi2}, \ref{thi4}.
		Furthermore, the integral curves of the Lie-Cartan vector-field is tangent to a family of hyperboloids of one and two sheets, some are in Figure \ref{thi2} and \ref{thi3}.
		
		If $\delta<0$, then we have the elliptic case of the \cite[Theorem 1.5]{MR1785113}: all nonequilibrium trajectories starting sufficiently close
		to $(0,0,0,\widetilde{p})$ are heteroclinic between the singular points $\widetilde{\varphi}(x^{+})$ and $\widetilde{\varphi}(x^{-})$, witch are
		on opposite sides of $\widetilde{\varphi}(0)=(0,0,0,\widetilde{p})$. The two-dimensional strong stable and strong unstable manifolds of such
		singular point $\widetilde{\varphi}(x^{+})$, $\widetilde{\varphi}(x^{-})$ intersect at an angle with exponentially small upper bound
		in terms of $|s_{\pm}|$.
	\end{proof}

	\section{Generic parabolic points}
	\label{genp}
	
	Consider $C^{k}(\mathbb{R}^3,\mathbb{R}^3)$ with the Whitey $C^{k}$-topology, $k\geq3$, as defined in \cite[Section 5]{MR0209602}.
	
	\begin{defn}\label{p11}
		Define $P_{1}\subset C^{k}(\mathbb{R}^3,\mathbb{R}^3)$, $k\geq3$, as the set of the vector fields $\xi$ such that:
		\begin{itemize}
			\item The Gauss curvature $\mathcal{K}$ of $\Delta$ has the property that $\mathcal{K}$ and $d\mathcal{K}$ do not vanish simultaneously.
		\end{itemize}
	\end{defn}
	\begin{defn}
		Set $\phi=\langle \nabla \mathcal{K},\mathcal{A}\rangle$ and $\rho=\langle \mathcal{P},\varphi'\rangle$, where at $\mathbb{P}$,
		$\mathcal{P}$ is the principal direction associated with the principal curvature that equals zero, $\mathcal{A}$ is the asymptotic direction
		that coincides with $\mathcal{P}$ and $\mathcal{K}$ is the Gaussian curvature.
	\end{defn}
	\begin{defn}\label{p22}
		Define $P_{2}$ as the set of the vector fields $\xi$ such that:
		\begin{itemize}
			\item At $\mathbb{P}$, the function $\phi$ has the property that $\phi$ and $d\phi$ do not vanish simultaneously.
		\end{itemize}
	\end{defn}
	\begin{defn}\label{p33}
		Define $P_{3}$ as the set of the vector fields $\xi$ such that:
		\begin{itemize}
			\item At $\{\phi=0\}$, the function $\rho$ has the property that $\rho$ and $d\rho$ do not vanish simultaneously.
			\item At $\{\phi=0\}$, the function $(k_{1})_{v}$ has the property that $(k_{1})_{v}$ and $d((k_{1})_{v})$ do not vanish simultaneously.
		\end{itemize}
	\end{defn}
	\begin{defn}\label{p44}
		Define $P_{4}$ as the set of the vector fields $\xi$ such that:
		\begin{itemize}
			\item At $\{\phi=0\}$,  the function $\delta$ of Theorem \ref{aci} has the property that $\delta$ and $d\delta$ do not vanish simultaneously.
		\end{itemize}
	\end{defn}
	
	\begin{thm}\label{t11}
		The sets $P_{1}$, $P_{2}$, $P_{3}$ and $P_{4}$ are residual subsets of $C^{k}(\mathbb{R}^3,\mathbb{R}^3)$, $k\geq3$.
	\end{thm}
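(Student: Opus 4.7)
The plan is to derive Theorem \ref{t11} from four applications of Thom's jet transversality theorem, one for each $P_i$, and then intersect. For each condition I identify the smallest order $r_i$ such that membership in $P_i$ depends only on the $r_i$-jet of $\xi$, an algebraic subset $W_i\subset J^{r_i}(\mathbb{R}^3,\mathbb{R}^3)$ whose avoidance is equivalent to $\xi\in P_i$, and I verify that the codimension of $W_i$ (relative to the source stratum on which the condition is imposed) strictly exceeds the dimension of that stratum. Thom's theorem then supplies a residual subset of $C^k(\mathbb{R}^3,\mathbb{R}^3)$ of vector fields whose $r_i$-jet misses $W_i$, and a finite intersection of residual sets is residual.

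For $P_1$, the Gauss curvature $\mathcal{K}$ is polynomial in the $1$-jet of $\xi$ and $\nabla\mathcal{K}$ in the $2$-jet, so $W_1=\{\mathcal{K}=0,\nabla\mathcal{K}=0\}\subset J^2(\mathbb{R}^3,\mathbb{R}^3)$ is cut out by four equations. Using the adapted form of Lemma \ref{propetaV}, in which $\mathcal{K}(0,0,0)=-(a_{2}+b_{1})^2$ and $\nabla\mathcal{K}(0,0,0)=(2b_{2}a_{11},(a_{3}b_{1}+a_{12})b_{2},(a_{13}-(a_{3})^2)b_{2})$, one reads off directly that these four equations are independent on a generic fibre, so $\operatorname{codim} W_{1}\geq 4>3=\dim\mathbb{R}^3$. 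Thom's theorem then gives residuality of $P_1$.

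For $P_2$, $P_3$, $P_4$ the pattern is the same but the conditions are imposed on increasingly thin strata: $\mathbb{P}$ of dimension $2$ for $P_2$, and the special parabolic curve $\mathbb{P}\cap\{\phi=0\}$ of dimension $1$ for $P_3$ and $P_4$. Working inside the open subset of jet space where the preceding condition already holds, so that the stratum is a genuine submanifold, each new requirement takes the form ``a scalar function and its intrinsic differential along the stratum do not vanish simultaneously,'' which amounts to avoidance of a semi-algebraic subset of an appropriate higher-order jet bundle of codimension exceeding the stratum dimension: codimension $3$ on a $2$-surface for $P_2$, and codimension $\geq 2$ on a $1$-curve for $P_3$ and $P_4$. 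Thom's transversality theorem, in its stratified form, produces a residual subset for each, and the intersection of the four residuals is the desired residual set.

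The main obstacle is verifying the codimension claims rigorously, which reduces to exhibiting enough independent jet coordinates among the equations defining each $W_i$. For $P_1$ this independence is immediate from Lemma \ref{propetaV}. For $P_2$ one differentiates $\phi=\langle\nabla\mathcal{K},\mathcal{A}\rangle$ along $\mathbb{P}$ and inspects the new $3$-jet coefficients that appear (cf.\ the expression for $\Upsilon$ in \eqref{upsilon}, which already exhibits explicit dependence on previously unused coefficients $a_{112},a_{113},a_{122},b_{13},c_{11},\ldots$). For $P_3$ and $P_4$ the quantities $\rho$, $(k_{1})_{v}$ and $\delta$ depend on third- and, in the case of $\delta$, possibly fourth-order derivatives of $\xi$, so one works in a correspondingly higher-order jet bundle and uses the normal-form expansions already carried out in the proofs of Lemma \ref{lemma:spct} and Theorem \ref{aci} to produce the required independent coefficients. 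Once these independences are in place, the four invocations of Thom's theorem and the intersection yield the statement.
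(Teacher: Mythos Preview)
Your proposal is correct and takes essentially the same approach as the paper: both obtain residuality of each $P_i$ from Thom's jet transversality theorem applied in an appropriate jet space, with the codimension of the exceptional locus exceeding the dimension of the relevant source stratum. The only cosmetic difference is that you phrase the argument as avoidance of the ``bad'' algebraic sets $W_i=\{\text{function}=0,\ \text{differential}=0\}$ via a direct codimension count, whereas the paper phrases it as transversality of $j^r(\xi)$ to the ``good'' strata $\mathbb{K}_i=\{\text{function}=0,\ \text{differential}\neq 0\}$ and then invokes Proposition~\ref{propGe1}; these are dual formulations of the same idea, and your version makes the codimension estimate more explicit.
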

	
	\begin{rem}
		The definitions \ref{p11}, \ref{p22} and \ref{p33} are adaptations for plane fields of the definitions given in \cite{MR0487812}.
		The Theorem \ref{t11} is a partial version for plane fields of the \cite[Theorem 1.1]{MR0487812} and \cite[Theorem 3.3]{MR0206974}. The Theorem \ref{t11} was motivated by \cite{MR0209602}, \cite{MR0142859} and \cite{MR0487812}.
	\end{rem}
	
	\begin{defn}
		Set 
		\begin{itemize}
			\item $\mathbb{K}_{1}=\{j^r(\xi)(p)\in J^r(\mathbb{R}^3,\mathbb{R}^3); \mathcal{K}(p)=0 \ \ and \ \ d\mathcal{K}_{p}\neq0\}$,
			\item $\mathbb{K}_{2}=\{j^r(\xi)(p)\in \mathbb{K}_{1}; \phi(p)=\langle \nabla \mathcal{K}(p),\mathcal{A}(p)\rangle=0 \ \ and \ \ d\phi_{p}\neq0\}$,
			\item $\mathbb{K}_{3}=\{j^r(\xi)(p)\in \mathbb{K}_{2}; \rho(p)=\langle \mathcal{P}(p),\varphi'(p)\rangle=0 \ \ and \ \ d\rho_{p}\neq0\}$,
			\item $\mathbb{K}_{4}=\{j^r(\xi)(p)\in \mathbb{K}_{2}; (k_{1})_{v}(p)=0 \ \ and \ \  d((k_{1})_{v})_{p}\neq0\}$,
			\item $\mathbb{K}_{5}=\{j^r(\xi)(p)\in \mathbb{K}_{2}; \delta(p)=0 \ \ and \ \ d\delta_{p}\neq0\}$.
		\end{itemize}
	\end{defn}
	
	\begin{prop}
		$\mathbb{K}_{1}$ is a regular submanifold of codimension 1 in $J^r(\mathbb{R}^3,\mathbb{R}^3)$,
		$\mathbb{K}_{2}$ is a regular submanifold of codimension 2 in $J^r(\mathbb{R}^3,\mathbb{R}^3)$,
		$\mathbb{K}_{3}$, $\mathbb{K}_{4}$ and $\mathbb{K}_{5}$ are  regular submanifolds of codimension 3 in $J^r(\mathbb{R}^3,\mathbb{R}^3)$.
	\end{prop}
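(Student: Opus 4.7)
The plan is to apply the regular value theorem (preimage theorem) in the jet space $J^{r}(\mathbb{R}^3,\mathbb{R}^3)$, taking $r$ large enough so that each of the functions $\mathcal{K}, \phi, \rho, (k_{1})_{v}, \delta$ descends to a (rational, hence smooth on its natural domain) function of the jet coordinates. Since $\mathcal{K}=eg-f^{2}$ depends on the 1-jet of $\xi$ (the coefficients $e,f,g$ from Lemma \ref{propeqlaV}), $\phi=\langle\nabla\mathcal{K},\mathcal{A}\rangle$ depends on the 2-jet, and $\rho, (k_{1})_{v}, \delta$ depend on the 3-jet or 4-jet, taking $r\geq 4$ suffices. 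Each $\mathbb{K}_{i}$ is then described as the intersection in $J^{r}$ of the zero loci of one, two or three of these functions, restricted to the open subset where the prescribed non-degeneracy clauses ($d\mathcal{K}_{p}\neq 0$, $d\phi_{p}\neq 0$, $d\rho_{p}\neq 0$, $d((k_{1})_{v})_{p}\neq 0$, $d\delta_{p}\neq 0$) are satisfied.

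For $\mathbb{K}_{1}$, I would inspect directly the partial derivatives of $\mathcal{K}$ with respect to the 1-jet coordinates of $\xi$. Using $\partial e/\partial a_{x}=1$, $\partial g/\partial b_{y}=1$ and the analogous relations read off from the formulas of Lemma \ref{propeqlaV}, the differential of the lifted function $\tilde{\mathcal{K}}$ on $J^{r}$ is nonzero at every jet with $(e,f,g)\neq(0,0,0)$; the simultaneous vanishing $e=f=g=0$ is excluded by $d\mathcal{K}_{p}\neq 0$, since it would force $\mathcal{K}$ and all its partial derivatives in $p$ to vanish. The regular value theorem then gives $\mathbb{K}_{1}$ as a regular submanifold of codimension one.

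For $\mathbb{K}_{2}$, $\mathbb{K}_{3}$, $\mathbb{K}_{4}$, $\mathbb{K}_{5}$ I would argue inductively that at each stage the newly added defining function depends on strictly higher-order jet information than the preceding ones. Concretely, $\phi$ contains $\nabla\mathcal{K}$ and so depends genuinely on the 2-jet of $\xi$, whereas $\mathcal{K}$ only involves the 1-jet; hence a perturbation of, say, $a_{xx}$ at the base point changes $\phi$ while preserving $\mathcal{K}$, and this produces a direction in which the differentials of $\tilde{\mathcal{K}}$ and $\tilde{\phi}$ on $J^{r}$ are linearly independent at every point of $\mathbb{K}_{2}$. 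The same scheme applies to $\rho$, $(k_{1})_{v}$ and $\delta$: each introduces a partial derivative of $\xi$ of order strictly higher than those appearing in $\mathcal{K}$ and $\phi$, and that coordinate can be varied freely to establish independence of the three differentials, giving codimension three.

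The main obstacle I anticipate is not conceptual but computational: isolating a clean \emph{highest} jet monomial inside the explicit formulas for $\phi$, $\rho$, $(k_{1})_{v}$ and $\delta$ and verifying that its coefficient does not accidentally vanish on the stratum under consideration. The expressions grow quickly in complexity (compare $\Upsilon$ in Lemma \ref{lemma:spct}), so the verification is best organized by writing each new function modulo the ideal generated by the previous defining functions and reading off a distinguished leading jet variable; the non-degeneracy clauses built into the definitions of $\mathbb{K}_{i}$ are exactly what one needs to guarantee that this leading coefficient does not degenerate. Once that independence is secured, a standard application of the preimage theorem delivers the regular submanifolds of the asserted codimensions.
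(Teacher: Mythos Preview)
Your proposal is correct and follows essentially the same route as the paper: define maps $\Psi_i$ from the jet space to $\mathbb{R}^1$, $\mathbb{R}^2$ or $\mathbb{R}^3$ using $\mathcal{K}$, $\phi$, $\rho$, $(k_1)_v$, $\delta$, observe that each function depends only on a bounded jet order so that the $\Psi_i$ are well defined on $J^r$, and then invoke the regular value theorem. The paper's proof is in fact considerably terser than yours---it simply records the maps $\Psi_i$, notes the jet orders involved, and asserts the submersion property without spelling out the ``highest jet coordinate'' mechanism you describe; your observation that each successive defining function introduces a genuinely new jet variable (so that one can perturb it independently of the previous constraints) is exactly the content behind the paper's assertion, and your remark that $e=f=g=0$ forces $d\mathcal{K}_p=0$ is the correct way to dispose of the degenerate locus for $\mathbb{K}_1$.
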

	
	\begin{proof}
		Let $\Psi_{1}:J^r(\mathbb{R}^3,\mathbb{R}^3)\rightarrow \mathbb{R}$, $\Psi_{1}(j^r(\xi)(p))=\mathcal{K}(p)$. Since $\mathcal{K}$ only depends of the derivatives 
		of $\xi$ of order 1, $\Psi_{1}$ is well defined. $(\Psi_{1})^{-1}(0)=\mathbb{K}_{1}$ 
		and since $\xi\in \mathbb{K}_{1}$,  $\Psi_{1}$ and $d \Psi_{1}$ does not vanish simultaneously.
		
		The proof for $\mathbb{K}_{i}\subset P_{i}$, $i=2,3,4,5$, is similar. Set 
		\begin{itemize}
			\item $\Psi_{2}:J^r(\mathbb{R}^3,\mathbb{R}^3)\rightarrow \mathbb{R}$, $\Psi_{2}(j^r(\xi)(p))=
			(\mathcal{K}(p),\phi(p))$,
			\item $\Psi_{3}:J^r(\mathbb{R}^3,\mathbb{R}^3)\rightarrow \mathbb{R}$, $\Psi_{3}(j^r(\xi)(p))=
			(\mathcal{K}(p),\phi(p),\rho(p))$,
			\item $\Psi_{4}:J^r(\mathbb{R}^3,\mathbb{R}^3)\rightarrow \mathbb{R}$, $\Psi_{4}(j^r(\xi)(p))=
			(\mathcal{K}(p),\phi(p),(k_{1})_{v})$,
			\item $\Psi_{5}:J^r(\mathbb{R}^3,\mathbb{R}^3)\rightarrow \mathbb{R}$, $\Psi_{5}(j^r(\xi)(p))=
			(\mathcal{K}(p),\phi(p),\delta(p))$.
		\end{itemize}
		Since $\phi$ (resp. $\rho$, $(k_{1})_{v}$, $\delta$) only depends of the derivatives 
		of $\xi$ of order 1, 2 (resp. order 1, 2, 3), the maps $\Psi_{i}$, $i=2,3,4,5$, are well defined.
	\end{proof}

	\begin{prop}\label{propGe1}
		If $j^r(\xi)\pitchfork \mathbb{K}_{1}$ then $(j^r)^{-1}(\mathbb{K}_{1})$ is a regular submanifold
		of codimension 1 of $\mathbb{R}^3$ and $(j^r)^{-1}(\mathbb{K}_{1})=\mathbb{P}$. 
		
		If $j^r(\xi)\pitchfork \mathbb{K}_{2}$ then $(j^r)^{-1}(\mathbb{K}_{2})$ is a regular submanifold
		of codimension 2 of $\mathbb{R}^3$ and $(j^r)^{-1}(\mathbb{K}_{2})=\{\phi=0\}$.
		
		If $j^r(\xi)\pitchfork \mathbb{K}_{3}$ then $(j^r)^{-1}(\mathbb{K}_{3})$ is a regular submanifold
		of codimension 3 of $\mathbb{R}^3$ and $(j^r)^{-1}(\mathbb{K}_{3})=\{\rho=0 \}
		\subset \{\phi=0\}$.
		
		If $j^r(\xi)\pitchfork \mathbb{K}_{4}$ then $(j^r)^{-1}(\mathbb{K}_{4})$ is a regular submanifold
		of codimension 3 of $\mathbb{R}^3$ and $(j^r)^{-1}(\mathbb{K}_{4})=\{(k_{1})_{v}=0\}
		\subset \{\phi=0\}$.
		
		If $j^r(\xi)\pitchfork \mathbb{K}_{5}$ then $(j^r)^{-1}(\mathbb{K}_{5})$ is a regular submanifold
		of codimension 3 of $\mathbb{R}^3$ and $(j^r)^{-1}(\mathbb{K}_{5})=\{\delta=0\}\subset \{\phi=0\}$.
	\end{prop}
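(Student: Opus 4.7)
The plan is to deduce each assertion from the standard jet-transversality principle. If $\mathbb{K}\subset J^r(\mathbb{R}^3,\mathbb{R}^3)$ is a regular submanifold of codimension $c$ and $j^r(\xi)\pitchfork \mathbb{K}$, then $(j^r\xi)^{-1}(\mathbb{K})$ is a regular submanifold of $\mathbb{R}^3$ of codimension $c$; this is the basic preimage theorem applied to the smooth section $j^r\xi\colon \mathbb{R}^3\to J^r(\mathbb{R}^3,\mathbb{R}^3)$, and it is the tool used in \cite{MR0209602} and \cite{MR0487812}. Since the previous proposition has already established that $\mathbb{K}_1$ has codimension $1$, $\mathbb{K}_2$ has codimension $2$, and $\mathbb{K}_3,\mathbb{K}_4,\mathbb{K}_5$ have codimension $3$ in $J^r(\mathbb{R}^3,\mathbb{R}^3)$, the only remaining task is to identify the preimages explicitly as subsets of $\mathbb{R}^3$.

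For the identification of $(j^r\xi)^{-1}(\mathbb{K}_1)$, I would observe that $j^r(\xi)(p)\in \mathbb{K}_1$ forces $\mathcal{K}(p)=0$ and rules out the degenerate case $d\mathcal{K}_p=0$; transversality of $j^r\xi$ to $\mathbb{K}_1$ makes the latter condition automatic at every point of the preimage, and the equation $\mathcal{K}(p)=0$ is exactly the defining equation of $\mathbb{P}$. Hence $(j^r\xi)^{-1}(\mathbb{K}_1)=\mathbb{P}$. For $\mathbb{K}_2$, the two defining conditions $\mathcal{K}(p)=0$ and $\phi(p)=0$ together cut out the set of special parabolic points; because $\phi$ was only defined on $\mathbb{P}$, the preimage is written simply as $\{\phi=0\}$, and transversality again upgrades the non-vanishing of $d\phi$ along this set from an open condition in the jet space to a pointwise condition in $\mathbb{R}^3$. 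The cases $\mathbb{K}_3,\mathbb{K}_4,\mathbb{K}_5$ are handled in exactly the same way by adding respectively $\rho=0$, $(k_1)_v=0$ or $\delta=0$ to the two previous equations, and each of these new functions is one of the three defining coordinates of the corresponding $\Psi_i$ considered in the previous proposition.

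The only point requiring genuine verification is that $j^r\xi$ does indeed factor the functions $\mathcal{K},\phi,\rho,(k_1)_v,\delta$ as pullbacks of smooth maps on the jet space, so that preimage and transversality make sense in $J^r$; this was noted in the proof of the previous proposition, where it was observed that each of $\phi,\rho,(k_1)_v,\delta$ depends only on derivatives of $\xi$ up to order $r\le 3$. The anticipated main obstacle is purely bookkeeping: keeping track, at each step, of the fact that the new equation is added to the equations already cutting out the preceding $\mathbb{K}_i$, so that the codimensions are additive. Once this is in place, the conclusion about the geometric nature of each preimage follows from the preimage theorem without further calculation.
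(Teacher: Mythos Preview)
Your proposal is correct and follows essentially the same approach as the paper: invoke the preimage theorem for a map transversal to a submanifold of known codimension (the paper cites \cite[Proposition 1, p.23]{10.1007/BFb0066810} for this), and then identify each preimage set-theoretically via the defining equations of the $\mathbb{K}_i$. Your write-up is in fact more explicit than the paper's, which dispatches the identifications with ``it is easy to check.''
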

	\begin{proof}
		It follows from \cite[Proposition 1, p.23]{10.1007/BFb0066810} 
		that each $(j^r)^{-1}(\mathbb{K}_{i})$ is a regular submanifold of $\mathbb{R}^3$ with the respective codimension given by the statement of the Proposition \ref{propGe1}.
		Now, $(j^r)^{-1}(\mathbb{K}_{1})=\{p\in\mathbb{R}^3;\mathcal{K}(p)=0 \}=\mathbb{P}$. 
		
		It is easy to check that $(j^r)^{-1}(\mathbb{K}_{2})=\{\phi=0\}$, $(j^r)^{-1}(\mathbb{K}_{3})=\{\rho=0 \}
		\subset \{\phi=0\}$, $(j^r)^{-1}(\mathbb{K}_{4})=\{(k_{1})_{v}=0\}
		\subset \{\phi=0\}$, $(j^r)^{-1}(\mathbb{K}_{5})=\{\delta=0\}\subset \{\phi=0\}$.
	\end{proof}
	
	\begin{defn}
		Set $\widetilde{P}_{i}=\{\xi\in C^r(\mathbb{R}^3,\mathbb{R}^3);j^r(\xi)\pitchfork \mathbb{K}_{i}\}$, $i=1,2,3,4,5$.
	\end{defn}
	
	\begin{proof}[Proof of Theorem \ref{t11}]
		By \cite[Transversality Theorem of Thom, p.32]{10.1007/BFb0066810}, \cite[Thom Transversality Theorem 4.9]{MR0341518}, the set $\widetilde{P}_{1}$ is residual in $C^k(\mathbb{R}^3,\mathbb{R}^3)$. 
		If $\xi\in \widetilde{P}_{1}$, 
		Proposition \ref{propGe1} shows  that $\mathcal{K}$ and $d\mathcal{K}$ does not vanish
		simultaneously and so $\xi\in P_{1}$. It follows that $\widetilde{P}_{1}\subset P_{1}$.
		The proof for $\widetilde{P}_{i}\subset P_{i}$, $i=2,3,4,5$, is similar.
	\end{proof}

	\begin{defn}
		Let $\mathcal{G}_{1}\subset C^{k}(\mathbb{R}^3,\mathbb{R}^3)$, $k\geq3$, be the set of vector fields $\xi$ such that
		the plane field $\Delta$ orthogonal to $\xi$ has the following properties:
		\begin{itemize}
			\item the parabolic set $\mathbb{P}$ of $\Delta$ is a regular surface (submanifolds of codimension 1),
			\item the set of parabolic points of type saddle, focus, node, saddle-node, node-focus and Hopf, is a regular curve (a submanifold of codimension 2),
			\item all the others parabolic points are of cusp type.
		\end{itemize}
	\end{defn}
	\begin{thm}\label{genpar}
		$\mathcal{G}_{1}$ is a residual subset of $C^{k}(\mathbb{R}^3,\mathbb{R}^3)$, $k\geq3$.
	\end{thm}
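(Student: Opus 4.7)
The strategy is to exhibit $\mathcal{G}_{1}$ as containing the intersection $P_{1}\cap P_{2}\cap P_{3}\cap P_{4}$ of the residual subsets of $C^{k}(\mathbb{R}^3,\mathbb{R}^3)$ produced in Theorem \ref{t11}, and then invoke the Baire category theorem. The Whitney $C^{k}$-topology makes $C^{k}(\mathbb{R}^3,\mathbb{R}^3)$ a Baire space, so a finite intersection of residual sets is residual, and any set containing a residual set is itself residual.

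First, I would show that for $\xi\in P_{1}$ the parabolic set $\mathbb{P}=\{\mathcal{K}=0\}$ is a regular surface by the Implicit Function Theorem applied to $\mathcal{K}$, since by definition $\mathcal{K}$ and $d\mathcal{K}$ do not vanish simultaneously; this also places us in the setting of Section \ref{par}. Next, for $\xi\in P_{1}\cap P_{2}$, applying the Implicit Function Theorem to the pair $(\mathcal{K},\phi)$ shows that $\varphi=\mathbb{P}\cap\{\phi=0\}$ is a regular curve on $\mathbb{P}$; this curve is exactly the curve of special parabolic points. At any $p\in\mathbb{P}\setminus\varphi$ we have $\phi(p)=\langle\nabla\mathcal{K}(p),\mathcal{A}(p)\rangle\neq0$, so by Proposition \ref{propcusp1} and Theorem \ref{t1} the point $p$ is of cuspidal type, handling the third bullet of the definition of $\mathcal{G}_{1}$.

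It remains to classify the points of $\varphi$. By Lemma \ref{lemma:s1}, the lift $\widetilde{\varphi}$ is a curve of singular points of the Lie-Cartan vector field $\mathcal{X}$, whose type is governed by the eigenvalues $\lambda_{1},\lambda_{2}$ of Proposition \ref{prop2}: the sign of $\lambda_{1}\lambda_{2}$ distinguishes saddle from node, the sign of the discriminant $\Omega$ of \eqref{omega} distinguishes node from focus, the vanishing of $\lambda_{1}\lambda_{2}$ gives the saddle-node transition analyzed in Theorem \ref{t3}, the vanishing of $\Omega$ gives the node-focus transition of Theorem \ref{thm:node-focus}, and the vanishing of $\mathrm{Re}(\lambda_{i})$ under complex eigenvalues gives the Hopf case of Theorem \ref{aci}. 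The conditions encoded in $P_{3}$ (non-simultaneous vanishing of $\rho$ and $d\rho$, and of $(k_{1})_{v}$ and $d((k_{1})_{v})$ on $\{\phi=0\}$) together with the condition of $P_{4}$ on $\delta$ ensure that each of these three transition loci intersects $\varphi$ transversally, hence in a discrete (codimension-$2$) set. Consequently $\varphi$ decomposes, as claimed in $\mathcal{G}_{1}$, as a regular curve of saddle, node and focus points together with isolated saddle-node, node-focus and Hopf points.

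The main obstacle is the translation step in the last paragraph: one must verify explicitly that the jet conditions defining $P_{3}$ and $P_{4}$ correspond bijectively to the transversal vanishing of $\lambda_{1}\lambda_{2}$, $\Omega$, and $\mathrm{Re}(\lambda_{i})$ along $\widetilde{\varphi}$. This amounts to rewriting the derivatives of $\rho$, $(k_{1})_{v}$, and $\delta$ in the direction of $\varphi'$ via the formulas \eqref{eigenVa}--\eqref{omega} and the expressions for $\lambda_{1}+\lambda_{2}$ and $\lambda_{1}\lambda_{2}$ obtained in the proof of Lemma \ref{lemma:spct}; once this is done the inclusion $\mathcal{G}_{1}\supset P_{1}\cap P_{2}\cap P_{3}\cap P_{4}$ follows and Theorem \ref{t11} together with Baire concludes the proof.
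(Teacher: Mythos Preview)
Your strategy is the same as the paper's: show $\bigcap_{i=1}^{4}P_{i}\subset\mathcal{G}_{1}$ and conclude by Theorem \ref{t11} and Baire. Your handling of $P_{1}$, $P_{2}$, and of the cuspidal points off $\varphi$ matches the paper exactly.

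The one place where your proposal goes astray is in the dictionary you set up between the conditions in $P_{3}$ and the transition types along $\widetilde{\varphi}$. You pair $(k_{1})_{v}$ with the node--focus transition (vanishing of $\Omega$), but in fact $(k_{1})_{v}$ controls the \emph{trace} $\lambda_{1}+\lambda_{2}$, not the discriminant. The paper establishes this by working in a tubular neighbourhood of an integral curve $\gamma$ of $\Delta$ tangent to $\varphi$, using the Darboux frame of Section \ref{darbouxframe}, and computing explicitly that
\[
(k_{1})_{v}(u)=\bigl(\lambda_{1}(u)+\lambda_{2}(u)\bigr)\cos^{2}\theta(u),
\]
while $\rho=0$ corresponds to $\theta=0$ and hence $\lambda_{1}\lambda_{2}=0$. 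Thus $\rho$ detects the saddle--node transition and $(k_{1})_{v}$ detects the locus $\lambda_{1}+\lambda_{2}=0$, which under complex eigenvalues is the Hopf crossing; $\delta$ from $P_{4}$ then supplies the transversality needed in Theorem \ref{aci}. No genericity condition is imposed on the node--focus locus $\Omega=0$ because at such points both eigenvalues are nonzero and the singularity remains hyperbolic, so it is automatically covered by Theorem \ref{thm:node-focus}. Once you correct this pairing, the ``translation step'' you flagged as the main obstacle is exactly the Darboux-frame computation the paper carries out, and the rest of your argument goes through.
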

	\begin{proof}
		Let $\xi\in\bigcap\limits_{i=1}^{4} P_{i}$, which, by Theorem \mbox{\ref{t11}}, is a residual set. We will prove that $\xi\in\mathcal{G}_{1}$. Since $\xi\in P_{1}$, the parabolic set is a regular surface. Since $\xi\in P_{2}$,
		the set of singular points of the Lie-Cartan vector field is a regular curve $\widetilde{\varphi}$.
		Since $\xi\in P_{3}$,
		let $\varphi$ be a regular curve of special parabolic points such that $\varphi'(s_{0})$ belongs to the plane of $\Delta$ at $\varphi(s_{0})$. Let $\gamma$ be a integral curve of $\Delta$, parametrized by arc lenght $u$,
		such  that $\gamma(0)=\varphi(s_{0})$ and $\gamma'(0)=\varphi'(s_{0})$.
		Set $X(u)=\gamma'(u)$ and consider the Darboux frame defined in Section \ref{darbouxframe} with the equations \eqref{darboux}, \eqref{darboux2} and \eqref{darboux3}.
		Suppose that $k_{1}(u)=0$ end $k_{2}(u)<0$. By Theorems \ref{ecf} and \ref{proptau},
		\begin{equation}\label{g1kntg}
		k_{n}(u)=k_{2}(u)sin^2(\theta(u)), \ \ \tau_{g}(u)=\widetilde{\tau}_{g}(u)+k_{2}(u)cos(\theta(u))sin(\theta(u)).
		\end{equation}
		Since $\langle X(u),Y(u)\rangle=0$ for all $u$,
		then
		\begin{equation}\label{g1kntgY}
		k_{n,Y}(u)=k_{2}(u)cos^2(\theta(u)), \ \ \tau_{g,Y}(u)=\widetilde{\tau}_{g}(u)-k_{2}(u)cos(\theta(u))sin(\theta(u)).
		\end{equation}
		Let $\alpha:\Lambda\subset \mathbb{R}^3\rightarrow \mathbb{R}^3$ defined by
		$\alpha(u,v,w)=\gamma(u)+vY(u)+w(X\wedge Y)(u)$, where $\Lambda$ is a open subset.
		The map $\alpha$ is a tubular neighbourhood around $\gamma$.
		Since $d\alpha=\alpha_{u}du+\alpha_{v}dv+\alpha_{w}dw$, then
		\begin{equation}
		d\alpha=[(1-vk_{g}-wk_{n})du]X+[dv-w\tau_{g}du]Y+[v\tau_{g}du+dw]X\wedge Y.
		\end{equation}
		Since $\xi(u,0,0)=(X\wedge Y)(u)$, at the  tubular neighbourhood $\alpha$, the Taylor expansion of $\xi$ is given by
		\eqref{xic}.
		
		By equations \eqref{darboux}, \eqref{darboux2}, \eqref{darboux3} and equations \eqref{g1kntg}, \eqref{g1kntgY}, we have that
		\begin{equation}
		\begin{split}
		\xi&=\Bigg((\widetilde{\tau}_{g}-k_{2}cos(\theta)sin(\theta))v+l_{1}w+\frac{m_{11}v^2}{2}+m_{21}vw+\frac{m_{31}w^2}{2}\\
		&+\frac{n_{11}v^3}{6}+\frac{n_{21}v^2w}{2}+\frac{n_{31}vw^2}{2}+\frac{n_{41}w^3}{6}+\mathcal{O}^4_{u}(v,w)\Bigg)X\\
		&+\Bigg(-k_{2}cos^2(\theta)v+l_{12}w+\frac{v^2m_{12}}{2}+vwm_{22}+\frac{w^2m_{32}}{2}\\
		&+\frac{n_{12}v^3}{6}+\frac{n_{22}v^2w}{2}+\frac{n_{32}vw^2}{2}+\frac{n_{42}w^3}{6}+\mathcal{O}^4_{u}(v,w)\Bigg)Y\\
		&+\Bigg(1+\frac{v^2m_{13}}{2}+vwm_{23}+\frac{w^2m_{33}}{2}\\
		&+\frac{n_{13}v^3}{6}+\frac{n_{23}v^2w}{2}+\frac{n_{33}vw^2}{2}+\frac{n_{43}w^3}{6}+\mathcal{O}^4_{u}(v,w))\Bigg)X\wedge Y.
		\end{split}
		\end{equation}
		We have that $d\xi=\xi_{u}du+\xi_{v}dv+\xi_{w}dw$.
		The implicit differential equations  \eqref{eqla} of the asymptotic lines are given by $\langle\xi,d\alpha\rangle=0$ and $\langle d\xi,d\alpha\rangle=0$.
		We can solve the equation $\langle\xi,d\alpha\rangle=0$ for $dw$ and substitute it in $\langle d\xi,d\alpha\rangle=0$. Then the equations of the asymptotic lines
		becomes $dw=Adu+Bdv$ and $edu^2+2fdudv+gdv^2=0$, where
		\begin{equation}
		\begin{split}
		&A=-2\widetilde{\tau}_{g}v-l_{1}w+\bigg([\widetilde{\tau}_{g}-k_{2}cos(\theta)sin(\theta)]k_{g}-\frac{m_{11}}{2}\bigg)v^2
		+\bigg(k_{g}l_{1}-\frac{m_{31}}{2}\\
		&-(\widetilde{\tau}_{g}+k_{2}cos(\theta)sin(\theta))k_{2}cos^2(\theta)
		+(\widetilde{\tau}_{g}-k_{2}cos(\theta)sin(\theta))k_{2}sin^2(\theta)-m_{21}\bigg)vw\\
		&+\bigg(k_{2}l_{1}sin^2(\theta)
		+(\widetilde{\tau}_{g}+k_{2}cos(\theta)sin(\theta))l_{2}\bigg)w^2+\mathcal{O}_{u}^3(v,w),
		\end{split}
		\end{equation}
		\begin{equation}
		B=k_{2}cos^2(\theta)v-l_{2}w-\frac{m_{12}v^2}{2}-m_{22}vw-\frac{m_{32}w^2}{2}+\mathcal{O}_{u}^3(v,w),
		\end{equation}
		
		\begin{equation}
		\begin{split}
		&e=-k_{2}sin^2(\theta)+(\widetilde{\tau}_{g}'-2k_{2}\theta'cos^2(\theta)-k_{2}'cos(\theta)sin(\theta)+k_{g}k_{2}-2\widetilde{\tau}_{g}l_{1}+k_{2}\theta')v\\
		&+(2\widetilde{\tau}_{g}k_{2}cos(\theta)sin(\theta)-(k_{2})^2cos^2(\theta)-k_{g}l_{2}+(\widetilde{\tau}_{g})^2+(k_{2})^2-(l_{1})^2+l_{1}')w\\
		&+\bigg((k_{2})^3cos^4(\theta)-(k_{2})^3cos^2(\theta)-2\widetilde{\tau}_{g}(k_{2})^2cos^3(\theta)sin(\theta)+2\widetilde{\tau}_{g}(k_{2})^2cos(\theta)sin(\theta)\\
		&-(k_{g})^2k_{2}\theta'cos^2(\theta)+2k_{g}k_{2}cos^2(\theta)-k_{g}k_{2}l_{1}cos(\theta)sin(\theta)-k_{g}k_{2}\theta'\\
		&+2(\widetilde{\tau}_{g})^2k_{2}cos^2(\theta(u))-(\widetilde{\tau}_{g})^2k_{2}+k_{g}k_{2}'cos(\theta)sin(\theta)+3k_{g}\widetilde{\tau}_{g}l_{1}-2\widetilde{\tau}_{g}m_{21}-k_{g}\widetilde{\tau}_{g}'\\
		&+\frac{1}{2}[k_{2}m_{13}cos^2(\theta)-k_{g}m_{12}-k_{2}m_{13}-l_{1}m_{11}+m_{11}']\bigg)v^2\\
		&+\bigg(-\widetilde{\tau}_{g}k_{2}cos^2(\theta(u))l_{1}+k_{2}'\widetilde{\tau}_{g}^2cos(\theta)-k_{g}(\widetilde{\tau}_{g})^2+(\widetilde{\tau}_{g})^2l_{2}
		-2m_{31}\widetilde{\tau}_{g}-k_{g}l_{1}'\\
		&+(k_{g})^2l_{2}+2k_{g}(l_{1})^2-2l_{1}m_{21}+\widetilde{\tau}_{g}k_{2}l_{1}-2\widetilde{\tau}_{g}k_{2}\theta'cos(\theta)sin(\theta)\\
		&+2\widetilde{\tau}_{g}k_{2}l_{2}cos(\theta)sin(\theta)-(k_{2})^2l_{2}cos^4(\theta)+(k_{2})^2l_{2}cos^2(\theta)+(k_{2})^2\theta'cos^2(\theta)\\
		&+k_{2}m_{23}cos^2(\theta)+k_{2}\widetilde{\tau}_{g}'cos^2(\theta)-(k_{2})^2l_{1}cos^3(\theta)sin(\theta)-k_{g}m_{22}+m_{21}'\\
		&+k_{2}k_{2}'cos(\theta)sin(\theta)
		+(k_{2})^2l_{1}cos(\theta)sin(\theta)-k_{2}\widetilde{\tau}_{g}'-k_{2}m_{23}-(k_{2})^2\theta'\bigg)vw\\
		&+\bigg(k_{g}k_{2}l_{2}sin^2(\theta)-[\widetilde{\tau}_{g}+k_{2}cos(\theta)sin(\theta)]l_{2}'-k_{2}l_{1}'sin^2(\theta)+k_{2}(l_{1})^2sin^2(\theta)\\
		&-[\widetilde{\tau}_{g}+k_{2}cos(\theta)sin(\theta)]k_{g}l_{1}+[\widetilde{\tau}_{g}+k_{2}cos(\theta)sin(\theta)]l_{1}l_{2}
		+\frac{1}{2}[k_{2}m_{33}cos^2(\theta)\\
		&-k_{2}m_{33}-k_{g}m_{32}-3l_{1}m_{31}+m_{31}'\bigg)w^2+\mathcal{O}_{u}^3(v,w),
		\end{split}
		\end{equation}
		\begin{equation}
		\begin{split}
		&f=-k_{2}cos(\theta)sin(\theta)+\frac{1}{2}(k_{2}l_{1}cos^2(\theta)+2k_{2}\theta'cos(\theta)sin(\theta)-k_{2}'cos^2(\theta)-2\widetilde{\tau}_{g}l_{2}\\
		&+m_{11})v+\frac{1}{2}(2k_{2}\widetilde{\tau}_{g}cos^2(\theta)+(k_{2})^2cos(\theta)sin(\theta)+k_{g}l_{1}-k_{2}\widetilde{\tau}_{g}-2l_{1}l_{2}+l_{2}'+m_{21})w\\
		&+\frac{1}{4}\Big(2\widetilde{\tau}_{g}(k_{2})^2cos^2(\theta)-4\widetilde{\tau}_{g}(k_{2})^2cos^4(\theta)-2(k_{2})^3cos^3(\theta)sin(\theta)-2k_{2}k_{g}l_{1}cos^2(\theta)\\
		&-2k_{g}k_{2}l_{2}cos(\theta)sin(\theta)
		+k_{2}m_{13}cos(\theta)sin(\theta)+2k_{2}m_{21}cos^2(\theta)
		+2k_{g}\widetilde{\tau}_{g}l_{2}-l_{1}m_{12}\\
		&-4\widetilde{\tau}_{g}m_{22}-3\widetilde{\tau}_{g}m_{13}-k_{g}m_{11}-l_{2}m_{11}+n_{11}+m_{12}'\Big)v^2
		+\frac{1}{2}\Big(k_{2}m_{11}cos^2(\theta)\\
		&-k_{2}m_{12}cos(\theta)sin(\theta)+k_{2}m_{31}cos^2(\theta)+2k_{g}l_{1}l_{2}
		-2\widetilde{\tau}_{g}m_{23}-\widetilde{\tau}_{g}m_{12}-2l_{1}m_{22}\\
		&-l_{1}m_{13}-2\widetilde{\tau}_{g}m_{32}-k_{2}m_{11}-2l_{2}m_{21}+m_{22}'+n_{21}\Big)vw
		+\frac{1}{4}\Big(2k_{2}(l_{2})^2cos(\theta)sin(\theta)\\
		&-2k_{2}l_{1}l_{2}cos^2(\theta)-k_{2}m_{33}cos(\theta)sin(\theta)
		-2k_{2}m_{22}cos(\theta)sin(\theta)+2k_{2}m_{21}cos^2(\theta)\\
		&+2\widetilde{\tau}_{g}(l_{2})^2+2k_{2}l_{1}l_{2}-2l_{1}m_{23}
		-\widetilde{\tau}_{g}m_{33}-2\widetilde{\tau}_{g}m_{22}-3l_{1}m_{32}-2k_{2}m_{21}+k_{g}m_{31}\\
		&-3l_{2}m_{31}+n_{31}+m_{32}'\Big)w^2+\mathcal{O}_{u}^3(v,w)
		\end{split}
		\end{equation}
		\begin{equation}
		\begin{split}
		&g=-k_{2}cos^2(\theta)+(k_{2}l_{2}cos^2(\theta)+m_{12})v+(m_{22}-(l_{2})^2)w+\frac{1}{2}[2k_{2}m_{13}cos^2(\theta)\\
		&+n_{12}+2k_{2}m_{22}cos^2(\theta)-l_{2}m_{12}]v^2
		+[k_{2}m_{23}cos^2(\theta)+k_{2}m_{32}cos^2(\theta)-2l_{2}m_{22}\\
		&-l_{2}m_{13}+n_{22}]vw+\frac{1}{2}[n_{32}-2l_{2}m_{23}-3l_{2}m_{32}]w^2+\mathcal{O}_{u}^3(v,w).
		\end{split}
		\end{equation}
		
		The derivatives $(\mathcal{K})_{v}$ and $(\mathcal{K})_{w}$ evaluated at $(u,0,0)$ are given by
		\begin{equation}\label{Kv0}
		\begin{split}
		&(\mathcal{K})_{v}=(k_{1})_{v}k_{2}=[2\widetilde{\tau}_{g}l_{1}cos^2(\theta)-k_{g}k_{2}cos^2(\theta)sin^2(\theta)\\
		&-k_{g}k_{2}cos^4(\theta)-\widetilde{\tau}_{g}'cos^2(\theta) +k_{2}\theta'cos^2(\theta)sin^2(\theta)+k_{2}\theta'cos^4(\theta)\\
		&-m_{12}sin^2(\theta)-k_{2}l_{2}cos^2(\theta)sin^2(\theta)+m_{11}cos(\theta)sin(\theta)\\
		&-2\widetilde{\tau}_{g}l_{2}cos(\theta)sin(\theta)+k_{2}l_{1}cos^3(\theta)sin(\theta)]k_{2},
		\end{split}
		\end{equation}
		\noindent and
		\begin{equation}\label{Kw0}
		\begin{split}
		&(\mathcal{K})_{w}=(k_{1})_{w}k_{2}=[(l_{1})^2cos^2(\theta)-(\widetilde{\tau}_{g})^2cos^2(\theta)\\
		&-k_{2}\widetilde{\tau}_{g}cos^3(\theta)sin(\theta)
		+k_{g}l_{2}cos^2(\theta)-l_{1}'cos^2(\theta)-m_{22}sin^2(\theta)\\
		&+(l_{2})^2sin^2(\theta)+m_{21}cos(\theta)sin(\theta)-2l_{1}l_{2}cos(\theta)sin(\theta)\\
		&-k_{2}\widetilde{\tau}_{g}cos(\theta)sin^3(\theta)+k_{g}l_{1}cos(\theta)sin(\theta)+l_{2}'cos(\theta)sin(\theta)]k_{2}.
		\end{split}
		\end{equation}
		Set $F=e+2fp+gp^2$ and $q=-\left(\frac{a}{c}\right)-\left(\frac{b}{c}\right)p$.
		Let $\mathcal{X}$ be the Lie-Cartan vector field $\mathcal{X}=(F_{p},pF_{p},qF_{p},-(F_{u}+pF_{v}+qF_{w}))$.
		Then
		\begin{equation}
		\mathcal{X}\left(u,0,0,-\frac{sin(\theta(u))}{cos(\theta(u))}\right)=\left(0,0,0,-\frac{(k_{1})_{v}(u)sin(\theta(u))}{cos^3(\theta(u))}\right).
		\end{equation}
		
		It follows that $\left(u_{0},0,0,-\frac{sin(\theta(u_{0}))}{cos(\theta(u_{0}))}\right)$ is a singular point of $\mathcal{X}$ in $F^{-1}(0)$ if, and only if,
		$\theta(u_{0})=0$ or $(k_{1})_{v}(u_{0})=0$.
		
		Let $\lambda_{1}(u)$ and $\lambda_{2}(u)$ be the not necessarily zero eigenvalues of
		\begin{equation}
		D\mathcal{X}\left(u,0,0,-\frac{sin(\theta(u))}{cos(\theta(u))}\right).
		\end{equation}
		Then $(k_{1})_{v}(u)=(\lambda_{1}(u)+\lambda_{2}(u))cos^2(\theta(u))$.
		If $\theta(u_{0})=0$, then $\varphi'(s_{0})$ is a principal direction and $\lambda_{1}(u_{0})\lambda_{2}(u_{0})=0$. It follows that $\rho=0$ at $(u_{0},0,0)$ and that $(u_{0},0,0,0)$ is a singular point which is a transition of type saddle-node.
		
		
		If $(k_{1})_{v}(u_{0})=0$, then $\lambda_{1}(u_{0})+\lambda_{2}(u_{0})=0$, and so $\left(u_{0},0,0,-\frac{sin(\theta(u_{0}))}{cos(\theta(u_{0}))}\right)$
		is a singular point of type saddle, node or a transition that occurs when the pair of complex eigenvalues crosses the imaginary axis.
		
		Since $(k_{1})_{v}$ and $\rho$ has the property that each one do not vanishes simultaneously with the respectively derivative, then
		the transitions above occurs at isolated points of $\varphi$.
		
		Since $\xi\in P_{4}$, the parabolic points are of type saddle, focus, node, saddle-node, node-focus and Hopf. We conclude that
		$\bigcap\limits_{i=1}^{4} P_{i}\subset \mathcal{G}_{1}$. Since $\mathcal{G}_{1}$ contains a residual set, then $\mathcal{G}_{1}$ is a residual subset of $C^{k}(\mathbb{R}^3,\mathbb{R}^3)$.
	\end{proof}
	
	\section{Examples}
	
		\begin{prop}\cite[p. 23]{MR1749926}\label{propEx1}
		Let $\xi(x,y,z)=(f(x,y,z),g(x,y,z),1)$. Then
		\begin{equation}
			\mathcal{K}=(f_{x}-ff_{z})(g_{y}-gg_{z})-\frac{(f_{y}-fg_{z}+g_{x}-gf_{z})^2}{4}=\langle\wp,\xi\rangle-\frac{\langle curl(\xi),\xi\rangle}{4}^2,
		\end{equation}
		\noindent where $\wp=(det(\xi_{y},\xi_{z},\xi),det(\xi_{z},\xi_{x},\xi),det(\xi_{x},\xi_{y},\xi))$.
	\end{prop}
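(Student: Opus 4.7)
The plan is to specialize Lemma \ref{propeqlaV} to the given vector field and then verify an algebraic identity. With $a=f$, $b=g$, $c=1$, every partial derivative of $c$ vanishes, so the coefficients of the binary differential equation in \eqref{eqlaV} collapse to
\begin{equation*}
 e \;=\; f_x - f f_z,\qquad \widetilde{g} \;:=\; g_y - g g_z,\qquad \widetilde{f} \;:=\; \tfrac{1}{2}\bigl(f_y + g_x - f g_z - g f_z\bigr),
\end{equation*}
where I use tildes to avoid clashing with the names of the components of $\xi$. Substituting into $\mathcal{K} = e\widetilde{g} - \widetilde{f}^{\,2}$ yields the first equality in one step.

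For the second equality I would compute the two invariants on the right directly from their definitions. The curl is $curl(\xi) = (-g_z,\, f_z,\, g_x - f_y)$, so $\langle curl(\xi),\xi\rangle = -fg_z + gf_z + g_x - f_y$. Since $\xi_x,\xi_y,\xi_z$ all have vanishing third component, each $3\times 3$ determinant defining $\wp$ collapses to a $2\times 2$ minor, producing
\begin{equation*}
 \wp \;=\; \bigl(f_y g_z - f_z g_y,\; f_z g_x - f_x g_z,\; f_x g_y - f_y g_x\bigr),
\end{equation*}
and hence a closed form for $\langle\wp,\xi\rangle$.

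The remaining task is to confirm the polynomial identity
\begin{equation*}
 (f_x - f f_z)(g_y - g g_z) - \langle\wp,\xi\rangle \;=\; \tfrac{1}{4}\bigl(P^2 - Q^2\bigr),
\end{equation*}
with $P := f_y + g_x - f g_z - g f_z$ and $Q := -\langle curl(\xi),\xi\rangle$. I would factor the right side via $P^2 - Q^2 = (P-Q)(P+Q)$: direct subtraction gives $P - Q = 2(g_x - f g_z)$ and $P + Q = 2(f_y - g f_z)$, so the right side equals $(g_x - f g_z)(f_y - g f_z)$. On the left, expanding the product and the expression for $\langle\wp,\xi\rangle$ allows six of the ten cross terms to cancel, and the four survivors regroup into precisely the same product $(g_x - f g_z)(f_y - g f_z)$. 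No deeper ingredient is required; the sole obstacle is careful sign bookkeeping to confirm every cancellation.
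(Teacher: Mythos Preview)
Your proof is correct. The paper itself does not supply a proof of this proposition; it is stated with a citation to \cite[p.~23]{MR1749926} and used only to produce the examples in Section~\ref{genp}. So there is nothing to compare against, and your direct computation---specializing Lemma~\ref{propeqlaV} to $c=1$ for the first equality, then reducing the second equality to the factorization $P^2-Q^2=(P-Q)(P+Q)$---is a clean self-contained verification.
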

	
	\subsection{Generic parabolic points}
	
	\begin{ex}
		Cuspidal parabolic point. Consider the vector field
		$\xi(x,y,z)=(x^2-y,x+y,1)$. The equation of the plane field $\langle \xi,dr \rangle=0$ is given by $(x^2-y)dx+(x+y)dy+dz=0$,
		see the Figure \ref{cusp_cp}. The parabolic surface of the plane field is given by $x=0$. All points are of the cuspidal type.
		\begin{figure}[H]
			\captionsetup[subfigure]{width=.3\linewidth}
			\centering
			\subfloat{\includegraphics[width=.5\textwidth]{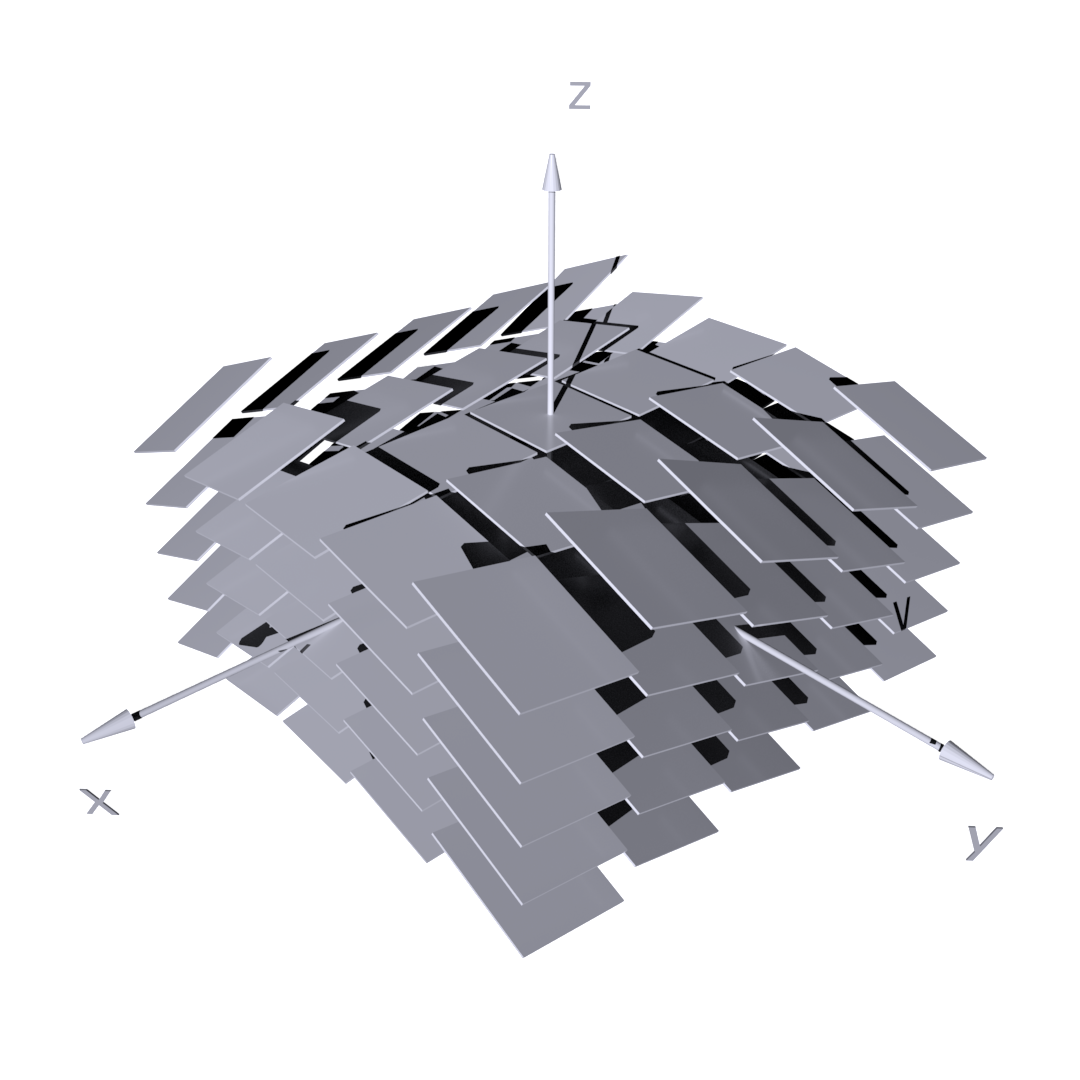}\label{cusp_cp1}}
			\subfloat{\includegraphics[width=.5\textwidth]{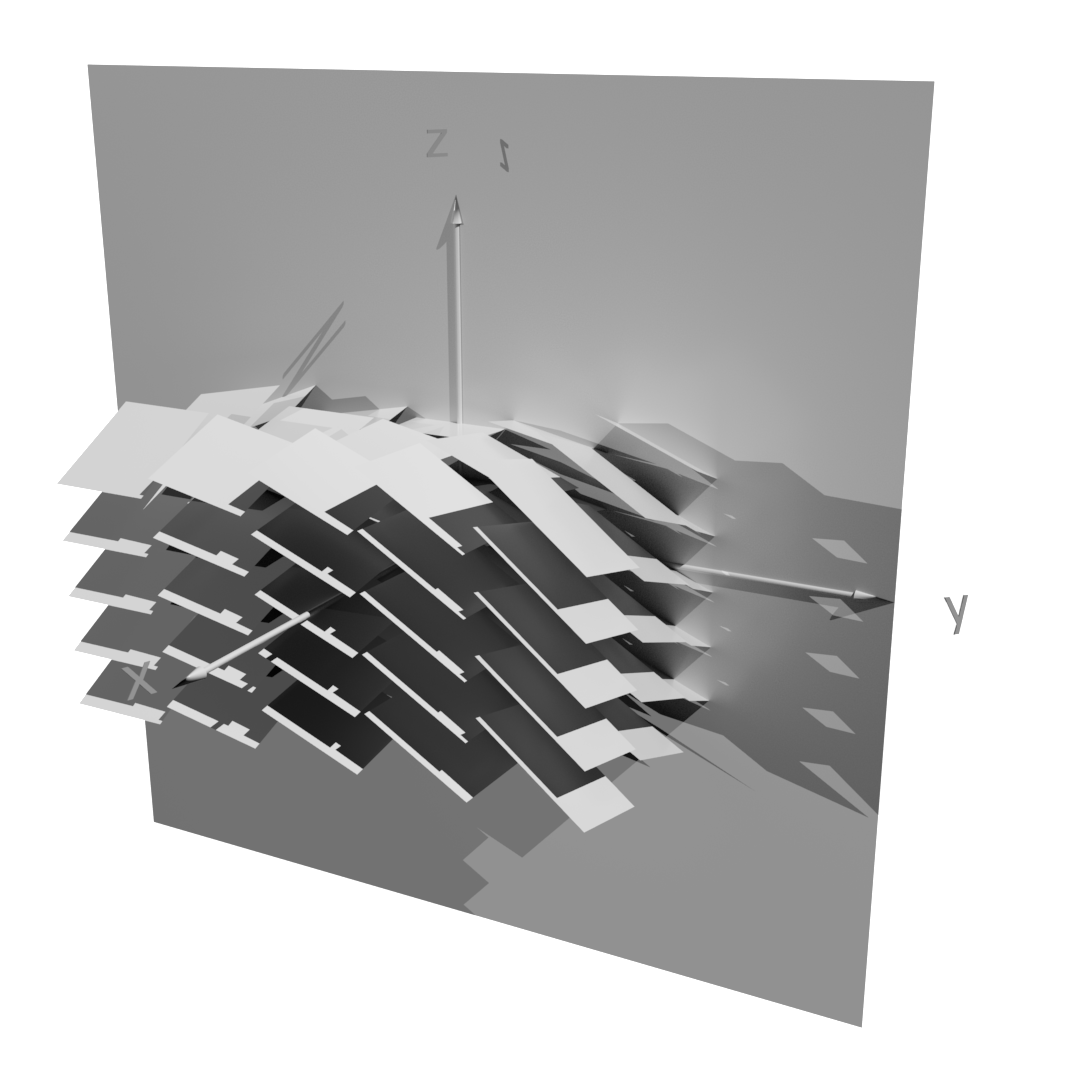}\label{cusp_cp2}}
			\caption{Plane field $(x^2-y)dx+ydy+dz=0$. Parabolic surface $x=0$.}
			\label{cusp_cp}
		\end{figure}
	\end{ex}
	
	\begin{ex} Parabolic point of saddle type, node type and focus type. Consider the vector field
		 $\xi(x,y,z)=(xy-y,x+y,1)$. The equation of the plane field $\langle \xi,dr \rangle=0$ is given by $(xy-y)dx+(x+y)dy+dz=0$,
		 see the Figure \ref{sela_cp}. The parabolic surface of the plane field is given by $4y-x^2=0$. The curve $x=y=0$, i.e, the $z$ axis, is 
		 a curve of parabolic points of saddle type. 
		\begin{figure}[H]
			\captionsetup[subfigure]{width=.3\linewidth}
			\centering
			\subfloat{\includegraphics[width=.5\textwidth]{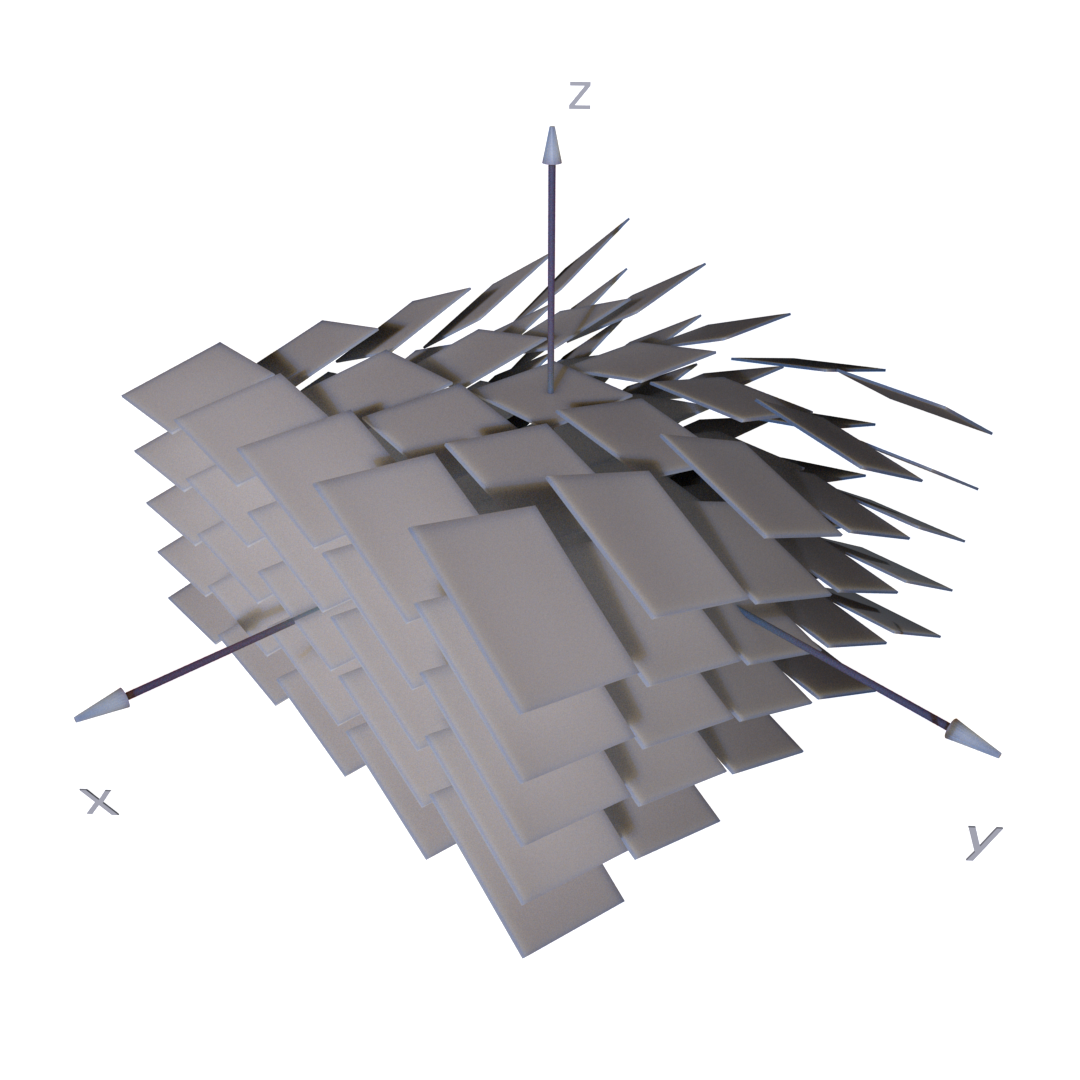}\label{sela_cp1}}
			\subfloat{\includegraphics[width=.5\textwidth]{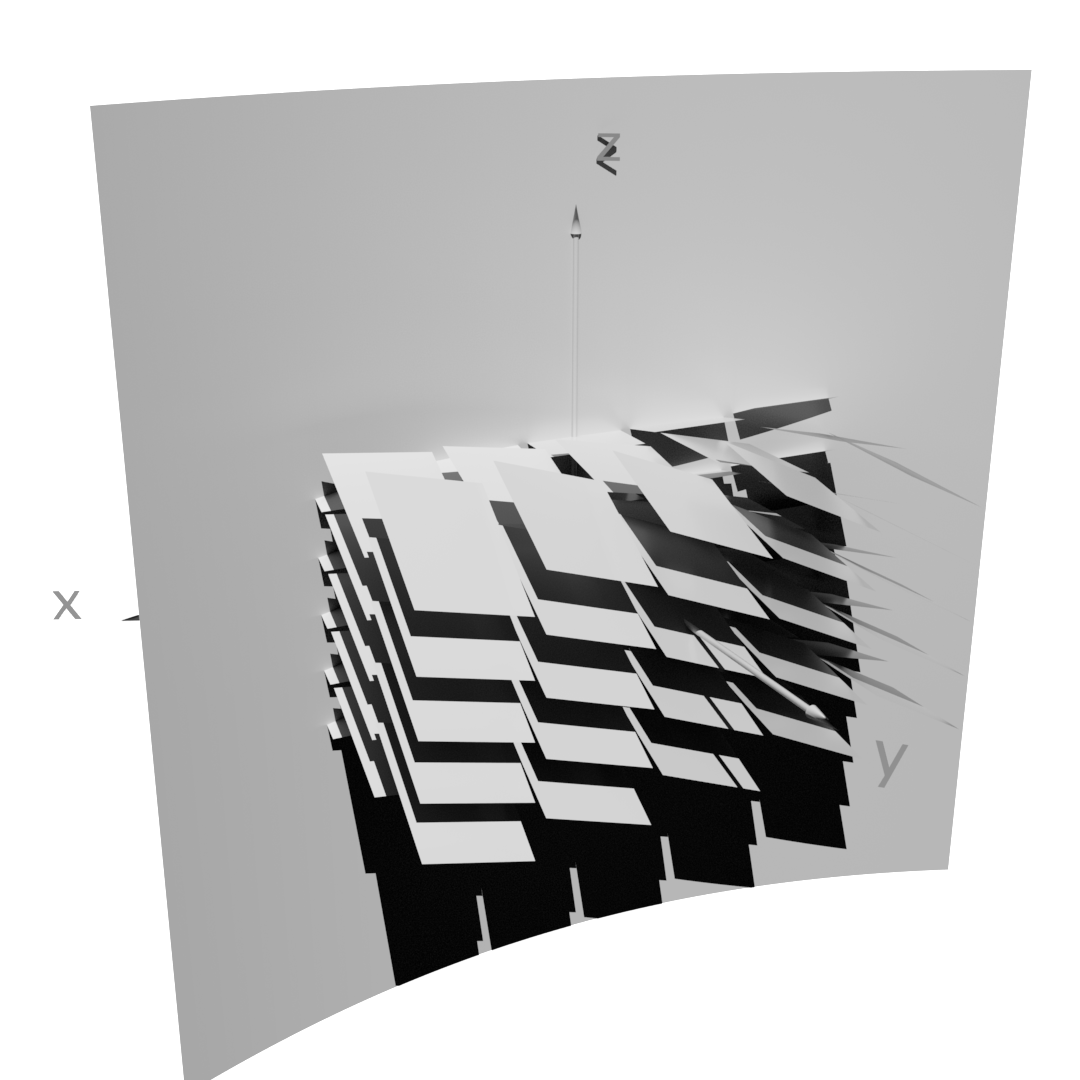}\label{sela_cp2}}
			\\
			\subfloat{\includegraphics[width=.5\textwidth]{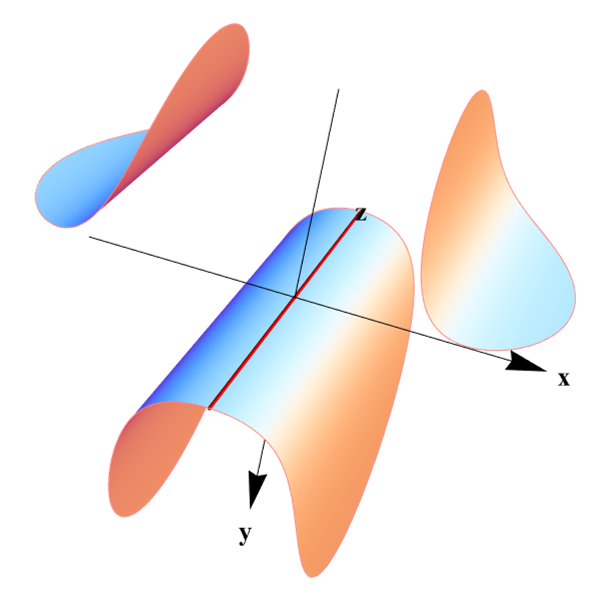}\label{sela_cp3}}
			\caption{Plane field $(xy-y)dx+(x+y)dy+dz=0$, parabolic surface and the curve $\varphi$ (coloured as red) of parabolic points of saddle type.}
			\label{sela_cp}
		\end{figure}
	
	Consider the vector field
	$\xi(x,y,z)=(xy-y+5z,x^2+x+y,1)$. The equation of the plane field $\langle \xi,dr \rangle=0$ is given by $(xy-y+5z)dx+(x^2+x+y)dy+dz=0$,
	see the Figure \ref{cpnode}. The parabolic surface of the plane field is given by $-5xy + 6y - 25z - (-(5/2)x^2 - x - (5/2)y)^2=0$. The curve 
	$-5xy + 6y - 25z - (-(5/2)x^2 - x - (5/2)y)^2=0$, $5x^2 + 2x + 5y=0$ is 
	a curve $\varphi$ of parabolic points of node type, $\varphi(x)=
	\left(x,-x^2 - (2/5x),((1/5)x^3 - (4/25)x^2 - (12/125x)\right)$. 
	\begin{figure}[H]
			\captionsetup[subfigure]{width=.6\linewidth}
			\centering
			\subfloat{\includegraphics[width=.5\textwidth]{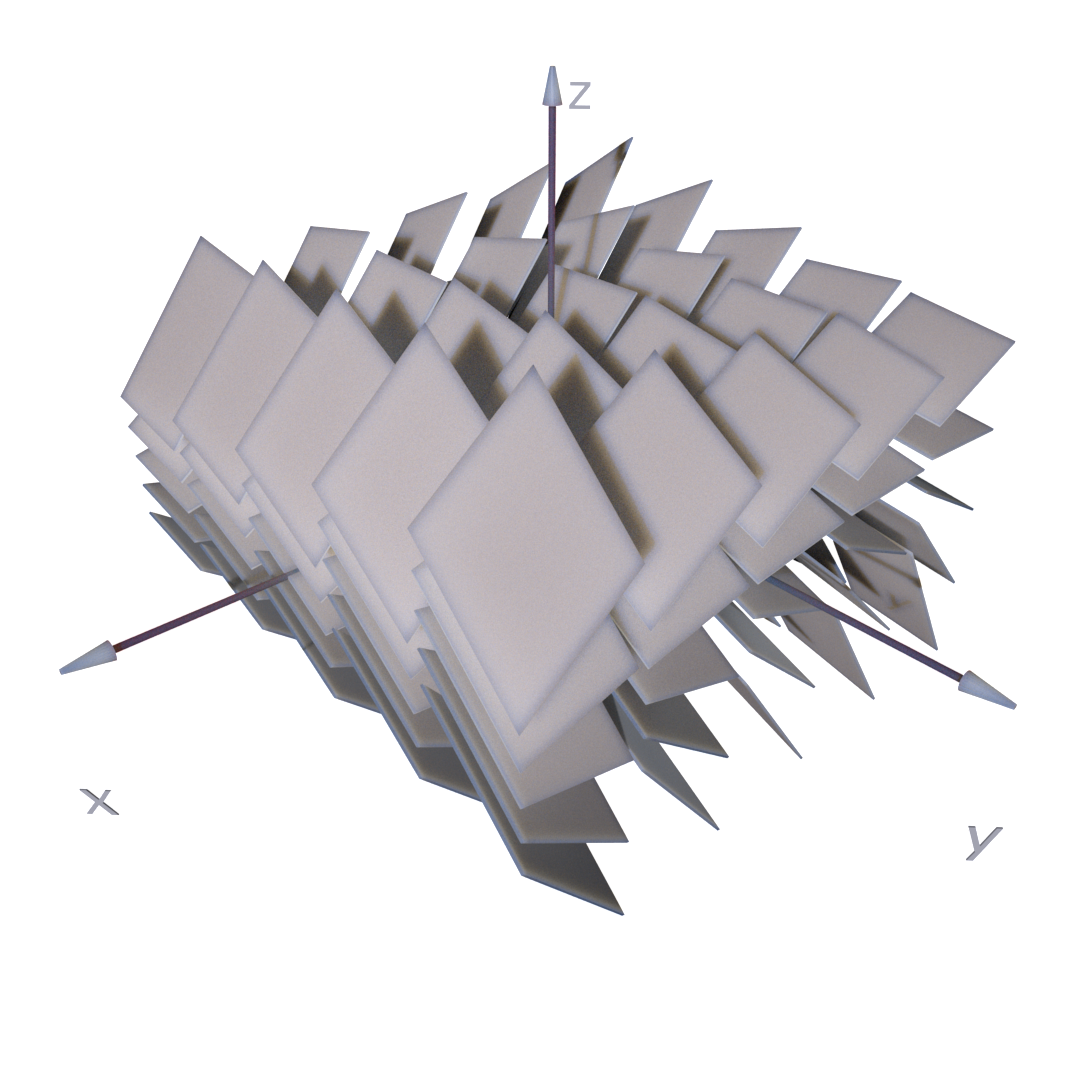}\label{cpnode1}}
			\subfloat{\includegraphics[width=.5\textwidth]{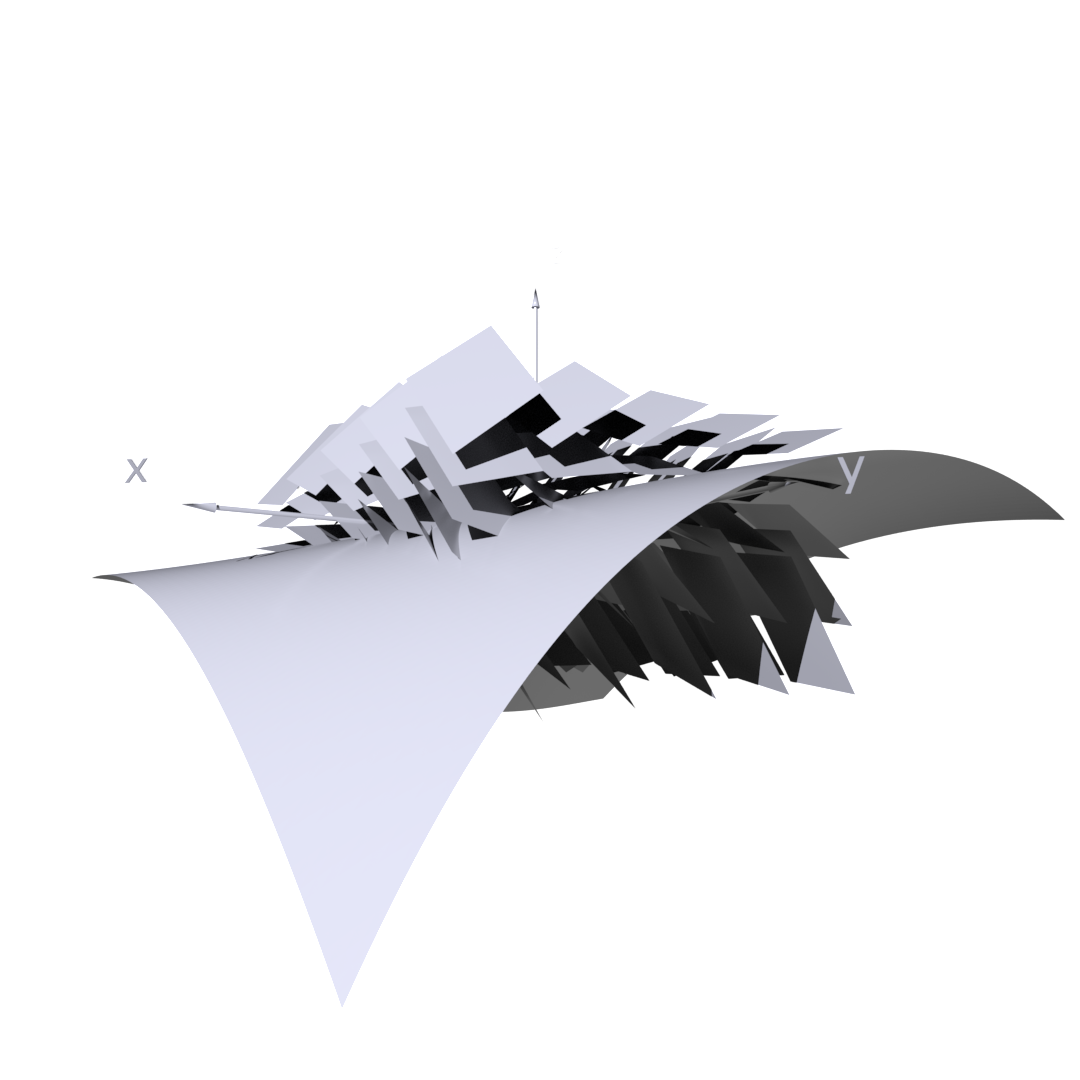}\label{spnode2}}
			\\
			\subfloat{\includegraphics[width=.5\textwidth]{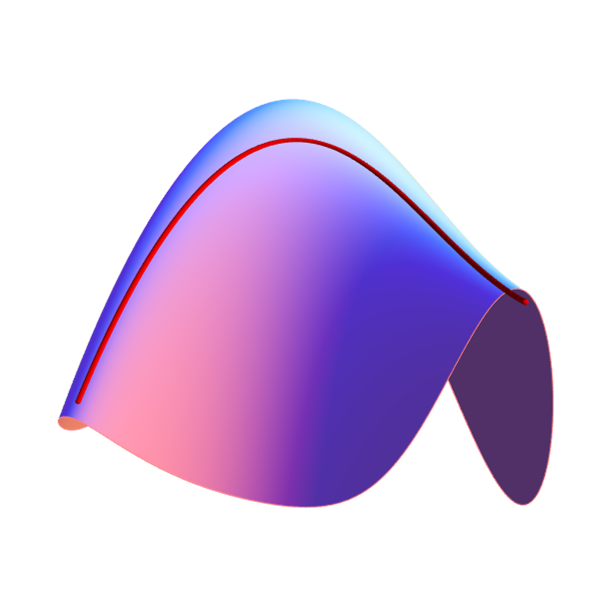}\label{spnode1}}
				
			\caption{Plane field $(xy-y+5z)dx+(x^2+x+y)dy+dz=0$, parabolic surface and the curve $\varphi$ (coloured as red) of parabolic points of node type.}
			\label{cpnode}
		\end{figure}
	
	Consider the vector field
	$\xi(x,y,z)=(x^3-y+z,x+y,1)$. The equation of the plane field $\langle \xi,dr \rangle=0$ is given by $(x^3-y+z)dx+(x+y)dy+dz=0$,
	see the Figure \ref{cpfoco}. 
	The parabolic surface of the plane field is given by $x^3 - (11/4)x^2 - y + z + (1/2)xy + (1/4)y^2=0$. The curve 
	$x=- y + z + (1/4)y^2=0$  is 
	a curve $\varphi$ of parabolic points of focus type, $\varphi(y)=
	\left(0,y,y-(1/4)y^2\right)$. 
	\begin{figure}[H]
			\captionsetup[subfigure]{width=.3\linewidth}
			\centering
			\subfloat{\includegraphics[width=.5\textwidth]{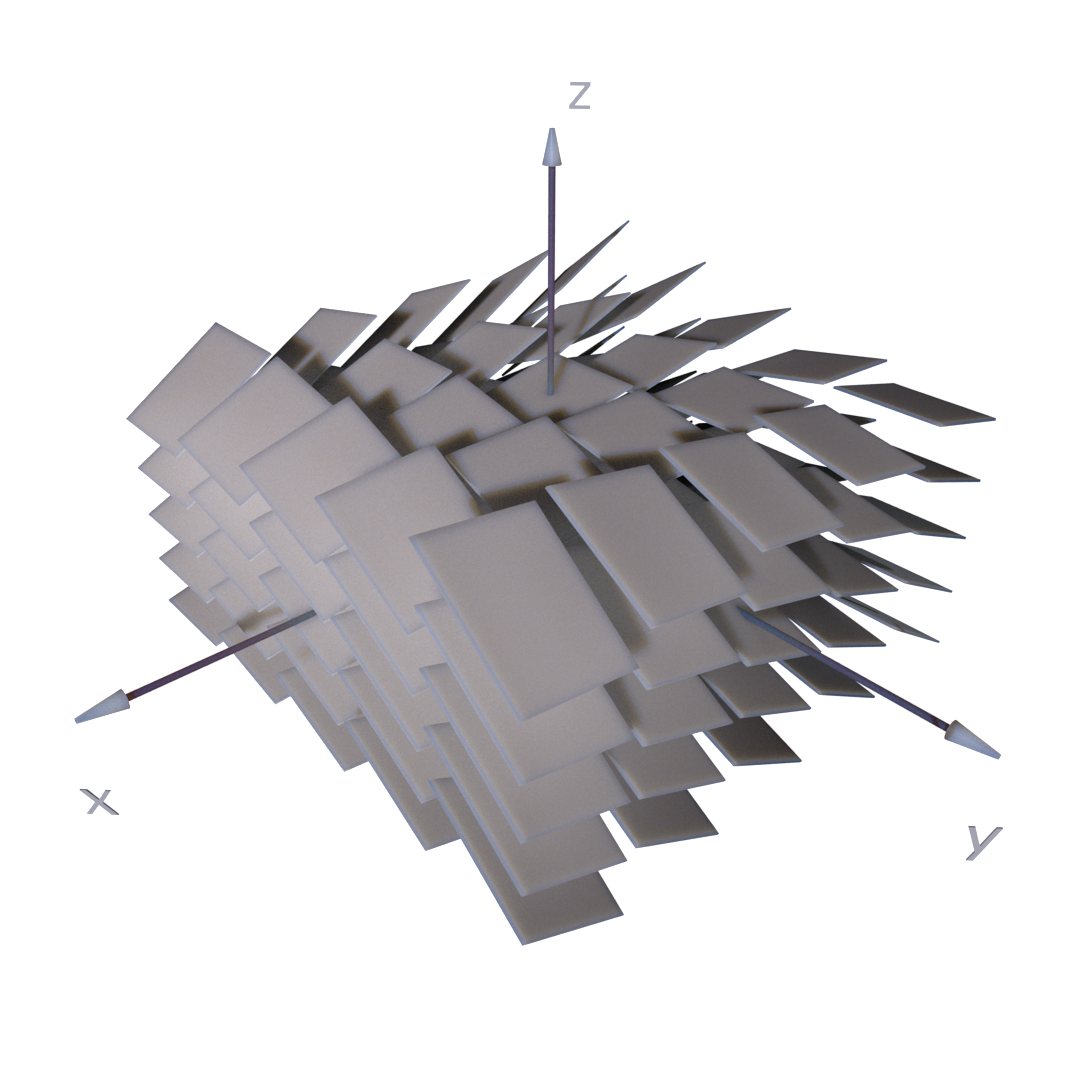}
			\label{cpfoco1}}
			\subfloat{\includegraphics[width=.5\textwidth]{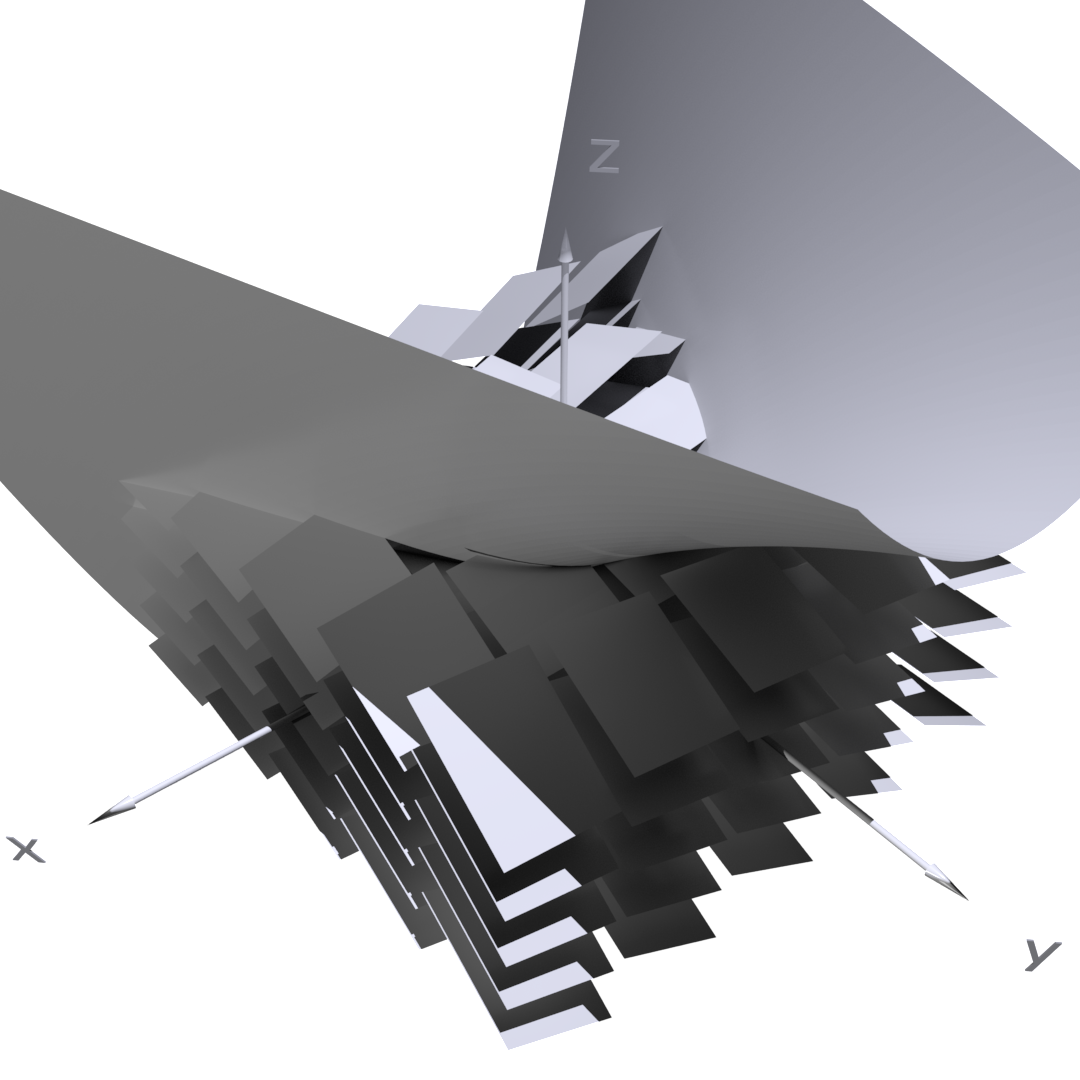}
			\label{spfocoP1}}
			\\
			\subfloat{\includegraphics[width=.5\textwidth]{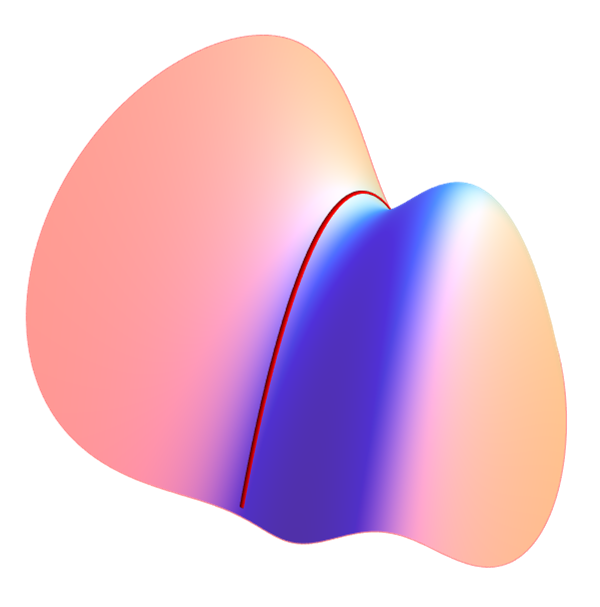}
			\label{spfoco1}}
			\caption{Plane field $(x^3-y+z)dx+(x+y)dy+dz=0$, parabolic surface and the curve $\varphi$ (coloured as red) of parabolic points of focus type.}
			\label{cpfoco}
		\end{figure}
	\end{ex}
	
	\begin{ex} Parabolic point of saddle-node transition type. Consider the vector field
		$\xi(x,y,z)=((2/3)x^3-y+xy+z,x^2+x+y,1)$. The equation of the plane field $\langle \xi,dr \rangle=0$ is given by $((2/3)x^3-y+xy+z)dx+(x^2+x+y)dy+dz=0$,
		see the Figure \ref{cpsaddlenode}. The parabolic surface of the plane field is given by $x^2 + 2y + (1/3)x^3 - z - (1/4)x^4 - (1/2)x^2y - (1/4)y^2=0$. The curve 
		$ x=2y-(1/4)y^2 -z=0$
		is the curve $\varphi(y)=\left(0,y,2 y - (1/4) y^2\right)$, of parabolic points of saddle type $(y<0)$ and node type $(y>0)$ with a saddle-node transition at $y=0$, i.e, the point $\varphi(0)=(0,0,0)$ is a parabolic point of saddle-node transition type. 
		\begin{figure}[H]
			\captionsetup[subfigure]{width=.3\linewidth}
			\centering
			\subfloat{\includegraphics[width=.5\textwidth]{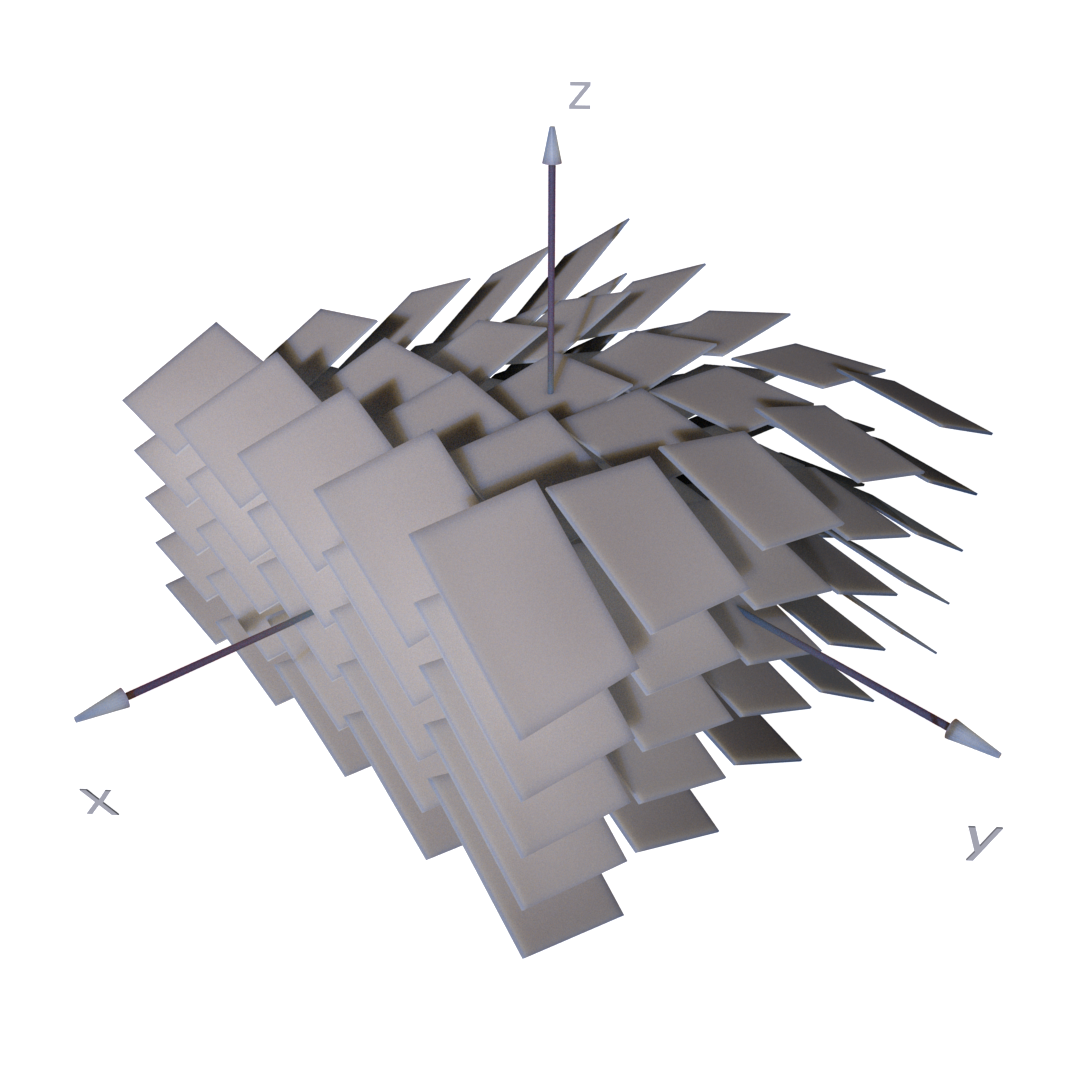}
			\label{cpsaddlenode1}}
			\subfloat{\includegraphics[width=.5\textwidth]{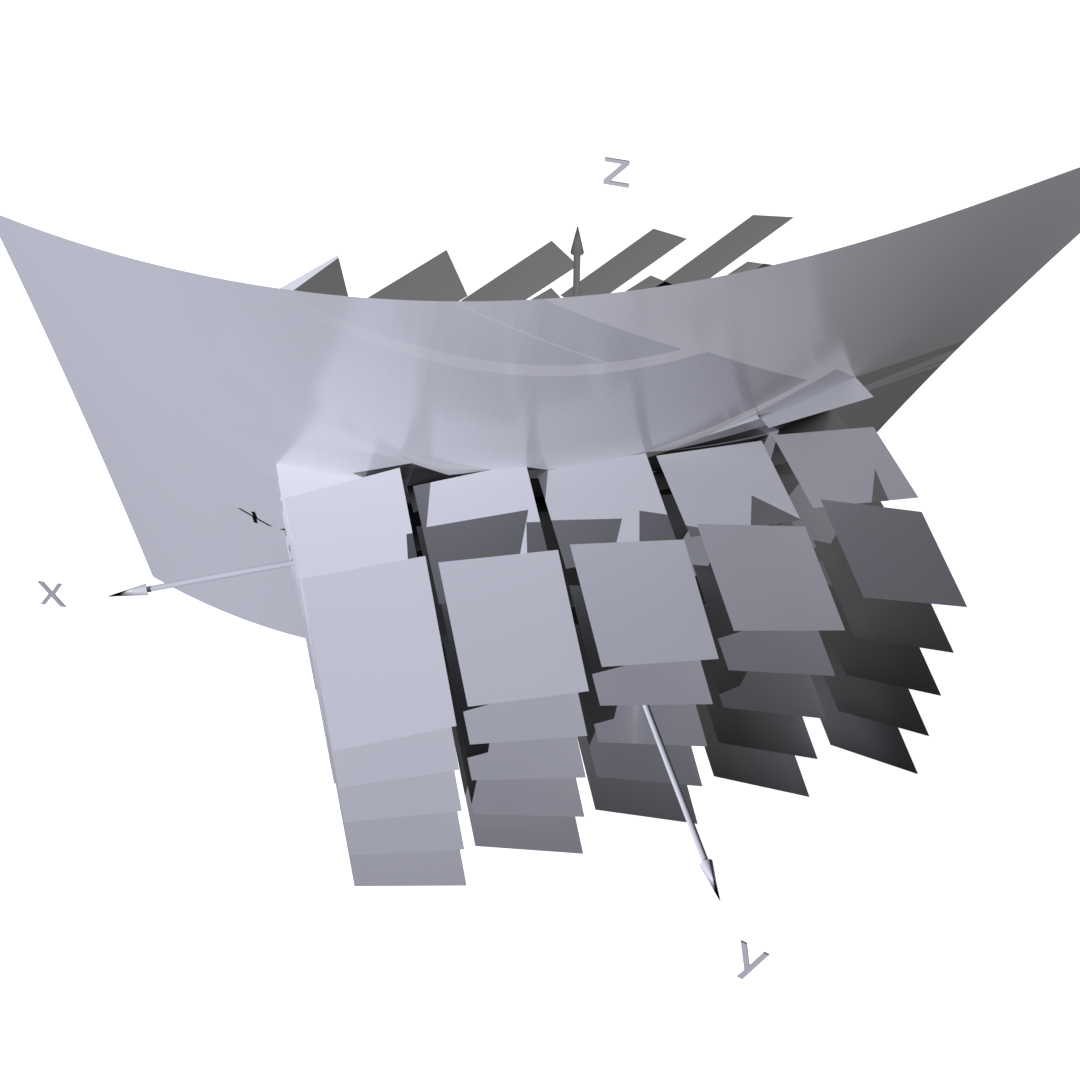}
			\label{cpsaddlenode2}}
			\\
			\subfloat{\includegraphics[width=.5\textwidth]{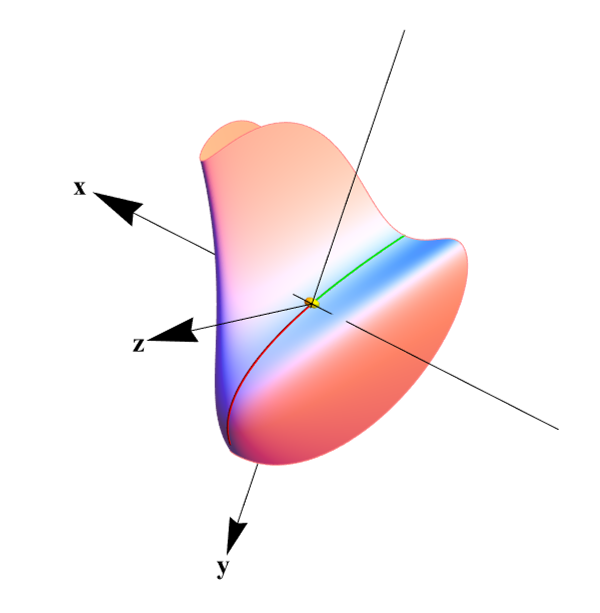}
			\label{spsaddlenode1}}
			\caption{Plane field $((2/3)x^3-y+xy+z)dx+(x^2+x+y)dy+dz=0$, parabolic surface and the curve $\varphi$ of parabolic points of saddle type (coloured as green), node   type (coloured as red) and the parabolic point $\varphi(0)=(0,0,0)$ of saddle-node transition type  (coloured as yellow).}
			\label{cpsaddlenode}
		\end{figure}
		
	\end{ex}
	\begin{ex} Parabolic point of node-focus transition type. Consider the vector field
	$\xi(x,y,z)=((3/4)x^3-y+xy+z,x^2+x+y,1)$. The equation of the plane field $\langle \xi,dr \rangle=0$ is given by $((3/4)x^3-y+xy+z)dx+(x^2+x+y)dy+dz=0$,
		see the Figure \ref{cpnodefocus}. The parabolic surface of the plane field is given by $(5/4)x^2 + 2y + (1/4)x^3 - z - (1/4)x^4 - (1/2)x^2y - (1/4)y^2=0$. The curve $(5/4)x^2 + 2y + (1/4)x^3 - z - (1/4)x^4 - (1/2)x^2y - (1/4)y^2=0$,
		$(1/4)x + x^2 + y=0$  
		is 
		the curve $\varphi(x)=\left(x,-(1/4)x - x^2,-(49/64)x^2 - (1/2)x + (1/4)x^3\right)$, of parabolic points of node type $(x<0)$ and focus type $(x>0)$ with a node-focus transition at $x=0$, i.e, the point $\varphi(0)=(0,0,0)$ is a parabolic point of node-focus transition type.  
		\begin{figure}[H]
			\captionsetup[subfigure]{width=.3\linewidth}
			\subfloat{\includegraphics[width=.5\textwidth]{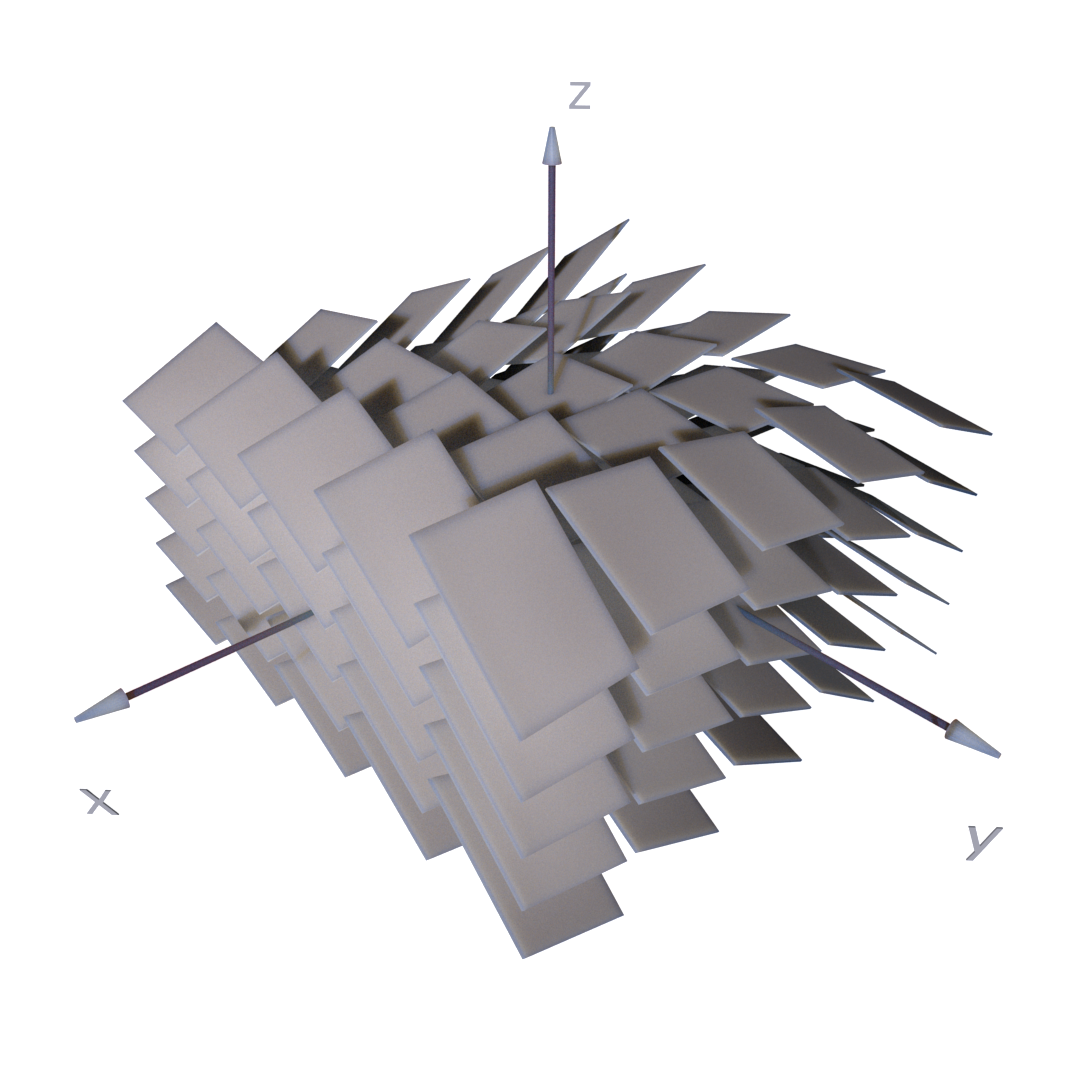}
			\label{cpnodefocus1}}
			\subfloat{\includegraphics[width=.5\textwidth]{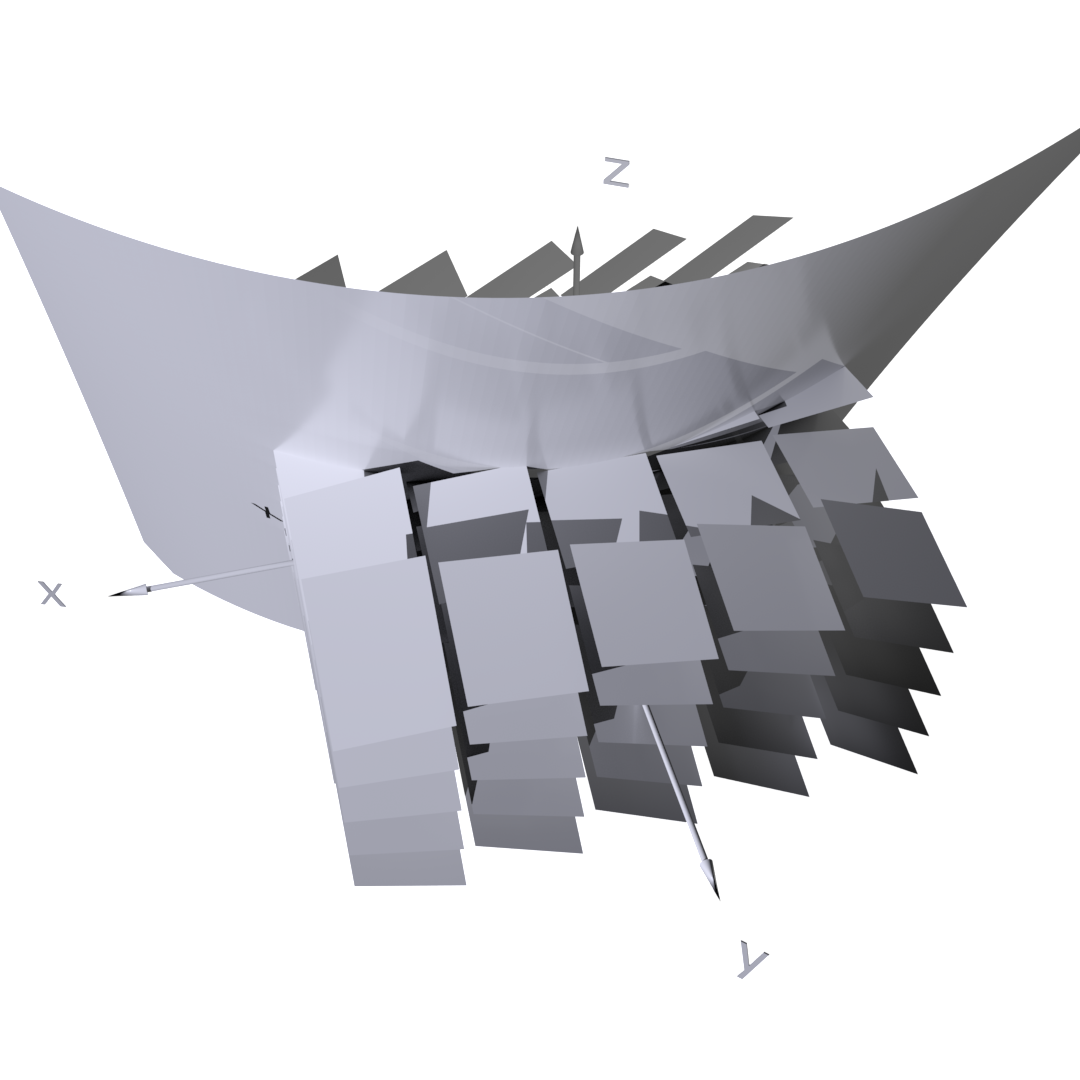}
			\label{cpnodefocus2}}
			\\
			\subfloat{\includegraphics[width=.5\textwidth]{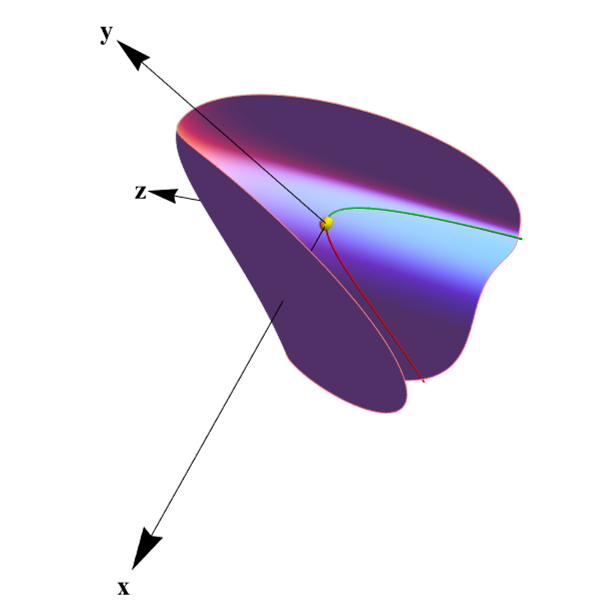}
			\label{cpnodefocus3}}
			\caption{Plane field $((3/4)x^3-y+xy+z)dx+(x^2+x+y)dy+dz=0$, parabolic surface and the curve $\varphi$ of parabolic points of node type (coloured as green), focus   type (coloured as red) and the parabolic point $\varphi(0)=(0,0,0)$ of node-focus transition type  (coloured as yellow).}
			\label{cpnodefocus}
		\end{figure}
		
	\end{ex}
	
	\begin{ex} Parabolic point with a pair of complex eigenvalues crossing the imaginary axis. Consider the vector field
		$\xi(x,y,z)=(x^3 - xy - y + z,x + y,1)$. The equation of the plane field $\langle \xi,dr \rangle=0$ is given by $(x^3 - xy - y + z)dx+(x + y)dy+dz=0$,
		see the Figure \ref{cphopfhyperbolic}. The parabolic surface of the plane field is given by $x^3 - 2x^2 + (1/4)y^2 + z=0$. The curve 
		$x^3 - 2x^2 + (1/4)y^2 + z=0$, $4x - y=0$,
		given by $\varphi(x)=\left(x,-4x, -x^3 - 2x^2\right)$, is 
		a curve of parabolic points of focus type. The point $(0,0,0)$ is a parabolic point  with a pair of complex eigenvalues crossing the imaginary axis. Furthermore, $(0,0,0)$ is an hyperbolic Hopf parabolic point
		\begin{figure}[H]
			\captionsetup[subfigure]{width=.3\linewidth}
			\centering
			\subfloat{\includegraphics[width=.5\textwidth]{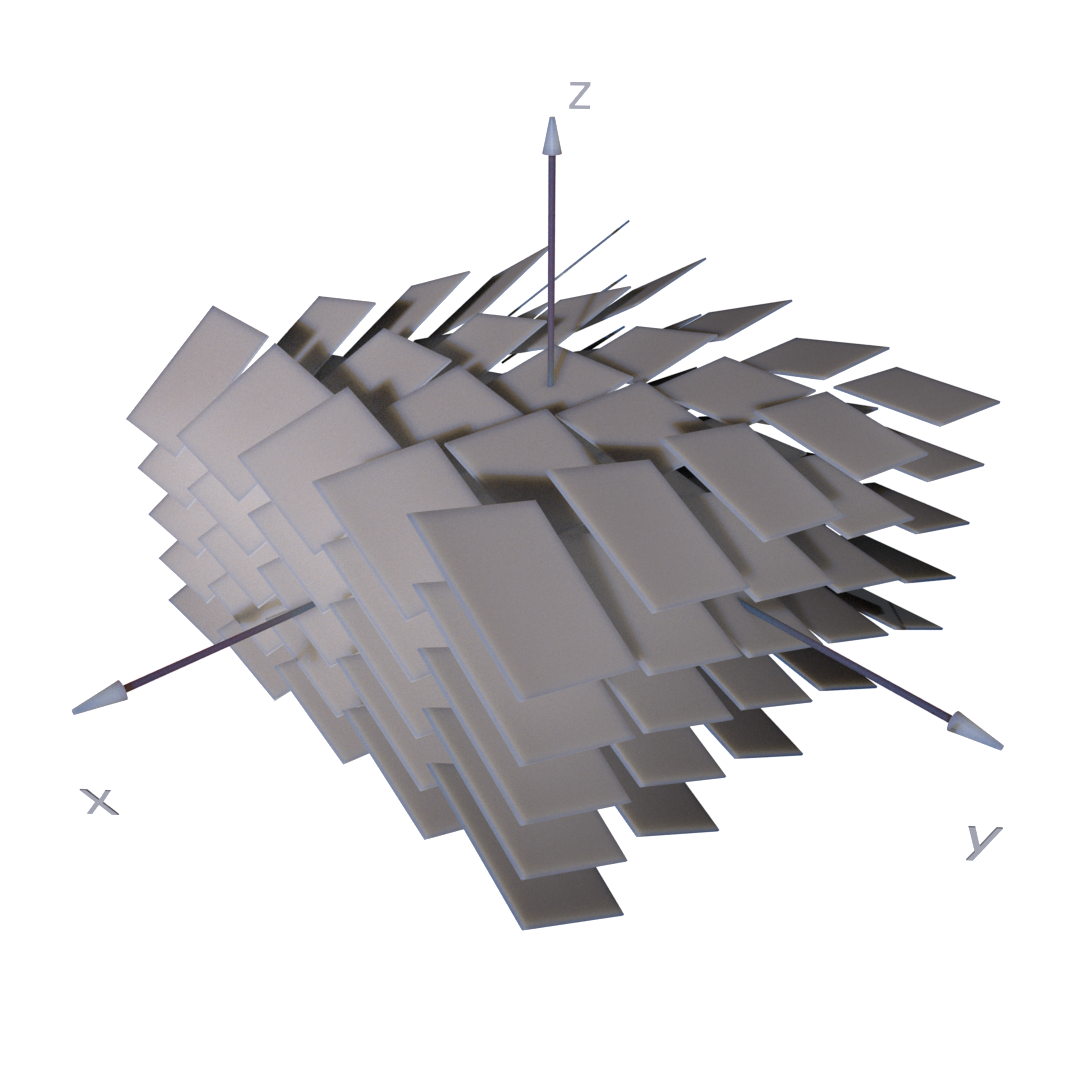}
			\label{cphopfhyperbolic1}}
			\subfloat{\includegraphics[width=.5\textwidth]{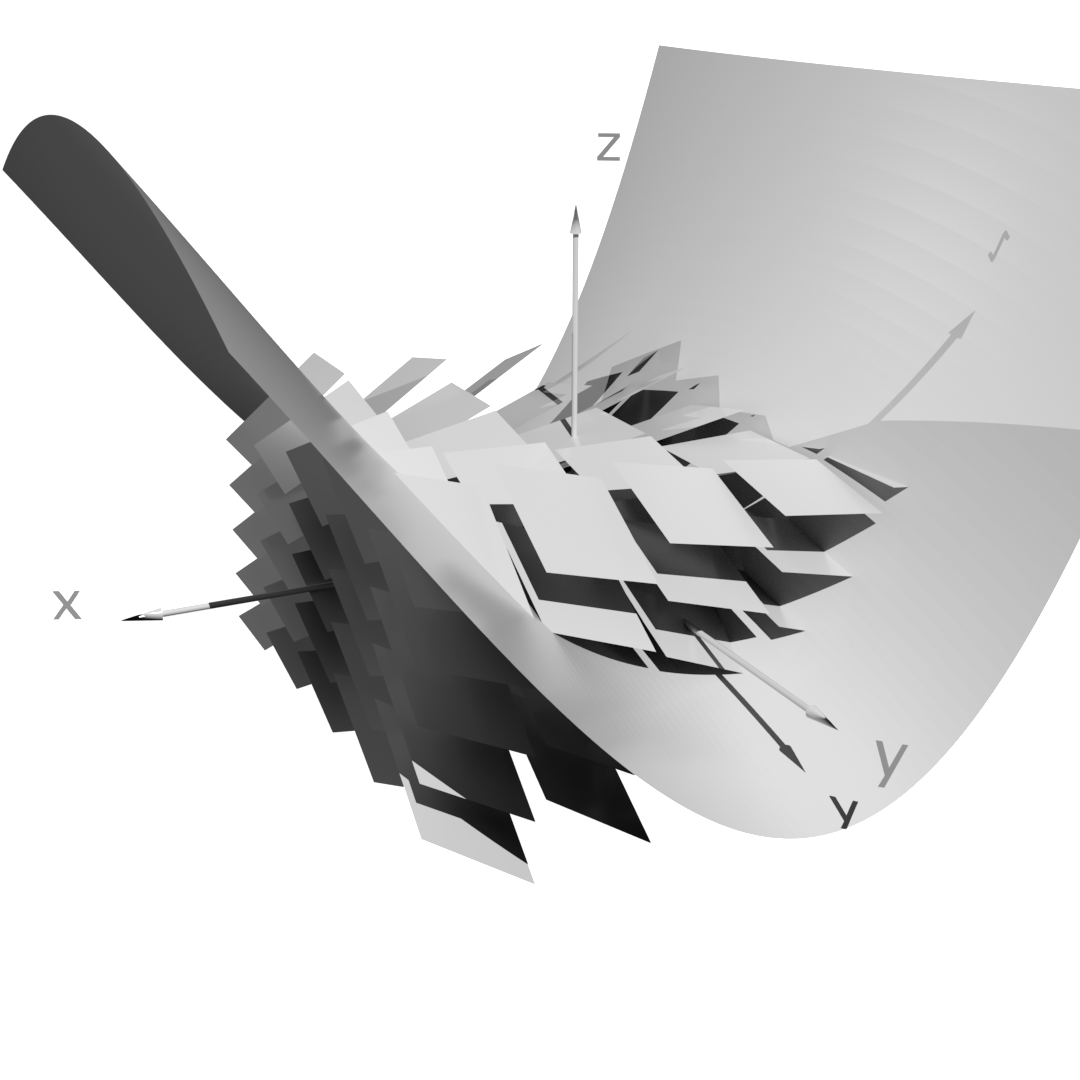}
			\label{cphopfhyperbolic2}}
			\\
			\subfloat{\includegraphics[width=.5\textwidth]{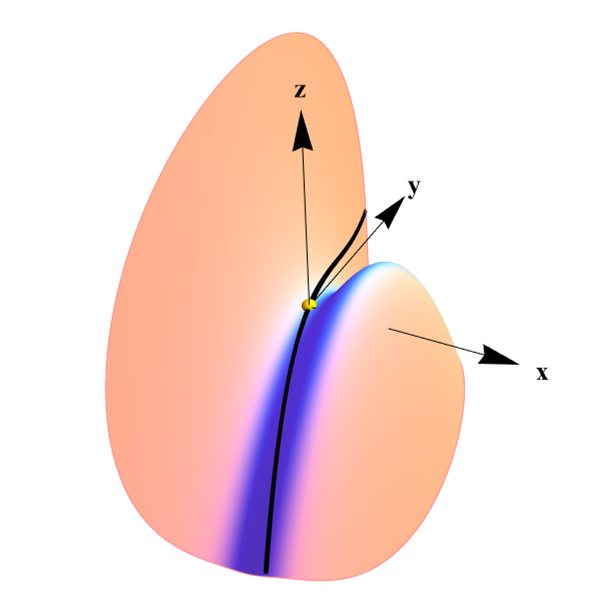}
			\label{cphopfhyperbolic3}}
			\caption{Plane field $(x^3 - xy - y + z)dx+(x + y)dy+dz=0$,
			parabolic surface and the curve $\varphi$ of parabolic points of focus type (coloured as black). The point $(0,0,0)$ (coloured as yellow) is a parabolic point  with a pair of complex eigenvalues crossing the imaginary axis. Furthermore, $(0,0,0)$ is an hyperbolic Hopf parabolic point.}
			\label{cphopfhyperbolic}
		\end{figure}
		
		 Consider the vector field
		$\xi(x,y,z)=(x^3 - 3xy - 3y + z,3x + y,1)$. The equation of the plane field $\langle \xi,dr \rangle=0$ is given by $(x^3 - 3xy - 3y + z)dx+(3x + y)dy+dz=0$,
		see the Figure \ref{cphopfelliptic}. The parabolic surface of the plane field is given by $x^3 + 6x^2 + z + (1/4)y^2=0$. The curve 
		$y=x^3 + 6x^2 + z + (1/4)y^2=0$, given by $\varphi(x)=\left(x, -4x, -x^3 - 10x^2\right)$, is 
		a curve of parabolic points of focus type. The point $(0,0,0)$ is a parabolic point  with a pair of complex eigenvalues crossing the imag
		inary axis. Furthermore, $(0,0,0)$ is an elliptic Hopf parabolic point. 
		\begin{figure}[H]
			\captionsetup[subfigure]{width=.3\linewidth}
			\centering
			\subfloat{\includegraphics[width=.5\textwidth]{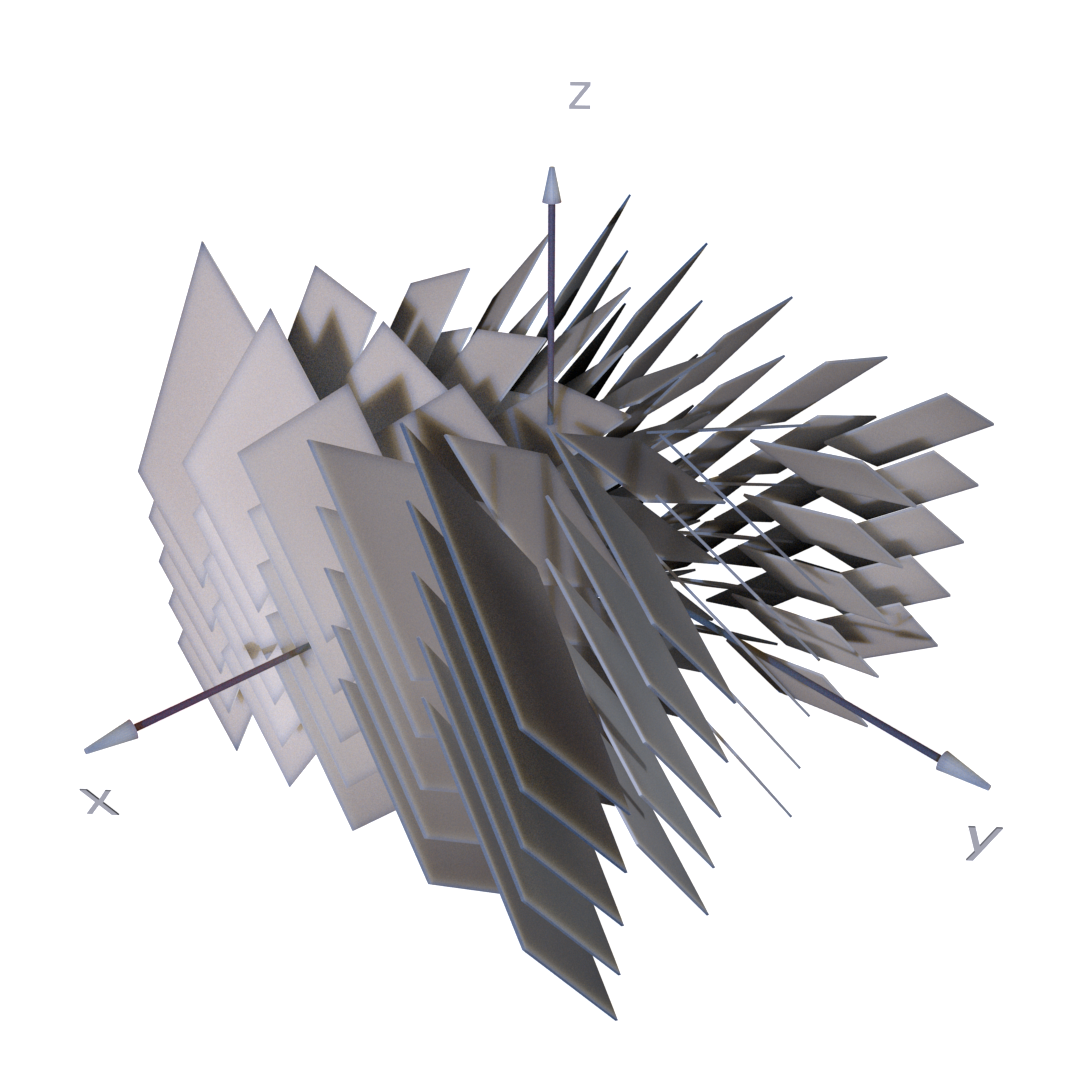}
			\label{cphopfelliptic1}}
			\subfloat{\includegraphics[width=.5\textwidth]{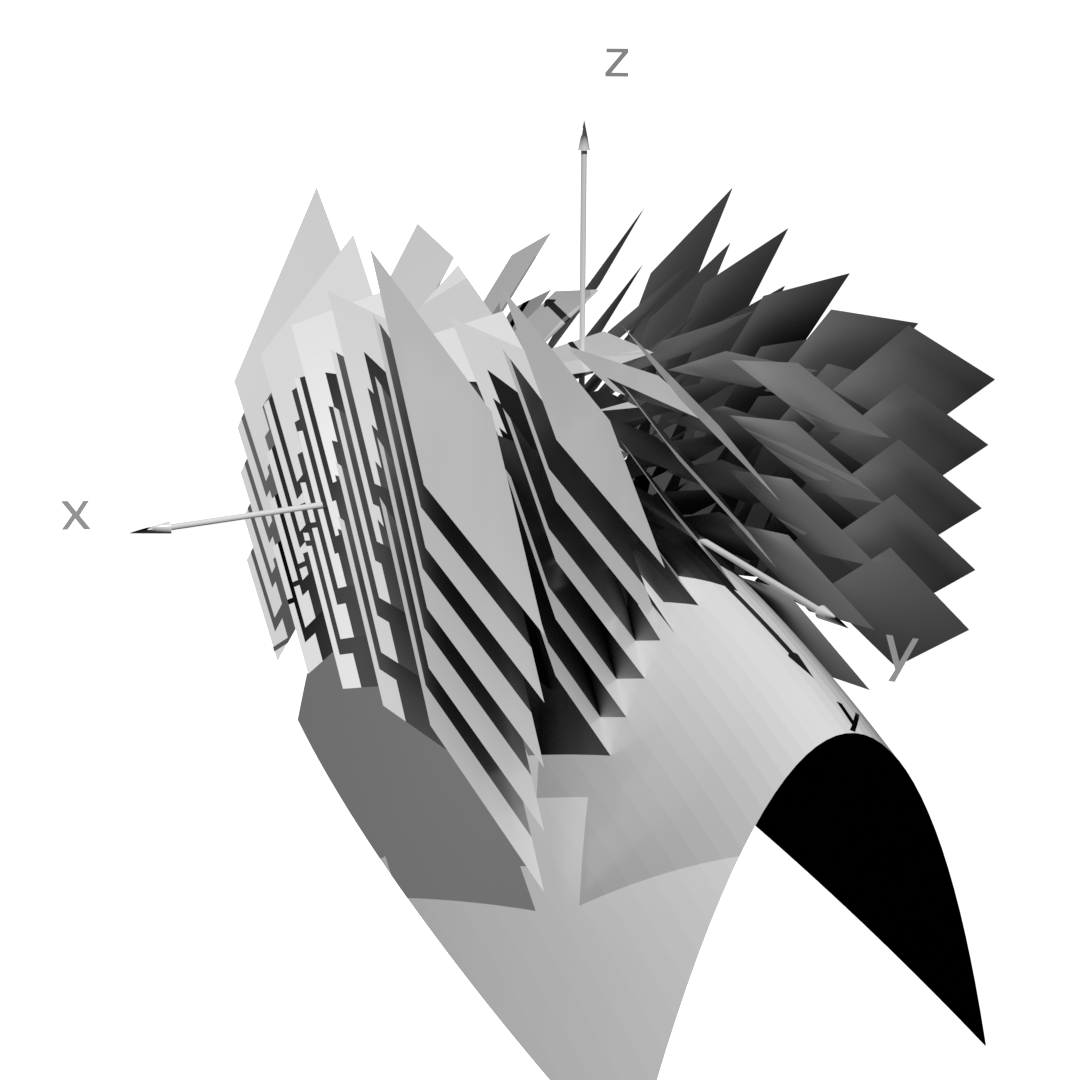}
			\label{cphopfelliptic2}}
			\\
			\subfloat{\includegraphics[width=.5\textwidth]{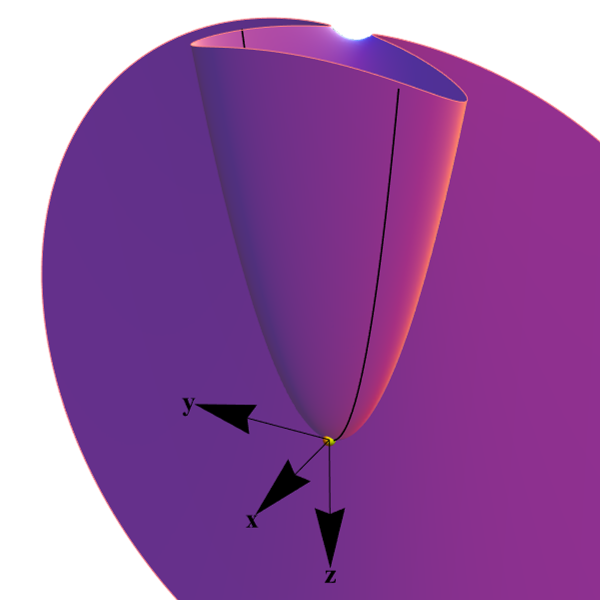}
			\label{cphopfelliptic3}}
			\caption{Plane field $(x^3 - 3xy - 3y + z)dx+(3x + y)dy+dz=0$,
			parabolic surface and the curve $\varphi$ of parabolic points of focus type (coloured as black). The point $(0,0,0)$ (coloured as yellow) is a parabolic point  with a pair of complex eigenvalues crossing the imaginary axis. Furthermore, $(0,0,0)$ is an elliptic Hopf parabolic point. }
			\label{cphopfelliptic}
		\end{figure}
	\end{ex}

	\subsection{Parabolic set}
	
	\begin{ex} Let $\xi(x,y,z)=(f(x,y,z),g(x,y,z),1)$,
		where $f(x,y,z)=-x$ and
		$g(x,y,z)=\frac{2x^3}{3}+2y^2x+2z^2x-x -y$.
		Then the equation of the plane field is given by
		\begin{equation*}
		\begin{split}
		-xdx+\left(\frac{2}{3}x^3+2x(y^2+z^2)-x-y\right)dy+dz=0
		\end{split}
		\end{equation*}
		By the Proposition \ref{propEx1}, the equation of the parabolic set $\mathcal{K}=0$ is given by
		\begin{equation*}
		\begin{split}
		\mathcal{K} &= 48x^2z^3-48x^4z^2-16x^4z+48x^2y^2z-12x^4-24x^2y^2-24x^2z^2
		-12y^4\\&-24y^2z^2
		-12z^4-24x^2z-48xyz+12x^2-48xy+12y^2
		+12z^2+9=0,
		\end{split}
		\end{equation*}
		\noindent and  a compact component   is topologically a sphere, see Figure \ref{exa}.
		
		\begin{figure}[H]
			\captionsetup[subfigure]{width=.3\linewidth}
			\centering
			\subfloat{\includegraphics[width=.6\textwidth]{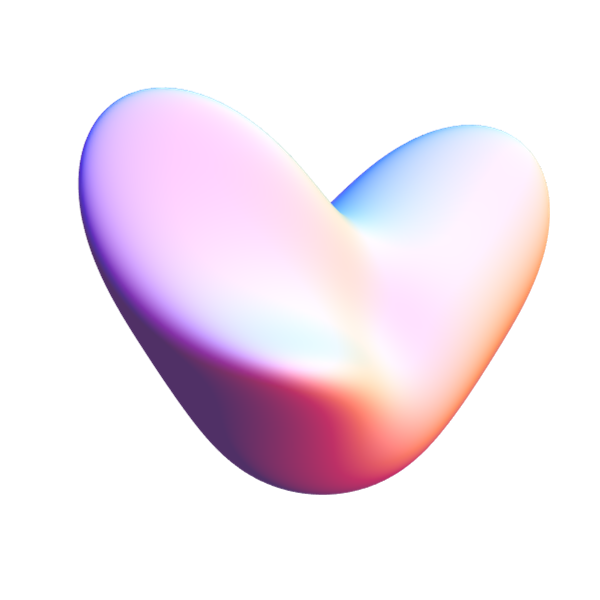}\label{exa1}}
			\caption{Parabolic set of the plane field generated by the vector field $\xi$ has a compact component which is  topologically a sphere.}
			\label{exa}
		\end{figure}
	\end{ex}
	
	\section*{Acknowledgments} The second author is fellow of CNPq and coordinator of Project PRONEX/ CNPq/FAPEG 2017 10 26 7000 508.
	
	\bibliographystyle{abbrv}
	\bibliography{referencias}

\def\cprime{$'$}
\begin{thebibliography}{10}

\bibitem{MR3971262}
A.~Agrachev, D.~Barilari, and U.~Boscain.
\newblock {\em A comprehensive introduction to sub-{R}iemannian geometry},
  volume 181 of {\em Cambridge Studies in Advanced Mathematics}.
\newblock Cambridge University Press, Cambridge, 2020.
\newblock From the Hamiltonian viewpoint, With an appendix by Igor Zelenko.

\bibitem{MR1749926}
Y.~Aminov.
\newblock {\em The geometry of vector fields}.
\newblock Gordon and Breach Publishers, Amsterdam, 2000.

\bibitem{MR1725930}
V.~I. Arnold.
\newblock Topological problems in the theory of asymptotic curves.
\newblock {\em Tr. Mat. Inst. Steklov}, 225(Solitony Geom. Topol. na
  Perekrest.):11--20, 1999.

\bibitem{MR4371078}
D.~Barilari, U.~Boscain, and D.~Cannarsa.
\newblock On the induced geometry on surfaces in 3{D} contact sub-{R}iemannian
  manifolds.
\newblock {\em ESAIM Control Optim. Calc. Var.}, 28:Paper No. 9, 28, 2022.

\bibitem{MR753131}
D.~Bennequin.
\newblock Entrelacements et \'{e}quations de {P}faff.
\newblock In {\em Third {S}chnepfenried geometry conference, {V}ol. 1
  ({S}chnepfenried, 1982)}, volume 107 of {\em Ast\'{e}risque}, pages 87--161.
  Soc. Math. France, Paris, 1983.

\bibitem{MR0487812}
D.~Bleecker and L.~Wilson.
\newblock Stability of {G}auss maps.
\newblock {\em Illinois J. Math.}, 22(2):279--289, 1978.

\bibitem{MR1328597}
J.~W. Bruce and F.~Tari.
\newblock On binary differential equations.
\newblock {\em Nonlinearity}, 8(2):255--271, 1995.

\bibitem{MR3192164}
R.~A. Chertovskih and A.~O. Remizov.
\newblock On pleated singular points of first-order implicit differential
  equations.
\newblock {\em J. Dyn. Control Syst.}, 20(2):197--206, 2014.

\bibitem{douglas2020}
D.~H. da~Cruz and R.~A. Garcia.
\newblock Finite type {$\xi$}-asymptotic lines of plane fields in
  {$\mathbb{R}^3 $}.
\newblock {\em J. Singul.}, 22:17--27, 2020.

\bibitem{MR800916}
A.~A. Davydov.
\newblock The normal form of a differential equation, that is not solved with
  respect to the derivative, in the neighborhood of its singular point.
\newblock {\em Funktsional. Anal. i Prilozhen.}, 19(2):1--10, 96, 1985.

\bibitem{MR1297761}
A.~A. Davydov.
\newblock {\em Qualitative theory of control systems}, volume 141 of {\em
  Translations of Mathematical Monographs}.
\newblock American Mathematical Society, Providence, RI, 1994.
\newblock Translated from the Russian manuscript by V. M. Volosov.

\bibitem{MR2256001}
F.~Dumortier, J.~Llibre, and J.~C. Art\'{e}s.
\newblock {\em Qualitative theory of planar differential systems}.
\newblock Universitext. Springer-Verlag, Berlin, 2006.

\bibitem{Euler1760}
L.~Euler.
\newblock Recherches sur la courbure des surfaces.
\newblock {\em M{\'e}moires de l'Acad{\'e}mie des Sciences de Berlin},
  16(119--143):9, 1760.

\bibitem{MR0206974}
E.~A. Feldman.
\newblock On parabolic and umbilic points of immersed hypersurfaces.
\newblock {\em Trans. Amer. Math. Soc.}, 127:1--28, 1967.

\bibitem{MR1766554}
B.~Fiedler and S.~Liebscher.
\newblock Generic {H}opf bifurcation from lines of equilibria without
  parameters. {II}. {S}ystems of viscous hyperbolic balance laws.
\newblock {\em SIAM J. Math. Anal.}, 31(6):1396--1404, 2000.

\bibitem{MR1785113}
B.~Fiedler, S.~Liebscher, and J.~C. Alexander.
\newblock Generic {H}opf bifurcation from lines of equilibria without
  parameters. {I}. {T}heory.
\newblock {\em J. Differential Equations}, 167(1):16--35, 2000.

\bibitem{MR1780924}
B.~Fiedler, S.~Liebscher, and J.~C. Alexander.
\newblock Generic {H}opf bifurcation from lines of equilibria without
  parameters. {III}. {B}inary oscillations.
\newblock {\em Internat. J. Bifur. Chaos Appl. Sci. Engrg.}, 10(7):1613--1621,
  2000.

\bibitem{MR1725206}
R.~Garcia, C.~Gutierrez, and J.~Sotomayor.
\newblock Structural stability of asymptotic lines on surfaces immersed in
  {$\mathbb{R}^3$}.
\newblock {\em Bull. Sci. Math.}, 123(8):599--622, 1999.

\bibitem{MR1634428}
R.~Garcia and J.~Sotomayor.
\newblock Structural stability of parabolic points and periodic asymptotic
  lines.
\newblock {\em Mat. Contemp.}, 12:83--102, 1997.

\bibitem{MR2532372}
R.~Garcia and J.~Sotomayor.
\newblock {\em Differential equations of classical geometry, a qualitative
  theory}.
\newblock Publica\c c\~oes Matem\'aticas do IMPA. [IMPA Mathematical
  Publications]. Instituto Nacional de Matem\'atica Pura e Aplicada (IMPA), Rio
  de Janeiro, 2009.

\bibitem{MR1198413}
C.~G. Gibson and C.~A. Hobbs.
\newblock Simple singularities of space curves.
\newblock {\em Math. Proc. Cambridge Philos. Soc.}, 113(2):297--310, 1993.

\bibitem{MR0341518}
M.~Golubitsky and V.~Guillemin.
\newblock {\em Stable mappings and their singularities}.
\newblock Springer-Verlag, New York-Heidelberg, 1973.
\newblock Graduate Texts in Mathematics, Vol. 14.

\bibitem{alacyr}
A.~J. Gomes.
\newblock Geometria extr\'inseca de campos de vetores em $\mathbb{R}^3$.
\newblock {\em Tese de Doutorado, Universidade Federal de Goi\'as}, 2016.

\bibitem{gomes2020}
A.~J. Gomes and R.~A. Garcia.
\newblock Principal cycles of one dimensional foliations associated to a plane
  field in $\mathbb{R}^3$.
\newblock arXiv:2003.08323, 2020.

\bibitem{MR2253203}
A.~Gray, E.~Abbena, and S.~Salamon.
\newblock {\em Modern differential geometry of curves and surfaces with
  {M}athematica}.
\newblock Studies in Advanced Mathematics. Chapman \& Hall/CRC, Boca Raton, FL,
  third edition, 2006.

\bibitem{MR0501173}
M.~W. Hirsch, C.~C. Pugh, and M.~Shub.
\newblock {\em Invariant manifolds}.
\newblock Lecture Notes in Mathematics, Vol. 583. Springer-Verlag, Berlin-New
  York, 1977.

\bibitem{zbMATH00043670}
H.~{Hopf}.
\newblock {\em {Differential geometry in the large. Seminar lectures New York
  University 1946 and Stanford University 1956. With a preface by S. S. Chern.
  2nd ed.}}, volume 1000.
\newblock Berlin etc.: Springer-Verlag, 2nd ed. edition, 1989.

\bibitem{MR2443254}
V.~Krouglov.
\newblock The curvature of contact structures on 3-manifolds.
\newblock {\em Algebr. Geom. Topol.}, 8(3):1567--1579, 2008.

\bibitem{MR3008238}
V.~Krouglov.
\newblock A note on the conjecture of {B}lair in contact {R}iemannian geometry.
\newblock {\em Tohoku Math. J. (2)}, 64(4):561--567, 2012.

\bibitem{10.1007/BFb0066810}
H.~I. Levine.
\newblock Singularities of differentiable mappings.
\newblock In C.~Wall, editor, {\em Proceedings of Liverpool Singularities ---
  Symposium I}, Berlin, Heidelberg, 1971. Springer Berlin Heidelberg.

\bibitem{MR3243511}
S.~Liebscher.
\newblock {\em Bifurcation without parameters}, volume 2117 of {\em Lecture
  Notes in Mathematics}.
\newblock Springer, Cham, 2015.

\bibitem{MR669541}
J.~Palis, Jr. and W.~de~Melo.
\newblock {\em Geometric theory of dynamical systems}.
\newblock Springer-Verlag, New York-Berlin, 1982.

\bibitem{MR1891533}
N.~M. Patrikalakis and T.~Maekawa.
\newblock {\em Shape interrogation for computer aided design and
  manufacturing}.
\newblock Springer-Verlag, Berlin, 2002.

\bibitem{MR0142859}
M.~M. Peixoto.
\newblock Structural stability on two-dimensional manifolds.
\newblock {\em Topology}, 1:101--120, 1962.

\bibitem{MR0209602}
M.~M. Peixoto.
\newblock On an approximation theorem of {K}upka and {S}male.
\newblock {\em J. Differential Equations}, 3:214--227, 1967.

\bibitem{MR512930}
B.~L. Reinhart.
\newblock The second fundamental form of a plane field.
\newblock {\em J. Differential Geom.}, 12(4):619--627 (1978), 1977.

\bibitem{MR3570306}
A.~O. Remizov and F.~Tari.
\newblock Singularities of the geodesic flow on surfaces with
  pseudo-{R}iemannian metrics.
\newblock {\em Geom. Dedicata}, 185:131--153, 2016.

\bibitem{zbMATH02626886}
R.~A.~P. {Rogers}.
\newblock {Some differential properties of the orthogonal trajectories of a
  congruence of curves, with an application to curl and divergence of vectors.}
\newblock {Proc. Roy. Ir. Acad. 29, 92-117 (1912).}, 1912.

\bibitem{MR532832}
M.~Spivak.
\newblock {\em A comprehensive introduction to differential geometry. {V}ol.
  {III}}.
\newblock Publish or Perish, Inc., Wilmington, Del., second edition, 1979.

\bibitem{MR1510041}
A.~Voss.
\newblock Geometrische {I}nterpretation der {D}ifferentialgleichung
  {$Pdx+Qdy+Rdz=0$}.
\newblock {\em Math. Ann.}, 16(4):556--559, 1880.

\bibitem{MR1344763}
I.~Zelenko and M.~Zhitomirski\u{\i}.
\newblock Rigid paths of generic {$2$}-distributions on {$3$}-manifolds.
\newblock {\em Duke Math. J.}, 79(2):281--307, 1995.

\end{thebibliography}

\end{document}